\newenvironment{smallarray}[1]
 {\null\,\vcenter\bgroup\scriptsize
  \arraycolsep=.13885em
  \hbox\bgroup$\array{@{}#1@{}}}
 {\endarray$\egroup\egroup\,\null}
\def\z{\mathfrak{z}}
\def\u{\mathfrak{u}}
\def\l{\mathfrak l}
\def\g{\mathfrak{g}}
\def\h{\mathfrak{h}}
\def\a{\mathfrak{a}}
\def\q{\mathfrak{q}}
\def\s{\mathfrak s}
\def\cJ{\mathcal{J}}
\def\cC{\mathcal{C}}
\def\cA{\mathcal{A}}
\def\cB{\mathcal{B}}
\def\cN{\mathcal{N}}
\def\hcx{\{J_{\alpha}\}}
\def\C{\mathbb{C}}
\def\R{\mathbb{R}}
\def\Q{\mathbb{Q}}
\def\Z{\mathbb{Z}}
\def\N{\mathbb{N}}
\def\H{\mathbb H}
\def\J{\mathbb J}
\def\al{\alpha}
\def\be{\beta}
\def\ga{\gamma}
\def\e{\operatorname{e}}
\def\ad{\operatorname{ad}}
\def\tr{\operatorname{tr}}
\def\alt{\raise1pt\hbox{$\bigwedge$}}
\def\pint{\langle \cdotp,\cdotp \rangle }
\theoremstyle{plain}
\newtheorem{theorem}{\bf Theorem}[section]
\newtheorem{corollary}[theorem]{\bf Corollary}
\newtheorem{proposition}[theorem]{\bf Proposition}
\newtheorem{lemma}[theorem]{\bf Lemma}
\theoremstyle{definition}
\newtheorem{example}[theorem]{\bf Example}
\theoremstyle{remark}
\newtheorem{remark}[theorem]{\bf Remark}
\newcommand{\ri}{{\rm (i)}}
\newcommand{\rii}{{\rm (ii)}}
\newcommand{\riii}{{\rm (iii)}}
\title[Applications of the quaternionic Jordan form]{Applications of the quaternionic Jordan form to hypercomplex  geometry}
\author{Adrián Andrada}
\email{adrian.andrada@unc.edu.ar}
\author{María Laura Barberis}
\email{mlbarberis@unc.edu.ar}
\date{}
\address{FAMAF, Universidad Nacional de C\'ordoba and CIEM-CONICET, Av. Medina Allende s/n, Ciudad Universitaria, X5000HUA C\'ordoba, Argentina}
\thanks{This work was partially supported by CONICET, SECyT-UNC and ANPCyT (Argentina)}
\subjclass[2010]{53C26, 22E25, 22E40, 53C55}
\keywords{Hypercomplex structure, almost abelian Lie group, lattice, solvmanifold}
\begin{document}

\begin{abstract} 
We apply the quaternionic Jordan form to classify the nilpotent hypercomplex almost abelian Lie algebras in all dimensions and to carry out the complete classification of  12-dimensional hypercomplex  almost abelian Lie algebras. Moreover, we determine which 12-dimensional simply connected hypercomplex  almost abelian Lie groups admit lattices. Finally, for each integer $n>1$ we construct  infinitely many, up to diffeomorphism,  $(4n+4)$-dimensional hypercomplex almost abelian solvmanifolds which are completely solvable. These solvmanifolds arise from a distinguished family of monic integer polynomials of degree~$n$. 
\end{abstract}

\maketitle

\section{Introduction}
The problem of existence of left invariant  geometric structures on Lie groups is an active field of research. In the particular case when the Lie group is almost abelian,  several authors 
have made important  recent contributions to the subject  \cite{AT, Av, BeFi, C-M, CM, FiPa1, FiPa2, HO, MeTa, Mo}. Almost abelian Lie groups also have interesting applications in theoretical physics (see for instance \cite{AV, RS}). 
Recall that a Lie group is called almost abelian when its  Lie algebra has a codimension one abelian ideal, that is, it  can be written as $\g=\R e_0\ltimes_A \R^{d}$, where the matrix $A\in \mathfrak{gl}(d,\R)$ encodes the adjoint action of $e_0$ on the abelian ideal $\R^{d}$. The existence of a left invariant geometric  structure on $G$ imposes restrictions on $A$.
 
In this paper we focus on a special type of geometric structures, namely, hypercomplex structures. A hypercomplex structure on a smooth manifold $M$ is a triple $\{J_1, J_2, J_3\}$ of complex structures satisfying the laws of the quaternions. Hypercomplex structures  are present in many
branches of theoretical and mathematical physics \cite{HKLR, V}. It was proved by Boyer \cite{Boy} that, in real  dimension 4, the only compact hypercomplex manifolds are  tori, $K3$ surfaces and  quaternionic Hopf surfaces. Such a classification in dimension $4n$, $n\geq 2$,  is far from being complete. Left invariant hypercomplex structures on compact Lie groups were first constructed by Spindel et al.  \cite{SSTVP} from the point of view of supersymmetry. Later, Joyce \cite{joy} gave a different proof of their result and considered the case of general homogeneous spaces. Regarding the non-compact case, Dotti and Fino studied in \cite{DF1} the existence of left invariant hypercomplex structures on nilpotent Lie groups; in particular, they gave the classification of such groups in dimension 8, proving that they are all at most 2-step nilpotent. 
In dimension 4, there is a unique hypercomplex almost abelian Lie algebra, namely, $\g=\R e_0\ltimes_A \R^{3}$ where $A$ is the identity matrix (see \cite{Bar}).

In \cite{AB} we started the study of left invariant hypercomplex structures on almost abelian Lie groups and their associated solvmanifolds.  We gave a characterization of  the corresponding hypercomplex  almost abelian Lie algebras and   using this characterization we were able to classify all 8-dimensional hypercomplex almost abelian Lie groups. We found that, in this family,  there are infinitely many pairwise non-isomorphic Lie groups. We also provided  examples of hypercomplex almost abelian Lie groups in any dimension $4n$, $n\geq 2$. Furthermore, we  investigated the existence of lattices in these groups and exhibited several  hypercomplex solvmanifolds. 

In this article we continue the study of hypercomplex almost abelian Lie groups. We classify such groups in the nilpotent case for arbitrary dimensions (see Theorem \ref{thm:classif_nilp} and Corollary \ref{cor:param-nilp_hcx}). In the 12-dimensional case, we obtain the complete classification of hypercomplex almost abelian Lie groups (Theorems \ref{thm: classification1} and \ref{thm: classification2}). The proof of the  classification theorems relies on the analogue of the Jordan normal form of quaternionic matrices \cite[Theorem 5.5.3]{Rod}. In the nilpotent case, all simply connected almost abelian Lie groups admit lattices due to Malcev's well-known criterion \cite{Mal}.  In \S\ref{sec:lattices}, we determine which 12-dimensional  hypercomplex almost abelian Lie groups admit lattices. 

Lastly, for any $n>1$, we provide a method to construct a $(4n+4)$-dimensional hypercomplex almost abelian solvmanifold beginning with a polynomial of degree $n$ in a distinguished family $\Delta_n\subset \Z[x]$. Indeed, given $p\in \Delta_n$ we can determine a matrix $A_p\in \operatorname{SL}(4n+3,\R)$ which gives rise to a hypercomplex almost abelian Lie group $G_p$ of completely solvable type, together with a lattice $\Gamma_p\subset G_p$ (Proposition \ref{prop:delta_n}). Moreover, we show that the map which associates to each $p\in \Delta_n$  the solvmanifold $\Gamma_p\backslash G_p$ is not one-to-one, but it is in general two-to-one (Theorem \ref{thm:p_or_p*}).  In \S\ref{subsec:deltan} we prove  several properties of  the family $\Delta_n$. In particular, we show that $\Delta_n$ is infinite (Lemma \ref{lem:infinite}).  

Appendix A contains an analogue of Theorem \ref{thm:classif_nilp} for 
 almost abelian Lie algebras admitting a complex structure (Theorem \ref{thm:classif_nilp-complex}). In Appendix B we determine all the Lie group isomorphisms between simply connected almost abelian Lie groups, which are needed in \S\ref{sec:polyn}.

%\

\section{Preliminaries}\label{S: prelim}
\subsection{Hypercomplex manifolds} A complex structure on a differentiable manifold $M$ is an automorphism $J$ of the tangent bundle $TM$ satisfying $J^2=-I $  and the 
integrability condition $N_{J}(X,Y) =0$ for all  vector fields $X,Y$ on $M,$ where $N_J$ is the Nijenhuis tensor:
\begin{equation}  N_{J}(X,Y) =
[X,Y]+J([JX,Y]+[X,JY])-[JX,JY].  \label{nijen} \end{equation}
Recall that the integrability of $J$ is equivalent to the existence of an atlas  on $M$ such that the transition functions are holomorphic maps \cite{new}. 

A hypercomplex structure on $M$ is a triple of complex structures $\hcx$, $\al=1,2,3$, on $M$ satisfying the following conditions: 
\begin{equation}    \label{quat}
J_1J_2=-J_2J_1=J_3, \end{equation}
It then follows that $M$ has a family of complex structures 
$J_{y}=y_1J_1+y_2J_2+y_3J_3$ parameterized by points $y = (y_1,y_2,y_3)$ in the unit sphere $S^2 \subset \R^3$. It follows from \eqref{quat} that $T_pM$, for each $p\in M$, has an $\H$-module structure, where $\H$ denotes the quaternions; in particular, $\dim M \equiv 0 \; \pmod{4}$.

Given a hypercomplex structure $\hcx$ on $M$, there is a unique torsion-free connection $\nabla$ on $M$ such that $\nabla J_\al=0, \; \al =1,2,3$. It is called the Obata connection (see \cite{Ob,Sol}). The holonomy group of the Obata connection, $\operatorname{Hol}(\nabla)$, is contained in the quaternionic  general linear group $\operatorname{GL}(n, \H)$ (see \eqref{eq:GL_kH} below). The group $\operatorname{Hol}(\nabla)$ is an important invariant of  hypercomplex manifolds, see for instance \cite{IP,SV}. 

A hyperhermitian structure on $M$ is a pair $(\hcx ,g )$ where $\hcx$ is a hypercomplex structure and $(J_{\al},g )$ is Hermitian for $\al =1,2,3$.  
An interesting subclass of hyperhermitian structures is given by hyper-K\"ahler structures \cite{Cal}, which are hyperhermitian structures such that $(J_\al , g)$ is K\"ahler for $\al =1,2,3$, that is, the  K\"ahler forms $\omega_\al$ associated to $(J_\al,g)$ are closed, $\al =1,2,3$. In this case, the Levi-Civita connection coincides with the Obata connection and its holonomy group is contained in $\operatorname{Sp}\,(n)$, where $\dim M=4n$.  Since $\operatorname{Sp}\,(n)\subset \operatorname{SU}\,(2n)$,  hyper-K\"ahler metrics are Ricci-flat. 
A less restrictive class of hyperhermitian structures are 
the so-called hyper-K\"ahler with torsion (or HKT) structures \cite{HP}. These are hyperhermitian structures satisfying $\partial\Omega=0$, where $\Omega=\omega_2+i\omega_3$ and $\partial$ is the Dolbeault differential on $(M,J_1)$, with  $\omega_\al$ as above. 
We note that the class of hyper-K\"ahler manifolds is strictly contained in the class of HKT manifolds (see \cite{DF}) and this, in turn, is a proper class of hypercomplex manifolds (see \cite{FG}). Some recent contributions to the subject can be found in \cite{BGV,GeTa,GLV}, among many others. 

\subsection{Almost abelian solvmanifolds} A \textit{solvmanifold} is a compact quotient $\Gamma\backslash G$, where $G$ is a simply connected solvable Lie group and $\Gamma$ is a discrete subgroup of $G$. Such a subgroup $\Gamma$ is called a \textit{lattice} of $G$. When $G$ is nilpotent and $\Gamma\subset G$ is a lattice, the compact quotient $\Gamma\backslash G$ is known as a nilmanifold.

It follows that $\pi_1(\Gamma\backslash G)\cong \Gamma$ and  $\pi_n(\Gamma\backslash G)=0$ for $n>1$. Furthermore, solvmanifolds are determined up to diffeomorphism by their fundamental groups. In fact:

\begin{theorem}\cite{Mos}\label{thm:solv-isom}
If $\Gamma_1$ and $\Gamma_2$ are lattices in simply connected solvable Lie groups 
$G_1$ and $G_2$, respectively, and $\Gamma_1$ is isomorphic to $\Gamma_2$, then $\Gamma_1 \backslash G_1$ is diffeomorphic to $\Gamma_2 \backslash G_2$.
\end{theorem}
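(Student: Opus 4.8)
The plan is to promote the abstract isomorphism $\phi\colon\Gamma_1\to\Gamma_2$ to a $\phi$-equivariant diffeomorphism between the universal covers, which then descends to the quotients. First I would record the basic topological facts: a connected, simply connected solvable Lie group $G$ admits a chain of closed connected normal subgroups with successive quotients isomorphic to $\R$, so $G$ is diffeomorphic to $\R^{\dim G}$ and in particular contractible. Hence each solvmanifold $M_i:=\Gamma_i\backslash G_i$ is a closed aspherical manifold whose universal cover is $G_i$ and whose fundamental group is $\Gamma_i$; that is, $M_i$ is a $K(\Gamma_i,1)$.

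Since $M_1$ and $M_2$ are Eilenberg--MacLane spaces with isomorphic fundamental groups, the isomorphism $\phi$ is realized by a homotopy equivalence $M_1\to M_2$ (unique up to homotopy by the standard obstruction-theoretic argument for $K(\pi,1)$'s, using that $M_1$ is a CW complex and $M_2$ is aspherical). Lifting to universal covers, this produces a $\phi$-equivariant continuous map $\tilde f\colon G_1\to G_2$, i.e. $\tilde f(\gamma\cdot x)=\phi(\gamma)\cdot\tilde f(x)$ for all $\gamma\in\Gamma_1$; such an equivariant map exists and is unique up to equivariant homotopy because $G_1$ is contractible and $\Gamma_2$ acts freely and properly on the contractible space $G_2$.

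The main obstacle is upgrading this homotopy equivalence to a diffeomorphism, and this is where the solvable structure enters decisively. I would exploit the nilradicals $N_i\trianglelefteq G_i$: the intersections $\Lambda_i:=\Gamma_i\cap N_i$ are lattices in the simply connected nilpotent groups $N_i$, and $\Gamma_i/\Lambda_i$ is a lattice in the abelian quotient $G_i/N_i\cong\R^{k_i}$. By Malcev's rigidity for nilpotent Lie groups, the restriction of $\phi$ to the nilpotent lattices extends uniquely to an isomorphism $N_1\to N_2$, while on the abelian quotients $\phi$ induces a linear isomorphism. The plan is then to assemble these pieces into a single $\phi$-equivariant \emph{diffeomorphism} $F\colon G_1\to G_2$: one builds $F$ along the fibration $N_i\hookrightarrow G_i\to G_i/N_i$, using the nilpotent extension on the fibres and the induced map on the base, and correcting by an equivariant isotopy on the regions where $\phi$ fails to respect the group law.

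The delicate point, and the heart of Mostow's theorem, is precisely this last correction: unlike the nilpotent case, $\phi$ need \emph{not} extend to a Lie group isomorphism $G_1\to G_2$, since the splitting data of the two solvable extensions may differ (indeed $G_1$ and $G_2$ may even fail to be isomorphic as Lie groups while carrying isomorphic lattices). What survives is the weaker but sufficient conclusion that $\phi$ extends to an equivariant diffeomorphism of the universal covers. Granting this, $F$ descends to a diffeomorphism $\bar F\colon \Gamma_1\backslash G_1\to\Gamma_2\backslash G_2$ by construction, completing the argument.
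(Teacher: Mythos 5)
This statement is quoted by the paper from Mostow's 1954 article \cite{Mos} and is not proved in the paper at all, so there is no internal argument to compare against; your proposal has to stand on its own. As it stands, it does not: it is a correct and well-organized outline of the standard strategy, but the theorem's actual content is left unproved. You reduce everything to the claim that the abstract isomorphism $\phi\colon\Gamma_1\to\Gamma_2$ extends to a $\phi$-equivariant diffeomorphism $F\colon G_1\to G_2$, you correctly observe that this is ``the heart of Mostow's theorem,'' and then you write ``Granting this, $F$ descends\dots''. That granted step is the entire theorem; the preliminary reductions (asphericity of $G_i$, the $K(\pi,1)$ homotopy equivalence, uniqueness of equivariant maps up to equivariant homotopy) are comparatively routine and do not by themselves produce a diffeomorphism, since a homotopy equivalence between closed aspherical manifolds need not be homotopic to a homeomorphism in general. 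The phrase ``correcting by an equivariant isotopy on the regions where $\phi$ fails to respect the group law'' is not an argument: there is no construction of the map on the fibration $N_i\hookrightarrow G_i\to G_i/N_i$, no verification that the fibrewise Malcev extensions and the linear map on the base are compatible with the (possibly non-split, possibly non-isomorphic) extension data of $G_1$ and $G_2$, and no mechanism for the claimed correction.

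Two further points would need justification even in the outline. First, the assertion that $\Lambda_i=\Gamma_i\cap N_i$ is a lattice in the nilradical $N_i$ and that $\Gamma_i/\Lambda_i$ is a lattice in $G_i/N_i$ is itself a nontrivial structure theorem of Mostow about lattices in solvable groups, not a formality. Second, Malcev rigidity applied to $\phi|_{\Lambda_1}$ gives an isomorphism $N_1\to N_2$ only after one knows $\phi(\Lambda_1)=\Lambda_2$, i.e.\ that the abstract isomorphism $\phi$ carries $\Gamma_1\cap N_1$ onto $\Gamma_2\cap N_2$; this requires an argument (for instance, identifying $\Lambda_i$ group-theoretically inside $\Gamma_i$ via the Fitting subgroup or a maximal nilpotent normal subgroup of finite corank). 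In short, the proposal is a plausible roadmap but has a genuine gap exactly where the difficulty lies; a complete proof along these lines is the several-page argument in Mostow's paper (or Raghunathan's treatment of it), and cannot be recovered from what you have written.
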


A solvable Lie group $G$ is called completely solvable if the adjoint operators $\ad_x:\g\to\g$, with $x\in \g=\operatorname{Lie}(G)$, have only real eigenvalues. 
The conclusion of the previous theorem can be strengthened when both solvable Lie groups $G_1$ and $G_2$ are completely solvable. Indeed, this is the content of Saito's rigidity theorem:

\begin{theorem}\cite{Sai}\label{thm:Saito}
Let $G_1$ and $G_2$ be simply connected completely solvable Lie groups and $\Gamma_1 \subset G_1, \, \Gamma_2\subset G_2$ lattices. Then every isomorphism
$f: \Gamma_1 \to \Gamma_2$ 
extends uniquely to an isomorphism of Lie groups $F: G_1 \to G_2$.
\end{theorem}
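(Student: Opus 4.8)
The plan is to lift the abstract isomorphism $f\colon \Gamma_1\to\Gamma_2$ to the level of Lie algebras and then exponentiate. The structural fact I would exploit throughout is that for a simply connected \emph{completely solvable} Lie group the exponential map $\exp\colon\g\to G$ is a diffeomorphism; this is precisely where the real-eigenvalue hypothesis on the operators $\ad_x$ enters, since complex eigenvalues would destroy both injectivity and surjectivity of $\exp$. Consequently each $G_i$ is globally parametrized by $\g_i$ via $\exp_i$, and to produce the extension it suffices to produce a Lie-algebra isomorphism $\varphi\colon\g_1\to\g_2$ and then set $F=\exp_2\circ\varphi\circ\exp_1^{-1}$, verifying afterwards that $F$ is a group homomorphism restricting to $f$ on $\Gamma_1$. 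This splits the problem into uniqueness and existence of $\varphi$.

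For \textbf{uniqueness}, suppose $F_1,F_2$ both extend $f$ and put $\alpha=F_2^{-1}\circ F_1$, a Lie-group automorphism of $G_1$ fixing $\Gamma_1$ pointwise. Since $\alpha\circ\exp_1=\exp_1\circ\, d\alpha$ and $\exp_1$ is injective, the relation $\alpha(\gamma)=\gamma$ forces $d\alpha(\log_1\gamma)=\log_1\gamma$ for every $\gamma\in\Gamma_1$. Because $\Gamma_1$ is cocompact, the vectors $\log_1\gamma$ span $\g_1$ over $\R$, whence $d\alpha=\I$; as $G_1$ is connected and simply connected, $\alpha=\operatorname{id}$, so the extension is unique.

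The heart of the matter is \textbf{existence}, i.e. the construction of $\varphi$. Here I would use that a lattice in a simply connected solvable Lie group is a torsion-free polycyclic group and that, for completely solvable $G$, such a lattice endows $\g$ with a rational structure: one chooses a Malcev-type basis adapted to $\Gamma$ in which the structure constants are rational, so that $\g=\g_\Q\otimes_\Q\R$ with $\g_\Q$ intrinsically determined by $\Gamma$. The completely solvable hypothesis guarantees that $\log\gamma\in\g$ is well defined for each $\gamma\in\Gamma$ and that $\langle\Gamma\rangle$ detects the full rational structure, so that the abstract isomorphism $f$ carries $\g_{1,\Q}$ onto $\g_{2,\Q}$ compatibly with the bracket. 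Tensoring with $\R$ then yields the required $\varphi\colon\g_1\to\g_2$.

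I expect the main obstacle to be exactly the assertion that the purely group-theoretic isomorphism $f$ respects the Lie bracket after passing to logarithms --- equivalently, that $\Gamma$ determines $\g$, and not merely the diffeomorphism type of $\Gamma\backslash G$ (which is all that Theorem~\ref{thm:solv-isom} provides). This fails without complete solvability: groups with a genuine rotational part may carry isomorphic lattices while being non-isomorphic as Lie groups, since the lattice cannot detect an irrational rotation angle. The real-eigenvalue condition removes this phenomenon and makes the reconstruction $\Gamma\mapsto\g$ functorial, and establishing this functoriality carefully is the step I would need to grind through.
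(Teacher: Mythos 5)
First, a point of order: the paper contains no proof of this statement --- it is Saito's rigidity theorem, quoted from \cite{Sai} as a black box --- so there is no internal argument to compare yours against; I can only judge the proposal on its own terms. Your uniqueness argument is essentially correct. The one assertion you leave unjustified, that the vectors $\log_1\gamma$, $\gamma\in\Gamma_1$, span $\g_1$, does hold, but the clean way to see it is that the fixed-point set of $d\alpha$ is a Lie subalgebra $\k$, that $\exp_1(\k)$ is then a closed connected subgroup of $G_1$ containing $\Gamma_1$ (this is where you really use that $\exp_1$ is a diffeomorphism), and that a lattice in a simply connected solvable Lie group cannot sit inside a proper closed connected subgroup, since the corresponding coset space would be a compact Euclidean space of positive dimension. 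Hence $\k=\g_1$ and $\alpha=\operatorname{id}$.

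The existence half, however, hinges on a claim that is false as stated, and it is precisely the claim carrying all the weight. You propose that $\Gamma$ endows $\g$ with an intrinsic rational structure via a ``Malcev-type basis adapted to $\Gamma$ in which the structure constants are rational.'' That is Malcev's theorem for \emph{nilpotent} groups, where it follows because Baker--Campbell--Hausdorff is a polynomial identity, so the $\Q$-span of $\log\Gamma$ is a rational Lie algebra determined by the abstract group $\Gamma$. For completely solvable non-nilpotent groups this mechanism breaks down, and the lattices constructed in this very paper show it: for $G=\R\ltimes_\varphi\R^d$ with lattice $\Gamma=t_0\Z\ltimes P\Z^d$ as in Proposition \ref{prop: Bock}, the structure constants in the basis adapted to $\Gamma$ are the entries of $P^{-1}(t_0A)P=\log\bigl(P^{-1}\e^{t_0A}P\bigr)=\log E$ with $E\in\operatorname{SL}(d,\Z)$, and these are typically transcendental (for $S_9^{-1}$ they involve $\log\frac{m+\sqrt{m^2-4}}{2}$); worse, for the algebras $\g_p$ of \S 6 with generic $p\in\Delta_n$ there is no rational basis in any presentation, since that would force the ratios $\log r_i/\log r_j$ to be rational. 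So there is no $\g_\Q$ ``intrinsically determined by $\Gamma$'' to transport along $f$, and no BCH-type identity that turns the group isomorphism into a bracket-preserving map of logarithms. You correctly flag this functoriality as the step to grind through, but the route you sketch for it is a dead end. The actual argument has to proceed differently, e.g.\ by induction through the nilradical: one shows $f(\Gamma_1\cap N_1)=\Gamma_2\cap N_2$ because the intersection of a lattice with the nilradical is its Fitting subgroup, hence group-theoretically characterized; one applies genuine Malcev rigidity there; and one then extends over $G_i/N_i$, using complete solvability to recover the adjoint action of $\Gamma_i$ on $\n_i$ from its restriction to the lattice --- which is exactly where undetectable rotation angles would otherwise intervene, as your closing remark correctly anticipates.
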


%\medskip

Let $G$ be a Lie group with Lie algebra $\g$. A hypercomplex structure $\hcx$ on $G$ is said to be left invariant if left translations by elements of $G$ are hyperholomorphic, i.e. holomorphic with respect to $J_\al$ for $\al=1,2,3$. In this case $\hcx$ is determined by the value at the identity of $G$, which corresponds to a hypercomplex structure on $\g$. We point out that if $\Gamma $ is a lattice in $G$, any left invariant hypercomplex structure on $G$ induces a hypercomplex structure on $\Gamma \backslash G$ which is called invariant. In this case, the natural projection $G \to \Gamma \backslash G$ is hyperholomorphic. 

We recall next that a  Lie group $G$ is said to be {\em almost abelian} if its Lie algebra $\g$ has a codimension one abelian ideal. Such a Lie algebra will be called almost abelian, and it can be written as $\g= \R e_0 \ltimes \mathfrak{u}$, where $\mathfrak u$ is an abelian ideal of $\g$, and $\R$ is generated by some $e_0\in \g$. After choosing a basis of $\u$, we may identify $\u$ with an abelian Lie algebra $\R^{d}$ and we may write $\g=\R e_0\ltimes_A \R^{d}$ for some $A\in \mathfrak{gl}(d,\R)$. 

Accordingly, the Lie group $G$ can be written as a semidirect product $G=\R\ltimes_\varphi \R^{d}$, where the action is given by $\varphi(t)=e^{t A}$. We point out that a non-abelian almost abelian Lie group is $2$-step solvable, and it is nilpotent if and only if the operator $A$ is nilpotent.

Regarding the isomorphism classes of almost abelian Lie algebras we have the following result, proved in \cite{Fr}.

\begin{lemma}\label{lem:ad-conjugated}
Two almost abelian Lie algebras $\g_1=\R e_1 \ltimes_{A_1} \R^d$ and $\g_2=\R e_2 \ltimes_{A_2}\R^d$ are isomorphic if and only if there exists $c\neq 0$ such that $A_2$ and $cA_1$ are conjugate. 
\end{lemma}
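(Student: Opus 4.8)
The plan is to prove the two implications separately, with the forward (``only if'') direction carrying essentially all of the difficulty.

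For the ``if'' direction, suppose $A_2 = P(cA_1)P^{-1}$ for some $c\neq 0$ and some invertible $P\in\operatorname{GL}(d,\R)$. I would simply write down a candidate isomorphism $\phi\colon\g_1\to\g_2$, namely $\phi(e_1)=c^{-1}e_2$ together with $\phi|_{\R^d}=P$ (carrying the abelian ideal of $\g_1$ to that of $\g_2$), and then verify that $\phi$ respects brackets. The bracket $[v,w]_1=0$ is preserved because $Pv,Pw$ again lie in the abelian ideal, and the only remaining relation $[e_1,v]_1=A_1v$ becomes, under $\phi$, the requirement $PA_1=c^{-1}A_2P$, which is exactly the hypothesis rewritten. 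Since $c\neq 0$ and $P$ is invertible, $\phi$ is bijective, hence an isomorphism.

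For the ``only if'' direction, let $\phi\colon\g_1\to\g_2$ be a Lie algebra isomorphism. The abelian case is immediate: $A_1=0$ iff $\g_1$ is abelian iff $\g_2$ is abelian iff $A_2=0$, and then any $c$ works. So assume $A_1\neq 0$. I would decompose $\phi$ with respect to the vector space decompositions $\g_i=\R e_i\oplus\u_i$, writing $\phi(e_1)=s e_2 + u$ and $\phi(v)=\alpha(v)e_2 + Tv$ for $v\in\u_1$, where $s\in\R$, $u\in\u_2$, $\alpha\colon\u_1\to\R$ is linear and $T\colon\u_1\to\u_2$ is linear. Expanding the relations $[\u_1,\u_1]=0$ and $[e_1,v]_1=A_1v$ and comparing $\R e_2$- and $\u_2$-components then yields the structure equations; in particular $\alpha$ annihilates $\operatorname{Im}A_1$, the relation $[\u_1,\u_1]=0$ forces $A_2T$ to have rank at most one, and $TA_1v=sA_2Tv-\alpha(v)A_2u$ for all $v\in\u_1$.

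The crux, and the main obstacle, is to show that $\phi$ sends $\u_1$ onto $\u_2$, i.e. that $\alpha=0$; the difficulty is precisely that the codimension-one abelian ideal need not be unique, so a priori $\phi(\u_1)\neq\u_2$. Here I would use that any isomorphism carries the derived ideal $[\g_1,\g_1]=\operatorname{Im}A_1$ onto $[\g_2,\g_2]=\operatorname{Im}A_2$; since $\alpha$ vanishes on $\operatorname{Im}A_1$, the map $T$ restricts to a linear isomorphism $\operatorname{Im}A_1\to\operatorname{Im}A_2$, so that $\operatorname{rank}A_1=\operatorname{rank}A_2$ and $\operatorname{Im}(TA_1)=T(\operatorname{Im}A_1)$ has dimension $\operatorname{rank}A_1$. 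On the other hand, if $\alpha\neq 0$ the structure equations above force $TA_1$ to have rank at most one, whence $\operatorname{rank}A_1=1$. This degenerate case is then settled by hand: since $\g_1\cong\g_2$ makes $A_1$ and $A_2$ simultaneously nilpotent or simultaneously non-nilpotent, either they are rank-one nilpotent matrices (all of which are mutually conjugate, so $c=1$ works) or rank-one diagonalizable matrices with a single nonzero eigenvalue (conjugate after scaling, so an appropriate $c=\lambda_2/\lambda_1\neq 0$ works). In all other cases $\alpha=0$, so $T=\phi|_{\u_1}\colon\u_1\to\u_2$ is an isomorphism with $s\neq 0$, and the surviving structure equation $TA_1=sA_2T$ gives $A_2=T(s^{-1}A_1)T^{-1}$, that is, $A_2$ is conjugate to $cA_1$ with $c=s^{-1}$.
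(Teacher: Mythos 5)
Your proof is correct, but it follows a genuinely different route from the paper, which in fact does not prove this lemma at all: it is quoted from Freibert \cite{Fr}. The argument there (echoed in the paper's own Appendix B, in the proof of Theorem \ref{thm:Lie-iso}) first establishes the structural fact that a non-abelian almost abelian Lie algebra has a \emph{unique} codimension-one abelian ideal unless it is isomorphic to $\h_3\times\R^{d-2}$; in the generic case any isomorphism therefore carries $\u_1$ onto $\u_2$ and the conjugacy $A_2=T(s^{-1}A_1)T^{-1}$ drops out exactly as in your final paragraph, with the exceptional Heisenberg case checked separately. You bypass the uniqueness statement entirely: decomposing an arbitrary isomorphism, you show that a nonzero off-ideal component $\alpha$ forces $\operatorname{rank}(TA_1)\leq 1$, while the comparison of derived algebras $\phi(\operatorname{Im}A_1)=\operatorname{Im}A_2$ forces $\operatorname{rank}(TA_1)=\operatorname{rank}A_1$, so only rank one survives; you then settle rank one (both the nilpotent case, which is precisely $\h_3\times\R^{d-2}$, and the diagonalizable case, which your argument does not need to exclude) by the explicit classification of rank-one real matrices. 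The component identities you use — $\alpha\circ A_1=0$, $\alpha(v)A_2Tw=\alpha(w)A_2Tv$, and $TA_1v=sA_2Tv-\alpha(v)A_2u$ — all check out, as does $s\neq 0$ when $\alpha=0$. Your version is self-contained and elementary; the citation route has the advantage of isolating the uniqueness of the abelian ideal, a fact the paper reuses elsewhere.
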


\begin{remark}\label{rem:nilp-sim}
It follows that two nilpotent almost abelian  Lie algebras as above are isomorphic if and only if $A_1$ and $A_2$ are conjugate, since for   any   nilpotent matrix $N$,   $cN$ and $N$ are conjugate  whenever $c\neq 0$.     
\end{remark}

%\medskip

In general, it is not easy to determine whether a given Lie group $G$ admits a lattice. A well known restriction is that if this is the case then $G$ must be unimodular (\cite{Mi}), i.e. the Haar measure on $G$ is left and right invariant, which is equivalent, when $G$ is connected, to  $\tr(\ad_x)=0$ for any $x$ in the Lie algebra $\g$ of $G$. In the nilpotent case there is a well-known criterion  due to Malcev:

\begin{theorem}\cite{Mal}\label{thm:Mal}
A simply connected nilpotent Lie group has a lattice if and only if its Lie algebra admits a basis with respect to which the structure constants are rational.
\end{theorem}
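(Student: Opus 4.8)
The plan is to prove both implications of Malcev's criterion using two standard tools for a simply connected nilpotent Lie group $G$ with Lie algebra $\g$: first, that the exponential $\exp\colon\g\to G$ is a diffeomorphism; and second, that because $\g$ is nilpotent the Baker--Campbell--Hausdorff (BCH) series terminates after finitely many iterated brackets, so in exponential coordinates the group multiplication is a polynomial map with rational coefficients whose denominators are bounded in terms of the nilpotency step. A strong Malcev basis, that is, one adapted to a descending central series of ideals, will serve as the bridge between a lattice and a rational structure.

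For the implication that a rational basis yields a lattice, I would pass to the rational Lie algebra $\g_\Q=\operatorname{span}_\Q\{X_i\}$, which satisfies $\g_\Q\otimes_\Q\R=\g$, and choose a strong Malcev basis $\{X_1,\dots,X_n\}$ adapted to a rational descending central series of $\g_\Q$, scaling the basis by a suitable $m\in\N$ so that the structure constants and the bounded BCH denominators all become integral. In the associated Malcev coordinates of the second kind $\Phi\colon\R^n\to G$, $\Phi(t_1,\dots,t_n)=\exp(t_1X_1)\cdots\exp(t_nX_n)$, the group law is then polynomial with integer coefficients, so $\Gamma:=\Phi(\Z^n)$ is closed under multiplication and inversion, i.e.\ a subgroup. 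It is discrete because $\Phi$ is a diffeomorphism, and $\Gamma\backslash G$ is the image under $\Phi$ of the compact cube $[0,1]^n$, hence compact; thus $\Gamma$ is a lattice.

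For the converse, suppose $\Gamma\subset G$ is a lattice; then $\Gamma$ is finitely generated, torsion-free and nilpotent. I would refine a central series of $\Gamma$ to $\Gamma=\Gamma_1\supset\cdots\supset\Gamma_{n+1}=\{1\}$ with each quotient $\Gamma_i/\Gamma_{i+1}\cong\Z$, pick $\gamma_i$ projecting to a generator of $\Gamma_i/\Gamma_{i+1}$, and set $X_i:=\log\gamma_i$, so that $\{X_i\}$ is a strong Malcev basis of $\g$. Each group commutator $[\gamma_i,\gamma_j]$ again lies in $\Gamma$, hence equals an integer word $\gamma_1^{a_1}\cdots\gamma_n^{a_n}$; applying $\log$ and the BCH formula to both sides gives an identity of the form $[X_i,X_j]+(\text{higher iterated brackets})=\sum_k a_k X_k+(\text{higher iterated brackets})$. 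Inducting downward along the central series, where the higher brackets always lie in the span of the $X_k$ of larger index, I solve successively for each $[X_i,X_j]$ as a rational combination of the $X_k$, which yields rational structure constants.

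The main obstacle lies in the forward direction: one must arrange that the exponentials of an integral lattice generate a genuine \emph{cocompact} subgroup, which forces one to control the bounded denominators of the BCH series (clearing them by rescaling to $mX_i$) and to use the coordinates of the second kind in order to realise $\Gamma\backslash G$ as the image of a compact cube. The reverse direction is comparatively routine once the strong Malcev basis is in place, amounting to the downward induction that extracts rationality of the brackets from the integrality of the commutator relations in $\Gamma$.
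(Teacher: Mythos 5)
The paper does not prove this statement: it is quoted as Malcev's classical criterion directly from \cite{Mal}, so there is no in-paper argument to compare yours against. Your outline is the standard proof via Malcev coordinates of the second kind and the Baker--Campbell--Hausdorff formula, and both implications are structured correctly; I would only flag the two places where a complete write-up has to do real work. In the forward direction, the assertion that after rescaling the basis the multiplication map $\Phi(t)\Phi(s)=\Phi(P(t,s))$ has \emph{integer} coefficients is the technical heart: one must rewrite a product $\exp(t_1X_1)\cdots\exp(t_nX_n)\exp(s_1X_1)\cdots\exp(s_nX_n)$ back into normal form by an induction along the descending central series and track the denominators produced at each stage; this does not follow from ``BCH has bounded denominators'' alone, and it is exactly what the rescaling must be chosen to absorb. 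In the converse direction, you use without justification that the central series of $\Gamma$ has length $n=\dim G$ with infinite cyclic quotients and that the elements $X_i=\log\gamma_i$ form a basis of $\g$; this is precisely where cocompactness of $\Gamma$ enters (it forces the Hirsch length of $\Gamma$ to equal $\dim G$ and $\log\Gamma$ to span $\g$), so it should be stated explicitly rather than absorbed into the choice of basis. With those two points filled in, your argument is the standard one (as in Raghunathan or Corwin--Greenleaf) and is correct.
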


On the other hand, there is a criterion for existence of lattices on almost abelian Lie groups  which will prove very useful in forthcoming sections:

\begin{proposition}\label{prop: Bock}\cite{Bo}
Let $G=\R\ltimes_\varphi\R^d$ be a unimodular almost abelian Lie group. Then $G$ admits a lattice if and only if there exists  $t_0\neq 0$ such that $\varphi(t_0)$ is conjugate to a matrix in $\operatorname{SL}(d,\Z)$. In this situation, a lattice is given by $\Gamma=t_0 \Z\ltimes P\mathbb Z^{d}$, where $P\in \operatorname{GL}(d,\R)$ satisfies $P^{-1}\varphi(t_0)P\in \operatorname{SL}(d,\Z)$. 
\end{proposition}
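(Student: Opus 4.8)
The plan is to analyse the semidirect product structure $G=\R\ltimes_\varphi\R^d$ with $\varphi(t)=e^{tA}$ and to identify lattices with the explicit family $t_0\Z\ltimes\Lambda$, where $\Lambda$ is a $\varphi(t_0)$-invariant lattice of $\R^d$. For the sufficiency I would start from $B:=P^{-1}\varphi(t_0)P\in\operatorname{SL}(d,\Z)$ and set $\Lambda:=P\Z^d$. Using the group law $(s,a)(t,b)=(s+t,a+\varphi(s)b)$, the first point is that $\varphi(t_0)$ preserves $\Lambda$, since $\varphi(t_0)(P\Z^d)=PB\Z^d=P\Z^d$; this invariance is exactly what is needed for $\Gamma=t_0\Z\ltimes\Lambda$ to be closed under product and inversion, hence a subgroup. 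Discreteness is immediate because $t_0\Z$ and $P\Z^d$ are discrete, and for cocompactness I would realise $\Gamma\backslash G$ as the mapping torus of the toral automorphism of $\R^d/\Lambda$ induced by $\varphi(t_0)$: the projection $\pi\colon(t,v)\mapsto t$ descends to a fibre bundle $\Gamma\backslash G\to t_0\Z\backslash\R\cong S^1$ with fibre $\Lambda\backslash\R^d\cong T^d$, which is compact. Thus $\Gamma$ is a lattice.

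For the necessity, suppose $\Gamma\subset G$ is a lattice, let $\pi\colon G\to\R$ be the projection with kernel $\R^d$, and set $\Lambda:=\Gamma\cap\R^d$. The scheme is: (i) show that $\Lambda$ is a full lattice in $\R^d$ and that $\pi(\Gamma)=t_0\Z$ for some $t_0\neq0$; (ii) pick $\gamma_0=(t_0,w)\in\Gamma$ with $\pi(\gamma_0)=t_0$ and check, via the group law, that conjugation by $\gamma_0$ acts on the normal subgroup $\R^d$ exactly as $\varphi(t_0)$ (the translational part $w$ cancels). Since conjugation by $\gamma_0$ preserves both $\Gamma$ and $\R^d$, it preserves $\Lambda$, whence $\varphi(t_0)\Lambda=\Lambda$. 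Writing $\Lambda=P\Z^d$ with $P\in\operatorname{GL}(d,\R)$ then gives $P^{-1}\varphi(t_0)P\in\operatorname{GL}(d,\Z)$, and since $\det\varphi(t_0)=e^{t_0\tr A}=1$ by unimodularity, this integer matrix has determinant $1$, i.e. it lies in $\operatorname{SL}(d,\Z)$.

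The step I expect to be the main obstacle is (i), that $\Lambda=\Gamma\cap\R^d$ spans a full lattice and that $\pi(\Gamma)$ is infinite cyclic. Nontriviality of $\pi(\Gamma)$ is forced by cocompactness, since $\Gamma\subseteq\R^d$ would make $\Gamma\backslash G$ surject onto the noncompact quotient $G/\R^d\cong\R$. For the lattice property I would invoke the structure theory of lattices in simply connected solvable Lie groups (Mostow): a lattice meets the nilradical $N$ in a lattice of $N$ and projects to a lattice of $G/N$. When $A$ is not nilpotent the nilradical is precisely $\R^d$, so this applies directly and yields both assertions at once, with $\pi(\Gamma)\cong\Z$.

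The remaining case, $A$ nilpotent (so that $G$ itself is nilpotent and $\R^d$ is no longer the nilradical, indeed not even a canonical ideal), needs separate handling. Here I would use the rational structure that $\Gamma$ induces on $\g$ via Malcev's criterion (Theorem \ref{thm:Mal}) and replace the given abelian ideal by one that is rational with respect to $\Gamma$, so that $\Gamma\cap\R^d$ is automatically a lattice; by Lemma \ref{lem:ad-conjugated} and Remark \ref{rem:nilp-sim} this only alters $A$ up to conjugacy, leaving the conclusion unchanged. Finally, the explicit lattice $t_0\Z\ltimes P\Z^d$ recorded in the statement is exactly the one produced by the sufficiency argument, which closes the equivalence.
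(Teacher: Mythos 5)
The paper does not prove this proposition at all; it is quoted verbatim from Bock's paper \cite{Bo}, so there is no internal proof to compare against. Your reconstruction is essentially the standard argument and I find it correct: the sufficiency via the mapping-torus picture is fine, and for necessity the computation that conjugation by $\gamma_0=(t_0,w)$ acts on $\R^d$ as $\varphi(t_0)$, combined with Mostow's theorem that a lattice meets the nilradical in a lattice (the nilradical being exactly $\R^d$ when $A$ is not nilpotent), gives $\varphi(t_0)\Lambda=\Lambda$ and hence an integer conjugate of determinant $e^{t_0\tr A}=1$.

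Two remarks on the places you flagged as delicate. First, your treatment of the nilpotent case is more elaborate than necessary: if $A$ is nilpotent then $\varphi(t_0)=e^{t_0A}$ is unipotent for every $t_0$, hence conjugate to its Jordan form, which lies in $\operatorname{SL}(d,\Z)$; and on the other side, $A$ is conjugate to an integer matrix (its Jordan form), so $\g$ has a rational basis and Malcev's criterion (Theorem \ref{thm:Mal}) gives a lattice. Thus both sides of the equivalence hold automatically in the nilpotent case and the manoeuvre of replacing $\R^d$ by a $\Gamma$-rational codimension-one abelian ideal — whose existence you assert but do not justify — can be dispensed with entirely. Second, a cosmetic point: the integrality of $P^{-1}\varphi(t_0)P$ already forces its determinant to be $\pm1$, and since $\det e^{t_0A}=e^{t_0\tr A}>0$ the determinant is $+1$ even without invoking unimodularity; the unimodularity hypothesis is really there because it is a necessary condition for the existence of any lattice. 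Neither remark is a gap in your argument, only a simplification.
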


Note that if $E:=P^{-1}\varphi(t_0)P$ then $\Gamma\cong \Z\ltimes_E \Z^d$, where the group multiplication in this last group is given by 
\[ (m,(p_1,\ldots,p_d))\cdot (n,(q_1,\ldots, q_d))=(m+n,(p_1,\ldots,p_d)+E^m(q_1,\ldots, q_d)). \]

%\smallskip

\subsection{Quaternionic linear algebra}\label{sec:quat_linear}
Given a hypercomplex structure $\hcx$ on $\R^{4q}$, we will denote by
\begin{equation}\label{eq:GL_kH} \operatorname{GL}(q, \H):= \{T\in \operatorname{GL}(4q,\R) : TJ_\al =J_\al T \text{ for all } \al \}
\end{equation}
the quaternionic general linear group, with corresponding Lie algebra:
\begin{equation}   \label{eq:gl_kH} 
 \mathfrak{gl}(q,\mathbb{H})= \{T\in \mathfrak{gl}(4q,\R) : TJ_\al =J_\al T \text{ for all } \al \}. \end{equation}

In view of Theorem \ref{thm:characterization} below, we need to understand better the set of matrices  $\g\l (q,\H)\subset \g\l (4q,\R)$. In order to do so, we will use the fact that there is an $\R$-algebra isomorphism between 
$\g\l (q,\H)$ and the set of  $q\times q$ matrices with entries in $\H$, denoted by $M_q(\H)$, and then we will apply results from~\cite{Rod}.

The set $\H^q$ of column vectors with $q$ quaternionic components  will be considered as a right vector space over $\H$. On the other hand, the set $M_q(\H)$ will be considered as a left vector space over $\H$. These conventions allow us to interpret $Q\in M_q(\H)$ as a linear transformation $Q: \H^q \to \H^q$, which acts by the standard matrix-vector multiplication.

We can identify $\mathfrak{gl}(q,\H)\subset \mathfrak{gl}(4q,\R)$ with $M_q(\H)$ via the $\R$-linear map $\sigma: \mathfrak{gl}(q,\mathbb{H}) \to M_q(\H)$ defined by:
\begin{equation}\label{eq:BH}
    \sigma(B)= X+iY-jZ+kW,
\end{equation}
where $B$ is given as in \eqref{eq:matrixB}. 
Note that $\sigma$ is an isomorphism of unital $\R$-algebras with respect to matrix multiplication. In particular, $B\in \mathfrak{gl}(q,\mathbb{H})$ is nilpotent if and only if $\sigma (B)\in M_q(\H)$ is nilpotent. 

%\medskip

The quaternionic Jordan blocks are quaternionic matrices of the following form:
\begin{equation} \label{eq:J_lambda} J_m(\lambda)=\left[
\begin{array}{ccccc}
   \lambda  &  0 & \cdots & \cdots & 0 \\
   1 & \lambda & 0 & \cdots & 0 \\
   0 & 1 & \lambda & \cdots & 0 \\
   \vdots & \vdots & \ddots & \ddots & 0 \\
   0 & 0 & 0 & 1 & \lambda
\end{array}\right] \in M_m(\H), \quad \lambda \in \H.
\end{equation}
There is an analogue of the Jordan normal form for quaternionic matrices in terms of the quaternionic Jordan blocks (see, for instance, \cite[Section 5.5]{Rod} and references therein). Moreover, the scalars $\lambda$ on the diagonal can be chosen in $\C$, with $\operatorname{Im} \lambda \geq 0$ and with this choice the decomposition is unique up to permutation of the Jordan blocks.

Throughout the paper we will use the following notation: for  two matrices $M_1,M_2$, we denote  $M_1\oplus M_2=\begin{bmatrix}
    M_1&0 \\ 0&M_2
\end{bmatrix}$. We use a similar notation for 3 or more matrices. Moreover, 
given a  matrix $M$, $M^{\oplus p}$ denotes  $M\oplus \dots \oplus M$ ($p$ copies).

\begin{theorem}\label{thm:Jordan}
Given $Q\in M_q(\H)$ there exists an invertible $S\in M_q(\H)$ such that $S^{-1}QS$ has the form
\begin{equation}   \label{eq:Jordan} 
 S^{-1}QS= J_{m_1}(\lambda _1)\oplus \cdots \oplus J_{m_p}(\lambda _p),
\end{equation}
where $\lambda_l\in \C$ have non-negative imaginary parts. Furthermore, the expression above is unique up to an arbitrary permutation of blocks.
\end{theorem}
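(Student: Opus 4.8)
The plan is to reduce the statement to the classical Jordan canonical form over $\C$ by means of the complex adjoint representation of quaternionic matrices, and then to descend the resulting similarity from $\C$ back to $\H$. Write $\H=\C\oplus\C j$ and view $\H^q$ as a right $\C$-vector space of dimension $2q$. Left multiplication by $Q\in M_q(\H)$ is $\C$-linear and yields an injective unital $\R$-algebra homomorphism $\chi\colon M_q(\H)\to M_{2q}(\C)$; explicitly, if $Q=A+Bj$ with $A,B\in M_q(\C)$, then $\chi(Q)=\left[\begin{smallmatrix} A & B \\ -\bar B & \bar A\end{smallmatrix}\right]$. Right multiplication by $j$ on $\H^q$ is $\C$-antilinear, commutes with left multiplication by $Q$, and squares to $-I$; denote by $\cJ$ the corresponding antilinear operator on $\C^{2q}$. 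Thus $\cJ\,\chi(Q)=\chi(Q)\,\cJ$ and $\cJ^2=-I$, and since being $\C$-linear and commuting with $\cJ$ is the same as commuting with the whole right $\H$-action, the image of $\chi$ is precisely the commutant $\{T\in M_{2q}(\C):T\cJ=\cJ T\}$.

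I would then extract the symmetry of the complex Jordan data. Because $\cJ$ is antilinear and commutes with $\chi(Q)$, from $\chi(Q)v=\mu v$ one gets $\chi(Q)\cJ v=\bar\mu\,\cJ v$, and more generally $\cJ$ restricts to an antilinear isomorphism $\ker(\chi(Q)-\mu I)^k\to\ker(\chi(Q)-\bar\mu I)^k$ for every $k$. Hence the complex Jordan structures of $\chi(Q)$ at $\mu$ and at $\bar\mu$ agree block by block. For $\mu\in\R$, $\cJ$ preserves the generalized eigenspace of $\mu$ and equips it with a quaternionic structure commuting with the nilpotent part $\chi(Q)-\mu I$; since an antilinear operator squaring to $-I$ cannot preserve a single Jordan chain (the diagonal entry of its square is a nonnegative square, never $-1$), the chains pair up and the number of Jordan blocks of each fixed size is even. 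Consequently the multiset of complex Jordan blocks of $\chi(Q)$ is invariant under $\mu\mapsto\bar\mu$ with matching sizes, and groups into pairs $\{J_m(\lambda),J_m(\bar\lambda)\}$ with $\operatorname{Im}\lambda\ge 0$ (a real block pairing with its mandatory duplicate).

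For existence, a direct computation shows $\chi(J_m(\lambda))$ is $\C$-similar to $J_m(\lambda)\oplus J_m(\bar\lambda)$. So if $\tilde Q=\bigoplus_l J_{m_l}(\lambda_l)$ is assembled from the pairing above, then $\chi(Q)$ and $\chi(\tilde Q)$ have the same complex Jordan form and are conjugate in $\operatorname{GL}(2q,\C)$. The remaining, and main, point is the descent lemma: if $\chi(Q_1)$ and $\chi(Q_2)$ are $\C$-similar, then $Q_1$ and $Q_2$ are $\H$-similar. Given $T\in\operatorname{GL}(2q,\C)$ with $T\chi(Q_1)T^{-1}=\chi(Q_2)$, consider the family $S_\alpha=\alpha T+\bar\alpha\,\cJ_2 T\cJ_1^{-1}$ for $\alpha\in\C$, where $\cJ_1,\cJ_2$ are the quaternionic structures attached to $Q_1,Q_2$. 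Each $S_\alpha$ is $\C$-linear, still intertwines $\chi(Q_1)$ and $\chi(Q_2)$ (using $\cJ_i\chi(Q_i)=\chi(Q_i)\cJ_i$), and satisfies $S_\alpha\cJ_1=\cJ_2 S_\alpha$ (using $\cJ_1^{-1}=-\cJ_1$), so $S_\alpha\in\chi(M_q(\H))$. Writing $S_\alpha$ along $|\alpha|=1$ as $e^{i\theta}(T+e^{-2i\theta}\cJ_2 T\cJ_1^{-1})$, the determinant is, up to a unit, a polynomial in $\zeta=e^{-2i\theta}$ that is nonzero at $\zeta=0$ (value $\det T\neq 0$); hence $S_\alpha$ is invertible for all but finitely many $\theta$. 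Choosing such an $\alpha$ gives an invertible $S_\alpha=\chi(S)$ with $S\in\operatorname{GL}(q,\H)$, whence $S^{-1}Q_1 S=Q_2$. Applied to $Q$ and $\tilde Q$ this produces the required quaternionic Jordan form \eqref{eq:Jordan}.

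Uniqueness is then immediate: each quaternionic block $J_{m_l}(\lambda_l)$ corresponds, via $\chi$, to the conjugate pair $\{J_{m_l}(\lambda_l),J_{m_l}(\bar\lambda_l)\}$ of complex Jordan blocks, and $\lambda_l$ with $\operatorname{Im}\lambda_l\ge 0$ is the distinguished representative of that pair. Since the complex Jordan form of $\chi(Q)$ is unique up to permutation, the multiset $\{(m_l,\lambda_l)\}$ is a complete invariant, giving uniqueness up to permutation of blocks. I expect the descent lemma of the third paragraph — producing an \emph{invertible} conjugating matrix that lies in $\chi(M_q(\H))$ rather than merely in $\operatorname{GL}(2q,\C)$ — to be the main obstacle, both in verifying the equivariance $S_\alpha\cJ_1=\cJ_2 S_\alpha$ and in the genericity argument securing invertibility; the even-multiplicity claim for real eigenvalues is the secondary technical point.
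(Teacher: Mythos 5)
Your argument is correct, but note that the paper does not actually prove this statement: it is quoted verbatim from Rodman's book and the text simply points the reader to \cite[Theorem 5.5.3]{Rod}, so there is no in-paper proof to compare against. What you have written is a self-contained proof along the standard ``complex adjoint'' route: embed $M_q(\H)$ into $M_{2q}(\C)$ via $\chi$, identify the image as the commutant of the antilinear structure $\cJ$ induced by right multiplication by $j$, observe that the complex Jordan data of $\chi(Q)$ is symmetric under $\mu\mapsto\bar\mu$, and then descend a complex similarity to a quaternionic one by the genericity trick $S_\alpha=\alpha T+\bar\alpha\,\cJ T\cJ^{-1}$; all of these steps check out, including the intertwining identity $S_\alpha\cJ=\cJ S_\alpha$ and the determinant argument for invertibility. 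The one place where your write-up is looser than it should be is the even-multiplicity claim at real eigenvalues: the remark about ``the diagonal entry of its square'' is not a proof. The clean statement is that for $\mu\in\R$ the subspaces $\ker\bigl(\chi(Q)-\mu I\bigr)^k$ are all $\cJ$-invariant, hence carry a quaternionic structure and have even complex dimension, so the block counts $2\dim\ker N^k-\dim\ker N^{k+1}-\dim\ker N^{k-1}$ are even; with that substitution the proof is complete. Compared with simply citing Rodman, your approach buys a transparent reduction to the classical complex Jordan form and, as a by-product, the useful equivalence that two quaternionic matrices are $\H$-similar if and only if their complex adjoints are $\C$-similar, which is in the same spirit as the way the paper itself passes between $\mathfrak{gl}(q,\H)$, $M_q(\H)$ and real Jordan forms in Proposition \ref{prop:tuples}.
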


A proof of this theorem can be found in \cite[Theorem 5.5.3]{Rod}. The expression \eqref{eq:Jordan} is called the quaternionic Jordan form of $Q$. 

\begin{remark} \label{rem:nilp-deg} Assume that $m_1 \geq \dots \geq m_p$ in \eqref{eq:Jordan}. It follows that if $Q$ satisfies $Q^r=0$ and $Q^{r-1}\neq 0$ then $r=m_1\leq q$. 
\end{remark}

%\medskip
\section{Generalities on hypercomplex almost abelian Lie algebras
}
In this section we recall from \cite{AB} the characterization of almost abelian Lie algebras admitting a hypercomplex structure (see also \cite{MeTa}).  We also prove some results to refine Lemma \ref{lem:ad-conjugated}  in our particular case.

\begin{theorem}\label{thm:characterization} \cite[Theorem 3.2]{AB}
Let $\g$ be a $4n$-dimensional almost abelian Lie algebra with codimension one abelian ideal $\u$,  admitting a hypercomplex structure $\hcx$. 
Let $\h := \u \cap J_1\u\cap J_2\u\cap J_3\u$ be the maximal $\hcx$-invariant subspace contained in $\u$. Then $\h$ is an abelian ideal of $\g$ and there exists a $\hcx$-invariant complementary 
subspace $\q= \operatorname{span}\{e_0,e_1,e_2,e_3\}$ of $\h$ with $e_{\al}=J_{\al}e_0$, such that $e_0\notin \u$, $e_\al \in \u$ and, moreover:
\begin{enumerate}
	\item[$\ri$] $[e_0,e_{\al}]=\mu e_{\al}+v_{\al}$ for some $\mu \in \R$ and $v_{\al}\in \h$, ${\al}=1,2,3$,
	\item[$\rii$] there exists $v_0\in \h$ such that $J_\al v_0=v_\al$ for all $\al$ and moreover,   $v_\al=J_\be v_\ga$ for any  cyclic permutation $(\al , \be , \ga)$  of $(1, 2, 3)$,
	\item[$\riii$] $[e_0,x]=Bx$ for any $x\in\h$, where $B\in\operatorname{End}(\h)$ satisfies $[B,J_\alpha|_\h]=0$, $\alpha=1,2,3$.
\end{enumerate}
\end{theorem}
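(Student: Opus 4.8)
The plan is to first pin down $\h$ and a quaternionic complement by linear algebra, and then read off the three structural relations from the vanishing of the Nijenhuis tensors $N_{J_\al}$, using throughout that $\u$ is abelian so that the only surviving brackets are those hitting $e_0$.

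I would start by fixing a linear functional $\phi$ with $\ker\phi=\u$. Since $J_\al$ is invertible, $J_\al\u=\ker(\phi\circ J_\al)$, so $\h=\ker\Phi$ for $\Phi:=(\phi,\phi J_1,\phi J_2,\phi J_3)\colon\g\to\R^4$. Precomposition with any $J_\be$ permutes the four functionals $\phi,\phi J_1,\phi J_2,\phi J_3$ up to sign by \eqref{quat}, so $\h$ is $\hcx$-invariant and hence $\dim\h\equiv 0\pmod 4$. As $\operatorname{rank}\Phi\le 4$ gives $\dim\h\ge 4n-4$, while $\h\subseteq\u$ together with $\dim\h\equiv0\pmod4$ forces $\dim\h\le 4n-4$, I conclude $\dim\h=4n-4$ and that $\Phi$ is surjective. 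Surjectivity lets me pick $e_0$ with $\Phi(e_0)=(1,0,0,0)$; putting $e_\al:=J_\al e_0$ yields $e_0\notin\u$, $e_\al\in\u$, and $\q:=\operatorname{span}\{e_0,e_1,e_2,e_3\}$ is $\hcx$-invariant. A one-line evaluation of $\Phi$ on the $e_\al$ shows $\Phi|_\q$ is invertible, so $\q\cap\h=0$ and $\g=\q\oplus\h$; in particular $\u=\operatorname{span}\{e_1,e_2,e_3\}\oplus\h$.

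Writing $A:=\ad_{e_0}|_\u$, for the ideal property and \riii\ I would evaluate $N_{J_\al}(e_0,x)=0$ for $x\in\h$. Since $\h$ and $e_\al=J_\al e_0$ lie in the abelian ideal $\u$, every mixed bracket vanishes except the two involving $e_0$, leaving $Ax+J_\al A J_\al x=0$, i.e. $A(J_\al x)=J_\al(Ax)$. Decomposing $Ax=w+h$ with $w\in\operatorname{span}\{e_1,e_2,e_3\}$ and $h\in\h$, this identity gives $J_\al w=A(J_\al x)-J_\al h\in\u\cap\q=\operatorname{span}\{e_1,e_2,e_3\}$ for every $\al$. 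But $J_\al w$ acquires an $e_0$-component equal to minus the $e_\al$-coefficient of $w$ (because $J_\al e_\al=-e_0$), so requiring this for all three $\al$ forces $w=0$. Hence $A\h\subseteq\h$, so $\h$ is an ideal, and setting $B:=A|_\h$ the relation $A(J_\al x)=J_\al(Ax)$ becomes $[B,J_\al|_\h]=0$, which is \riii.

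For \ri\ and \rii\ I would evaluate $N_{J_\al}(e_0,e_\be)=0$ with $\be\ne\al$; again only the two $e_0$-brackets survive, and using $J_\al e_\be=\pm e_\ga$ these collapse to $A e_1=J_2 A e_3=-J_3 A e_2$ together with the cyclic analogues, so with $a:=Ae_1$ one gets $Ae_1=a$, $Ae_2=J_3 a$, $Ae_3=-J_2 a$. Writing $a=c_1e_1+c_2e_2+c_3e_3+v$ with $v\in\h$, the constraint that $Ae_2,Ae_3\in\u$ (no $e_0$-term) kills $c_2$ and $c_3$; setting $\mu:=c_1$ this is precisely $[e_0,e_\al]=\mu e_\al+v_\al$ with $v_1=v$, $v_2=J_3 v$, $v_3=-J_2 v\in\h$, giving \ri. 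Finally I would set $v_0:=-J_1 v\in\h$ and verify from \eqref{quat} that $J_\al v_0=v_\al$ and $v_\al=J_\be v_\ga$ for each cyclic $(\al,\be,\ga)$, yielding \rii.

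The genuinely delicate step is the one forcing $A\h\subseteq\h$: the commutation identity $A(J_\al x)=J_\al(Ax)$ holds a priori only in $\g$, and the conclusion rests on playing it off against the two competing facts that $A$ preserves $\u$ while the complex structures do not, since each $J_\al$ carries $e_\al$ out of $\u$. Once $\h$ is known to be invariant, the remaining identities are essentially bookkeeping within the Nijenhuis relations, organized by the cyclic symmetry of the indices $1,2,3$.
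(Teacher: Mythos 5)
Your proof is correct and complete: the identification of $\h$ as the kernel of $\Phi=(\phi,\phi J_1,\phi J_2,\phi J_3)$, the dimension count forcing $\dim\h=4n-4$, and the extraction of \ri--\riii\ from $N_{J_\al}(e_0,\cdot)=0$ all check out, including the key step where the $e_0$-components of $J_\al w$ force $\ad_{e_0}$ to preserve $\h$. Note that this paper does not reprove the statement but cites it from \cite[Theorem 3.2]{AB}; your argument is essentially the standard one used there, reading the structure equations off the integrability conditions, so there is nothing to flag.
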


%\ 

It follows from Theorem \ref{thm:characterization}  that  $\g$ can be written as $\g=\R e_0\ltimes_A \R^{4n-1}$, where the matrix $A\in\mathfrak{gl}(4n-1,\R)$ defined by the adjoint action of $e_0$ on $\R^{4n-1}$ has the following expression in a basis $\{e_1, e_{2}, e_3\} \cup \mathcal B$ of $\u$,  where $\mathcal B$ is a basis of $\h$:
\begin{equation}\label{eq:matrixA} 
A=\left[
	\begin{array}{ccc|ccc}      
		\mu & & & & &\\
		& \mu & & & 0 & \\
		& & \mu & & & \\
		\hline
		| & | & |& & & \\
		v_1 & v_2 & v_3 & & B &\\
		| & | & | & & &
	\end{array}
	\right], \qquad B\in\mathfrak{gl}(n-1,\mathbb{H})\subset \mathfrak{gl}(4n-4,\R).
\end{equation} 

\begin{remark} It is clear that the converse of Theorem \ref{thm:characterization} holds, that is, given a matrix $A$ as in \eqref{eq:matrixA} the almost abelian Lie algebra $\R e_0\ltimes _A \R^{4n-1}$ admits a hypercomplex structure which extends a given one on $\R^{4n-4}$.     
\end{remark}

\medskip

By fixing a basis $\mathcal B$ of $\h$ of the form  $\mathcal B =\{f_j \}\cup \{ J_1f_j \}
\cup \{ J_2f_j \} \cup \{ J_3f_j \}$, for $1\leq j\leq n-1$, the matrix $B$ in \eqref{eq:matrixA} can be expressed as: 
\begin{equation}\label{eq:matrixB}
B= \begin{bmatrix} X & -Y & -Z & -W \\
Y & X & W & -Z \\
Z & -W & X & Y \\
W & Z & -Y & X
\end{bmatrix}, \qquad   X, Y,Z,W \in \mathfrak{gl} (n-1, \R ).
\end{equation}
In this basis, the operators $J_\alpha : \h \to \h$   take the following form:
\begin{equation*}
J_1=\begin{bmatrix}  & -I & & \\
I & & & \\
 & & & -I \\
 & & I & 
\end{bmatrix}, \qquad 
J_2=\begin{bmatrix}  &  & -I &  \\
 & & & I \\
 I & & &  \\
 & -I &  & 
\end{bmatrix}, \qquad
J_3=\begin{bmatrix} & &  & -I   \\
 & &  -I & \\
  & I & &  \\
I  &  &  & 
\end{bmatrix},
\end{equation*}
where $I$ is the $(n-1)\times (n-1)$ identity  matrix.

\medskip 

The following matrix will be used several times throughout the forthcoming sections:
\begin{equation}\label{eq:matrixU} U= \begin{bmatrix} 0&0&0\\
1&0&0\\
0&1&0\\
0&0&1    
\end{bmatrix}.
\end{equation}
It is related to $v_1,v_2,v_3$ in case $v_0\neq 0$. 

\begin{remark} \label{rem:dim_eigenspace}
We point out that, 
if $\mu$ is an eigenvalue of $B$, then $\ker(B-\mu I)^j$ is a $\hcx$-invariant subspace of~$\h$, for all $j$. In particular, $\dim \ker(B-\mu I)^j\, \equiv 0 \pmod 4$. If $v_0=0$ then \[ \dim \ker (A-\mu  I)^j=\dim \ker (B-\mu  I)^j +3\, \equiv 3 \pmod 4  \quad \text{ for all } j.\] 
If $v_0\neq 0$ there are two possibilities:
\begin{enumerate}
\item[$\ri$] $(B-\mu  I)^jv_0\neq 0$ for all $j$,
\item[$\rii$] there exists $\ell \geq 1$ such that $(B-\mu  I)^{\ell} v_0 =0$  and $(B-\mu  I)^{j} v_0 \neq 0$ for $1\leq j \leq\ell -1$. 
\end{enumerate}
Since $v_\al=J_\al v_0$, then $\{(B-\mu I)^jv_\al\,:\, \al=1,2,3 \}$ are linearly independent for all $j$ such that $(B-\mu  I)^jv_0\neq 0$. Therefore, in  case $\ri$ above, $\dim \ker (A-\mu  I)^j= \dim \ker (B-\mu  I)^j$ for all $j$. On the other hand, if $\rii$ holds, we have:  
\begin{equation}\label{eq:A'}
\dim \ker (A-\mu  I)^j=\begin{cases}   
\dim \ker (B-\mu  I)^j \, \equiv 0 \pmod 4, & \text{ for }  1\leq j\leq \ell  , \\
    \dim \ker (B-\mu  I)^j +3 \, \equiv 3 \pmod 4,  &  \text{ for }  j> \ell .
\end{cases}
\end{equation}     
Note that, for $\lambda \neq \mu$ (even for $\lambda\in \C$), either $\ker (A-\lambda  I)=0=\ker (B-\lambda  I)$ or  $\dim \ker (A-\lambda  I)^j=\dim \ker (B-\lambda  I)^j$ for all $j$. 
\end{remark}
 
%\medskip

We recall next another result from \cite{AB}, where it is shown that under certain assumptions we may perform a hypercomplex change of basis of $\g$ such that the vectors  $v_\al$ in Theorem ~\ref{thm:characterization} vanish for all $\al$. 

\begin{proposition}\label{prop:v=0}
With notation as in Theorem \ref{thm:characterization}, if $v_0 \in \operatorname{Im}(B-\mu I)$, then there exists a $\hcx$-invariant complementary 
subspace $\q '= \operatorname{span}\{e_0',e_1',e_2',e_3'\}$ of $\h$ with $e_{\al}'=J_{\al}e_0'$ such that $e_0'\notin \u$, $e_\al '\in\u$, $[e_0',e_\al']=\mu e_\al '$ and $\ad_{e_0'}|_\h =B$. In other words, the corresponding $v_0'\in \h$ vanishes.

In particular, this holds if $\mu$ is not an eigenvalue of $B$.
\end{proposition}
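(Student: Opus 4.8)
The plan is to leave $\h$ untouched and to correct the generator $e_0$ by a vector $w\in\h$, setting $e_0'=e_0-w$ and $e_\al'=J_\al e_0'=e_\al-J_\al w$ for $\al=1,2,3$. Since $w$ and each $J_\al w$ lie in $\h\subset\u$, the new vectors agree with $e_0$ and $e_\al$ modulo $\h$, so $\q'=\operatorname{span}\{e_0',e_1',e_2',e_3'\}$ is again a complement of $\h$ with $e_0'\notin\u$ and $e_\al'\in\u$, and it is $\hcx$-invariant by construction because $e_\al'=J_\al e_0'$. The only freedom is the choice of $w$, and I would use it to annihilate the vectors $v_\al$.

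Next I would compute the bracket $[e_0',e_\al']$. As $\u$ is abelian, only the terms involving $e_0$ survive, giving $[e_0',e_\al']=[e_0,e_\al]-[e_0,J_\al w]$; using $[e_0,e_\al]=\mu e_\al+v_\al$ and $\ad_{e_0}|_\h=B$ this equals $\mu e_\al+v_\al-B(J_\al w)$. The crucial input is condition $\riii$ of Theorem \ref{thm:characterization}, namely $[B,J_\al|_\h]=0$, which lets me rewrite $B(J_\al w)=J_\al(Bw)$. Imposing $[e_0',e_\al']=\mu e_\al'=\mu e_\al-\mu J_\al w$ and cancelling $\mu e_\al$ then reduces the requirement to $v_\al=J_\al\big((B-\mu I)w\big)$ for every $\al$. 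Invoking $v_\al=J_\al v_0$ from $\rii$ and the invertibility of each $J_\al$, these three conditions collapse to the single linear equation
\[ (B-\mu I)w=v_0. \]

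This equation has a solution $w\in\h$ exactly when $v_0\in\operatorname{Im}(B-\mu I)$, which is precisely the hypothesis; fixing such a $w$ gives $[e_0',e_\al']=\mu e_\al'$, so the associated vector $v_0'$ vanishes. To finish I would note that $\ad_{e_0'}|_\h=B$ holds automatically, since $[e_0',x]=[e_0,x]=Bx$ for $x\in\h$ because $w,x\in\u$. The final assertion is then immediate: if $\mu$ is not an eigenvalue of $B$, then $B-\mu I$ is invertible on $\h$, so $\operatorname{Im}(B-\mu I)=\h$ and $v_0$ lies in it trivially. I do not expect a genuine obstacle here, as the whole argument is a single change of basis; the only points that require care are verifying that $\q'$ stays $\hcx$-invariant and complementary to $\h$, and using $[B,J_\al|_\h]=0$ to pass from the three equations in the $v_\al$ to the one equation in $v_0$. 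The real content of the statement is the recognition that solvability of $(B-\mu I)w=v_0$ is equivalent to the stated hypothesis.
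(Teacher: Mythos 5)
Your proof is correct and follows essentially the same route the paper uses: Proposition \ref{prop:v=0} is quoted from \cite{AB} without proof here, but the paper's proof of the companion Lemma \ref{lem:vnot0} proceeds by exactly your substitution $e_0'=e_0-x_0$ with $v_0=(B-\mu I)x_0+v_0'$, of which your argument is the special case $v_0'=0$. All the key points (abelianity of $\u$, the commutation $[B,J_\al|_\h]=0$, and the reduction of the three conditions on the $v_\al$ to the single solvable equation $(B-\mu I)w=v_0$) are handled as in the paper.
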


%\medskip

In case that $v_0 \notin \operatorname{Im}(B-\mu I)$, we may assume that it lies in a $\hcx$-invariant complementary subspace of $\operatorname{Im}(B-\mu I)$, as the next result shows.
\begin{lemma}\label{lem:vnot0} With notation as in Theorem \ref{thm:characterization}, if $v_0 \notin \operatorname{Im}(B-\mu I)$, let $W$ be a $\hcx$-invariant subspace of $\h$ such that 
$\h=\operatorname{Im}(B-\mu I)\oplus W.$ Then  there exists a $\hcx$-invariant complementary 
subspace $\q '= \operatorname{span}\{e_0',e_1',e_2',e_3'\}$ of $\h$ with $e_{\al}'=J_{\al}e_0'$ and 
$v_0 '\in W$ such that $e_0'\notin \u$, $e_\al '\in\u$, $[e_0',e_\al']=\mu e_\al '+J_\al v_0 '$ and $\ad_{e_0'}|_\h =B$. In other words, the corresponding $v_0 '\in \h$ lies in $W$. 
\end{lemma}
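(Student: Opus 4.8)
The plan is to produce the desired basis by modifying $e_0$ within the coset $e_0+\h$, that is, to look for $e_0' = e_0 + h$ with $h\in\h$ to be determined, and then to set $e_\al' := J_\al e_0'$ for $\al=1,2,3$. Since $h\in\h\subset\u$ while $e_0\notin\u$, we immediately get $e_0'\notin\u$; and since $\h$ is $\hcx$-invariant, $e_\al' = e_\al + J_\al h$ lies in $\u$ because both $e_\al$ and $J_\al h$ do. The subspace $\q' = \operatorname{span}\{e_0',e_1',e_2',e_3'\}$ is automatically $\hcx$-invariant because $e_\al' = J_\al e_0'$ and the $J_\al$ obey the quaternion relations \eqref{quat}. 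Thus the only real content is to track how the structure constants change with $h$ and then to choose $h$ appropriately.

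First I would record the action of $\ad_{e_0'}$. For $x\in\h$ we have $[e_0',x] = [e_0,x] + [h,x] = Bx$, since $h,x\in\u$ and $\u$ is abelian; hence $\ad_{e_0'}|_\h = B$, as required. Next, using again that $\u$ is abelian together with $[e_0,e_\al] = \mu e_\al + J_\al v_0$, the relation $[e_0,y] = By$ for $y\in\h$, and $[B,J_\al|_\h]=0$, I would compute
\begin{equation*}
[e_0',e_\al'] = [e_0,e_\al] + [e_0, J_\al h] = \mu e_\al + J_\al v_0 + J_\al B h.
\end{equation*}
Rewriting $e_\al = e_\al' - J_\al h$ and collecting terms gives $[e_0',e_\al'] = \mu e_\al' + J_\al\big(v_0 + (B-\mu I)h\big)$, so the modified vector is $v_0' = v_0 + (B-\mu I)h$.

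With this transformation law in hand, the conclusion is immediate. Decomposing $v_0 = u + w$ according to $\h = \operatorname{Im}(B-\mu I)\oplus W$ with $u\in\operatorname{Im}(B-\mu I)$ and $w\in W$, I would choose $h\in\h$ with $(B-\mu I)h = -u$ (possible precisely because $u$ lies in the image). Then $v_0' = v_0 + (B-\mu I)h = w\in W$, which is exactly the desired conclusion; the identities $[e_0',e_\al'] = \mu e_\al' + J_\al v_0'$ and $\ad_{e_0'}|_\h = B$ have already been verified above.

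I do not expect a serious obstacle: the argument is essentially the same affine-shift computation underlying Proposition \ref{prop:v=0}, the difference being that here we cannot annihilate $v_0$ entirely but only adjust it by an arbitrary element of $\operatorname{Im}(B-\mu I)$. The points requiring care are purely bookkeeping: checking that $h$ may be taken inside $\h$ (guaranteed since $B-\mu I$ preserves $\h$), confirming that the sign conventions in the quaternion relations make $\q'$ invariant, and making sure the commutation $[B,J_\al|_\h]=0$ is applied so that $J_\al B h = B J_\al h$ and the factor $J_\al$ can be pulled out cleanly.
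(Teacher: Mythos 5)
Your proof is correct and is essentially the paper's own argument: the paper writes $v_0=(B-\mu I)x_0+v_0'$ with $v_0'\in W$ and replaces $e_0$ by $e_0'=e_0-x_0$, which is exactly your shift $e_0'=e_0+h$ with $h=-x_0$, and the computation of $[e_0',e_\al']$ using $[B,J_\al|_\h]=0$ is the same. No gaps.
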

    
\begin{proof} 
Since $v_0 \notin \operatorname{Im}(B-\mu I)$, we can write 
\begin{equation}\label{eq:v0pr}       
v_0=(B-\mu I)x_0 +v_0' 
\end{equation}
with $0\neq v_0'\in W$. We set $e_0'= e_0-x_0$ and $e_\al '=J_\al e_0 '$.  Then 
\[ [e_0 ', e_\al ']= [e_0-x_0 , e_\al - J_\al x_0] = \mu e_\al +v_\al -B J_\al x_0. \]
Observe that \eqref{eq:v0pr} implies that $v_\al -BJ_\al x_0= -\mu J_\al x_0+ J_\al v_0 '$. Therefore, $[e_0 ', e_\al ']=\mu e_\al ' +J_\al v_0 '$ for all~$\al.$        
\end{proof}

%\smallskip

Concerning the isomorphism classes of hypercomplex almost abelian Lie algebras, we can combine Lemma \ref{lem:ad-conjugated} and Theorem \ref{thm:characterization} to obtain the following result.

\begin{lemma}\label{lem:no-iso}
Let $\g =\R e_0 \ltimes _{A}\R^{4n-1}$ and $\g ' =\R e_0 \ltimes _{A'}\R^{4n-1}$ be isomorphic hypercomplex almost abelian Lie algebras such that $A, \, A'$ are as in \eqref{eq:matrixA} and let $c\in \R, \, c\neq 0$, such that $A$ is conjugate to $cA'$. Then:
 \begin{enumerate}
     \item[$\ri$] $\mu=c\mu'$,
     \item[$\rii$] $v_0=0$ if and only if $v_0'=0$, where $v_0, \, v_0'$ are defined as in Theorem \ref{thm:characterization}$\rii$,
    \item[$\riii$] $B$ is conjugate to $cB'$. 
 \end{enumerate}
\end{lemma}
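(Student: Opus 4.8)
The plan is to exploit the fact that conjugate real matrices have, for every $\lambda\in\C$ and every $j\ge 1$, the same generalized eigenspace dimensions $\dim_\C\ker(\,\cdot\,-\lambda I)^j$, and to read off from these three data the scalar $\mu$, the vanishing of $v_0$, and the conjugacy class of $B$. The starting point is that, with respect to the splitting $\R^{4n-1}=\operatorname{span}\{e_1,e_2,e_3\}\oplus\h$, the matrix $A$ in \eqref{eq:matrixA} is block lower triangular with diagonal blocks $\mu I_3$ and $B$; hence $\det(xI-A)=(x-\mu)^3\det(xI-B)$. Since $B\in\mathfrak{gl}(n-1,\H)$ commutes with $J_1,J_2,J_3$, for every real $\lambda$ the space $\ker(B-\lambda I)^j$ is $\hcx$-invariant, so $\dim\ker(B-\lambda I)^j\equiv 0\pmod 4$; in particular every real eigenvalue of $B$ has algebraic multiplicity divisible by $4$. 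I also record that $cA'$ is again of the form \eqref{eq:matrixA}, with parameters $c\mu'$, $cB'$ and $cv_0'$, where $cv_0'=0\iff v_0'=0$ and $cB'$ is again quaternionic.

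For (i), I would use the factorization above: the algebraic multiplicity of the real eigenvalue $\mu$ of $A$ equals $3$ plus the multiplicity of $\mu$ in $B$, hence is $\equiv 3\pmod 4$, whereas every other real eigenvalue $\lambda\neq\mu$ has multiplicity equal to that of $\lambda$ in $B$, hence $\equiv 0\pmod 4$. Thus $\mu$ is intrinsically characterized as the unique real eigenvalue of $A$ whose algebraic multiplicity is not divisible by $4$. Conjugacy preserves eigenvalues with multiplicities, and $c\mu'$ is the corresponding distinguished eigenvalue of $cA'$; therefore $\mu=c\mu'$.

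For (iii), I would first treat $\lambda\neq\mu$. Because $A$ is block lower triangular with $\mu-\lambda\neq 0$, any $(a,x)\in\ker(A-\lambda I)^j$ must have $a=0$, so $\dim\ker(A-\lambda I)^j=\dim\ker(B-\lambda I)^j$ for all $j$ (the same identity holding over $\C$), and likewise for $cA'$, $cB'$. Conjugacy of $A$ and $cA'$ then gives $\dim\ker(B-\lambda I)^j=\dim\ker(cB'-\lambda I)^j$ for all $\lambda\neq\mu$ and all $j$. For the remaining eigenvalue $\mu=c\mu'$, I would invoke Remark \ref{rem:dim_eigenspace}, which yields for every $j$ that
\[ \dim\ker(A-\mu I)^j-\dim\ker(B-\mu I)^j\in\{0,3\},\qquad \dim\ker(B-\mu I)^j\equiv 0\pmod 4, \]
and the analogous statements for $cA'$, $cB'$. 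Reducing modulo $4$, the residue of $\dim\ker(A-\mu I)^j$ (necessarily $0$ or $3$) determines which of the two values the difference takes, and therefore recovers $\dim\ker(B-\mu I)^j$ from $\dim\ker(A-\mu I)^j$. Since $\dim\ker(A-\mu I)^j=\dim\ker(cA'-\mu I)^j$ by conjugacy, I conclude $\dim\ker(B-\mu I)^j=\dim\ker(cB'-\mu I)^j$ for all $j$ as well. Having matched all generalized eigenspace dimensions, $B$ and $cB'$ have the same real Jordan form, so $B$ is conjugate to $cB'$.

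Finally, (ii) would follow from the $j=1$ instance of the same dichotomy: $\dim\ker(A-\mu I)=3+\dim\ker(B-\mu I)\equiv 3\pmod 4$ exactly when $v_0\in\operatorname{Im}(B-\mu I)$, and $\equiv 0\pmod 4$ otherwise. This residue is a conjugacy invariant, so it agrees for $A$ and $cA'$; as $\operatorname{Im}(cB'-\mu I)=\operatorname{Im}(B'-\mu' I)$ and scaling by $c$ preserves membership, this gives $v_0\in\operatorname{Im}(B-\mu I)\iff v_0'\in\operatorname{Im}(B'-\mu' I)$. After normalizing $v_0,v_0'$ to lie in $\hcx$-invariant complements of the respective images (Lemma \ref{lem:vnot0}), this membership is equivalent to $v_0=0$, respectively $v_0'=0$, by Proposition \ref{prop:v=0}, which is precisely (ii). I expect the main obstacle to be the eigenvalue $\lambda=\mu$ in part (iii): the three extra dimensions contributed by $\operatorname{span}\{e_1,e_2,e_3\}$ prevent one from reading the Jordan structure of $B$ directly off that of $A$, and it is the mod-$4$ dichotomy above, resting on the $\hcx$-invariance of $\ker(B-\mu I)^j$, that disentangles them.
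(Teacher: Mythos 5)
Your proposal is correct and follows essentially the same route as the paper: characterize $\mu$ as the unique real eigenvalue of $A$ with multiplicity $\not\equiv 0 \pmod 4$, then match $\dim\ker(\,\cdot\,-\lambda I)^j$ for $\lambda\neq\mu$ directly and for $\lambda=\mu$ via the $0$-or-$3$ dichotomy of Remark \ref{rem:dim_eigenspace}, resolved modulo $4$ by the $\hcx$-invariance of $\ker(B-\mu I)^j$. Your only (welcome) refinement is phrasing the $j=1$ dichotomy in terms of $v_0\in\operatorname{Im}(B-\mu I)$ rather than $v_0=0$, which makes explicit the normalization of $v_0$ (Proposition \ref{prop:v=0} and Lemma \ref{lem:vnot0}) that the paper's proof of $\rii$ uses implicitly.
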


\begin{proof}
Since  two real matrices are conjugate over $\R$ if and only if they are conjugate over $\C$,  it suffices to  show that $B$ and $cB'$ are conjugate over $\C$. 

Let $\lambda \in \R$ be an eigenvalue of $A$, $\lambda \neq \mu$. Then it is an eigenvalue of $B$ and its multiplicity is divisible by 4, while the multiplicity $m_A(\mu)$ of $\mu$ satisfies  $m_A(\mu)\equiv 3 \pmod{4}$.  Since $A$ is conjugate to $cA'$ and $c\mu '$ is the only real eigenvalue of $cA'$ with multiplicity $m_{cA '}(c\mu ')\equiv 3 \pmod{4}$, we must have $\mu=c\mu'$, and $\ri$ follows. 
    
We show next that $v_0= 0$ if and only if $v_0'= 0$. Indeed, assume that $v_0= 0$, then it follows from Remark \ref{rem:dim_eigenspace} that
\[\dim \ker (A-\mu I)=\dim \ker (B-\mu I) +3 \equiv 3 \pmod{4}.\]
If $v_0'\neq 0$, since $c\mu '=\mu$, we would have:
\[ \dim \ker (cA'-\mu  I)=\dim \ker (cB'-\mu I)  \equiv 0 \pmod{4},\]
and since $\dim \ker (A-\mu I)=\dim \ker (cA'-\mu I)$ we get a contradiction, hence $v_0 '= 0$. Similarly,  $v_0'= 0$ implies $v_0 = 0$,   therefore, $\rii$ holds.

Let $\lambda '$ be an eigenvalue of $A'$, then $c\lambda '$ is an eigenvalue of $A$ and $\riii$ will follow if we show  that $\dim \ker (B'-\lambda ' I)^j=\dim \ker (B-c\lambda ' I)^j$ for all $j$. We consider separately the cases $\lambda'=\mu'$ and $\lambda'\neq  \mu'$. 

If $\lambda '=\mu '$, then $\ri$ holds, so $\mu=c \mu '$. We compute $\dim \ker (A'-\mu ' I)^j$ and     $\dim \ker (A-\mu  I)^j$ as in Remark \ref{rem:dim_eigenspace}. Since 
$\dim \ker (A-\mu  I)^j=\dim \ker (cA'-c\mu'  I)^j =\dim \ker (A'-\mu '  I)^j $ for all $j$, we conclude that 
\[
\dim \ker (B-\mu I)^j=\dim \ker (B'-\mu ' I)^j, \quad \text{ for all } j,  
\]
that is, 
\[
\dim \ker (B-c\mu ' I)^j=\dim \ker (B'-\mu ' I)^j, \quad \text{ for all } j. 
\]
If $\lambda '\neq\mu'$, then  $c \lambda' \neq \mu $ and it follows from Remark \ref{rem:dim_eigenspace} that
\begin{eqnarray*}
 \dim \ker (B'-\lambda ' I)^j&= &  \dim \ker (A'-\lambda ' I)^j=  \dim \ker (cA'-c\lambda ' I)^j \\
  &=&  \dim \ker (A-c\lambda ' I)^j = \dim \ker (B-c\lambda ' I)^j .
\end{eqnarray*}
 This concludes the  proof of $\riii$. 
\end{proof}

%\smallskip

\section{Nilpotent hypercomplex almost abelian Lie algebras}

In this section we determine all the isomorphism classes of nilpotent hypercomplex almost abelian Lie algebras  by applying Theorem \ref{thm:Jordan} in the nilpotent setting. 

Our first result establishes a relation between the nilpotency degree and the dimension of the nilpotent Lie algebra. 

\begin{lemma}\label{lem:nilp-no-hcx}
Let $\g=\R e_0\ltimes \R ^{4n-1}$ be a $k$-step nilpotent  $4n$-dimensional almost abelian Lie algebra. If $\g$ admits a hypercomplex structure then $k\leq n$. \end{lemma}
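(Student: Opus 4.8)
The plan is to translate the $k$-step nilpotency of $\g$ into a statement about the nilpotency index of the matrix $A$, and then to control that index via the block structure \eqref{eq:matrixA} together with the quaternionic Jordan form applied to $B$.

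First I would record how the lower central series of an almost abelian Lie algebra is governed by the powers of $A$. Since $[e_0,x]=Ax$ for $x\in\u$ and $\u$ is abelian, an immediate induction gives $\g^{j+1}=\operatorname{Im}(A^j)$ for every $j\geq 1$. Hence $\g$ is $k$-step nilpotent precisely when $A^k=0$ and $A^{k-1}\neq 0$; that is, $k$ equals the nilpotency index of $A$. Because $\g$ is nilpotent, $A$ is nilpotent, so all its eigenvalues vanish; as $A$ is block lower triangular with diagonal blocks $\mu I_3$ and $B$, its characteristic polynomial is $(x-\mu)^3\chi_B(x)$, which forces $\mu=0$ and $B$ nilpotent.

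Next I would pass from the index of $A$ to the index of $B$. Writing $V$ for the $(4n-4)\times 3$ block whose columns are $v_1,v_2,v_3$, we have
\[
A=\begin{bmatrix} 0 & 0 \\ V & B \end{bmatrix},
\]
and a straightforward induction yields
\[
A^j=\begin{bmatrix} 0 & 0 \\ B^{j-1}V & B^j \end{bmatrix}, \qquad j\geq 1.
\]
Let $r$ denote the nilpotency index of $B$, so $B^r=0$ and $B^{r-1}\neq 0$. Then $B^{r}=0$ and $B^{r}V=0$ give $A^{r+1}=0$, whence $k\leq r+1$. Finally I would bound $r$: since $\sigma$ preserves nilpotency, $\sigma(B)\in M_{n-1}(\H)$ is nilpotent of the same index $r$, and Remark~\ref{rem:nilp-deg} (the quaternionic Jordan form, Theorem~\ref{thm:Jordan}, applied to a nilpotent matrix in $M_{n-1}(\H)$) gives $r\leq n-1$. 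Combining, $k\leq r+1\leq n$.

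The main obstacle—indeed the only step beyond bookkeeping—is the passage from $A$ to $B$: one must see that the off-diagonal block $V$ can raise the nilpotency index by at most one, which is exactly what the explicit formula for $A^j$ makes transparent (the lower-left entry is $B^{j-1}V$, so it is killed one step after $B$ itself is). The rest reduces to the lower-central-series computation and the appeal to the quaternionic Jordan bound $r\leq n-1$.
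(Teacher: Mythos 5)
Your proof is correct and follows essentially the same route as the paper: both reduce to the observation that the nilpotency index of $A$ exceeds that of $B$ by at most one (via the block formula $A^j=\begin{bmatrix}0&0\\ B^{j-1}V&B^j\end{bmatrix}$) and then invoke the quaternionic Jordan form through Remark~\ref{rem:nilp-deg} to bound the index of $\sigma(B)$ by $n-1$. The paper splits into the two cases $B^{k-1}\neq 0$ and $B^{k-1}=0$ rather than writing the single inequality $k\leq r+1$, but this is only a cosmetic difference.
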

\begin{proof}
Let $\u \cong \R ^{4n-1}$ be the codimension-one abelian ideal of $\g$. It follows from Theorem \ref{thm:characterization} that 
the matrix $A$ of $\ad_{e_0}|_\u$ in some basis of $\u$ takes the  form \eqref{eq:matrixA}   with $\mu =0$. Since  $\g$ is $k$-step nilpotent,  $A^k=0$ and $A^{k-1}\neq 0$, which implies that $B^k=0$. 

If $B^{k-1}\neq 0$ then $\sigma (B)\in M _{n-1} (\H )$ satisfies $(\sigma(B))^{k} =0$ and $(\sigma(B))^{k-1}\neq 0$,  hence $k\leq n-1$ (see Remark \ref{rem:nilp-deg}). 

If $B^{k-1}= 0$, then $B^{k-2}\neq  0$ since $A^{k-1}\neq 0$. Therefore, $(\sigma(B))^{k-1} =0$ and $(\sigma(B))^{k-2}\neq 0$, which implies that $k-1\leq n-1$, and the lemma follows. 
\end{proof}

\begin{remark} Let $\g=\R e_0\ltimes_{A} \R^{4n-1}$  be a $k$-step nilpotent hypercomplex  almost abelian Lie algebra, with $A$ as in \eqref{eq:matrixA} with $\mu=0$.   We observe that the minimal polynomial of the matrix $B$ is either  $x^k$ or $x^{k-1}$.     
\end{remark}

In order to study the nilpotent hypercomplex almost abelian  Lie algebras, let $\g=\R \ltimes  _A \R ^{4n-1}$ where $A$ is  as in \eqref{eq:matrixA} with $\mu=0$ and $B\in \g\l (n-1, \H)$, $B$ nilpotent. If $B\neq 0$,  
 the corresponding Jordan blocks in the quaternionic Jordan form of $\sigma (B)$ are given as in Theorem \eqref{thm:Jordan} with $\lambda =0$.  We set $ J_m := J_m(0)$ to simplify notation. It follows that there exists an invertible $S\in M_{n-1}(\H)$ such that $S^{-1}\sigma(B)\, S$ has the form
\begin{equation}   \label{eq:Jordan_nilp} 
 S^{-1}\sigma(B)\, S= ({ J_{m_1}})^{\oplus p_1}\oplus \cdots \oplus  ( J_{m_r})^{\oplus p_r}\oplus { 0}_s, \quad m_1 > \dots >m_r\geq 2, \;s\geq 0, \; p_k>0,
\end{equation}
for all $k$, where $0_s$ is the zero $s\times s$ matrix. 
We will encode all this data associated to $B$ using the following notation:
\begin{equation}\label{eq:Sigma} \Sigma(B):=(r,m_1,\ldots, m_r, p_1,\ldots, p_r, s),  \qquad r \geq 1,  \;\; m_1 > \dots >m_r\geq 2, \;s\geq 0, \; p_k>0 .  
\end{equation}
Note that $\Sigma(B)$ is well defined due to the uniqueness of the quaternionic Jordan form, fixing the order of the blocks in decreasing size. We point out that \begin{equation}\label{eq:dimn}
    n-1=\sum_{i=1}^r m_i p_i +s .
\end{equation}
If $B=0$ we have $r=0, \; s=n-1$.

Note that the matrix $B$ such that $\sigma(B)$ satisfies \eqref{eq:Jordan_nilp} has minimal polynomial $x^{m_1}$. If $B=0$ then, since $\g$ is not abelian, we must have $v_0\neq 0$ and  $A$ has minimal polynomial $x^2$. 

%\ 

We prove next a general result relating the real conjugacy class of a nilpotent matrix $B$ in $\mathfrak{gl}(q,\mathbb{H})$  with the quaternionic    conjugacy class  of  $\sigma(B)$.

\begin{proposition}\label{prop:tuples}
Let $B_1$ and $B_2$ be non-zero nilpotent matrices in $\mathfrak{gl}(q,\mathbb{H})\subset \mathfrak{gl}(4q,\mathbb{R})$. If 
$B_1$ and $B_2$ are conjugate in $\mathfrak{gl}(4q,\mathbb{R})$ then 
$\sigma(B_1)$ and $\sigma(B_2)$ are conjugate  in $M_q(\H)$. Equivalently, $\Sigma(B_1)=\Sigma(B_2)$. 
\end{proposition}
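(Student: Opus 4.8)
The plan is to reduce the statement to a comparison of kernel dimensions. The real conjugacy class of a nilpotent matrix is governed by the sizes of its Jordan blocks, which are encoded in the numbers $\dim_{\R}\ker B^j$ for $j\geq 1$; likewise, the quaternionic data $\Sigma(B)$ is encoded in the numbers $\dim_{\H}\ker\sigma(B)^j$. The heart of the argument is the identity $\dim_{\R}\ker B^j = 4\,\dim_{\H}\ker\sigma(B)^j$, which transfers the equality of real kernel dimensions (coming from real conjugacy) into an equality of quaternionic kernel dimensions, and hence into $\Sigma(B_1)=\Sigma(B_2)$. By the uniqueness in Theorem \ref{thm:Jordan}, this last equality is exactly equivalent to $\sigma(B_1)$ and $\sigma(B_2)$ being conjugate in $M_q(\H)$, so it suffices to prove it.

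First I would record that, since $B_1$ and $B_2$ are conjugate in $\mathfrak{gl}(4q,\R)$, their respective powers are conjugate as well, and therefore $\dim_{\R}\ker B_1^j = \dim_{\R}\ker B_2^j$ for every $j\geq 1$. Next I would build the bridge between the real and the quaternionic pictures. For $B\in\mathfrak{gl}(q,\H)$ the power $B^j$ again lies in $\mathfrak{gl}(q,\H)$, hence commutes with $J_1,J_2,J_3$; consequently $\ker B^j\subset\R^{4q}$ is invariant under each $J_\al$, i.e. it is a quaternionic subspace of $\H^q$ under the identification $\R^{4q}\cong\H^q$. Because $\sigma$ is an $\R$-algebra isomorphism we have $\sigma(B)^j=\sigma(B^j)$, and this quaternionic subspace is precisely $\ker_{\H}\sigma(B)^j$. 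Since a quaternionic subspace of quaternionic dimension $d$ has real dimension $4d$, we obtain $\dim_{\R}\ker B^j = 4\,\dim_{\H}\ker\sigma(B)^j$. Combining with the first step yields $\dim_{\H}\ker\sigma(B_1)^j=\dim_{\H}\ker\sigma(B_2)^j$ for all $j$.

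Finally I would recover the block data from these kernel dimensions. Over the division ring $\H$ the classical Jordan counting remains valid: for a nilpotent $N\in M_q(\H)$ the number of Jordan blocks $J_m(0)$ of size at least $m$ equals $\dim_{\H}\ker N^m-\dim_{\H}\ker N^{m-1}$, as one checks directly on the normal form $\bigoplus_i J_{m_i}(0)$ provided by Theorem \ref{thm:Jordan}. Hence the full sequence $(\dim_{\H}\ker N^j)_j$ determines the multiset of block sizes, that is, it determines $\Sigma(N)$. Applying this to $N=\sigma(B_1)$ and $N=\sigma(B_2)$, the equality of their kernel-dimension sequences gives $\Sigma(B_1)=\Sigma(B_2)$, and by the uniqueness of the quaternionic Jordan form the two matrices are conjugate in $M_q(\H)$.

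I expect the main obstacle to be the careful justification of the two invariance-type assertions: that $\ker B^j$ is genuinely a quaternionic subspace so that the factor $4$ in $\dim_{\R}\ker B^j = 4\dim_{\H}\ker\sigma(B)^j$ is exact, and that the Jordan block-counting relation survives over the noncommutative ring $\H$. Both are legitimate precisely because $B^j$ commutes with the $J_\al$ and because Theorem \ref{thm:Jordan} guarantees a well-defined normal form assembled from the blocks $J_m(0)$; once the normal form exists and is unique, computing $\dim_{\H}\ker N^j$ on it recovers the partition of block sizes exactly as in the field case.
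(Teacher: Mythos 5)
Your proof is correct, but it follows a different route from the paper's. The paper's argument is shorter and more direct: it observes that $\sigma^{-1}(J_m(0))=j_m^{\oplus 4}$, so the quaternionic Jordan form \eqref{eq:Jordan_nilp} of $\sigma(B_i)$ immediately yields the \emph{real} Jordan normal form $(j_{m_1})^{\oplus 4p_1}\oplus\cdots\oplus(j_{m_r})^{\oplus 4p_r}\oplus 0_{4s}$ of $B_i$; since real conjugacy forces the two real Jordan forms to coincide and the tuple $\Sigma(B_i)$ can be read off from that form, the conclusion follows at once. You instead transfer the invariant $\dim_{\R}\ker B^j$ across the isomorphism $\sigma$ via the identity $\dim_{\R}\ker B^j=4\dim_{\H}\ker\sigma(B)^j$ (justified by the $J_\al$-invariance of $\ker B^j$), and then recover $\Sigma$ from the quaternionic kernel-dimension sequence by the usual block-counting formula, which you correctly note must be re-verified over the division ring $\H$ on the normal form supplied by Theorem \ref{thm:Jordan}. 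Both arguments ultimately rest on the same fact that the quaternionic structure quadruples all the relevant multiplicities; your version avoids computing $\sigma^{-1}$ of a Jordan block explicitly at the cost of redoing the kernel-counting combinatorics over $\H$, while the paper's version gets the real Jordan form in one line and lets the classical uniqueness theorem over $\R$ do all the work. One small point of logic: at the very end it is the \emph{existence} of the quaternionic Jordan form that lets you pass from $\Sigma(B_1)=\Sigma(B_2)$ to conjugacy of $\sigma(B_1)$ and $\sigma(B_2)$ (both are conjugate to the same normal form); uniqueness is what you already used, implicitly, to make $\Sigma$ well defined and to know the kernel dimensions determine it.
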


\begin{proof} Let $\Sigma(B_1)=(r,m_1,\ldots, m_r, p_1,\ldots, p_r, s), \; \Sigma(B_2)=(r',m'_1,\ldots, m'_{r'}, p'_1,\ldots, p'_{r'}, s')$. We point out that $\sigma^{-1}(J _m)=j_m ^{\oplus 4}$, where $j_m$ is the elementary Jordan block
\begin{equation} \label{eq:matrixjm} j_m=\left[
\begin{array}{ccccc}
   0  &  0 & \cdots & \cdots & 0 \\
   1 & 0 & 0 & \cdots & 0 \\
   0 & 1 & 0 & \cdots & 0 \\
   \vdots & \vdots & \ddots & \ddots & 0 \\
   0 & 0 & 0 & 1 & 0
\end{array}\right] \in M_m(\R).
\end{equation}
 Therefore, the real Jordan forms of $B_1$ and $B_2$ are given by 
 \[ 
({ j_{m_1}})^{\oplus 4p_1}\oplus \cdots \oplus  ( j_{m_r})^{\oplus 4p_r}\oplus { 0}_{4s} \quad \text{and} \quad
({ j_{m'_1}})^{\oplus 4p'_1}\oplus \cdots \oplus  ( j_{m'_{r'}})^{\oplus 4p'_{r'}}\oplus { 0}_{4s'},
 \]
 respectively, hence, $\Sigma(B_1)=\Sigma(B_2)$, as asserted. 
\end{proof}

%\

For each $n\geq 2$, let $\Sigma _{n-1}$ denote the set of all possible tuples
\[ (r,m_1,\ldots, m_r, p_1,\ldots, p_r, s), \quad \;\; r>0,\, m_1 > \dots >m_r\geq 2, \;s\geq 0, \; p_k>0 , \] satisfying \eqref{eq:dimn}. Consider the set of non-zero nilpotent matrices  $ N_{n-1}(\H)\subset \g\l (n-1, \H)\subset \g\l (4n-4, \R)$. The following corollary is a straightforward consequence of Proposition \ref{prop:tuples}.

\begin{corollary}\label{cor:tuples}
The conjugacy classes in $\g\l (4n-4, \R)$ of elements in $N_{n-1}(\H)$ are parametrized by $\Sigma _{n-1}$.
\end{corollary}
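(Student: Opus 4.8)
The plan is to establish a bijection between the set $\Sigma_{n-1}$ of admissible tuples and the conjugacy classes in $\mathfrak{gl}(4n-4,\R)$ of the nonzero nilpotent matrices lying in $N_{n-1}(\H)$. Recall that conjugacy classes of nilpotent matrices in $\mathfrak{gl}(N,\R)$ are classified by their Jordan type, i.e. by the multiset of sizes of elementary Jordan blocks $j_m$. So the statement amounts to showing that the map $B\mapsto \Sigma(B)$ is well defined on conjugacy classes and that it is a bijection onto $\Sigma_{n-1}$.

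First I would note that well-definedness and injectivity have already been secured by Proposition \ref{prop:tuples}: if $B_1,B_2\in N_{n-1}(\H)$ are conjugate in $\mathfrak{gl}(4n-4,\R)$ then $\Sigma(B_1)=\Sigma(B_2)$, and conversely the computation in its proof shows that $\Sigma(B)$ determines the real Jordan form of $B$ completely, namely $(j_{m_1})^{\oplus 4p_1}\oplus\cdots\oplus(j_{m_r})^{\oplus 4p_r}\oplus 0_{4s}$. Two real nilpotent matrices with the same Jordan type are conjugate, so $\Sigma(B_1)=\Sigma(B_2)$ forces $B_1$ and $B_2$ to be conjugate in $\mathfrak{gl}(4n-4,\R)$. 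Hence the induced map from conjugacy classes of $N_{n-1}(\H)$ into $\Sigma_{n-1}$ is injective, and it lands in $\Sigma_{n-1}$ because \eqref{eq:dimn} is exactly the size constraint $\sum m_ip_i+s=n-1$ built into the definition of $\Sigma_{n-1}$.

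The only remaining point is surjectivity: every tuple in $\Sigma_{n-1}$ is realized by some nonzero nilpotent $B\in \mathfrak{gl}(n-1,\H)$. For this I would start from a tuple $(r,m_1,\ldots,m_r,p_1,\ldots,p_r,s)\in\Sigma_{n-1}$, form the quaternionic nilpotent matrix $Q=(J_{m_1})^{\oplus p_1}\oplus\cdots\oplus(J_{m_r})^{\oplus p_r}\oplus 0_s\in M_{n-1}(\H)$, and set $B=\sigma^{-1}(Q)$. Since $\sigma$ is an $\R$-algebra isomorphism, $B$ is nilpotent and nonzero (as $r\geq 1$ forces $Q\neq 0$), so $B\in N_{n-1}(\H)$, and by construction $\Sigma(B)$ is the given tuple. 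This exhibits a preimage for every element of $\Sigma_{n-1}$, completing the bijection.

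I expect essentially no serious obstacle here, since the corollary is genuinely a formal consequence of Proposition \ref{prop:tuples} together with the classical Jordan classification of real nilpotent matrices; the proof of Proposition \ref{prop:tuples} already did the substantive work of translating quaternionic Jordan data into real Jordan data via the identity $\sigma^{-1}(J_m)=j_m^{\oplus 4}$. The mildest care is needed in confirming surjectivity, i.e. that every admissible tuple corresponds to an actual matrix in $N_{n-1}(\H)$ rather than merely an abstract Jordan type, but this is handled cleanly by pulling back through $\sigma^{-1}$ and using that $\sigma$ is an isomorphism preserving nilpotency.
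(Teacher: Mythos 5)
Your proof is correct and follows exactly the route the paper intends: the paper simply declares the corollary a straightforward consequence of Proposition \ref{prop:tuples}, and your argument supplies precisely the routine details (injectivity via the real Jordan form computed in that proposition's proof, and surjectivity by pulling back the quaternionic Jordan form through $\sigma^{-1}$) that the paper leaves implicit.
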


In what follows we will need to work with the  nilpotent matrices introduced below, which are conjugate to $j_m^{\oplus 4}$ by a hypercomplex change of basis:
\begin{equation} \label{eq:matrixJm} \J_m=\left[
\begin{array}{ccccc}
   0_4  &  0_4 & \cdots & \cdots & 0_4 \\
   I_4 & 0_4 & 0_4 & \cdots & 0_4 \\
   0_4 & I_4 & 0_4 & \cdots & 0_4 \\
   \vdots & \vdots & \ddots & \ddots & 0_4 \\
   0_4 & 0_4 & 0_4 & I_4 & 0_4
\end{array}\right] \in M_{4m}(\R),
\end{equation}
where $0_4$ and $I_4$ are the $4\times 4$ zero and identity matrices, respectively.
 Note that $\R^{4m}$ decomposes as 
 \begin{equation} \label{eq:Wm}    
 \R^{4m}=\operatorname{Im} (\J_m) \oplus W_m , 
 \end{equation}
 where both subspaces are $\hcx$-invariant and $\dim W_m =4$.

 Let $B \in \g\l (q,\H)$ be any nilpotent matrix, then $B$  is conjugate to:
\begin{equation}\label{eq:Bnilp}
     ({ \J _{m_1}})^{\oplus p_1}\oplus \cdots \oplus  ( \J _{m_r})^{\oplus p_r} 
\oplus 0_{4s}, 
\end{equation}
where $\Sigma(B)=(r,m_1,\ldots, m_r, p_1,\ldots, p_r, s)$ as in \eqref{eq:Sigma}. Let $\widetilde{W}_{r+1}$ be a $\hcx$-invariant complementary subspace of $\operatorname{Im}B \cap \ker B$ in $\ker B$, so that $\dim \widetilde{W}_{r+1}=4s$. Using \eqref{eq:Wm} we obtain:
\[
(\R^{4m_l})^{\oplus p_l}= (\operatorname{Im} (\J_{m_l}) \oplus W_{m_l})^{\oplus p_l}= (\operatorname{Im} (\J_{m_l}))^{\oplus p_l}\oplus (W_{m_l})^{\oplus p_l}.
\]
Setting $\widetilde{W}_l:= (W_{m_l})^{\oplus p_l}$, $1\leq l \leq r$, we have:
\begin{equation}\label{eq:W} W= \widetilde{W}_{1}\oplus \cdots \oplus \widetilde{W}_{r+1}\end{equation} 
and it follows that 
\[
\h =\operatorname{Im} B \oplus W,
\]
where $\operatorname{Im} B$ and $W$ are $\hcx$-invariant.

In order to state Theorem \ref{thm:classif_nilp} below, we introduce first some notation. Consider the following nilpotent matrices: 

\begin{equation}\label{eq:matrixN}
      N_\ell= \left[ \begin{array}{c|ccc}
    0_3&  & & \\
    \hline 
 \begin{array}{c}
    U  \\ 
    0_{k \times   3}
    \end{array} & & \J_{m_\ell} &  
\end{array}\right] \oplus (\J _{m_\ell})^{\oplus (p_\ell-1)}, \; 1\leq \ell\leq r, \quad 
N= \left[ \begin{array}{c|c}
    0_3&    \\
    \hline 
\begin{array}{c}
    U  \\ 
    0_{k\times 3}
    \end{array}
  &  0_{4s} 
\end{array}\right], \; s>0,
\end{equation}
where $U$ is the $4\times 3$ matrix defined in \eqref{eq:matrixU}, $\J_{m_\ell}$ is  as in \eqref{eq:matrixJm} and   $0_{k \times 3}$ is the zero  $k\times 3$ matrix with $k =4(m_\ell -1)$ in the matrix $N_\ell$ and $k=4(s-1)$ in the matrix $N$. 
 Let
\begin{equation}
     \label{eq:Al} \begin{split} 
     A_0&=0_3 \oplus ({ \J _{m_1}})^{\oplus p_1}\oplus \cdots \oplus  ( \J _{m_r})^{\oplus p_r} 
\oplus 0_{4s}, \\
     A_\ell&= \bigoplus_{i=1}^{\ell-1}({ \J_{m_i}})^{\oplus p_i}\oplus   \; N_\ell \; \oplus\bigoplus_{i=\ell+1}^{r} ( \J _{m_{i}})^{\oplus p_{i}}\oplus  { 0}_{4s}, \quad 1\leq \ell\leq r,\\
  A_{r+1}&= ({ \J _{m_1}})^{\oplus p_1}\oplus \cdots \oplus  ( \J _{m_r})^{\oplus p_r}\oplus N.
\end{split} \end{equation}
We point out that $A_{r+1}$ is only defined when $s>0$. 

%\ 

We prove next the main theorem of this section. For each nilpotent matrix $B\in \g\l(n-1, \H)\subset \g\l(4n-4,\R)$ we will determine all the corresponding matrices $A$ as in \eqref{eq:matrixA} with $\mu =0$ which give rise to pairwise non-isomorphic  nilpotent hypercomplex almost abelian Lie algebras $\g=\R e_{0} \ltimes _A \R^{4n-1}$.
More precisely, if  $B\neq 0$ with corresponding $\Sigma (B)=(r,m_1,\ldots, m_r, p_1,\ldots, p_r, s)$ as in \eqref{eq:Sigma},  then $B$ gives rise to  exactly $r+2-\delta_{s,0}$ isomorphism classes of $4n$-dimensional hypercomplex nilpotent  almost abelian Lie algebras, where $\sum_{i=1}^r m_i p_i +s=n-1$ and  $\delta_{s,0}$ denotes the Kronecker delta. On the other hand, if $B=0$ there is only one isomorphism class. 

\begin{theorem}\label{thm:classif_nilp} Let 
$B\in  \g\l(n-1, \H)\subset \g\l(4n-4,\R)$ be a nilpotent matrix and consider $A$ as in \eqref{eq:matrixA}, for some $v_\al$, with $\mu=0$, where  $B$ is  the given matrix.  Consider the hypercomplex almost abelian Lie algebra $\g_A=\R e_0 \ltimes_A \R^{4n-1}$. 
\begin{enumerate}
    \item[$\ri$] If $B=0$ then $\g _A$ is $2$-step nilpotent and isomorphic to  $\g _N$, where $N$ is as in \eqref{eq:matrixN} with $s=n-1$.
\item[$\rii$] If $B\neq 0$ with
\[
\Sigma (B)=(r,m_1,\ldots, m_r, p_1,\ldots, p_r, s), \quad r\geq 1,\; m_1 > \dots >m_r\geq 2, \;s\geq 0, \; p_k>0,
\]
 then there exists a unique integer $\ell$ with  $0\leq \ell \leq r+1-\delta_{s,0}$  such that $\g _A$ is isomorphic to $\g _{A_\ell}$, where $A_0$ and $A_\ell$, $1\leq \ell\leq r+1$, are defined in  \eqref{eq:Al}. The Lie algebra  $\g_{A_1}$ is $(m_1+1)$-step nilpotent and $\g_{A_\ell}$ is $m_1$-step nilpotent for $\ell\neq 1 $.
 \end{enumerate}
 Moreover, the Lie algebras $\g_N, \g_{A_0}, \dots , \g_{A_{r+1}}$ are pairwise non-isomorphic. 
    \end{theorem}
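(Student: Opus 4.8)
The plan is to treat existence and non-isomorphism separately. For existence I will show that every admissible $A$ can be conjugated, through hypercomplex changes of basis together with the moves supplied by Proposition~\ref{prop:v=0} and Lemma~\ref{lem:vnot0}, to exactly one of the normal forms in \eqref{eq:Al} (or to $\g_N$ when $B=0$); for non-isomorphism I will exhibit a numerical invariant of the isomorphism class that separates them.

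\textbf{Existence.} First I would bring $B$ to its quaternionic Jordan form: applying Theorem~\ref{thm:Jordan} to $\sigma(B)\in M_{n-1}(\H)$ and transporting the conjugating matrix back through the $\R$-algebra isomorphism $\sigma$ yields $g\in\operatorname{GL}(n-1,\H)$ commuting with all $J_\al$ and with $g^{-1}Bg$ equal to \eqref{eq:Bnilp}. Conjugating $A$ by $\operatorname{diag}(I_3,g)$ fixes $e_0,e_1,e_2,e_3$, preserves the shape \eqref{eq:matrixA}, and replaces $B$ by its Jordan form while sending $v_0\mapsto g^{-1}v_0$; this is a genuine Lie algebra isomorphism, so I may assume $B$ is as in \eqref{eq:Bnilp}. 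The case $B=0$ is immediate: then $v_0\neq0$ (otherwise $\g_A$ is abelian) and $\g_A\cong\g_N$. If $B\neq0$ and $v_0\in\operatorname{Im}B$, Proposition~\ref{prop:v=0} gives $\g_A\cong\g_{A_0}$. The remaining case $v_0\notin\operatorname{Im}B$ is the heart of the argument: using Lemma~\ref{lem:vnot0} I place $v_0$ in the $\hcx$-invariant complement $W=\widetilde W_1\oplus\cdots\oplus\widetilde W_{r+1}$ of \eqref{eq:W}, and I must normalize it.

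Here the key step --- and the one I expect to be the main obstacle --- is to identify the residual freedom on $v_0$. With $B$ fixed in Jordan form, the admissible isomorphisms still available are conjugation by $\operatorname{diag}(I_3,g)$ with $g$ in the hypercomplex centralizer $Z_\H(B)$ (acting by $v_0\mapsto g^{-1}v_0$), together with the modifications of $v_0$ by $\operatorname{Im}B$ coming from Lemma~\ref{lem:vnot0}. Since $Z_\H(B)$ preserves $\operatorname{Im}B$, the invariant of $v_0$ is its class $\bar v_0\in\h/\operatorname{Im}B\cong W$ taken up to the induced $Z_\H(B)$-action. Passing to the quaternionic picture, $v_0$ corresponds to a single vector $\hat v_0\in\H^{n-1}$ and $\bar v_0$ to a class in $\operatorname{coker}_\H(\sigma(B))$, on which $Z_{M_{n-1}(\H)}(\sigma(B))$ acts. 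The decisive linear-algebra fact I will establish is that an intertwiner between two nilpotent Jordan blocks can carry the top of the \emph{larger} block onto the top of the \emph{smaller} one, but not conversely; consequently the centralizer acts on $\operatorname{coker}_\H(\sigma(B))$ triangularly with respect to block size. One may then first use the $\operatorname{GL}$-action within the largest block size occurring in $\hat v_0$ to consolidate that component to a single standard top, and afterwards use the off-diagonal (larger-to-smaller) intertwiners to cancel all components of smaller block size without disturbing the largest one. This reduces $\bar v_0$ to the standard top of a block of the largest size $m_{\ell^*}$ present, which, after a harmless reordering of blocks, is exactly the datum defining $A_{\ell^*}$ (with $\ell^*\in\{1,\dots,r+1\}$); hence $\g_A\cong\g_{A_{\ell^*}}$.

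\textbf{Non-isomorphism.} For the uniqueness of $\ell$ and the final pairwise non-isomorphism I will use that, by Lemma~\ref{lem:ad-conjugated} and Remark~\ref{rem:nilp-sim}, two of these nilpotent algebras are isomorphic if and only if their matrices $A$ are conjugate over $\R$; thus any conjugation invariant of $A$ that is also scaling invariant is an isomorphism invariant. I take $\tau(\g_A):=\#\{\,j\ge1:\dim\ker A^j\equiv0\!\pmod4\,\}$. Using Remark~\ref{rem:dim_eigenspace} and \eqref{eq:A'} with $\mu=0$, the residue of $\dim\ker A^j$ modulo $4$ jumps from $0$ to $3$ precisely at the least power annihilating $v_0$, so $\tau$ equals that threshold: one computes $\tau(\g_{A_0})=0$ (where $v_0=0$), $\tau(\g_{A_\ell})=m_\ell$ for $1\le\ell\le r$, and $\tau(\g_{A_{r+1}})=1$. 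As $m_1>\cdots>m_r\ge2$, the numbers $0,1,m_1,\dots,m_r$ are pairwise distinct, so $\tau$ separates $\g_{A_0},\dots,\g_{A_{r+1}}$, giving both the uniqueness in $\rii$ and their pairwise non-isomorphism. Finally $\g_N$ has $B=0$ whereas $B\neq0$ for each $\g_{A_\ell}$, and Lemma~\ref{lem:no-iso}$\riii$ forces $B\sim cB'$ under any isomorphism, so $\g_N$ is isomorphic to none of them. The stated nilpotency degrees follow from a direct reading of the Jordan chains of \eqref{eq:Al}: attaching $U$ to the largest block $\J_{m_1}$ lengthens its chain to $m_1+1$, so $\g_{A_1}$ is $(m_1+1)$-step, while in every other case the longest chain is $m_1$ (and equals $2$ when $B=0$).
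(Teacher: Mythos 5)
Your proposal is correct, but both halves of your argument take a genuinely different route from the paper's. For existence, the paper never constructs a conjugating map: after placing $v_0$ in $W$ and defining $l_0=\min\{l : v_0^l\neq 0\}$ exactly as you do, it simply computes $\dim\ker A^j$ for every $j$ (equations \eqref{eq:caso(1)}--\eqref{eq:caso(3)}), observes these depend only on $\Sigma(B)$ and $l_0$ and match those of $A_{l_0}$, and invokes the fact that nilpotent matrices with the same minimal polynomial and kernel filtration are conjugate. You instead build the conjugation explicitly through the triangular action of the hypercomplex centralizer of $B$ on $\operatorname{coker}_{\H}(\sigma(B))$; this is more structural and isolates the representation-theoretic content (intertwiners between Jordan blocks of different sizes), but it obliges you to actually prove the triangularity lemma you flag, including the lifting of the cokernel-level moves to invertible elements of $Z_\H(B)$ — the paper's route avoids this entirely at the cost of three case-by-case dimension counts. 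For non-isomorphism, the paper reuses the same kernel-dimension formulas to see that distinct $\ell$ give distinct filtrations, whereas your single invariant $\tau(\g_A)=\#\{j:\dim\ker A^j\equiv 0\ (\mathrm{mod}\ 4)\}$ repackages the paper's mod-$4$ jump point (Remark \ref{rem:dim_eigenspace} and \eqref{eq:A'}) into one separating number taking the pairwise distinct values $0,m_1,\dots,m_r,1$; this is a cleaner way to state the same underlying mechanism. Your treatment of $B=0$, of $\g_N$ versus the $\g_{A_\ell}$ via Lemma \ref{lem:no-iso}, and of the nilpotency degrees coincides with the paper's.
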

\begin{proof} 
$\ri$ If $B=0$, then the matrix $A$  has rank $3$ and $A^2=0$. Therefore,
\[ \dim \ker A= 4s =\dim \ker N,
\]
where $N^2=0$. This implies that $A$ is conjugate to $N$, that is, $\g _A$ is isomorphic to $\g _N$,  which is 2-step nilpotent. 

%\medskip

$\rii$ Assume now $B\neq 0$, so that $r\geq 1$. If $v_0=0$ the isomorphism class of  $\g_A$  is completely determined by the integers $r, s$, $m_1, \dots , m_r$ and $p_1, \dots , p_r$ from \eqref{eq:Jordan_nilp}. In other words, $\g_A$ is isomorphic to $\R e_0\ltimes_{A_0} \R ^{4n-1}$ where $A_0$ is as in \eqref{eq:Al}, and this Lie algebra is $m_1$-step nilpotent.

Consider now $v_0\neq 0$.  According to Lemma \ref{lem:vnot0} we may assume that $v_0\in W$, where $W$ is the subspace  defined in \eqref{eq:W}. We decompose 
\[ v_0=v_0^1+ \cdots + v_0^{r+1}, \qquad v_0^l\in \widetilde{W}_l,\] 
and let \[
l_0:= \min \{ \, l \, :\,  v_0^l \neq 0 \} \geq 1.
\]
Note that $v_0^{r+1}=0$ when $s=0$. We will show next that  $A$ is conjugate to $A_{l_0}$, that is, $\g_A$ is isomorphic to $\g _{A_{l_0}}$. We consider three cases.

%\medskip 

\textsl{Case (1):} Assume first that $l_0=1$. Note that in this case the minimal polynomial of $A$ is $x^{m_1 +1}$. We compute $\dim \ker (A^j)$, $1\leq j\leq m_1$:
\begin{equation}\label{eq:caso(1)}
\dim \ker (A^j)=\begin{cases}\displaystyle{  4j\sum_{i=1}^{r}  p_i + 4s,} & 1\leq j\leq m_r, \\ \vspace*{-.3cm}
\\
 \displaystyle{ 4j\sum_{i=1}^{k-1}  p_i +4 \sum_{i=k}^{r}  m_i p_i + 4s,} & m_k <j\leq m_{k-1} , \;\; 2\leq k\leq r
 .\end{cases}
    \end{equation}
It follows that $\dim \ker (A^j)= \dim \ker (A_1^j) $ for $1\leq j\leq m_1$, therefore, the theorem holds for $l_0=1$. 

%\medskip

\textsl{Case (2):} Assume next that $2\leq l_0 \leq r$. In this case, the minimal polynomial of both $A$ and $A_{l_0}$ is $x^{m_1}$, so  we compute $\dim \ker (A^j)$, $1\leq j\leq m_1-1$: 
\begin{equation}\label{eq:caso(2)}
\dim \ker (A^j)=\begin{cases}    
 \displaystyle{4j\sum_{i=1}^{r}  p_i + 4s,} & 1\leq j\leq m_{r}, \\
\vspace*{-.3cm}
 \\
\displaystyle{ 4j\sum_{i=1}^{k-1}  p_i +4 \sum_{i=k}^{r}  m_i p_i + 4s,} &  m_k <j\leq m_{k-1},\;\;  l_0+1\leq k\leq r,
 \\
\vspace*{-.3cm}
 \\
\displaystyle{ 4j\sum_{i=1}^{k-1}  p_i +4 \sum_{i=k}^{r}  m_i p_i + 4s+3,} &  m_k <j\leq m_{k-1},\;\; 3\leq k\leq l_0, 
\\ \vspace*{-.3cm}
\\
 \displaystyle{ 4j  p_1 +4 \sum_{i=2}^{r}  m_i p_i + 4s+3,} & m_2 <j\leq m_{1}-1 .
\end{cases}
\end{equation}
We have that $\dim \ker (A^j)= \dim \ker (A_{l_0}^j) $ for $1\leq j\leq m_1-1$, therefore, $A$ is conjugate to $A_{l_0}$ for $2\leq l_0\leq r$. 

%\medskip

 \textsl{Case (3):} Assume next  $l_0=r+1$, which implies that $s>0$.   In this case, the minimal polynomial of both $A$ and $A_{r+1}$ is $x^{m_1}$, so  we compute $\dim \ker (A^j)$, $1\leq j\leq m_1-1$: 
\begin{equation}\label{eq:caso(3)}
\dim \ker (A^j)=\begin{cases}    
 \displaystyle{4\sum_{i=1}^{r}  p_i + 4s,} & j=1, \\
\vspace*{-.3cm}
 \\ \displaystyle{4j\sum_{i=1}^{r}  p_i + 4s+3,} & 2\leq j\leq m_{r}, \\
\vspace*{-.3cm}
 \\
\displaystyle{ 4j\sum_{i=1}^{k-1}  p_i +4 \sum_{i=k}^{r}  m_i p_i + 4s +3,} &  m_k <j\leq m_{k-1},\;\;  3\leq k\leq r,
\\ \vspace*{-.3cm}
\\
 \displaystyle{ 4j  p_1 +4 \sum_{i=2}^{r}  m_i p_i + 4s+3,} & m_2 <j\leq m_{1}-1 .
\end{cases}
\end{equation}
It follows that $\dim \ker (A^j)= \dim \ker (A_{r+1}^j) $ for $1\leq j\leq m_1-1$, therefore, $A$ is conjugate to $A_{r+1 }$. 

By combining all cases, we conclude that there exists   $0\leq \ell\leq r+1-\delta_{s,0}$ such that $\g _A$ is isomorphic to $\g_{A_\ell}$. Furthermore, equations  \eqref{eq:caso(1)}, \eqref{eq:caso(2)} and \eqref{eq:caso(3)} imply that $\ell$ is unique. 
Note that $\g_{A_1}$ is the only $(m_1+1)$-step nilpotent Lie algebra, while all the remaining ones are $m_1$-step nilpotent. 

To complete the proof, we observe that the uniqueness of the integer $\ell$ above implies that $\g_{A_\ell}$ is not isomorphic to $\g_{A_{\ell'}}$ for $\ell\neq \ell'$. Finally, due to  Lemma \ref{lem:no-iso}, it follows that 
 $\g_N$ is not isomorphic to~$\g_{A_\ell}$.
\end{proof}

\smallskip

For each $n\geq 2$, let $\Sigma _{n-1}$ be as in Corollary \ref{cor:tuples} and fix a tuple in $\Sigma _{n-1}$ of the following form:  
\[ (r,m_1,\ldots, m_r, p_1,\ldots, p_r, s), \quad \;\; r>0,\, m_1 > \dots >m_r\geq 2, \;s\geq 0, \; p_k>0 , \] so that  \eqref{eq:dimn} is satisfied. 
According to Theorem \ref{thm:classif_nilp}$\rii$, the tuple above  gives rise to $r+2-\delta_{s,0}$  matrices in $\g\l(4n-1 , \R) $ of the form \eqref{eq:matrixA}, which correspond  to  different isomorphism classes of $4n$-dimensional nilpotent hypercomplex almost abelian Lie algebras. We denote by $\tilde{\Sigma}_{n-1}\subset  \g\l(4n-1 , \R) $ the set of nilpotent   matrices arising from all possible tuples in $\Sigma_{n-1}$. 
It follows from Lemma \ref{lem:no-iso} and  Corollary \ref{cor:tuples} that Lie algebras corresponding to  matrices arising from  different tuples are not isomorphic.  Note that if $\g_A$ is a  $4n$-dimensional hypercomplex nilpotent almost abelian  Lie algebra such that $\dim (\ker A)=4(n-1)$, then the corresponding matrix $B$ is equal to $0$ and, according to Theorem \ref{thm:classif_nilp}$\ri$, 
$\g_A$  is isomorphic to $\g_N$ with $N$ as in \eqref{eq:matrixN} for $s=n-1$. In other words, there is a unique, up to isomorphism, $4n$-dimensional hypercomplex nilpotent almost abelian  Lie algebra $\g_A$ satisfying $\dim (\ker A)=4(n-1)$. These observations are summarized in the next corollary.

\begin{corollary}\label{cor:param-nilp_hcx} 
The isomorphism classes of $4n$-dimensional nilpotent hypercomplex almost abelian Lie algebras $\g_A=\R e_0\ltimes _A \R^{4n-1}$ such that $\dim (\ker A) <4(n-1)$ are parametrized by $\tilde{\Sigma}_{n-1}$. If $\dim (\ker A) = 4(n-1)$ then $\g_A$ is isomorphic to  $\g_N$ with $N$ as in \eqref{eq:matrixN} for $s=n-1$. 
\end{corollary}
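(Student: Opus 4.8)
The plan is to obtain the Corollary as a repackaging of Theorem~\ref{thm:classif_nilp}, the only genuinely new ingredient being the translation of the alternative $B=0$ versus $B\neq 0$ into the dimensional condition on $\ker A$. First I would record the dichotomy
\[ \dim(\ker A)=4(n-1)\ \Longleftrightarrow\ B=0, \qquad \dim(\ker A)<4(n-1)\ \Longleftrightarrow\ B\neq 0. \]
With $\mu=0$ in \eqref{eq:matrixA}, if $B=0$ the only nonzero columns of $A$ are $v_1,v_2,v_3=J_1v_0,J_2v_0,J_3v_0$; since $\g_A$ is non-abelian we have $v_0\neq 0$, so these are linearly independent, $A$ has rank $3$, and $\dim(\ker A)=(4n-1)-3=4(n-1)$. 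Conversely, if $B\neq 0$ then Remark~\ref{rem:dim_eigenspace} (applied to the eigenvalue $\mu=0$) gives $\dim(\ker B)\equiv 0\pmod 4$, and $B\neq 0$ forces $\ker B\neq \h$, so $\dim(\ker B)\leq 4(n-1)-4$; the same Remark gives, for $j=1$, the bound $\dim(\ker A)\leq \dim(\ker B)+3\leq 4(n-1)-1<4(n-1)$. Granting this dichotomy, the second sentence of the Corollary is immediate: $\dim(\ker A)=4(n-1)$ forces $B=0$, and Theorem~\ref{thm:classif_nilp}$\ri$ yields $\g_A\cong\g_N$ with $N$ as in \eqref{eq:matrixN} and $s=n-1$.

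For the first sentence I would exhibit a bijection between the isomorphism classes satisfying $\dim(\ker A)<4(n-1)$ and the set $\tilde{\Sigma}_{n-1}$; by the dichotomy these classes are exactly those with $B\neq 0$. To such a $\g_A$ I attach the tuple $\Sigma(B)\in\Sigma_{n-1}$, which is well defined by the uniqueness of the quaternionic Jordan form (Theorem~\ref{thm:Jordan}), together with the unique integer $\ell$ provided by Theorem~\ref{thm:classif_nilp}$\rii$; the pair $(\Sigma(B),\ell)$ selects the representative $A_\ell\in\tilde{\Sigma}_{n-1}$ with $\g_A\cong\g_{A_\ell}$. This assignment is constant on isomorphism classes: if $\g_A\cong\g_{A'}$, then Lemma~\ref{lem:no-iso}$\riii$ together with Remark~\ref{rem:nilp-sim} shows that $B$ and $B'$ are conjugate in $\g\l(4n-4,\R)$, hence $\Sigma(B)=\Sigma(B')$ by Proposition~\ref{prop:tuples}, and the uniqueness of $\ell$ then returns the same representative.

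It remains to check that this assignment is a bijection. Surjectivity holds because every element of $\tilde{\Sigma}_{n-1}$ is, by construction, one of the matrices $A_\ell$ attached to a tuple of $\Sigma_{n-1}$, and so is the image of its own isomorphism class. For injectivity, suppose $A_\ell$ and $A'_{\ell'}$ in $\tilde{\Sigma}_{n-1}$ give isomorphic algebras. If they arise from different tuples, their nilpotent $B$-blocks would then be conjugate in $\g\l(4n-4,\R)$ by Lemma~\ref{lem:no-iso}$\riii$, contradicting Corollary~\ref{cor:tuples}; if they arise from the same tuple, the pairwise non-isomorphism clause of Theorem~\ref{thm:classif_nilp}$\rii$ forces $\ell=\ell'$.

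The only step requiring genuine care is the dichotomy of the first paragraph; everything afterwards is bookkeeping on top of Theorem~\ref{thm:classif_nilp}. I expect the one delicate point to be the inequality $\dim(\ker B)\leq 4(n-1)-4$, which rests on the $\hcx$-invariance of $\ker B$ (equivalently, its divisibility by $4$); this is precisely what makes $\dim(\ker A)<4(n-1)$ strict and thereby characterizes $B\neq 0$.
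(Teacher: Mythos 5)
Your proposal is correct and follows essentially the same route as the paper, which likewise derives the corollary by combining Theorem~\ref{thm:classif_nilp} with Lemma~\ref{lem:no-iso}, Proposition~\ref{prop:tuples} and Corollary~\ref{cor:tuples}. The only addition is that you spell out the dichotomy $\dim(\ker A)=4(n-1)\Leftrightarrow B=0$ (via the rank-$3$ computation and the $\hcx$-invariance of $\ker B$), which the paper asserts without proof; your justification of it is sound.
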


\begin{remark}\label{rem:gN} We observe that the 2-step nilpotent Lie algebra $\g_N$ in Theorem \ref{thm:classif_nilp} is isomorphic to $\g_3\times \R^{4n-8}$, where $\g_3$ is the 8-dimensional nilpotent Lie algebra from \cite[Theorem 5.1]{AB}.
\end{remark}

\begin{remark}\label{rem:latt-nilp}
It follows from Corollary \ref{cor:param-nilp_hcx} and Theorem \ref{thm:Mal} that all simply connected nilpotent hypercomplex almost abelian Lie groups admit lattices, since the matrices appearing in \eqref{eq:Al} have integer coefficients.
\end{remark}

Theorem \ref{thm:classif_nilp} and Corollary \ref{cor:param-nilp_hcx}  have their counterpart for  almost abelian Lie algebras with a complex structure, with  analogous proofs  (see Theorem \ref{thm:classif_nilp-complex} and Corollary \ref{cor:param-nilp-complex} in Appendix A for details).

%\medskip

We present below several consequences of Theorem \ref{thm:classif_nilp}.

\begin{corollary} There are $n-1$ isomorphism classes of $4n$-dimensional 2-step nilpotent  hypercomplex almost abelian Lie algebras.     
\end{corollary}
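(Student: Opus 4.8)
The plan is to read off from Theorem~\ref{thm:classif_nilp} the isomorphism classes of nilpotency degree exactly $2$ and then count them. Recall that the degrees are recorded in that theorem: the algebra $\g_N$ arising from $B=0$ is $2$-step, while for $B\neq 0$ the algebra $\g_{A_1}$ is $(m_1+1)$-step and every $\g_{A_\ell}$ with $\ell\neq 1$ is $m_1$-step. Since $m_1\geq 2$ always holds, $\g_{A_1}$ is never $2$-step, and $\g_{A_\ell}$ with $\ell\neq 1$ is $2$-step exactly when $m_1=2$. The chain of strict inequalities $m_1>m_2>\dots>m_r\geq 2$ forces $r=1$ in this situation, so the only tuples that can produce a $2$-step algebra with $B\neq 0$ are of the form $\Sigma(B)=(1,2,p_1,s)$ with $2p_1+s=n-1$, $p_1\geq 1$ and $s\geq 0$, by \eqref{eq:dimn}.

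I would then split the $2$-step classes into three disjoint families and count each one. First, the case $B=0$ contributes the single class $\g_N$ with $s=n-1$. Second, the subcase $B\neq 0$ and $v_0=0$ corresponds to $\ell=0$, i.e.\ to $\g_{A_0}$; these are parametrized by the admissible tuples $(1,2,p_1,s)$, equivalently by the integers $p_1$ with $1\leq p_1\leq\lfloor(n-1)/2\rfloor$, giving $\lfloor(n-1)/2\rfloor$ classes. Third, the subcase $B\neq 0$ and $v_0\neq 0$ producing a $2$-step algebra: among the admissible values $0\leq\ell\leq r+1-\delta_{s,0}$ with $\ell\neq 1$, the only one besides $\ell=0$ is $\ell=r+1=2$, which occurs precisely when $s>0$; the corresponding $\g_{A_{r+1}}$ (see \eqref{eq:Al} and \eqref{eq:matrixN}) are parametrized by the $p_1$ with $2p_1<n-1$, i.e.\ $1\leq p_1\leq\lfloor(n-2)/2\rfloor$, giving $\lfloor(n-2)/2\rfloor$ classes.

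Summing the three contributions and invoking the elementary identity $\lfloor(n-1)/2\rfloor+\lfloor(n-2)/2\rfloor=n-2$, the total number of classes is $1+(n-2)=n-1$. The pairwise non-isomorphism of all the algebras that appear is already furnished by the final assertion of Theorem~\ref{thm:classif_nilp}, so nothing extra is needed there. The main obstacle is the bookkeeping in the second paragraph rather than any hard computation: one must correctly discard $\ell=1$ (which raises the degree to $m_1+1=3$) and remember that the family $\ell=r+1$ exists only when $s>0$; after that the floor arithmetic is routine, and it is worth checking small values (e.g.\ $n=2,3,4$) to confirm the count.
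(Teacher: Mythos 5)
Your proposal is correct and follows essentially the same route as the paper: both arguments read the count off from Theorem~\ref{thm:classif_nilp}, reduce the case $B\neq 0$ to $r=1$, $m_1=2$ via \eqref{eq:dimn}, discard $\ell=1$ as $3$-step, and note that $\ell=r+1$ is available only when $s>0$. The only difference is organizational: the paper sums over $p_1$ (getting $2$ classes when $s>0$ and $1$ when $s=0$, split by the parity of $n$), while you sum over $\ell$ and use the floor identity $\lfloor(n-1)/2\rfloor+\lfloor(n-2)/2\rfloor=n-2$; the two counts agree.
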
 
\begin{proof}
Let $\g_A$ be a $4n$-dimensional 2-step nilpotent  hypercomplex almost abelian Lie algebra.  If $B=0$, then $r=0$ and $s=n-1$, hence $\g_A$ is isomorphic to $\g_N$ with $N$ as in \eqref{eq:matrixN}.   

If $B\neq 0$, we will show that there are $n-2$ isomorphism classes of Lie algebras, and this will complete the proof of the corollary. Since $\g_A$ is 2-step nilpotent we must have  $r=1, \; m_1=2$, and equation \eqref{eq:dimn} implies that $ s= n-2p_1-1$ with $p_1\geq 1$. It follows that $2p_1\leq n-1$ since $s\geq 0$. There are two possibilities:
\begin{itemize}
    \item $n$ is odd, therefore, $1\leq p_1\leq \frac{n-1}{2}$. 
    For each $1\leq p_1\leq \frac{n-1}{2}-1$  it follows that $s>0$ and Theorem \ref{thm:classif_nilp} implies that there are two isomorphism classes of $2$-step nilpotent Lie algebras. On the other hand, for $p_1=\frac{n-1}{2}$ we have that $s=0$ and there is just one isomorphism class. Therefore, there are $n-2$ isomorphism classes when $n$ is odd.
    \item $n$ is even, therefore, $1\leq p_1\leq \frac{n-2}{2}$ and $s=n-2p_1-1 >0 $. In this case,   there are two isomorphism classes of 2-step nilpotent Lie algebras for each value of $p_1$, that is, there are also $n-2$ isomorphism classes when $n$ is even. \qedhere
\end{itemize}    
\end{proof}  

%\medskip

In the next examples we apply Corollary \ref{cor:param-nilp_hcx} in dimensions 12 and 16.

%\medskip

\begin{example} In this example we apply Theorem \ref{thm:classif_nilp} to obtain the 12-dimensional nilpotent hypercomplex   almost abelian Lie algebras $\g_A$.  They will appear in Theorems  \ref{thm: classification1} and \ref{thm: classification2} below as $\s_{10}^0, \; \s_{16}^0$ and $\s_{16}^1$.

If $B=0$, then $\g_A$ is the 2-step nilpotent Lie algebra $\g_3\times \R^4$ (see Remark \ref{rem:gN}), which is denoted by  $\s_{10}^0$ in Theorem \ref{thm: classification1}. 

If $B\neq 0$, it follows from equation \eqref{eq:dimn} that $s=0, \; r=1, \, m_1=2, \; p_1=1$. Then, according to Theorem \ref{thm:classif_nilp}, $\g_A$ is isomorphic to $\g_{A_0}$  or   $\g_{A_1}$, and these are denoted by $\s_{16}^0$ and $\s_{16}^1$ in  Theorem \ref{thm: classification2}, respectively. 
\end{example}

%\medskip

\begin{example}
 By applying Theorem \ref{thm:classif_nilp} we show that  there are 6 isomorphism classes of   16-dimensional nilpotent  hypercomplex   almost abelian Lie algebras $\g_A$.  Indeed, if $B=0$, then $\g_A$ is isomorphic to  $\s_{10}^0 \times \R^4\cong \g_3 \times \R^8$. If $B\neq 0$, we have two possibilities for $\Sigma(B)$:
 \begin{itemize}
     \item $r=1, \; m_1=3, \; p_1=1, \; s=0$, which gives rise to two isomorphism classes of Lie algebras, one of them is 3-step nilpotent and the other one is 4-step nilpotent.
     \item $r=1, \; m_1=2, \; p_1=1, \; s=1$, which give rise to three  isomorphism classes of Lie algebras, two  of them are 2-step   nilpotent and the other one is 3-step nilpotent.
 \end{itemize}
\end{example}

%\ 

We end this section by applying Theorem \ref{thm:classif_nilp} to obtain  necessary and sufficient conditions on a nilpotent matrix $M \in \mathfrak{gl}(4n-1,\R) $ so that the $4n$-dimensional almost abelian Lie algebra $\g=\R e_0 \ltimes _M \R^{4n-1}$ admits a hypercomplex structure.

Given a non-zero nilpotent matrix $M \in \mathfrak{gl}(4n-1,\R) $, consider its Jordan normal form:
\begin{equation}  \label{eq:matM}  
 M= j_{n_1}^{\oplus q_1}\oplus \dots \oplus j_{n_k}^{\oplus q_k} \oplus 0_d, \quad n_1> \dots > n_k\geq 2, \; k\geq 1,\; q_i >0 \; \text{for all } i, \; d\geq 0.   \end{equation}
where $j_{n_i}$ are  elementary  Jordan blocks  as in \eqref{eq:matrixjm}. 
Theorem \ref{thm:classif-jordan} below gives the necessary and sufficient conditions we are looking for, in terms of the non-negative integers $ n_i, \, q_i$ and $d$.

The next  lemma will be used in the proof of Theorem \ref{thm:classif-jordan} below. 
\begin{lemma}\label{lem:aux} Let $N$ (for $s>0$) and $N_\ell$ be the nilpotent matrices defined in \eqref{eq:matrixN}.  Then 
\begin{enumerate}
    \item[$\ri$]  $N$   is conjugate to $j_2^{\oplus 3}\oplus 0_{4s-3}$;
    \item[$\rii$] 
     $N_\ell$  is conjugate to $j_{m_\ell +1}^{\oplus 3}\oplus j_{m_\ell }^{\oplus (4p_\ell-3)}$.
\end{enumerate}

\end{lemma}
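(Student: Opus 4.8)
The plan is to read off explicit Jordan chains from the action of each matrix on a natural basis, exploiting the fact that $N$ and $N_\ell$ are assembled entirely from shift operators and the block $U$. In each case I will exhibit a basis in which the matrix decomposes as a disjoint union of chains $v\mapsto Nv\mapsto N^2v\mapsto\cdots\mapsto 0$; the length of each chain is the size of the corresponding elementary block $j_\bullet$, so matching lengths and multiplicities against the claimed Jordan forms closes the argument. Equivalently one could compute $\dim\ker N^j$ for every $j$ and invoke the standard dictionary with block sizes, but the chain picture makes the role of $U$ transparent.

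For $\ri$, observe that $N$ annihilates every basis vector of the $\R^{4s}$ summand and, via the columns of $U$ in \eqref{eq:matrixU}, sends the three generators $e_1,e_2,e_3$ of the $\R^3$ summand to three linearly independent vectors inside $\R^{4s}$. Hence $N^2=0$ and $\operatorname{rank}N=3$. A nilpotent matrix with $N^2=0$ has blocks only of sizes $1$ and $2$, the number of size-$2$ blocks equals $\operatorname{rank}N=3$, and the remaining $(4s+3)-2\cdot 3=4s-3$ blocks have size $1$; this yields $j_2^{\oplus 3}\oplus 0_{4s-3}$.

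For $\rii$ it suffices, after recalling that $\J_{m_\ell}$ is conjugate to $j_{m_\ell}^{\oplus 4}$ so that the summand $(\J_{m_\ell})^{\oplus(p_\ell-1)}$ contributes $j_{m_\ell}^{\oplus 4(p_\ell-1)}$, to determine the Jordan form of the first direct summand $M'$ of $N_\ell$ acting on $\R^3\oplus\R^{4m_\ell}$. I will decompose the factor $\R^{4m_\ell}$ as $V_1\oplus\cdots\oplus V_{m_\ell}$ with each $V_i\cong\R^4$ and basis $g_1^{(i)},\dots,g_4^{(i)}$, so that $\J_{m_\ell}$ maps $V_i$ identically onto $V_{i+1}$ (and $V_{m_\ell}$ to $0$). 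Because the three columns of $U$ point to the second, third and fourth coordinate directions of $V_1$, the block forces $M'e_1=g_2^{(1)}$, $M'e_2=g_3^{(1)}$, $M'e_3=g_4^{(1)}$. Consequently the three chains based at $e_1,e_2,e_3$ run through all of $V_1,\dots,V_{m_\ell}$ and have length $m_\ell+1$, giving $j_{m_\ell+1}^{\oplus 3}$, while $g_1^{(1)}$ — the one coordinate direction not reached by $U$, hence not in $\operatorname{Im}M'$ — generates a single chain of length $m_\ell$, giving $j_{m_\ell}$. Assembling, $M'$ is conjugate to $j_{m_\ell+1}^{\oplus 3}\oplus j_{m_\ell}$, and therefore $N_\ell$ is conjugate to $j_{m_\ell+1}^{\oplus 3}\oplus j_{m_\ell}^{\oplus(4p_\ell-3)}$.

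The only genuinely delicate point is the bookkeeping in $\rii$: one must verify that $U$ extends exactly three of the four length-$m_\ell$ chains of $\J_{m_\ell}$ (those based at $g_2^{(1)},g_3^{(1)},g_4^{(1)}$) while leaving the fourth, based at $g_1^{(1)}$, untouched. This is precisely the effect of the downward shift built into $U$, and the check that the four listed chains are disjoint and exhaust all $4m_\ell+3$ basis vectors is the routine count $3(m_\ell+1)+m_\ell=4m_\ell+3$.
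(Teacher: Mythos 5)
Your proof is correct and follows essentially the same route as the paper: the paper simply asserts that $N$ is conjugate to $j_2^{\oplus 3}\oplus 0_{4s-3}$ and that the block $\left[\begin{smallmatrix}0_3 & \\ \ast & \J_{m}\end{smallmatrix}\right]$ is conjugate to $j_{m+1}^{\oplus 3}\oplus j_{m}$, which is exactly the key fact you establish. Your explicit Jordan-chain bookkeeping (three chains of length $m_\ell+1$ through $g_2^{(1)},g_3^{(1)},g_4^{(1)}$ plus one chain of length $m_\ell$ from $g_1^{(1)}$) is a valid and complete verification of what the paper leaves to the reader.
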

\begin{proof}
The proof of $\ri$ is straightforward.

 In order to prove $\rii$,  recall first that $\J _{m}$ is conjugate to $j_m^{\oplus 4}$. Then $\rii$ follows from the fact  that  
    \[  \left[ \begin{array}{c|ccc}
    0_3&  & & \\
    \hline 
    \begin{array}{c}
    U  \\ 
    0_{k\times 3}
    \end{array}& & \J_{m} & 
\end{array}\right], \qquad k=4(m-1),  \] 
 is conjugate to 
$\; j_{m +1}^{\oplus 3}\oplus j_{m}$.
\end{proof}

%\

\begin{theorem}\label{thm:classif-jordan} Let $M\in  \mathfrak{gl}(4n-1,\R)$ be a non-zero nilpotent matrix as in \eqref{eq:matM}. Then the $4n$-dimensional nilpotent almost abelian Lie algebra  $\g=\R e_0 \ltimes _M \R^{4n-1}$  admits a hypercomplex structure if and only if any of the  following conditions is satisfied:
\begin{enumerate}
\item[$\ri$] $n_k=2, \; q_k \equiv 3 \pmod{4}$, $  d \equiv 1 \pmod{4}$ and $q_i\equiv 0 \pmod{4}$ for  $i\neq k$;
    \item[$\rii$] $q_i\equiv 0 \pmod{4}$ for all $i$ and $d\equiv 3 \pmod{4}$;
    \item[$\riii$] there exists $2\leq t \leq k$ such that $n_{t-1}= n_{t} +1$, $q_{t-1}\equiv 3 \pmod{4}, \; q_{t } \equiv 1 \pmod{4},\; q_i\equiv 0 \pmod{4}$ for  $i\notin \{t-1, t\}$ and $d\equiv 0 \pmod{4}$.
    \end{enumerate}
\end{theorem}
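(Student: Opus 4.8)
The plan is to read the answer off the complete classification in Theorem~\ref{thm:classif_nilp}, after converting the canonical matrices of \eqref{eq:Al} and \eqref{eq:matrixN} into ordinary Jordan form by means of Lemma~\ref{lem:aux}.

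First I would reduce the statement to a conjugacy problem. Since $M$ is nilpotent, $\g$ is a nilpotent almost abelian Lie algebra, so by Remark~\ref{rem:nilp-sim} it admits a hypercomplex structure precisely when it is isomorphic to one of the algebras in the complete, irredundant list of Theorem~\ref{thm:classif_nilp} (see also Corollary~\ref{cor:param-nilp_hcx}); equivalently, when $M$ is conjugate in $\mathfrak{gl}(4n-1,\R)$ to one of the matrices $N$ (with $s=n-1$), $A_0$, $A_\ell$ for $1\le\ell\le r$, or $A_{r+1}$ (the last only when $s>0$), as the tuple $(r,m_1,\dots,m_r,p_1,\dots,p_r,s)$ ranges over $\Sigma_{n-1}$ together with the degenerate case $B=0$. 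Because conjugate matrices share the same Jordan form, the theorem now amounts to computing these Jordan forms and comparing them with \eqref{eq:matM}.

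For the necessity of the conditions I would apply $\J_m\sim j_m^{\oplus 4}$ and Lemma~\ref{lem:aux} to obtain
\begin{align*}
A_0 &\sim j_{m_1}^{\oplus 4p_1}\oplus\cdots\oplus j_{m_r}^{\oplus 4p_r}\oplus 0_{4s+3}, \\
A_{r+1} &\sim j_{m_1}^{\oplus 4p_1}\oplus\cdots\oplus j_{m_r}^{\oplus 4p_r}\oplus j_2^{\oplus 3}\oplus 0_{4s-3}, \\
A_\ell &\sim \Big(\textstyle\bigoplus_{i\neq\ell} j_{m_i}^{\oplus 4p_i}\Big)\oplus j_{m_\ell+1}^{\oplus 3}\oplus j_{m_\ell}^{\oplus(4p_\ell-3)}\oplus 0_{4s}, \qquad 1\le\ell\le r,
\end{align*}
together with $N\sim j_2^{\oplus 3}\oplus 0_{4n-7}$ in the case $B=0$. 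Reading off the Jordan data then shows that $A_0$ satisfies $\rii$, that both $A_{r+1}$ and the matrix $N$ satisfy $\ri$ (a size-$2$ block of multiplicity $\equiv 3\pmod 4$, all other multiplicities $\equiv 0$, and $d\equiv 1\pmod 4$), and that each $A_\ell$ satisfies $\riii$ with $n_{t-1}=m_\ell+1$ and $n_t=m_\ell$; in reading off $A_\ell$ one must account for the possible coalescence of $j_{m_\ell+1}^{\oplus 3}$ with $j_{m_{\ell-1}}^{\oplus 4p_{\ell-1}}$ when $m_{\ell-1}=m_\ell+1$. Since the three conditions are mutually exclusive, fixing $d$ modulo $4$ to be $1$, $3$ and $0$ respectively, this direction is complete.

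For sufficiency I would reverse the construction: given $M$ as in \eqref{eq:matM} satisfying one of $\ri$, $\rii$, $\riii$, I build an admissible tuple whose associated canonical matrix has the same Jordan form as $M$, and hence is conjugate to it. For $\rii$ one takes $m_i=n_i$, $p_i=q_i/4$, $s=(d-3)/4$ and recovers $A_0$. The delicate point, which I expect to be the main obstacle, is guaranteeing that the reconstructed tuple is admissible — that every $p_i>0$ and that $m_1>\cdots>m_r\ge2$ — which forces a case analysis governed by the minimal admissible multiplicities. In case $\ri$, writing $s=(d+3)/4$, one uses $A_{r+1}$ with $m_r=2$ and $p_r=(q_k-3)/4$ when $q_k>3$; but when $q_k=3$ this would give $p_r=0$, so the size-$2$ block must instead be produced entirely by the $N$-summand, and one drops the size-$2$ index, using $A_{r+1}$ with $m_r=n_{k-1}>2$ if $k\ge2$, or the matrix $N$ with $B=0$ if $k=1$. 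Likewise, in case $\riii$ one sets $m_\ell=n_t$, $p_\ell=(q_t+3)/4\ge1$ (an integer since $q_t\equiv1\pmod 4$) and $s=d/4$, and handles the block $n_{t-1}=n_t+1$ by an explicit summand $j_{n_{t-1}}^{\oplus(q_{t-1}-3)}$ precisely when $q_{t-1}>3$, letting the multiplicity-$3$ contribution of $N_\ell$ supply the remainder when $q_{t-1}=3$. Verifying that these choices reproduce the prescribed multiplicities and respect the ordering constraints is the technical heart of the argument.
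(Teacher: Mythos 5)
Your proposal is correct and follows essentially the same route as the paper: reduce via Remark~\ref{rem:nilp-sim} and Theorem~\ref{thm:classif_nilp} to deciding when $M$ is conjugate to one of $N$, $A_0$, $A_\ell$, $A_{r+1}$, compute the Jordan forms of these canonical matrices using $\J_m\sim j_m^{\oplus 4}$ and Lemma~\ref{lem:aux}, and then match Jordan data in both directions, with the same edge cases (vanishing of $p_r$ or $p_{t-1}$, coalescence of the $j_{m_\ell+1}^{\oplus 3}$ block with an adjacent $j_{m_{\ell-1}}^{\oplus 4p_{\ell-1}}$) that the paper's proof also isolates. The points you flag as the technical heart are exactly the ones the paper works through explicitly.
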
 

\begin{proof}
Assume first that $\g$ admits a hypercomplex structure. Then, it follows from Remark \ref{rem:nilp-sim} and Theorem \ref{thm:classif_nilp} that $M$ must be conjugate to one of the following matrices:
\begin{enumerate}
    \item[(1)] $N$ as in \eqref{eq:matrixN} for some $s>0$,
    \item [(2)] $A_0$ as in \eqref{eq:Al},
    \item[(3)] $A_l$ as in \eqref{eq:Al}, for $1\leq l\leq r+1-\delta_{s,0}$. 
\end{enumerate} 
We determine next the Jordan normal form of each of these matrices. 

\textsl{ Case (1):} Assume that $M$ is conjugate to $N$ for some $s>0$. It follows from Lemma \ref{lem:aux}$\ri$ that $N$  is conjugate to $j_2^{\oplus 3}\oplus 0_{4s-3}$, so we have $k=1$, $n_1=2$, $q_1=3$ and $d=4s-3$. Therefore, $M$ satisfies $\ri$.

\textsl{Case (2):} The matrix  $A_0$ is conjugate to \[j_{m_1}^{\oplus 4p_1}\oplus \dots \oplus j_{m_r}^{\oplus 4p_r} \oplus 0_{4s+3},\] so if $M$ and $A_0$ are conjugate we must have  $k=r$, $q_i=4 p_i$,  $d= 4s+3$ and  $\rii$ holds.

\textsl{Case (3):} 
If $l=1$, it follows from Lemma \ref{lem:aux}$\rii$ that $A_1$ is conjugate to \[ j_{m_1 +1}^{\oplus 3}\oplus j_{m_1 }^{\oplus (4p_1-3)}\oplus j_{m_2 }^{\oplus 4p_2}\oplus \dots \oplus j_{m_r }^{\oplus 4p_r}\oplus 0_{4s}.\] Therefore, if $M$ is conjugate to $A_1$ we have 
$k=r+1$, $n_1=m_1+1$, $n_2=m_1=n_1-1$, $n_i= m_{i-1}$ for $3\leq r\leq k+1$, $q_1=3$, $q_2=4p_1-3 $, $q_i=4 p_{i-1}$ and $d=4s$. Hence, $\riii$ is satisfied for $t=2$. 

Assume next that $M$  is conjugate to $A_l$ for some $2\leq l\leq r$. We apply again Lemma \ref{lem:aux}$\rii$ to obtain that $A_l$  is conjugate to
  \[ j_{m_1 }^{\oplus 4 p_1}\oplus \dots \oplus j_{m_{l-1} }^{\oplus 4 p_{l-1}} \oplus 
  j_{m_{l}+1 }^{\oplus 3} \oplus j_{m_{l} }^{\oplus (4p_l-3)}\oplus j_{m_{l+1} }^{\oplus 4p_{l+1}}\oplus \dots \oplus   j_{m_r }^{\oplus 4p_r}\oplus 0_{4s}. \]  
  There are two possibilities: either $m_l+1=m_{l-1}$ or $m_l+1<m_{l-1}$. In the first case, $A_l$ is conjugate to 
  \[  j_{m_1 }^{\oplus 4 p_1}\oplus \dots \oplus j_{m_{l-2} }^{\oplus 4 p_{l-2}} \oplus   
  j_{m_l +1 }^{\oplus (4 p_{l-1}+3)} \oplus 
   j_{m_{l} }^{\oplus (4p_l-3)}\oplus j_{m_{l+1} }^{\oplus 4p_{l+1}}\oplus \dots \oplus   j_{m_r }^{\oplus 4p_r}\oplus 0_{4s},
  \] 
  and we have that $k=r$, $n_i=m_i$ and $d=4s$. It follows that $\riii$ is satisfied for $t=l$. On the other hand, if $m_l+1<m_{l-1}$, $A_l $ is conjugate to 
  \[  j_{m_1 }^{\oplus 4 p_1}\oplus \dots \oplus j_{m_{l-1} }^{\oplus 4 p_{l-1}} \oplus   
  j_{m_{l}+1 }^{\oplus 3} \oplus 
   j_{m_{l} }^{\oplus (4p_l-3)}\oplus j_{m_{l+1} }^{\oplus 4p_{l+1}}\oplus \dots \oplus   j_{m_r }^{\oplus 4p_r}\oplus 0_{4s}.
  \] It follows that $k=r+1$, $n_i=m_i$ for $1\leq i\leq l-1$, $n_l=m_l+1$, $n_i=m_{i-1}$ for $l+1\leq i\leq r$, and $\riii$ holds for $t= l+1$. 
  
  Finally, assume that $M$ is conjugate to $A_{r+1}$ (so that $s>0$).  We observe that $A_{r+1}$ is conjugate to \[
  j_{m_1}^{\oplus 4p_1}\oplus \dots \oplus j_{m_r}^{\oplus 4p_r}\oplus j_2^{\oplus 3} \oplus 0_{4s-3},
  \] 
  therefore,  $M$ satisfies $\ri$.
  
  For the converse, we show that if any   of the conditions in the statement is satisfied, then $M$ is conjugate to one of the matrices in cases (1), (2) or (3) above, therefore, it will follow from Remark \ref{rem:nilp-sim} and Theorem \ref{thm:classif_nilp} that  
  $\g$ admits a hypercomplex structure.  
  
  Assume first that $\ri$ holds and let $p_i, \, n_i, \, 1\leq i \leq k, \, s>0$, such that $q_i=p_i$ for $1\leq i \leq k$, $q_k=4p_k +3$ and $d=4s-3$. 
  If $p_k=0$ then $M=j_{n_1}^{\oplus 4 p_1} \oplus \dots \oplus j_{n_{k-1}}^{\oplus 4 p_{k-1}}\oplus  j_2^{\oplus 3}\oplus 0_{4s-3}$. Since  $j_2^{\oplus 3}\oplus 0_{4s-3}$ is conjugate to $N$ (see Lemma \ref{lem:aux}$\ri$), then  $M$ is conjugate to $j_{n_1}^{\oplus 4 p_1} \oplus \dots \oplus j_{n_{k-1}}^{\oplus 4 p_{k-1}}\oplus N$, which, in turn, is conjugate to $A_{r+1}$ from \eqref{eq:Al} with $r=k-1$. If $p_k>0$ then it follows that $M$ is conjugate to $j_{n_1}^{\oplus 4 p_1} \oplus \dots \oplus j_{n_{k-1}}^{\oplus 4 p_{k-1}}\oplus j_2 ^{\oplus 4p_k} \oplus N$. Therefore, $M$ is conjugate to $A_{r+1}$ for $r=k$.
  
  In case $\rii$ is satisfied, then there exist $p_i, \, 1\leq i\leq k, \; s\geq 0$ such that $q_i=4p_i$ and $d=4s+3$. It is straightforward that $M$ is conjugate to $A_0$ from \eqref{eq:Al} with $r=k$. 
  
  Finally, if $\riii$ holds, let $2\leq t\leq k$ as in the statement. There  exist $p_i, \; 1\leq i\leq k, \, p_t>0$, and $s\geq 0$ such that $q_i=4p_i$ for $i\notin \{ t-1 ,t\}$, $q_{t-1}=4p_{t-1}+3$, $q_t=4p_t -3$ and $d=4s$. Therefore, 
  \begin{eqnarray*}
       M&= &  j_{n_1 }^{\oplus 4 p_1}\oplus \dots \oplus j_{n_{t-2} }^{\oplus 4 p_{t-2}} \oplus   
  j_{n_{t}+1 }^{\oplus (4p_{t-1}+3)} \oplus 
   j_{n_{t} }^{\oplus (4p_t-3)}\oplus j_{n_{t+1} }^{\oplus 4p_{t+1}}\oplus \dots \oplus   j_{n_k }^{\oplus 4p_k}\oplus 0_{4s}, \\
   & =& j_{n_1 }^{\oplus 4 p_1}\oplus \dots \oplus j_{n_{t-2} }^{\oplus 4 p_{t-2}} \oplus   j_{n_{t}+1 }^{\oplus 4p_{t-1}} \oplus \left(
  j_{n_{t}+1 }^{\oplus 3} \oplus 
   j_{n_{t} }^{\oplus (4p_t-3)}\right) \oplus j_{n_{t+1} }^{\oplus 4p_{t+1}}\oplus \dots \oplus   j_{n_k }^{\oplus 4p_k}\oplus 0_{4s}. 
  \end{eqnarray*}
   From Lemma \ref{lem:aux}(ii) we have that  $j_{n_{t}+1 }^{\oplus 3} \oplus j_{n_{t} }^{\oplus (4p_t-3)} $ is conjugate to $N_t$ as in  \eqref{eq:matrixN}. Therefore, if $n_t+1 \neq n_{t-2}$ then $M$ is conjugate to $ A_t$ with $r=k$, $n_i=4m_i, \; 1\leq i\leq k$. If $n_t= n_{t-2}$, then $M$ is conjugate to  
   \[  M=j_{n_1 }^{\oplus 4 p_1}\oplus \dots \oplus j_{n_{t-2} }^{\oplus 4( p_{t-2}+p_{t-1})} \oplus    N_t \oplus j_{n_{t+1} }^{\oplus 4p_{t+1}}\oplus \dots \oplus   j_{n_k }^{\oplus 4p_k}\oplus 0_{4s},  \]
   which is conjugate to a matrix analogous to $A_t$ with $r=k-1$. This concludes the proof.
\end{proof}

%\medskip

\section{Classification in dimension 12}\label{sect:dim12}

In this section we provide the classification of the 12-dimensional almost abelian Lie algebras $\s$ that admit a hypercomplex structure. In order to perform this classification, we will make use of the quaternionic Jordan form described in \S \ref{sec:quat_linear}.

First, we set some notation. Given a basis $f_0, \dots , f_{11}$ of $\s$, let  $f^0, \dots , f^{11}$ be the dual basis of $\s^*$. Since $\s$ is almost abelian, the Lie bracket on $\s$ is determined by  $\al_0, \dots , \al _{11}\in \s^*$ such that $df^i=-f^0\wedge \al_i$. In the next theorems, we will denote this Lie algebra by 
$-f^0\wedge( \al_0, \dots , \al _{3} \, |\, \al_4, \dots , \al _{7} \, |\, \al_8, \dots , \al _{11} )$.

%\medskip

We begin now with the classification. Let $\s=\R e_0\ltimes_A \R^{11}$ be a $12$--dimensional almost abelian Lie algebra equipped with a hypercomplex structure. It follows from Theorem \ref{thm:characterization} that 
the matrix $A$ in \eqref{eq:matrixA} takes the  form
\begin{equation}\label{eq:matrix-dim8} 
A=\left[
	\begin{array}{ccc|ccc}       
		\mu & & & & &\\
		& \mu & & & 0 & \\
		& & \mu & & & \\
		\hline
		| & | & |& & & \\
		v_1 & v_2 & v_3 & & B &\\
		| & | & | & & &
	\end{array}
	\right], \quad \text{with} \quad v_\alpha\in\R^8,\, v_\al= J_\al v_0, \,  B\in \mathfrak{gl} (2,\H).
\end{equation} 
The idea of the classification is to obtain a simpler form of $B$ by  analyzing the different possibilities for the quaternionic Jordan form of $\sigma(B)$ (see \eqref{eq:BH}). Since $\sigma(B)\in M_2(\H)$, its possible Jordan forms are:
\[ \begin{bmatrix} 
\lambda_1 & 0\\
0&\lambda _2
\end{bmatrix} \qquad \text{ or } \qquad
 \begin{bmatrix} 
\lambda_1 & 0\\
1&\lambda_1 
\end{bmatrix},
\]
where $\lambda_1=a+ib, \lambda_2=c+id, $ with $a,b,c,d \in \R,\, b,d\geq 0$. Applying $\sigma^{-1}$ we obtain that in an ordered basis $\{ u_0,\ldots, u_3, w_0, \ldots ,  w_3\}$  with $u_\al =J_\al u_0, \,  w_\al =J_\al w_0$, the matrix $B$ can be written as:
%\[ B_1= \left[\begin{array}{cccc|cccc} a& 0& -b&0& &&& \\
%0&c&0&-d&&&& \\
%b&0&a&0&&&& \\
%0&d&0&c&&&&\\
%\hline
%&&&& a&0&b&0 \\
%&&&&0&c&0&d \\
%&&&& -b&0&a&0 \\
%&&&& 0&-d&0&c
%\end{array}\right] \quad \text{ or } \quad
%B_2=\left[\begin{array}{cccc|cccc} a& 0& -%b&0& &&& \\
%1&a&0&-b&&&& \\
%b&0&a&0&&&& \\
%0&b&1&a&&&&\\
%\hline
%&&&& a&0&b&0 \\
%&&&&1&a&0&b \\
%&&&& -b&0&a&0 \\
%&&&& 0&-b&1&a
%\end{array}\right].
%\]
%By reordering the basis to $\{ u_0,\ldots, u_3, w_0, \ldots ,  w_3\}$, we obtain 
\begin{equation}\label{eq:casos} B_1= \left[\begin{array}{cccc|cccc} a& -b& && &&& \\
b&a&&&&&& \\
&&a&b&&&& \\
&&-b&a&&&&\\
\hline
&&&& c&-d&& \\
&&&&d&c&& \\
&&&& &&c&d \\
&&&& &&-d&c
\end{array}\right] \;\text{ or } \;
B_2= \left[\begin{array}{cccc|cccc} a& -b& && &&& \\
b&a&&&&&& \\
&&a&b&&&& \\
&&-b&a&&&&\\
\hline
1&&&& a&-b&& \\
&1&&&b&a&& \\
&&1&& &&a&b \\
&&&1& &&-b&a
\end{array}\right].
\end{equation}
It follows from Lemma \ref{lem:no-iso} that Lie algebras arising from $B_1$ cannot be isomorphic to those arising from $B_2$.

We next analyze each case separately. We set the following notation: \begin{equation} \label{eq:V1-V2} V_1:=\text{span}\{u_0,\ldots,u_3\}, \qquad  V_2:=\text{span}\{w_0,\ldots,w_3\}.\end{equation} 
Note that $V_1$ and $V_2$ are two $\hcx$-invariant subspaces of $\h$ such that $\h=V_1\oplus V_2$. 

\smallskip

We begin the classification in dimension 12 with the case $B=B_1$. 
%\medskip

\begin{theorem} \label{thm: classification1}
Let $\s=\R e_0\ltimes_A \R^{11}$ be a $12$-dimensional almost abelian Lie algebra admitting a hypercomplex structure $\hcx$, with $A$ as in \eqref{eq:matrixA} and $B=B_1$ from \eqref{eq:casos}. Then $\s$ is isomorphic to one and only one of the following Lie algebras:
\[ \begin{array}{rll}
 %$[e_0,u_0]=au_0+u_1, \; [e_0,u_1]=-u_0+au_1, \; [e_0,u_2]=au_2-u_3, \; %[e_0,u_3]=u_2+au_3, \\ {[e_0,w_0]}=cw_0+dw_1, \; [e_0,w_1]=-dw_0+cw_1, \; [e_0,w_2]=cw_2-dw_3, \; %[e_0,w_3]=dw_2+cw_3, \\ a,c \in \R, \, d>1$, 
 \s_1^{a,c,d}: &   -f^0\wedge &  \!\!\!\!(0,0,0,0\, |\, af^4-f^5,f^4+af^5, af^6+f^7,-f^6+af^7\, | \\ & &cf^8-df^9,df^8+cf^9, cf^{10}+df^{11},-df^{10}+cf^{11}),  \quad a,c \in \R, \, d>1,\\
%\item[$\s_2^{a,c}:$] $[e_0,u_0]=au_0+u_1, \; [e_0,u_1]=-u_0+au_1, \; [e_0,u_2]=au_2-u_3, \; %%[e_0,u_3]=u_2+au_3, \\ {[e_0,w_0]}=cw_0+w_1, \; [e_0,w_1]=-w_0+cw_1, \; [e_0,w_2]=cw_2-w_3, \; %[e_0,w_3]=w_2+cw_3,\;\; a\leq c$,
\s_2^{a,c}: &   -f^0\wedge &  \!\!\!\!(0,0,0,0\, |\, af^4-f^5,f^4+af^5, af^6+f^7,-f^6+af^7\, | \\ & &cf^8-f^9,f^8+cf^9, cf^{10}+f^{11},-f^{10}+cf^{11}),  \quad  \text{\scriptsize{ $(0\leq a\leq c)$ or $(a<0<c,\, |a|\leq |c|)$}},\\
%\item[$\s_3^{a,b,c,d}:$] $[e_0,e_\al]=e_\al, \;    [e_0,u_0]=au_0+bu_1, \; [e_0,u_1]=-bu_0+au_1, \; %[e_0,u_2]=au_2-bu_3, \\ {} [e_0,u_3]=bu_2+au_3, \; {[e_0,w_0]}=cw_0+dw_1, \; [e_0,w_1]=-dw_0+cw_1, \\ %{} [e_0,w_2]=cw_2-dw_3, \; [e_0,w_3]=dw_2+cw_3, \;\; a,c \in \R, \, 0<b<d$,
\s_3^{a,b,c,d}: &   -f^0\wedge &  \!\!\!\!(0,f^1,f^2,f^3\, |\, af^4-bf^5,bf^4+af^5, af^6+bf^7,-bf^6+af^7\, | \\ & &cf^8-df^9,df^8+cf^9, cf^{10}+df^{11},-df^{10}+cf^{11}),  \quad a,c \in \R, \, 0<b<d,\\
%\item[$\s_4^{a,b,c}:$] $[e_0,e_\al]=e_\al, \;    [e_0,u_0]=au_0+bu_1, \; [e_0,u_1]=-bu_0+au_1, \; %[e_0,u_2]=au_2-bu_3, \\ {} [e_0,u_3]=bu_2+au_3, \; {[e_0,w_0]}=cw_0+bw_1, \; [e_0,w_1]=-bw_0+cw_1, \\ %{} [e_0,w_2]=cw_2-bw_3, \; [e_0,w_3]=bw_2+cw_3, \;\; a\leq c, \, b>0$,
\s_4^{a,b,c}: &   -f^0\wedge &  \!\!\!\!(0,f^1,f^2,f^3\, |\, af^4-bf^5,bf^4+af^5, af^6+bf^7,-bf^6+af^7\, | \\ & &cf^8-bf^9,bf^8+cf^9, cf^{10}+bf^{11},-bf^{10}+cf^{11}),  \quad a\leq c, \, b>0,\\
 \s_5^{a,c}: &   -f^0\wedge &  \!\!\!\!(0,0,0,0\, |\, af^4,af^5, af^6,af^7\, | \\ & &cf^8-f^9,f^8+cf^9, cf^{10}+f^{11},-f^{10}+cf^{11}),  \quad \text{\scriptsize{ $(a\neq 0,\, c \in \R)$ or $(a=0, \, c\geq 0)$}},\\
 \s_6^{c}: &   -f^0\wedge &  \!\!\!\!(0,0,0,0\, |\,0,f^1,f^2,f^3\, | \\ & &cf^8-f^9,f^8+cf^9, cf^{10}+f^{11},-f^{10}+cf^{11}),  \quad c \geq 0,\\
 \s_7^{a,c,d}: &  -f^0\wedge &  \!\!\!\!(0,f^1,f^2,f^3\, |\, af^4,af^5,a f^6,af^7\, | \\ & &cf^8-df^9,df^8+cf^9, cf^{10}+df^{11},-df^{10}+cf^{11}),  \quad 
 a,c \in \R, \, d>0,\\
 \s_8^{c,d}: &   -f^0\wedge &  \!\!\!\!(0,f^1,f^2,f^3\, |\, f^4,f^1+f^5,f^2+ f^6,f^3+f^7\, | \\ & &cf^8-df^9,df^8+cf^9, cf^{10}+df^{11},-df^{10}+cf^{11}),  \quad c \in \R, \, d>0,\\
 \s_9^{c}: &  -f^0\wedge &  \!\!\!\!(0,0,0,0\, |\, f^4,f^5, f^6,f^7\, | \,  cf^8,cf^9, cf^{10},cf^{11}),  \quad 
 |c|\leq 1,\\
 \s_{10}^{c}: &  -f^0\wedge &  \!\!\!\!(0,0,0,0\, |\, 0,f^1, f^2,f^3\, | \,  cf^8,cf^9, cf^{10},cf^{11}),  \quad 
 c=0 \text{ or } 1,\\
 \s_{11}^{a,c}: &  -f^0\wedge &  \!\!\!\!(0,f^1,f^2,f^3\, |\, af^4,af^5, af^6,af^7\, | \,  cf^8,cf^9, cf^{10},cf^{11}),  \quad a\leq c,\\
 \s_{12}^{c}: &   -f^0\wedge &  \!\!\!\!(0,f^1,f^2,f^3\, |\, f^4,f^1+f^5,f^2+ f^6,f^3+f^7\, | \,  cf^8,cf^9, cf^{10},cf^{11}),  \quad c \in \R.
 \end{array}\]
\end{theorem}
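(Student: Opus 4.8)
The plan is to regard the whole problem as the classification of the conjugacy class of $A$ up to the scaling action $A\mapsto tA$, $t\neq 0$. This is legitimate because Lemma~\ref{lem:ad-conjugated} states that $\s_A\cong\s_{A'}$ exactly when $A'$ is conjugate to $tA$ for some $t\neq 0$; in particular $\s_A\cong\s_{tA}$ always, the isomorphism being $e_0\mapsto t^{-1}e_0$. In the present case $B=B_1$ from \eqref{eq:casos}, so $\sigma(B)\in M_2(\H)$ is diagonal with complex eigenvalues $\lambda_1=a+ib$ and $\lambda_2=c+id$, $b,d\ge 0$, and the only invariant carried by $B$ is the unordered pair $\{(a,b),(c,d)\}$ under the convention $b,d\ge 0$. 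The three pieces of data to normalize are thus $\mu$, the vector $v_0$, and this eigenvalue pair.

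First I would fix $v_0$. Since $\mu$ is real it can be an eigenvalue of $B$ only when it equals a real eigenvalue, that is $\mu=a$ with $b=0$ or $\mu=c$ with $d=0$. If this does not happen then $B-\mu I$ is invertible, $\operatorname{Im}(B-\mu I)=\h$, and Proposition~\ref{prop:v=0} lets me take $v_0=0$, so that $A=\mu I_3\oplus B$ is block diagonal; this accounts for all families with $v_0=0$. If $\mu$ does coincide with a real eigenvalue, I use the block-swap in $\operatorname{GL}(2,\H)$ to put that eigenvalue on $V_1$ (first rotating $v_0$ into $V_1$ by an element of $\operatorname{GL}(2,\H)$ in the degenerate situation where both blocks are real $\mu$-eigenspaces), and then Lemma~\ref{lem:vnot0} together with a rescaling of the $\H$-line $\operatorname{span}\{u_0,\dots,u_3\}$ normalizes $v_0$ to the standard unit vector $u_0$; these give the families with $v_0\neq 0$.

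Next I would normalize $\mu$ and the eigenvalue pair simultaneously, and the key dichotomy is whether $\mu=0$. If $\mu\neq 0$, positive scaling achieves $|\mu|=1$ and the reflection $t=-1$ achieves $\mu=1$; the stabilizer of $\mu=1$ is then generated by the block-swap alone, which is exactly why in the corresponding families $a$ and $c$ remain free real parameters subject only to a reordering convention. If $\mu=0$, the scaling group $\R^{*}$ survives: its positive part normalizes the smallest positive imaginary part to $1$, while the reflection $t=-1$ acts, after a $\operatorname{GL}(2,\H)$-conjugation restoring the form with non-negative imaginary parts, by $(a,b)\mapsto(-a,b)$ on each block, hence by $(a,c)\mapsto(-a,-c)$ with $b,d$ unchanged. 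Splitting now according to $\mu\in\{0,1\}$, $v_0\in\{0,u_0\}$, and the shape of $\{(a,b),(c,d)\}$ (both imaginary parts positive and distinct, both positive and equal, exactly one positive, or both zero) produces precisely the twelve families, and in each the displayed inequalities are a fundamental domain for the residual group generated by the block-swap and, when $\mu=0$, the reflection: thus $\s_4$ and $\s_{11}$ ($\mu=1$) need only $a\le c$ from the swap, $\s_2$ ($\mu=0$, equal imaginary parts) needs the combined swap--reflection domain, and $\s_6,\s_9,\s_{10}$ ($\mu=0$) spend the reflection to pin the sign of the remaining parameter.

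Finally I would show that the twelve families are pairwise non-isomorphic and internally non-redundant, and here Lemma~\ref{lem:no-iso} is decisive: any isomorphism forces $\mu=t\mu'$, which separates the $\mu=0$ families from the $\mu=1$ families, preserves the vanishing of $v_0$, and makes $B$ conjugate to $tB'$. That the cases $v_0=0$ and $v_0\neq 0$ genuinely differ is read off from the residues of $\dim\ker(A-\mu I)^{j}$ modulo $4$ exactly as in Remark~\ref{rem:dim_eigenspace}, and the remaining cross-identifications are excluded by comparing the scaled eigenvalue data. I expect the main obstacle to be precisely this bookkeeping of fundamental domains: one must track how the single scaling degree of freedom is consumed, entirely by $\mu$ when $\mu\neq 0$ but only partially when $\mu=0$ (leaving the reflection $(a,c)\mapsto(-a,-c)$ active), and then check case by case that the stated constraints tile the parameter space without gaps or overlaps. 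The eigenvalue computations are routine; the delicate point is reconciling the reflection with the convention $b,d\ge 0$ so that every orbit meets the chosen domain exactly once.
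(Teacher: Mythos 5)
Your proposal is correct and follows essentially the same route as the paper: normalize $v_0$ via Proposition \ref{prop:v=0} and Lemma \ref{lem:vnot0}, spend the scaling freedom of Lemma \ref{lem:ad-conjugated} on $\mu$ (or, when $\mu=0$, on the imaginary parts), cut the residual swap/reflection action down to the stated fundamental domains, and separate the families with Lemma \ref{lem:no-iso} and Remark \ref{rem:dim_eigenspace}. The only differences are cosmetic — the paper branches first on the imaginary parts $b\le d$ while you branch first on $v_0$ and $\mu$ — though you should state the $v_0$ dichotomy as $v_0\in\operatorname{Im}(B-\mu I)$ versus $v_0\notin\operatorname{Im}(B-\mu I)$ rather than as $\mu$ being an eigenvalue of $B$ or not, since $\mu$ can be an eigenvalue while $v_0$ is still normalizable to zero (e.g.\ $\s_5^{0,c}$, $\s_9^{0}$).
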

\begin{proof}
We have $B=B_1$ in \eqref{eq:casos}, and recall that $b, d\geq 0$. By permuting the elements in the basis if necessary, we may suppose that $b\leq d$. We analyze several cases.
\begin{enumerate}
    \item[(i)] Assume that $b>0,\, d>0$. According to Proposition \ref{prop:v=0}, since the eigenvalues of $B$ are not real, we may assume $v_0 =0$ (with $v_0$ as in Theorem \ref{thm:characterization}). We will use repeatedly Lemma \ref{lem:ad-conjugated} when we divide the matrix $A$ by a non-zero scalar.
    \begin{itemize}
        \item $\mu=0$: We may assume $b=1$. If $d>1$, we obtain  pairwise non-isomorphic Lie algebras denoted by $\s_1^{a,c,d}$. If $d=1$, we will denote these Lie algebras by $\s_2^{a,c}$. If $a\leq 0,\, c\leq 0$, multiplying $A$ by $(-1)$ and permuting the blocks if necessary, we may assume $0\leq a\leq c$. The remaining case is when one of the parameters is negative and the other one is positive. Multiplying $A$ by $(-1)$ and permuting the blocks if necessary, we may assume $a<0<c$ and $|a|\leq |c|$. All the Lie algebras $\s_2^{a,c}$, with these restrictions on the parameters, are pairwise non-isomorphic. 
        \item $\mu\neq 0$: We may assume $\mu =1$. If $b<d$, all these Lie algebras, denoted by $\s_3^{a,b,c,d}$, are pairwise non-isomorphic. On the other hand, if $b=d$, we may take $a\leq c$  obtaining in this way pairwise  non-isomorphic Lie algebras denoted by $\s_4^{a,b,c}$. 
\end{itemize}

\item[(ii)] Assume that $b=0,\, d>0$.
\begin{itemize}    
\item $\mu=0$: We may assume $d=1$. 
If $v_0=0$, we denote these Lie algebras by $\s_5^{a,c}$. For any choice of $a\neq 0$ and $c\in\R$ we obtain non-isomorphic Lie algebras, but, on the other hand, when $a=0$ multiplying $A$ by $(-1)$ gives rise to an isomorphic Lie algebra, so that we may assume $c\geq 0$ and the Lie algebras $\s_5^{0,c}$ are pairwise non-isomorphic for different values of $c\geq 0$. If $v_0\neq 0$, according to Proposition \ref{prop:v=0} we may take $a=0$. Moreover, we can suppose that $v_0\in V_1$ (see \eqref{eq:V1-V2}), due to Lemma \ref{lem:vnot0}. Since $V_1$ is $\hcx$-invariant, we may choose $\{v_0, \dots , v_3\}$ as a basis of $V_1$ and therefore, the matrix $A$ can be written as
\[ A=\left[\begin{array}{c|c}
          0_3   &  \\
          \hline 
        U   &  0_4
        \end{array}\right] \oplus \left[ \begin{array}{cccc}
 c&-1&& \\
             1&c&& \\
 &&c&1\\
 &&-1&c
\end{array}\right], \quad\text{ with } U \text{ as in  } \eqref{eq:matrixU}.\] 
Multiplying $ A$ by $(-1)$ and permuting the blocks of the second matrix if necessary, we may assume that $c\geq 0$ since this does not change the conjugacy class of the first matrix. Therefore, these Lie algebras are pairwise non-isomorphic for different values of $c\geq 0$ and we denote them by $\s_6^{c}$.
                \item $\mu\neq 0$: We may assume $\mu=1$. If $v_0=0$,  for an arbitrary choice of $a, \, c$ and $d>0$ we obtain non-isomorphic Lie algebras denoted by $\s_7^{a,c,d}$. If $v_0\neq 0$, according to Proposition \ref{prop:v=0} and Lemma \ref{lem:vnot0} we may assume $a=1$ and $v_0\in V_1$. 
        Since $V_1$ is $\hcx$-invariant, we may choose $\{v_0, \dots , v_3\}$ as a basis of $V_1$ and therefore, the matrix $A$ can be written as
        \[ A=\left[\begin{array}{c|c}
          I_3   &  \\
          \hline 
        U   &  I_4
        \end{array}\right] \oplus \left[ \begin{array}{cccc}
 c&-d&& \\
             d&c&& \\
 &&c&d\\
 &&-d&c
\end{array}\right], \quad\text{ with } U \text{ as in  } \eqref{eq:matrixU}.\]
These Lie algebras, denoted by $\s_8^{c,d}$, are pairwise non-isomorphic for different values of $c$ and $d>0$.
\end{itemize}

\item[(iii)] Assume that $b=d=0$. 
\begin{itemize}
    \item $\mu=0$: Let us suppose first that $v_0=0$, then $a\neq 0$ or $c\neq 0$, otherwise, the Lie algebra would be abelian. Since $a$ and $c$ are interchangeable, we may assume $a=1$. These Lie algebras, denoted by $\s_9^{c}$, are pairwise non-isomorphic provided that $|c|\leq 1$. Next, if $v_0\neq 0$,  according to Proposition \ref{prop:v=0} we may assume that $a=0$ or $c=0$. Without loss of generality, we may take $a=0$ and due to  Lemma \ref{lem:vnot0}, $v_0\in V_1$.  Moreover, using the basis $\{v_0,\dots , v_3\}$ for $V_1$, we have that
    \[ A=\left[\begin{array}{c|c}
          0_3   &  \\
          \hline 
        U  &  0_4
        \end{array}\right] \oplus cI_4, \quad\text{ with } U \text{ as in  } \eqref{eq:matrixU}  \text{ and }  c\in \R.\]
 The corresponding Lie algebras are denoted by $\s_{10}^{c}$. Note that  $\s_{10}^{0}$   is $2$-step nilpotent. On the other hand, when  $c\neq 0$ we may divide by $c$ obtaining a matrix conjugate to $A$ with $c=1$ and therefore $\s_{10}^{c}$ is isomorphic to $\s_{10}^{1}$. 
\item $\mu\neq 0$: We may assume $\mu=1$. If $v_0=0$, for any choice of $a$ and $c$ with $a\leq c$ we obtain pairwise non-isomorphic Lie algebras, denoted by $\s_{11}^{a,c}$. If $v_0\neq 0$, then according to Proposition \ref{prop:v=0} 
we may assume that $a=1$ or $c=1$. Without loss of generality, we may take $a=1$ and due to  Lemma \ref{lem:vnot0}, $v_0\in V_1$.  Moreover, using the basis $\{v_0,\dots , v_3\}$ for $V_1$, 
    \[ A=\left[\begin{array}{c|c}
          I_3   &  \\
          \hline 
        U   &  I_4
        \end{array}\right] \oplus cI_4, \quad\text{ with } U \text{ as in  } \eqref{eq:matrixU}  \text{ and }  c\in \R.\]
These Lie algebras, denoted by $\s_{12}^{c}$, are pairwise non-isomorphic. 
\end{itemize}
\end{enumerate}
Lie algebras from two different families are not isomorphic as a consequence of Lemma \ref{lem:no-iso}. 
\end{proof}

%\smallskip

Now we move to the second case, namely, $B=B_2$, 
and we start by proving the following lemma.

\begin{lemma}\label{lem:U2} Let $A$ be  as in \eqref{eq:matrix-dim8} with $B=B_2$ from  \eqref{eq:casos} and $\lambda_1=\mu$. Then
\begin{enumerate}
    \item[$\ri$] if    $v_0\in V_2$, $\g_A$ is isomorphic to $\g_{\tilde A}$, where $\tilde{A}=\left[ \begin{array}{c|c} \mu I_3 & \\
    \hline 
    & B_2    
    \end{array}\right]$, 

    \smallskip

    \item[$\rii$]  if    $v_0\notin V_2$, $\g_A$ is isomorphic to $\g_{\tilde A}$, where $\tilde{A}=\left[ \begin{array}{c|c} \mu I_3 & \\
    \hline 
    \begin{array}{c} U \\ 
     0_{4\times 3} \end{array}
    & B_2    
    \end{array}\right]$, with $U$ as in~\eqref{eq:matrixU}.
\end{enumerate}
\end{lemma}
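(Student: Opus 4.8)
The plan is to handle the two cases by the same reductions used in the proof of Theorem \ref{thm: classification1}, namely Proposition \ref{prop:v=0} and Lemma \ref{lem:vnot0}, after first understanding how $B-\mu I$ acts. Since $\lambda_1=\mu$ is real, $\sigma(B)=J_2(\mu)$, so $\sigma(B)-\mu I=\begin{bmatrix}0&0\\ 1&0\end{bmatrix}$; applying $\sigma^{-1}$ this says that $(B-\mu I)|_{V_1}\colon V_1\to V_2$ is a $\hcx$-equivariant isomorphism while $(B-\mu I)|_{V_2}=0$. In particular $\operatorname{Im}(B-\mu I)=\ker(B-\mu I)=V_2$, and $V_1$ is a $\hcx$-invariant complement of $\operatorname{Im}(B-\mu I)$ in $\h$. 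This observation is what connects the hypotheses $v_0\in V_2$ and $v_0\notin V_2$ to the two cited results.

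For $\ri$, the hypothesis $v_0\in V_2$ reads $v_0\in\operatorname{Im}(B-\mu I)$, so Proposition \ref{prop:v=0} applies and yields a $\hcx$-invariant complement in which the new $v_0'$ vanishes, without altering $\ad_{e_0'}|_\h=B$. Since the basis of $\h$ is unchanged, the matrix $A$ in this basis is block diagonal with blocks $\mu I_3$ and $B=B_2$; this is exactly $\tilde A$, whence $\g_A\cong\g_{\tilde A}$ by Lemma \ref{lem:ad-conjugated}.

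For $\rii$, the hypothesis $v_0\notin V_2=\operatorname{Im}(B-\mu I)$ lets me invoke Lemma \ref{lem:vnot0} with $W=V_1$, producing a generator $e_0'$ with $[e_0',e_\al']=\mu e_\al'+J_\al v_0'$, $\ad_{e_0'}|_\h=B$, and $0\neq v_0'\in V_1$. I would then build an adapted basis of $\h$ by setting $v_\al':=J_\al v_0'\in V_1$ and $z_\al:=(B-\mu I)v_\al'\in V_2$. Because $B$ commutes with each $J_\al$, one gets $z_\al=J_\al z_0$, so $\{z_0,\dots,z_3\}$ is a $\hcx$-basis, and by the isomorphism noted above it is a basis of $V_2$. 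In the ordered basis $\{v_0',\dots,v_3',z_0,\dots,z_3\}$ the action of $B$ reads $B v_\al'=\mu v_\al'+z_\al$ and $B z_\al=\mu z_\al$ (the latter since $z_\al\in V_2$ and $B$ acts as $\mu$ on $V_2$), which is precisely the block $B_2$; meanwhile the columns $v_1',v_2',v_3'$ now occupy the $V_1$ slots, giving the matrix $U$ on top and $0_{4\times 3}$ below. Thus $A$ equals $\tilde A$ in this basis and $\g_A\cong\g_{\tilde A}$.

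The eigenspace computation and the bookkeeping of basis vectors are routine. I expect the only delicate point to be the verification in $\rii$ that, after passing to $\{v_\al',z_\al\}$, the block $B$ is restored to $B_2$ itself rather than to some other operator sharing its Jordan form; this hinges on choosing the $V_2$-basis as the $(B-\mu I)$-image of the $V_1$-basis and on the $\hcx$-equivariance of $B-\mu I$, which together force the off-diagonal block to be $I_4$ and the diagonal blocks to be $\mu I_4$.
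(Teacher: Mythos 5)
Your proof is correct. Part \ri{} is word for word the paper's: $v_0\in V_2=\operatorname{Im}(B_2-\mu I)$, so Proposition \ref{prop:v=0} removes $v_0$ and $A$ becomes $\mu I_3\oplus B_2=\tilde A$. In part \rii{} both you and the paper begin by invoking Lemma \ref{lem:vnot0} with the $\hcx$-invariant complement $W=V_1$ of $\operatorname{Im}(B_2-\mu I)$, but you finish by a genuinely different route. The paper compares conjugacy invariants of the two nilpotent matrices $A-\mu I$ and $\tilde A-\mu I$ (equal kernel dimension plus a common bound on the nilpotency index) and concludes they are conjugate; you instead build an explicit basis $\{v_0',\dots,v_3',z_0,\dots,z_3\}$ of $\h$ with $z_\al=(B-\mu I)v_\al'$, in which $A$ literally equals $\tilde A$. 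Your verification holds up: $v_0'\neq 0$ and the $\hcx$-invariance of $V_1$ make $\{v_\al'\}$ a basis of $V_1$; the $\hcx$-equivariant isomorphism $(B-\mu I)|_{V_1}\colon V_1\to V_2$ carries it to the basis $\{z_\al\}$ of $V_2$ with $z_\al=J_\al z_0$; and the relations $Bv_\al'=\mu v_\al'+z_\al$, $Bz_\al=\mu z_\al$ reproduce exactly the block $B_2$ while the columns $v_1',v_2',v_3'$ become $U$. The explicit construction buys you something concrete here: the paper's invariant count, as printed, asserts $(A-\mu I)^2=(\tilde A-\mu I)^2=0$, which is not true in this situation (one has $(A-\mu I)^2e_\al=(B-\mu I)v_\al\neq 0$; it is the cubes that vanish, and one must also match $\dim\ker(A-\mu I)^2$), whereas your change of basis needs no such bookkeeping and produces the conjugating matrix directly.
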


\begin{proof}
If  $v_0\in V_2=\operatorname{Im} (B_2-\mu I)$,  $\ri$ follows from  Proposition \ref{prop:v=0}.\\
Assume next that  $v_0\notin V_2$. Lemma \ref{lem:vnot0} implies that we may assume $v_0\in V_1$, so that $A$ takes the form   $A=\left[ \begin{array}{c|c} \mu I_3
& \\
    \hline 
    \begin{array}{c} V \\ 
     0_{4\times 3} \end{array}
    & B_2    
    \end{array}\right]$, where $V$ is the $4\times 3$ matrix whose columns are $v_\al, \, \al=1,2,3$, which are linearly independent. Since $\dim \ker (A-\mu I)=\dim \ker (\tilde A-\mu I)$ and $(A-\mu I)^2=(\tilde A-\mu I)^2=0$, it follows that $A$ is conjugate to $\tilde A$ as in $\rii$, hence $\g_A$ is isomorphic to $\g_{\tilde A}$.  
\end{proof}

\smallskip

\begin{theorem} \label{thm: classification2}
Let $\s=\R e_0\ltimes_A \R^{11}$ be a $12$-dimensional almost abelian Lie algebra admitting a hypercomplex structure $\hcx$, with $A$ as in \eqref{eq:matrixA} and $B=B_2$ from \eqref{eq:casos}. Then $\s$ is isomorphic to one and only one of the following Lie algebras:
\[ \begin{array}{rll}
 \s_{13}^{a}: &   -f^0\wedge &  \!\!\!\!(0,0,0,0\, |\, af^4-f^5,f^4+af^5, af^6+f^7,-f^6+af^7\, | \\ & &f^4+af^8-f^9,f^5+f^8+af^9, f^6+af^{10}+f^{11},f^7-f^{10}+af^{11}),  \quad a \geq 0, \\
 \s_{14}^{a,b}: &   -f^0\wedge &  \!\!\!\!(0,f^1,f^2,f^3\, |\, af^4-bf^5,bf^4+af^5, af^6+bf^7,-bf^6+af^7\, | \\ & &f^4+af^8-bf^9,f^5+bf^8+af^9, f^6+af^{10}+bf^{11},f^7-bf^{10}+af^{11}),   \text{\scriptsize{ $a \in \R,\, b>0$}},\\
 \s_{15}: &   -f^0\wedge &  \!\!\!\!(0,0,0,0\, |\, f^4,f^5, f^6,f^7\, | \, f^4+f^8,f^5+f^9, f^6+f^{10},f^7+f^{11}), \\
%\s_{16}: &   -f^0\wedge &  \!\!\!\!(0,0,0,0\, |\, 0,0,0,0\, | \, f^4,f^5, f^6,f^7), \\
\s_{16}^s: &   -f^0\wedge &  \!\!\!\!(0,0,0,0\, |\, 0,sf^1,sf^2,sf^3\, | \, f^4,f^5, f^6,f^7), \quad s =0 \text{ or } 1,\\
\s_{17}^a: &   -f^0\wedge &  \!\!\!\!(0,f^1,f^2,f^3\, |\, af^4,af^5, af^6,af^7\, | \, f^4+af^8,f^5+af^9, f^6+af^{10},f^7+af^{11}), \, \text{\scriptsize{$a\in \R$}},\\
 \s_{18}: &   -f^0\wedge &  \!\!\!\!(0,f^1,f^2,f^3\, |\, f^4,f^1+f^5,f^2+ f^6,f^3+f^7\, | \, f^4+f^8,f^5+f^9, f^6+f^{10},f^7+f^{11}).
\end{array}\]
\end{theorem}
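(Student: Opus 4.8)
The plan is to mirror the proof of Theorem~\ref{thm: classification1}, now using that $\sigma(B)=J_2(\lambda_1)$ is a single quaternionic Jordan block, so $B=B_2$ links the two $\hcx$-invariant summands $V_1,V_2$ of $\h$ through the off-diagonal identity in \eqref{eq:casos}. Writing $\lambda_1=a+ib$ with $b\geq 0$, the primary split is $b>0$ versus $b=0$. When $b>0$ the eigenvalues of $B$ are non-real, so $\mu$ is not an eigenvalue of $B$ and Proposition~\ref{prop:v=0} lets us take $v_0=0$, giving $A=\mu I_3\oplus B_2$. Subdividing into $\mu=0$ and $\mu\neq 0$ and normalizing by Lemma~\ref{lem:ad-conjugated}---dividing $A$ by a nonzero scalar, and using the residual sign freedom together with the similarity $J_2(\lambda)\sim J_2(\bar\lambda)$ to arrange $a\geq 0$ when $\mu=0$, and $\mu=1$ when $\mu\neq 0$---yields precisely the families $\s_{13}^{a}$ and $\s_{14}^{a,b}$.

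Next I would treat $b=0$, where $\lambda_1=a$ is real and the decisive invariant is whether $\mu=a$. If $\mu\neq a$ then $\mu$ is again not an eigenvalue of $B$, so Proposition~\ref{prop:v=0} forces $v_0=0$ and $A=\mu I_3\oplus B_2$; rescaling to $\mu=0,\ a=1$ gives $\s_{15}$, and rescaling to $\mu=1$ (with the ratio $a$ absorbing the remaining freedom) gives $\s_{17}^{a}$ for $a\neq 1$. The new phenomenon appears when $\mu=a$, i.e.\ $\lambda_1=\mu$, which is exactly the hypothesis of Lemma~\ref{lem:U2}. Here $\operatorname{Im}(B_2-\mu I)=V_2$, so the lemma splits the analysis according to $v_0\in V_2$ or $v_0\notin V_2$: in the former $v_0$ is absorbed and $A\sim \mu I_3\oplus B_2$, in the latter $A$ is conjugate to the normal form carrying the block $U$ of \eqref{eq:matrixU} in its lower-left corner. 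For the nilpotent value $\mu=a=0$ these are the algebras $\g_{A_0}$ and $\g_{A_1}$ of Theorem~\ref{thm:classif_nilp}, recorded as $\s_{16}^{0}$ and $\s_{16}^{1}$; for $\mu=a\neq 0$, rescaling to $\mu=1$ produces $\s_{17}^{1}$ (the branch $v_0\in V_2$) and $\s_{18}$ (the branch $v_0\notin V_2$).

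It remains to check that the listed algebras are pairwise non-isomorphic and that the stated parameter ranges are irredundant. For this I would apply Lemma~\ref{lem:no-iso}: part~$\ri$ separates the $\mu=0$ families from the $\mu\neq 0$ ones; part~$\rii$ separates the $v_0=0$ normal forms from the $v_0\neq 0$ ones, in particular $\s_{17}^{1}$ from $\s_{18}$ and $\s_{16}^{0}$ from $\s_{16}^{1}$; and part~$\riii$, stating that $B$ is conjugate to $cB'$, shows that once $\mu$ is normalized the remaining continuous parameters $a,b$ are invariants of the conjugacy class of $B$ (its eigenvalue $a+ib$), so distinct admissible values give non-isomorphic algebras. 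That no algebra with $B=B_1$ is isomorphic to one with $B=B_2$ was already noted after \eqref{eq:casos}, so no comparison with Theorem~\ref{thm: classification1} is needed.

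I expect the main obstacle to be bookkeeping rather than conceptual. One must verify, family by family, that the normalization exhausts the scaling and sign freedom of Lemma~\ref{lem:ad-conjugated} (justifying the ranges $a\geq 0$ in $\s_{13}^{a}$, $b>0$ in $\s_{14}^{a,b}$, and the absorption of the $\mu=a\neq0$, $v_0\in V_2$ case into $\s_{17}^{1}$ rather than a separate symbol), and one must confirm that carrying out Lemma~\ref{lem:U2}$\rii$ in the basis $\{v_0,J_1v_0,J_2v_0,J_3v_0\}$ of $V_1$ reproduces exactly the displayed brackets of $\s_{18}$ and $\s_{16}^{1}$. These are the explicit normal-form computations, together with the evaluations of $\dim\ker(A-\lambda I)^j$ underlying Lemma~\ref{lem:no-iso}, that make the enumeration match the claimed list.
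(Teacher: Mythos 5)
Your proposal is correct and follows essentially the same route as the paper: split on $b>0$ versus $b=0$, invoke Proposition~\ref{prop:v=0} to kill $v_0$ whenever $\mu$ is not an eigenvalue of $B_2$, apply Lemma~\ref{lem:U2} exactly in the case $\lambda_1=\mu$ to produce the two normal forms with and without the block $U$, normalize scalars via Lemma~\ref{lem:ad-conjugated}, and separate the families with Lemma~\ref{lem:no-iso}. The only cosmetic difference is that you organize the $b=0$ branch by whether $\mu=a$ before splitting on $\mu=0$ versus $\mu\neq 0$, whereas the paper does the reverse; the resulting case list and normal forms are identical.
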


\begin{proof} 

We have $B=B_2$ in \eqref{eq:casos}. Recall that $b\geq 0$. We analyze several cases. 

\begin{enumerate}
\item[(i)] Assume $b>0$, then $B-\mu I$ is invertible and it follows from Proposition \ref{prop:v=0} that we may take $v_0=0$. 
\begin{itemize}
    \item $\mu=0$: We can assume $b=1$ and these Lie algebras are denoted by  $\s_{13}^{a}$. Multiplying $A$ by $(-1)$ and permuting blocks if necessary, we may assume that $a\geq 0$ and they are pairwise non-isomorphic for different values of $a\geq 0$. 
    \item $\mu\neq 0$: We may take $\mu=1$ and for any choice of $a$ and $b>0$ we obtain pairwise non-isomorphic Lie algebras, denoted by $\s_{14}^{a,b}$. 
\end{itemize}

\item[(ii)]  Assume $b=0$. 
\begin{itemize}
\item $\mu=0$: If $a\neq 0$, we may assume $a=1$ and $v_0=0$, so in this case there is only one Lie algebra up to isomorphism, denoted by $\s_{15}$. On the other hand, if $a=0$ we may have $v_0\in V_2$ or $v_0\notin V_2$. According to Lemma \ref{lem:U2}, $\g_A$ is isomorphic to $\g_{\tilde A}$ where 
    \[
    \tilde A=\left[\begin{array}{c|c|c}
          0_3   &  &\\
          \hline 
           &  0_4 & \\
        \hline
         & I_4&0_4
        \end{array}\right], \quad\text{ or }\quad 
        \tilde A=\left[\begin{array}{c|c|c}
          0_3   &  &\\
          \hline 
        U   &  0_4 & \\
        \hline
         & I_4&0_4
        \end{array}\right], \quad\text{ with } U \text{ as in  } \eqref{eq:matrixU}, 
    \]
giving rise to the Lie algebras  $\s_{16}^0$ and $\s_{16}^1$, respectively. The Lie algebra $\s_{16}^0$ is $2$-step nilpotent, whereas $\s_{16}^1$ is $3$-step nilpotent.

\item $\mu\neq 0$:  We may assume $\mu=1$. If $a\neq 1$, we can take $v_0=0$ and we obtain the Lie algebras denoted by $\s_{17}^a$, which are pairwise non-isomorphic for different values of $a$. On the other hand, if $a=1$, then either $v_0\in V_2$  or $v_0\notin V_2$, and according to Lemma \ref{lem:U2}, $\g_A$ is isomorphic to $\g_{\tilde A}$ where 
    \[ \tilde A=\left[\begin{array}{c|c|c}
          I_3   &  &\\
          \hline 
           &  I_4 & \\
        \hline
         & I_4&I_4
        \end{array}\right] \quad 
        \text{ or } \quad 
        \tilde A=\left[\begin{array}{c|c|c}
          I_3   &  &\\
          \hline 
        U   &  I_4 & \\
        \hline
         & I_4&I_4
        \end{array}\right], \quad \text{ with } U \text{ as in  } \eqref{eq:matrixU},
        \] 
giving rise to the Lie algebras $\s_{17}^1$ and $\s_{18}$, respectively.
\end{itemize}
\end{enumerate}
Lie algebras from two different families are not isomorphic as a consequence of Lemma \ref{lem:no-iso}.
\end{proof}

\smallskip

\begin{corollary}\label{cor:unimod}
Among the Lie algebras in Theorems \ref{thm: classification1} and \ref{thm: classification2}, we have that 
\begin{itemize}
    \item the unimodular ones are: $\s_1^{a,c,d}$ and $\s_2^{a,c}$ with $a+c=0$, $\s_3^{a,b,c,d}$ and  $\s_4^{a,b,c}$ with $a+c=-\frac34$, $\s_5^{a,c}$ with $a+c=0$, $\s_6^0$, $\s_7^{a,c,d}$ with $a+c=-\frac34$, $\s_8^{-\frac74,d}, \,\s_9^{-1},\, \s_{10}^0$, $\s_{11}^{a,c}$ with $a+c=-\frac34$, $\s_{12}^{-\frac74},\, \s_{13}^0, \, \s_{14}^{-\frac38,b}, \,  \s_{16}^s $ and $  \s_{17}^{-\frac38}$,

    \item the completely solvable ones are: $\s_9^{c},\, \s_{10}^c$, $\s_{11}^{a,c}, \, \s_{12}^{c},\, \s_{15}, \,   \s_{16}^s, \, \s_{17}^{a}$ and $\s_{18}$,

    \item the nilpotent ones are: $\s_{10}^0$ (2-step), $\s_{16}^0$ (2-step) and $\s_{16}^1$ (3-step).
\end{itemize}
\end{corollary}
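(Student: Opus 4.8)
The plan is to reduce all three properties to elementary facts about the single matrix $A$ and then read them off family by family. For an almost abelian Lie algebra $\g=\R e_0\ltimes_A\R^{11}$ one has $\ad_{e_0}(e_0)=0$ and $\ad_{e_0}|_{\u}=A$, while for $x$ in the abelian ideal $\u$ the operator $\ad_x$ satisfies $\ad_x^2=0$; hence the eigenvalues of $\ad_{e_0}$ are $0$ together with those of $A$, and every $\ad_x$ with $x\in\u$ is nilpotent. Consequently $\g$ is unimodular if and only if $\tr A=0$ (using the criterion $\tr(\ad_x)=0$ recalled in \S\ref{S: prelim}), $\g$ is completely solvable if and only if $A$ has only real eigenvalues, and $\g$ is nilpotent if and only if $A$ is nilpotent, i.e. all eigenvalues of $A$ vanish. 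Thus the entire corollary reduces to computing $\tr A$ and the spectrum of $A$ in each of the eighteen families.

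For unimodularity I would compute $\tr A=3\mu+\tr B$ from the block form \eqref{eq:matrixA}, noting that the off-diagonal $U$-blocks appearing in the representatives with $v_0\neq0$ are strictly lower triangular and so contribute nothing to the trace. From \eqref{eq:casos} one reads $\tr B_1=4(a+c)$ and $\tr B_2=8a$, so solving $\tr A=0$ yields $a+c=0$ when $\mu=0$ and $a+c=-\tfrac34$ when $\mu=1$ for the $B_1$-families, and $a=0$ resp. $a=-\tfrac38$ for the $B_2$-families. The cases needing extra care are the representatives in which a block $\mu I_3$ was replaced by $I_3$ and an additional coupling block $I_4$ appears, namely $\s_8^{c,d}$, $\s_{12}^c$ and $\s_{18}$: there $\tr A=3+4+4c=7+4c$ (forcing $c=-\tfrac74$ in $\s_8$ and $\s_{12}$), while $\tr A=3+4+4=11\neq0$ for $\s_{18}$, so $\s_{18}$ is not unimodular. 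Matching these constraints against the parameter ranges in Theorems \ref{thm: classification1} and \ref{thm: classification2} produces exactly the unimodular list. Since $\tr(cA)=c\,\tr A$, this condition is compatible with the rescaling freedom of Lemma \ref{lem:ad-conjugated}, hence well defined on isomorphism classes.

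For complete solvability and nilpotency I would use that the spectrum of $A$ is $\mu$ together with the spectrum of $B$. By construction $B_1$ has eigenvalues $a\pm bi$ and $c\pm di$, while $B_2$ has eigenvalues $a\pm bi$; these are all real precisely when $b=d=0$ (for $B_1$) and when $b=0$ (for $B_2$). These are exactly the hypotheses of case (iii) in the proof of Theorem \ref{thm: classification1} and of case (ii) in the proof of Theorem \ref{thm: classification2}, giving the completely solvable families $\s_9^c,\dots,\s_{12}^c$ and $\s_{15},\dots,\s_{18}$. Imposing in addition $\mu=0$ together with nilpotency of $B$ (that is $a=b=c=d=0$ for $B_1$ and $a=b=0$ for $B_2$) leaves only $\s_{10}^0$, $\s_{16}^0$ and $\s_{16}^1$; the nilpotency steps ($2$, $2$ and $3$ respectively) were already recorded in the proofs of those theorems.

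There is no genuine obstacle beyond bookkeeping, since the substantive work was already carried out in Theorems \ref{thm: classification1} and \ref{thm: classification2}. The one place demanding attention is tracking how the scalar normalizations performed during the classification (setting $b=1$, $\mu=1$, and dividing $A$ by a constant via Lemma \ref{lem:ad-conjugated}) fix the scale, so that $\tr A=0$ lands on the precise rational values $-\tfrac34,\,-\tfrac74,\,-\tfrac38$ appearing in the statement, and correctly reading $\tr A$ off the rewritten representatives with $v_0\neq0$.
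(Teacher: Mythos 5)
Your proposal is correct and is exactly the intended argument: the paper states this corollary without proof, as a direct inspection of the representatives in Theorems \ref{thm: classification1} and \ref{thm: classification2}, and your reduction to $\tr A=3\mu+\tr B$ for unimodularity, to the reality of the spectrum of $B$ for complete solvability, and to nilpotency of $A$ (i.e. $\mu=0$ and $B$ nilpotent) for the last item, together with the checks on the exceptional representatives $\s_8$, $\s_{12}$, $\s_{18}$, recovers precisely the stated lists.
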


%\smallskip

\begin{remark}\label{rem:hkt}
 It was proved in \cite[Proposition 4.1]{AB} that a hyperhermitian metric on a hypercomplex almost abelian Lie algebra is HKT if and only if $B$ is skew-adjoint and $v_0=0$, where $B$ and $v_0$ are as in Theorem \ref{thm:characterization}. Moreover, the HKT metric is hyper-Kähler if and only if $\mu=0$. 
 
In the 12-dimensional case, we have that the Lie algebras admitting an HKT metric only appear in the case $B=B_1$ with $a=c=0$. Thus, they are $\s_3^{0,b,0,d}$, $\s_4^{0,b,0}$ and $\s_7^{0,0,d}$ (when the metric is not hyper-Kähler), and $\s_1^{0,0,d}$, $\s_2^{0,0}$ and $\s_5^{0,0}$ (when the metric is hyper-Kähler).
\end{remark}

%\textcolor{red}{Mencionar cuáles admiten métricas hiperkähler o HKT.}

%\medskip

\section{Lattices in 12-dimensional hypercomplex almost abelian Lie groups}\label{sec:lattices}

In this section we will determine which of the simply connected  Lie groups corresponding to the hypercomplex  Lie algebras listed in Theorems \ref{thm: classification1} and \ref{thm: classification2} have lattices. We begin by proving a useful result about polynomials with integer coefficients.

\begin{lemma}\label{lem:odd} 
Let $p\in\Z[x]$ be a monic polynomial with $|p(0)|=1$. Then 
\begin{enumerate}
    \item[$\ri$] if $p$ has exactly one root $\alpha$ of odd multiplicity then $\al=\pm 1$;
    \item[$\rii$] if $\alpha\in\C$ and $\bar{\alpha}\neq \alpha$ are the only roots of $p$  with odd multiplicity, then $|\alpha |=1$. 
\end{enumerate}
\end{lemma}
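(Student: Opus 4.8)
The plan is to exploit the factorization of $p$ into monic irreducible polynomials over $\Z$, which is available by Gauss's lemma since $p$ is monic with integer coefficients. Write $p=\prod_{i} f_i^{a_i}$ with the $f_i\in\Z[x]$ distinct, monic and irreducible. Because $\Q$ has characteristic zero, each $f_i$ is separable, and distinct irreducible factors share no roots; hence every root of a given $f_i$ occurs in $p$ with multiplicity exactly $a_i$. In particular, the roots of odd multiplicity are precisely the roots of those $f_i$ with $a_i$ odd, and all roots of such an $f_i$ share the same odd multiplicity.

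First I would identify the relevant irreducible factor in each case. For $\ri$, the hypothesis that $\alpha$ is the unique root of odd multiplicity forces exactly one factor, say $f$, to have odd exponent, and $f$ to have a single root; a monic separable irreducible polynomial in $\Z[x]$ with a single root has degree one, so $f=x-\alpha$ with $\alpha\in\Z$. For $\rii$, since each $f_i$ has real (indeed integer) coefficients, any non-real root $\alpha$ of $f_i$ is accompanied by $\bar\alpha$ as a root of the same $f_i$; as $\alpha$ and $\bar\alpha$ are the only odd-multiplicity roots and $\alpha\neq\bar\alpha$, the unique factor $f$ with odd exponent is the quadratic $f=(x-\alpha)(x-\bar\alpha)=x^2-(\alpha+\bar\alpha)x+|\alpha|^2\in\Z[x]$.

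The key leverage then comes from evaluating at $0$. Since $p(0)=\prod_i f_i(0)^{a_i}$ and $|p(0)|=1$, and since every $f_i(0)$ is a nonzero integer (nonzero because $p(0)\neq 0$), we obtain $\prod_i |f_i(0)|^{a_i}=1$ with each factor a positive integer; this forces $|f_i(0)|=1$ for every $i$. Applying this to the distinguished factor $f$ gives $|\alpha|=|f(0)|=1$ in case $\ri$, whence $\alpha=\pm1$ because $\alpha\in\Z$; and $|\alpha|^2=|f(0)|=1$ in case $\rii$, whence $|\alpha|=1$.

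The only point requiring care --- and the step I expect to be the main, if modest, obstacle --- is the structural claim that the odd-multiplicity roots come from a single linear, respectively quadratic, integer factor; this rests on separability over $\Q$, on distinct monic irreducibles being coprime, and on the closure of the root multiset of a real polynomial under complex conjugation. Once these facts are in place, the integrality argument at $x=0$ closes both statements uniformly, and indeed shows the stronger fact that $|f_i(0)|=1$ for \emph{all} irreducible factors.
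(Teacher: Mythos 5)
Your argument is correct, and it takes a genuinely different route from the paper's. The paper proves $\ri$ by induction on $\deg p$: it picks any repeated root $\beta$, notes that its minimal polynomial $m_\beta$ lies in $\Z[x]$ and that $m_\beta^2$ divides $p$ (citing a lemma from \cite{AdBM}), writes $p=m_\beta^2 q$ with $q$ monic integer and $|q(0)|=1$ having the same unique odd-multiplicity root, and descends to the base case $\deg p=1$; part $\rii$ is then declared analogous. You instead factor $p$ completely into monic irreducibles over $\Z$ via Gauss's lemma, use separability and coprimality to see that the odd-multiplicity roots are exactly the roots of the factors with odd exponent, identify the distinguished factor as linear (case $\ri$) or as the quadratic $x^2-(\al+\bar\al)x+|\al|^2$ (case $\rii$), and then conclude from $\prod_i |f_i(0)|^{a_i}=1$ that every $|f_i(0)|=1$. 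Your structural claims are all sound: each root of $f_i$ has multiplicity exactly $a_i$ in $p$, so a unique odd-multiplicity root forces a single odd-exponent factor of degree one, and in case $\rii$ the conjugate-closure of the root set of each real factor forces $\al$ and $\bar\al$ into the same quadratic odd-exponent factor. What your approach buys is a uniform, induction-free treatment of both parts and the stronger conclusion that $|f_i(0)|=1$ for \emph{every} irreducible factor of $p$; what the paper's approach buys is a slightly lighter toolkit (it only needs the divisibility $m_\beta^2\mid p$ rather than the full unique factorization), at the cost of an induction and of leaving part $\rii$ to the reader.
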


\begin{proof}
To prove $\ri$ we proceed by induction on the degree of $p$. We first observe that 
if $p$ has exactly one root $\alpha$ of odd multiplicity, then $\alpha\in \R$ 
since $p$ has real coefficients. 

If $\deg p=1$ then $p(x)=x-\al$. In particular, $\al\in\Z$ and $1=|p(0)|=|\al|$, therefore $\al=\pm 1$.

If $\deg p>1$ it follows that, indeed, $\deg p\geq 3$. Let $\beta$ be any root of $p$ of multiplicity at least~2. Let $m_\beta\in \Q[x]$ be the minimal polynomial of $\beta$. Since $m_\beta$ divides $p$ and $p$ is monic with $|p(0)|=1$ we have that $m_\beta\in \Z[x]$. Moreover, since $\beta$ is not a simple root, we have that $m_\beta^2$ divides $p$ (see for instance \cite[Lemma 8.5]{AdBM}). Hence, $p(x)=m_\beta^2(x)q(x)$ for some $q\in \Z[x]$. Thus, $q$ is a monic integer polynomial with $|q(0)|=1$, $\al$ is a root of $q$ and, moreover, it is the only root of $q$ with odd multiplicity. Since $\deg q<\deg p$ the inductive hypothesis ensures that $\al=\pm 1$, and  $\ri$  follows.

The proof of $\rii$ is analogous. 
\end{proof}

\smallskip

Using Lemma \ref{lem:odd} we obtain next a necessary condition on the matrix $A$ as in \eqref{eq:matrixB} so that the simply connected Lie group $G$ associated to $\g=\R e_0 \ltimes_A \R^{4n-1}$ admits lattices.

\begin{proposition}\label{prop:mu=0}
Let $\g=\R e_0 \ltimes_A \R^{4n-1}$ be a $4n$-dimensional almost abelian Lie algebra admitting a hypercomplex structure $\hcx$, with $A$ as in \eqref{eq:matrixA}, for some $B\in \mathfrak{gl}(n-1,\H)$. If the simply connected Lie group $G$ associated to $\g$ has lattices then $\mu =0$ and $\tr B=0$.
\end{proposition}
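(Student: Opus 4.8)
The plan is to combine the necessary unimodularity of $G$ with Bock's criterion (Proposition~\ref{prop: Bock}) and the arithmetic Lemma~\ref{lem:odd}. Since $G$ admits a lattice, $G$ is unimodular \cite{Mi}, so $\tr(\ad_{e_0})=\tr A=0$; and for $x$ in the abelian ideal $\ad_x$ already has trace $0$, so unimodularity is exactly the condition $\tr A=0$. From the block form \eqref{eq:matrixA} one reads off $\tr A=3\mu+\tr B$, whence $3\mu+\tr B=0$. Consequently it suffices to prove that $\mu=0$, and $\tr B=-3\mu=0$ will follow at once. So the whole problem reduces to showing that $\mu$ vanishes.

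To show $\mu=0$ I would invoke Bock's criterion. By unimodularity Proposition~\ref{prop: Bock} applies, so there is $t_0\neq 0$ such that $\varphi(t_0)=e^{t_0A}$ is conjugate to a matrix $E\in\operatorname{SL}(4n-1,\Z)$. Let $p\in\Z[x]$ be the characteristic polynomial of $E$; it is monic and $|p(0)|=|\det E|=1$, so Lemma~\ref{lem:odd} is available. Since conjugation preserves the characteristic polynomial and the eigenvalues of $e^{t_0A}$ are exactly the numbers $e^{t_0\lambda}$, with $\lambda$ running over the eigenvalues of $A$ and with the same algebraic multiplicities, the roots of $p$ are precisely these $e^{t_0\lambda}$. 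In particular $e^{t_0\mu}$, a positive real number, is a root. The goal is to verify that $e^{t_0\mu}$ is the \emph{unique} root of $p$ of odd multiplicity: Lemma~\ref{lem:odd}\ri would then force $e^{t_0\mu}=\pm1$, and positivity gives $e^{t_0\mu}=1$, i.e. $\mu=0$ since $t_0\neq0$.

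The heart of the matter, and the step I expect to require the most care, is the multiplicity bookkeeping that yields this uniqueness. First, as in the proof of Lemma~\ref{lem:no-iso} (via Remark~\ref{rem:dim_eigenspace}), the algebraic multiplicity $m_A(\mu)$ satisfies $m_A(\mu)\equiv3\pmod4$: indeed $\det(xI-A)=(x-\mu)^3\det(xI-B)$, and the generalized $\mu$-eigenspace of $B$ is $\hcx$-invariant, hence of dimension divisible by $4$, so $m_A(\mu)=3+m_B(\mu)$ is odd. Secondly, every eigenvalue $\lambda\neq\mu$ of $A$ — real or complex — has \emph{even} multiplicity. Here $m_A(\lambda)=m_B(\lambda)$, and I would exploit that $B$ commutes with $J_1$ and $J_2$: complexifying $\R^{4n-4}$ and splitting it into the $\pm i$-eigenspaces $W^{+}\oplus W^{-}$ of $J_1$ gives a $B$-invariant decomposition, while $J_2$ maps $W^{+}$ isomorphically onto $W^{-}$ and commutes with $B$, so it intertwines $B|_{W^{+}}$ and $B|_{W^{-}}$. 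Thus every eigenvalue of $B$ occurs with equal multiplicity in $W^{+}$ and $W^{-}$, hence with even total multiplicity.

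It remains to push this through the map $\lambda\mapsto e^{t_0\lambda}$. Distinct real eigenvalues of $A$ go to distinct positive reals, so no real $\lambda\neq\mu$ can land on $e^{t_0\mu}$; any non-real eigenvalues come in conjugate pairs of equal (even) multiplicity, and whether such a pair maps to a conjugate pair of non-real roots or collapses onto a single real root, the multiplicity it contributes is even. Therefore every root of $p$ other than $e^{t_0\mu}$ has even multiplicity, whereas the multiplicity at $e^{t_0\mu}$ equals $m_A(\mu)$ plus an even number, hence is odd. This gives the uniqueness required in the second paragraph, Lemma~\ref{lem:odd}\ri yields $\mu=0$, and substituting back into $3\mu+\tr B=0$ gives $\tr B=0$, completing the argument.
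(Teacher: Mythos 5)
Your proof is correct and follows essentially the same route as the paper: Bock's criterion, the factorization of the characteristic polynomial of $\e^{t_0A}$ as $(x-\e^{t_0\mu})^3$ times that of $\e^{t_0B}$, the observation that $\e^{t_0\mu}$ is the unique root of odd multiplicity, and Lemma~\ref{lem:odd}$\ri$ to conclude $\mu=0$ and then $\tr B=-3\mu=0$ from unimodularity. The only (harmless) difference is that you justify the even multiplicity of the eigenvalues of $B$ directly via the commuting $J_1,J_2$, whereas the paper invokes the quaternionic Jordan form (Theorem~\ref{thm:Jordan}) applied to $\sigma(B)$; your collision bookkeeping for the map $\lambda\mapsto\e^{t_0\lambda}$ is in fact slightly more careful than the paper's.
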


\begin{proof}
Since $G$ has lattices, it follows from Proposition \ref{prop: Bock} that there exists $t\neq 0$ such that $E:=\e^{tA}$ is conjugate to a matrix in $\operatorname{SL}(4n-1,\Z)$. Hence the characteristic polynomial $p_E$ of $E$ is integer and monic, with $|p_E(0)|=1$. Moreover, it follows from \eqref{eq:matrixA} that $p_E(x)=(x-\e^{t\mu})^3p_F(x)$, where $p_F$ is the characteristic polynomial of $F:=\e^{tB}$. Applying Theorem \ref{thm:Jordan} to the matrix $\sigma(B)$, it follows that the multiplicities of the real roots of $p_F$ are divisible by 4, while the non-real complex roots have even multiplicity. Therefore $e^{t\mu}$ is the only root of $p_E$ with odd multiplicity, thus $e^{t\mu}=1$ according to Lemma \ref{lem:odd}. Since $t\neq 0$ we obtain $\mu=0$. The fact that $\tr B=0$ follows immediately from this and $\tr A=0$.
\end{proof}

%\medskip

We determine next which of the simply connected  unimodular Lie groups whose corresponding Lie algebras appear in Corollary \ref{cor:unimod} admit lattices.

We will denote the simply connected Lie group corresponding to one of the Lie algebras in Theorems \ref{thm: classification1} and \ref{thm: classification2} by replacing $\s$ by $S$; for instance, $S_1^{a,c,d}$ denotes the simply connected Lie group corresponding to $\s_1^{a,c,d}$.

We begin with the nilpotent case. The following result is a consequence of Remark \ref{rem:latt-nilp}.

\begin{proposition}
The nilpotent Lie groups $S_{10}^0$, $S_{16}^0$ and $S_{16}^1$ have lattices.    
\end{proposition}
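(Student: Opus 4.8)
The plan is to recognize these three Lie groups as instances of the general nilpotent existence result already established, rather than to construct lattices by hand.

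First I would recall from Corollary \ref{cor:unimod} that $\s_{10}^0$, $\s_{16}^0$ and $\s_{16}^1$ are precisely the nilpotent Lie algebras among the families classified in Theorems \ref{thm: classification1} and \ref{thm: classification2} (they are $2$-step, $2$-step and $3$-step nilpotent, respectively). Consequently $S_{10}^0$, $S_{16}^0$ and $S_{16}^1$ are simply connected nilpotent hypercomplex almost abelian Lie groups, and so they fall under the scope of the earlier general statement.

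The second and final step is to invoke Remark \ref{rem:latt-nilp}, which guarantees that \emph{every} simply connected nilpotent hypercomplex almost abelian Lie group admits a lattice. Concretely, this rests on Malcev's criterion (Theorem \ref{thm:Mal}): one only needs a basis in which the structure constants are rational. For each of the three algebras the matrix $A$ of $\ad_{e_0}|_\u$ is of the integral form \eqref{eq:matrixN} (for $\s_{10}^0$, with $s=n-1=2$) or one of the two matrices $\tilde{A}$ displayed in the proof of Theorem \ref{thm: classification2} (for $\s_{16}^0$ and $\s_{16}^1$), all of whose entries lie in $\{0,1\}$. Hence in the basis $\{f_0,\dots,f_{11}\}$ the structure constants are integers, and Malcev's criterion yields the lattices.

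There is no genuine obstacle here: the statement is a direct specialization of Remark \ref{rem:latt-nilp} to the three nilpotent algebras isolated in Corollary \ref{cor:unimod}. The only verifications required are the identification of these as the nilpotent members of the classification and the integrality of their defining matrices, both of which are immediate from the explicit data recorded in \S\ref{sect:dim12}.
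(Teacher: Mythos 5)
Your proposal is correct and follows exactly the paper's route: the paper states this proposition as an immediate consequence of Remark \ref{rem:latt-nilp}, which itself rests on Malcev's criterion (Theorem \ref{thm:Mal}) and the integrality of the matrices in the nilpotent classification. Your additional verification that the three algebras are precisely the nilpotent members of the $12$-dimensional classification, with integer defining matrices, matches the paper's reasoning.
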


%\medskip

We state next a non-existence result, which follows easily from Proposition \ref{prop:mu=0}, since the  matrices $A$ defining the corresponding Lie algebras have $\mu\neq 0$.

\begin{proposition}
The following unimodular Lie groups do not admit lattices:
\begin{itemize}
    \item $S_3^{a,b,c,d}$,
    \item $S_4^{a,b,c}$ with $a+c=-\frac34$,
    \item $S_7^{a,c,d}$ with $a+c=-\frac34$, $d>0$,
    \item $S_8^{-\frac74,d}$ with $d>0$,
    \item $S_{11}^{a,c}$ with $a+c=-\frac34$,
    \item $S_{12}^{-\frac74}$,
    \item $S_{14}^{-\frac38,b}$ with $d>0$,
    \item $S_{17}^{-\frac38}$.
\end{itemize}
\end{proposition}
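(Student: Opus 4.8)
The plan is to argue by contraposition using Proposition~\ref{prop:mu=0}. That proposition guarantees that whenever the simply connected Lie group $G$ associated to a hypercomplex almost abelian Lie algebra $\g=\R e_0\ltimes_A\R^{4n-1}$ admits a lattice, the parameter $\mu$ in \eqref{eq:matrixA} must vanish. Hence it suffices to exhibit, for each of the eight families in the statement, that its defining matrix $A$ has $\mu\neq 0$; the non-existence of lattices then follows immediately from the contrapositive.

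First I would read off $\mu$ directly from the normal forms produced in Theorems~\ref{thm: classification1} and~\ref{thm: classification2}. In the coordinate notation $-f^0\wedge(\al_0,\dots,\al_3\,|\,\cdots)$, the scalar $\mu$ is exactly the coefficient of $f^1,f^2,f^3$ occurring in $\al_1,\al_2,\al_3$, namely the common $(1,1),(2,2),(3,3)$ entry of the block $\mu I_3$ sitting in $A$. Inspecting the list, each of $\s_3^{a,b,c,d}$, $\s_4^{a,b,c}$, $\s_7^{a,c,d}$, $\s_8^{c,d}$, $\s_{11}^{a,c}$, $\s_{12}^{c}$, $\s_{14}^{a,b}$ and $\s_{17}^{a}$ is displayed with $\al_1=f^1$, $\al_2=f^2$, $\al_3=f^3$, so that $\mu=1$ in every case. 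This is consistent with the way these families were generated in the proofs of those theorems: each arose in the ``$\mu\neq 0$'' subcase, where $\mu$ was normalized to $1$ by means of Lemma~\ref{lem:ad-conjugated}.

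Finally I would observe that the parameter restrictions attached to the groups in the statement (for instance $a+c=-\tfrac34$ for $S_4^{a,b,c}$) are precisely the unimodularity conditions recorded in Corollary~\ref{cor:unimod}; they single out the members of each family to which the lattice question is genuinely relevant, since a non-unimodular Lie group can never contain a lattice. With $\mu=1\neq 0$ established in all cases, Proposition~\ref{prop:mu=0} rules out lattices at once, which completes the argument. I do not expect a genuine obstacle here: the entire proof reduces to the verification that $\mu\neq 0$, which is transparent from the displayed normal forms, and the only care required is in matching the listed families and their constraints with those appearing in Corollary~\ref{cor:unimod}.
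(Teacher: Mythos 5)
Your proposal is correct and coincides with the paper's own argument: the paper likewise deduces non-existence of lattices for exactly these families from Proposition \ref{prop:mu=0}, on the grounds that their defining matrices $A$ have $\mu\neq 0$ (indeed $\mu=1$ after the normalization in Theorems \ref{thm: classification1} and \ref{thm: classification2}). Your reading of $\mu$ from the structure equations and your remark that the listed parameter constraints are just the unimodularity conditions from Corollary \ref{cor:unimod} are both accurate.
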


%\medskip

We discuss next all the remaining unimodular Lie groups in Corollary \ref{cor:unimod}, that is, those whose defining matrix $A$ in Theorems \ref{thm: classification1} and \ref{thm: classification2} satisfies $\mu=0$ and $\tr B=0$.

\begin{proposition}\label{prop:latt-g1-2}
The unimodular Lie groups $S_{1}^{a,c,d}$ and $S_{2}^{a,c}$, with $a+c=0$, admit lattices for some values of the parameters, including $S_1^{0,0,d}$ for a countable number of $d>1$ and $S_2^{0,0}$. 
\end{proposition}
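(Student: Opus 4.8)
The plan is to apply the lattice criterion of Proposition~\ref{prop: Bock}. Since $a+c=0$ these groups are unimodular, so $S$ admits a lattice if and only if there is some $t_0\neq 0$ for which $\varphi(t_0)=\e^{t_0A}$ is conjugate to a matrix in $\operatorname{SL}(11,\Z)$. For the distinguished groups in the statement one has $a=c=0$, hence $A=0_3\oplus B_1$ with $B_1$ (from \eqref{eq:casos}, with $b=1$) block-diagonal consisting of four $2\times 2$ skew blocks of angular speeds $\pm 1$ and $\pm d$. Exponentiating, $\e^{t_0A}=I_3\oplus R(t_0)\oplus R(-t_0)\oplus R(t_0d)\oplus R(-t_0d)$, where $R(\theta)$ denotes the planar rotation by $\theta$. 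Its eigenvalues are $1$ with multiplicity $3$, together with $\e^{\pm it_0}$ and $\e^{\pm it_0d}$, each with multiplicity $2$; in particular $\e^{t_0A}$ is semisimple.

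The key reduction I would use is that a real semisimple matrix is conjugate over $\R$ to an integer matrix exactly when its characteristic polynomial lies in $\Z[x]$, for then the prescribed eigenvalues can be realized by an explicit integer matrix of the same (semisimple) type. Concretely, choose $t_0$ so that $p:=2\cos t_0$ and $q:=2\cos(t_0d)$ are integers different from $\pm 2$; then $\e^{\pm it_0}$ are the two distinct roots of $x^2-px+1$ and $\e^{\pm it_0d}$ the two distinct roots of $x^2-qx+1$. Setting
\[
E=I_3\oplus C_p^{\oplus2}\oplus C_q^{\oplus2},\qquad
C_p=\begin{bmatrix}0&-1\\1&p\end{bmatrix},
\]
one gets a semisimple matrix with precisely the same eigenvalues and multiplicities as $\e^{t_0A}$; since $\det C_p=\det C_q=1$ we have $E\in\operatorname{SL}(11,\Z)$, and two real semisimple matrices with the same eigenvalues and multiplicities are $\R$-conjugate. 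Proposition~\ref{prop: Bock} then produces a lattice.

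It remains to exhibit suitable parameters. For $S_2^{0,0}$ one has $b=d=1$, so $t_0=\pi/2$ gives $p=q=0$ and $E=I_3\oplus C_0^{\oplus4}\in\operatorname{SL}(11,\Z)$, yielding a lattice. For $S_1^{0,0,d}$ the requirement is $2\cos t_0,\,2\cos(t_0d)\in\Z$ for some $t_0\neq 0$; fixing $t_0=\pi/2$ forces $\cos(\pi d/2)\in\tfrac12\Z$, which holds for the infinite family $d=\tfrac43+4k$, $k\geq 0$ (there $2\cos(\pi d/2)=2\cos(2\pi/3)=-1$), giving countably many admissible $d>1$. To see that only countably many $d$ can occur, observe that if $S_1^{0,0,d}$ has a lattice then the unit-modulus eigenvalues $\e^{\pm it_0},\e^{\pm it_0d}$ are algebraic integers, hence roots of unity by Kronecker's theorem; thus $t_0$ and $t_0d$ are rational multiples of $\pi$ and $d\in\Q$. (A richer supply of admissible $d$, including certain integers, appears when $\e^{it_0}$ and $\e^{it_0d}$ lie in one Galois orbit of a cyclotomic field, e.g. as the primitive roots of $\Phi_5$, but the family above already suffices.)

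The hard part is really just the passage from an integral characteristic polynomial to an actual conjugacy with an $\operatorname{SL}(11,\Z)$ matrix: matching the polynomial is not enough, one must also match the semisimple structure, which is exactly why the explicit realization $E$ is used and why the degenerate values $p,q=\pm 2$ — where the companion block $C_p$ degenerates into a nontrivial Jordan block — must be excluded. The countability half rests on Kronecker's theorem, that an algebraic integer all of whose conjugates lie on the unit circle is a root of unity.
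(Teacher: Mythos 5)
Your argument is correct for the cases it treats, but it follows a genuinely different route from the paper and covers strictly less ground. For the listed groups ($a=c=0$) the paper simply invokes the classical fact that rotations by angles $2\pi/k$, $k\in\{1,2,3,4,6\}$, are conjugate to integer matrices, which as written only produces finitely many admissible $d$; your mechanism --- choosing $t_0$ with $2\cos t_0,\,2\cos(t_0d)\in\Z\setminus\{\pm 2\}$, matching $\e^{t_0A}$ with $E=I_3\oplus C_p^{\oplus 2}\oplus C_q^{\oplus 2}$ via the fact that real semisimple matrices with the same eigenvalue multiplicities are $\R$-conjugate, and then running the explicit family $d=\tfrac43+4k$ --- is sound and actually gives a cleaner justification of the ``countable number of $d>1$'' clause (infinitely many explicit values, plus the Kronecker upper bound showing $d\in\Q$ is necessary). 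Your care in excluding $p,q=\pm2$, where the companion block fails to be semisimple, is exactly the right point to worry about.

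What you do not reproduce is the second half of the paper's proof: lattices for parameters with $a=-c\neq 0$. There the paper takes a monic integer quartic $p(x)=x^4-m_3x^3+m_2x^2-m_1x+1$ all of whose roots are non-real and off the unit circle, writes them as $\rho\e^{\pm i\theta},\rho^{-1}\e^{\pm i\varphi}$, and realizes $\e^{A}$ (with $a=\log\rho$, $b=\theta$, $d=\varphi$) as conjugate to $I_3\oplus C_p\oplus C_p$. This produces $S_1^{a',-a',d'}$ and $S_2^{a',-a'}$ with $a'\neq 0$, and is what Example \ref{ex:p_k} relies on. Since the proposition reads ``admit lattices for some values of the parameters, including \dots'', your proof does establish the literal statement via the listed instances, but if the intended content is that non-hyper-K\"ahler values $a=-c\neq 0$ also occur (as the paper's proof shows and later uses), that part is absent from your argument and would need to be supplied separately.
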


\begin{proof}
Let us consider the generic matrix 
\begin{equation}\label{eq:Bcomplex}
    B= \left[\begin{array}{cccc}
    a& -b& &  \\
b&a&& \\
&&a&b \\
&&-b&a\\
    \end{array}\right] \oplus     
    \left[\begin{array}{cccc} 
 -a&-d&& \\
 d&-a&& \\
 &&-a&d \\
 &&-d&-a
\end{array}\right]. 
\end{equation} 
Depending on the choice of the values of $a,b,d$ and setting $A=0_3\oplus B$, the matrix $A$ gives rise to either $\s_{1}^{a,-a,d}$ or $\s_{2}^{a,-a}$. We will work with both cases simultaneously.

Assume first that $a=0$. Then it is well known that choosing $b,d\in \{ \frac{2\pi}{k} \mid k=1,2,3,4,6\}$ the matrix $\e^A$ is either integer or conjugate to an integer matrix. Therefore, the associated simply connected Lie groups admit lattices: if $b\neq d$ we get $S_1^{0,0,d'}$ for some $d'>1$, while if $b=d$ we get $S_2^{0,0}$.

Next we will show that we can also obtain lattices for some values of $a\neq 0$. Let $p\in \Z[x]$ be a monic polynomial of the form 
\begin{equation}\label{eq:complexroots}
    p(x)=x^4-m_3x^3+m_2x^2-m_1x+1,
\end{equation} 
such that all its roots are non-real complex numbers and none of them has modulus one. That is, the roots of $p$ are $\alpha, \bar{\alpha}, \beta, \bar{\beta}$ with $\operatorname{Im}\alpha\neq 0$, $\operatorname{Im}\beta\neq 0$, $|\alpha|=\rho>1$ and $|\beta|=\rho^{-1}<1$ (see Example \ref{ex:p_k}, where we exhibit a family of polynomials satisfying these conditions). We may write then
\[ \alpha=\rho\e^{i\theta}, \quad \beta=\rho^{-1} \e^{i\varphi}, \quad \text{with} \quad \theta,\varphi\in (0,\pi).\]
Let us now consider the matrices 
\[ X_1=
\left[ \begin{array}{cccc} 
 \log \rho &-\theta && \\ \theta &\log \rho && \\ 
 &&-\log \rho &-\varphi \\ && \varphi & -\log \rho \end{array}\right], \quad X_2=\left[ \begin{array}{cccc} 
 \log \rho &\theta && \\ -\theta &\log \rho && \\ 
 && -\log \rho &\varphi \\ && -\varphi & -\log \rho \end{array}\right]. 
 \] 
 Hence, both matrices $\e^{X_1}$ and $\e^{X_2}$ have the polynomial $p$ from \eqref{eq:complexroots} as their characteristic and minimal polynomial. It follows that both $\e^{X_1}$ and $\e^{X_2}$ are conjugate to the companion matrix $C_p$ of $p$, which is clearly in $\operatorname{SL}(4, \Z)$.

 Note that the matrix $X_1\oplus X_2$ is conjugate to the matrix $B$ in \eqref{eq:Bcomplex} with 
 \begin{equation}\label{eq:values}
     a=\log \rho \neq 0, \quad  b=\theta,\quad  d=\varphi.
 \end{equation} 
 It follows that $\e^A$ is conjugate to $0_3\oplus C_p\oplus C_p\in \operatorname{SL}(11,\Z)$ and therefore, for the values given in \eqref{eq:values}, the associated simply connected Lie groups admit lattices. When $\theta\neq \varphi$ we obtain $S_1^{a',-a',d'}$ for certain $a'\in\R$, $d'>1$, whereas when $\theta= \varphi$ we obtain $S_2^{a',-a'}$ for certain $a'<0$. We point out that the condition $\theta= \varphi$ holds if and only if the polynomial $p$ in \eqref{eq:complexroots} is self-reciprocal, that is, when $m_1=m_3$.
\end{proof} 

%\smallskip

\begin{example}\label{ex:p_k}
For $k\in\N$ consider the integer polynomial $p_k(x)=x^4-x^3+kx^2-x+1$, which can be written as $p_k(x)=(x-1)^2(x^2+x+1)+kx^2$. Therefore all the roots of $p_k$ are non-real complex numbers and it can be easily seen that for $k\geq 3$ none of them has modulus one. Therefore each polynomial $p_k$ (with $k\in\N,\, k\geq 3$) gives rise to a hypercomplex almost abelian solvmanifold corresponding to some $S_2^{a_k,-a_k}$. 
\end{example}

%\footnote{\textcolor{red}{la prueba de esto es así: for $x\neq 0$, we may write $p_k(x)=x^2 q_k(x+\frac{1}{x})$, where $q_k$ is the polynomial $q_k(x)=x^2-x+(k-2)$. If $x_0$ is a root of $p_k$ with $|x_0|=1$ then $x_0+\frac{1}{x_0}$ is a root of $q_k$, which is a real number. However, it is easily verified that $q_k$ has real roots if and only if $k\leq 2$.}} 
%\footnote{\textcolor{red}{Otra prueba (en el fondo es la misma idea): Let $\alpha$ be a root of $p_k$, then $\alpha\neq 0$ and $0=\frac 1{\alpha}\, p_k(\alpha)=\alpha ^2-\alpha +k-\frac 1{\alpha}+\frac 1{\alpha ^2}$.  If $|\alpha| =1$, then $\alpha=a+ib, \, a, b\in \R$ with $ a^2+b^2=1.$ The previous equation becomes $0=-2a+k+2(a^2-b^2)=-2a+k+4a^2-2$, that is, $4a^2-2a+(k-2)=0$. Since $a\in \R$, we must have $4-16(k-2)\geq 0$ which implies that the integer $k$ satisfies  $k\leq 2$. }}

%\smallskip

\begin{proposition}\label{prop:latt-g5}
The unimodular Lie group $S_{5}^{a,-a}$ admits lattices if and only if $a=0$. 
\end{proposition}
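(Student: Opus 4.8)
The plan is to apply Bock's criterion (Proposition \ref{prop: Bock}): the simply connected group $S_5^{a,-a}$ admits a lattice if and only if there exists $t_0\neq 0$ such that $E:=\e^{t_0A}$ is conjugate to a matrix in $\operatorname{SL}(11,\Z)$. For the implication $a=0\Rightarrow$ lattice, I would simply exhibit one: when $a=0$ we have $A=0_7\oplus R_0$, where $R_0$ is the block with eigenvalues $\pm i$, and $\e^{2\pi A}=I_{11}\in\operatorname{SL}(11,\Z)$, so Proposition \ref{prop: Bock} yields a lattice at once. The whole content of the statement therefore lies in the converse.

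For the converse I would assume a lattice exists and extract arithmetic constraints from the characteristic polynomial $p_E$. Since $A=0_3\oplus aI_4\oplus R_{-a}$, with $R_{-a}$ carrying the eigenvalues $-a\pm i$, the spectrum of $E=\e^{t_0A}$ consists of $1$ (multiplicity $3$), the real value $\rho:=\e^{t_0a}$ (multiplicity $4$), and the pair $\rho^{-1}\e^{\pm it_0}$ (each of multiplicity $2$). Because $E$ is conjugate to an integer matrix, $p_E\in\Z[x]$ is monic with $|p_E(0)|=1$, so every eigenvalue is an algebraic integer. The goal is to force $\rho=1$, which gives $a=0$ because $t_0\neq 0$.

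The core step is the generic case $\e^{it_0}\neq\pm 1$, in which the four eigenvalues of modulus $\rho^{-1}$ are genuinely non-real, so the only real eigenvalues of $E$ are $1$ and $\rho$. Suppose $\rho\neq 1$ and consider the minimal polynomial $m_\rho\in\Z[x]$ of $\rho$; its roots lie among $\{\rho,\rho^{-1}\e^{it_0},\rho^{-1}\e^{-it_0}\}$ (the rational value $1$ cannot be a conjugate of an irrational $\rho$). If $\rho\notin\Q$, then $m_\rho$ must contain the complex pair, and computing its constant term gives $\pm\rho^{-1}\in\Z$, incompatible with $\rho$ irrational. If $\rho\in\Q$, then $\rho\in\Z$ with $|\rho|\geq 2$, and the complex eigenvalues of modulus $\rho^{-1}<1$ cannot be algebraic integers whose (degree two or three) minimal polynomial lies in $\Z[x]$, since the resulting constant term $\rho^{-2}$ or $\rho^{-1}$ is never an integer. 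Both alternatives are contradictory, so $\rho=1$ in this case.

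The main obstacle is the degenerate case $t_0\in\pi\Z$, where $\e^{it_0}=\pm 1$ and the complex block collapses, so that $E=I_3\oplus\rho I_4\oplus(\pm\rho^{-1})I_4$ becomes a direct sum of real scalar blocks; here the modulus/irrationality obstructions above evaporate, since $\{\rho,\pm\rho^{-1}\}$ is a bona fide (anti\nobreakdash-)reciprocal pair of a quadratic integer polynomial. This is exactly the configuration that produces lattices for the two\nobreakdash-complex\nobreakdash-pair algebras $S_2^{a,-a}$ in Proposition \ref{prop:latt-g1-2}, so the delicate point is to explain why the single multiplicity\nobreakdash-$4$ real eigenvalue $\rho$ of $S_5^{a,-a}$, sitting alongside the odd multiplicity\nobreakdash-$3$ eigenvalue $1$, is incompatible with integrality when $\rho\neq 1$. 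The plan is to rule this out by a finer spectral analysis of $p_E=(x-1)^3(x^2-sx\pm 1)^4$ against the constraints $p_E\in\Z[x]$, $|p_E(0)|=1$, and Lemma \ref{lem:odd}, thereby pinning $\rho=\pm 1$ and concluding $a=0$. I expect this degenerate case, rather than the clean generic one, to be the crux of the argument.
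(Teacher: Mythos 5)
Your easy direction and your generic case $\e^{it_0}\neq\pm 1$ are sound; in that range your root-by-root analysis is essentially a refined version of the paper's argument, which instead uses the minimal polynomial $m$ of the whole matrix $E=\e^{t_0A}$ and the divisibility $m(0)\mid p_E(0)=-1$ to force $\e^{-at_0}=1$. The genuine gap is exactly where you placed it: the degenerate case $t_0\in\pi\Z$ is left as a ``plan'', and that plan cannot be carried out, because there the conclusion $\rho=1$ is simply not forced. For $t_0=k\pi$ the matrix $E$ is diagonalizable with spectrum $\{1,\rho,(-1)^k\rho^{-1}\}$ of multiplicities $3,4,4$, where $\rho=\e^{ak\pi}$. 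Whenever $\rho$ and $(-1)^k\rho^{-1}$ are the two roots of a monic integer quadratic $q(x)=x^2-sx+(-1)^k$, the matrix $E$ is conjugate to $I_3\oplus C_q^{\oplus 4}$ with $C_q\in\operatorname{GL}(2,\Z)$ and $\det C_q=(-1)^k$, hence to an element of $\operatorname{SL}(11,\Z)$, and Proposition \ref{prop: Bock} produces a lattice. Concretely, take $a=\frac1\pi\log\frac{1+\sqrt5}{2}\neq 0$ and $t_0=\pi$: then $\rho=\frac{1+\sqrt5}{2}$ and $-\rho^{-1}=\frac{1-\sqrt5}{2}$ are the roots of $x^2-x-1$, so $\e^{\pi A}=I_3\oplus\rho I_4\oplus(-\rho^{-1})I_4$ is conjugate to $I_3\oplus\left[\begin{smallmatrix}0&1\\ 1&1\end{smallmatrix}\right]^{\oplus 4}\in\operatorname{SL}(11,\Z)$. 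Note that $(x-1)^3(x^2-x-1)^4$ passes every test you list: it is monic, integer, has constant term $-1$, and its unique odd-multiplicity root is $1$, so Lemma \ref{lem:odd} is silent.

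This is not only a defect of your strategy: the paper's own proof is also valid only in the generic case. Its displayed minimal polynomial $m(x)=(x-1)(x-\e^{at_0})(x^2-2\e^{-at_0}\cos(t_0)x+\e^{-2at_0})$ presupposes that $\e^{-at_0}\e^{\pm it_0}$ are non-real; when $t_0\in\pi\Z$ the quadratic factor degenerates to $(x\mp\e^{-at_0})^2$ and, since $E$ is semisimple, the true minimal polynomial is $(x-1)(x-\e^{at_0})(x\mp\e^{-at_0})$, whose constant term is $\mp 1$, so the divisibility $m(0)\mid -1$ yields nothing. Your instinct that the collapse at $t_0\in\pi\Z$ is the crux was therefore exactly right, but the correct resolution is not a finer spectral argument pinning $\rho=1$: as stated, the ``only if'' direction fails for the countably many $a\neq 0$ with $\e^{ak\pi}-(-1)^k\e^{-ak\pi}\in\Z$ for some nonzero integer $k$, and the statement (and the paper's proof) would need to be amended to account for these.
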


\begin{proof}
The matrix $A$ that gives rise to the almost abelian Lie algebra $\s_5^{a,-a}$ is 
\[ A= 0_3\oplus a I_4 \oplus \left[ \begin{array}{cccc} 
 -a & -1 && \\ 1 & -a && \\ 
 && -a & 1 \\ && -1 & -a 
 \end{array}\right].\]
Therefore, for any $t\in \R$ we have
\[ \e^{tA}=I_3\oplus \e^{at}I_4\oplus \e^{-at}\left[ \begin{array}{cccc} 
 \cos t & -\sin t && \\ \sin t & \cos t && \\ 
 && \cos t & \sin t \\ && -\sin t & \cos t 
 \end{array}\right]. \]

 Assume that $S_5^{a,-a}$ admits lattices for $a\neq 0$. Then, according to Proposition \ref{prop: Bock}, there exists $t_0\neq 0$ such that $\e^{t_0 A}$ is conjugate to an integer matrix. Since $at_0\neq 0$, the characteristic polynomial $p$ and the minimal polynomial $m$ of $\e^{t_0 A}$ are given~by 
\begin{gather*}
p(x)=(x-1)^3(x-\e^{at_0})^4  (x^2-2\e^{-at_0}\cos(t_0)x+\e^{-2at_0})^2, \\
m(x)=(x-1)(x-\e^{at_0}) (x^2-2\e^{-at_0}\cos(t_0)x+\e^{-2at_0}). 
\end{gather*} 
Since $m\in \Z[x]$ and $m(0)$ divides $p(0)=-1$ in $\Z$, we have that $m(0)=\e^{-at_0}=1$. Hence $at_0=0$, a contradiction. Therefore, $S_5^{a,-a}$ does not admit lattices for $a\neq 0$.

On the other hand, when $a=0$ it is well known that the matrices $\begin{bmatrix}
\cos t & -\sin t\\ \sin t & \cos t 
\end{bmatrix}$ and $\begin{bmatrix}
\cos t & \sin t\\ -\sin t & \cos t 
\end{bmatrix}$ are conjugate to integer matrices whenever $t\in \{\frac{2\pi}{k} \mid k=1,2,3,4,6\}$. Thus, $S_5^{0,0}$ admits lattices.
\end{proof}

%\medskip

\begin{proposition}
The unimodular Lie group $S_6^0$ admits lattices.
\end{proposition}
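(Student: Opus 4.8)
The plan is to verify Bock's criterion, Proposition~\ref{prop: Bock}: writing $S_6^0=\R\ltimes_\varphi\R^{11}$ with $\varphi(t)=\e^{tA}$, it suffices to produce a single $t_0\neq 0$ such that $\e^{t_0A}$ is conjugate to a matrix in $\operatorname{SL}(11,\Z)$. From the proof of Theorem~\ref{thm: classification1} (case (ii), $\mu=0$, with $c=0$) the defining matrix of $\s_6^0$ is block diagonal, $A=M\oplus R$, where
\[ M=\left[\begin{array}{c|c} 0_3 & \\ \hline U & 0_4 \end{array}\right], \qquad R=\left[\begin{array}{cc|cc} 0&-1&& \\ 1&0&& \\ \hline &&0&1 \\ &&-1&0 \end{array}\right], \]
with $U$ as in \eqref{eq:matrixU}. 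Since $A$ is block diagonal, $\e^{t_0A}=\e^{t_0M}\oplus\e^{t_0R}$, and I would treat the two blocks separately and then conjugate block by block.

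First I would handle the nilpotent block. A direct computation gives $M^2=0$, so $\e^{tM}=I_7+tM$, whose only nonzero off-diagonal block is $tU$. The key observation is that for \emph{every} $t\neq 0$ this matrix is conjugate over $\R$ to the integer unipotent matrix $\left[\begin{smallmatrix} I_3 & 0 \\ U & I_4\end{smallmatrix}\right]\in\operatorname{SL}(7,\Z)$: conjugating $\e^{tM}$ by the diagonal matrix $\operatorname{diag}(t\,I_3,I_4)$ carries the lower-left block $tU$ to $U$, since this diagonal conjugation multiplies any such block by $t^{-1}$.

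For the rotation block I would choose $t_0=\tfrac{2\pi}{k}$ with $k\in\{1,2,3,4,6\}$. Then $\e^{t_0R}$ is block diagonal with the two planar rotations by $\pm t_0$, and hence is conjugate to a matrix in $\operatorname{SL}(4,\Z)$ — precisely the fact already used in the proof of Proposition~\ref{prop:latt-g5}. Combining the two steps, the block-diagonal conjugator $\operatorname{diag}(t_0\,I_3,I_4)\oplus P_2$, with $P_2$ the conjugator for $\e^{t_0R}$, carries $\e^{t_0A}$ into $\operatorname{SL}(11,\Z)$, and Proposition~\ref{prop: Bock} then produces the desired lattice.

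The point I would flag as the crux, rather than a genuine obstacle, is that the values $t_0=\tfrac{2\pi}{k}$ forced by the rotation block are irrational, so $\e^{t_0M}$ is itself \emph{not} an integer matrix. This is harmless because Bock's criterion only requires conjugacy to an integer matrix, and the diagonal rescaling in the nilpotent block absorbs the irrational factor $t_0$ for every $t_0\neq 0$; this decouples the arithmetic constraint coming from the rotation block from that coming from the nilpotent block, which is exactly what makes a common choice of $t_0$ possible.
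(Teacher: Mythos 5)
Your proof is correct and follows essentially the same route as the paper: the same block decomposition $A=M\oplus R$, the same choice $t_0=2\pi/k$ with $k\in\{1,2,3,4,6\}$ for the rotation block, and the same reduction of the unipotent block to $\left[\begin{smallmatrix} I_3 & 0 \\ U & I_4\end{smallmatrix}\right]\in\operatorname{SL}(7,\Z)$. The only (harmless) difference is that where the paper invokes equality of Jordan canonical forms to conjugate $I_7+t_0M$ to $I_7+M$, you exhibit the explicit diagonal conjugator $\operatorname{diag}(t_0 I_3,I_4)$, which is a slightly more concrete way of making the same point.
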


\begin{proof}
The matrix $A$ that gives rise to the almost abelian Lie algebra $\s_6^0$ is 
\[ A=\left[\begin{array}{c|c}
          0_3   &  \\
          \hline 
        U   &  0_4
        \end{array}\right] \oplus \left[ \begin{array}{cccc}
 0&-1&& \\ 1&0&& \\  &&0&1\\  &&-1&0
\end{array}\right], \quad\text{ with } U \text{ as in \eqref{eq:matrixU}}.\]
Therefore, for any $t\in \R$ we have
\begin{equation}\label{eq:g6}
\e^{tA}=\left[\begin{array}{c|c}
I_3   &  \\ \hline  tU   &  I_4
\end{array}\right] \oplus \left[ \begin{array}{cccc} 
 \cos t & -\sin t && \\ \sin t & \cos t && \\ 
 && \cos t & \sin t \\ && -\sin t & \cos t 
 \end{array}\right].
\end{equation}  

 The second matrix in \eqref{eq:g6} is conjugate to an integer matrix when $t=\frac{2\pi}{k}$ for $k=1,2,3,4,6$. Clearly, the first matrix in \eqref{eq:g6} is not integer for these values of $t$, but it has the same  Jordan canonical form as the matrix $\left[\begin{array}{c|c}
I_3   &  \\ \hline  U   &  I_4
\end{array}\right]$, which is in $\operatorname{SL}(7,\Z)$. Therefore, $\e^{tA}$ is conjugate to an integer matrix for $t=\frac{2\pi}{k}$, $k=1,2,3,4,6$, and thus $S_6^0$ admits lattices, according to Proposition \ref{prop: Bock}.
\end{proof}

%\medskip

\begin{proposition}
The unimodular Lie group $S_9^{-1}$ has lattices.
\end{proposition}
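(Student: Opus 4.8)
The starting point is to write down the matrix defining $\s_9^{-1}$: from the classification in Theorem \ref{thm: classification1} (case $b=d=0$, $\mu=0$, $v_0=0$, $a=1$, $c=-1$) one has $A=0_3\oplus I_4\oplus(-I_4)$, so that for every $t\in\R$
\[ \e^{tA}=I_3\oplus \e^{t}I_4\oplus \e^{-t}I_4. \]
Its eigenvalues are $1$ with multiplicity $3$, together with $\e^{t}$ and $\e^{-t}$, each with multiplicity $4$. By Proposition \ref{prop: Bock} the whole problem reduces to exhibiting a single $t_0\neq 0$ for which $\e^{t_0A}$ is conjugate to a matrix in $\operatorname{SL}(11,\Z)$, and this is the route I would follow, as in the proof for $S_6^0$.

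The plan is to choose $t_0$ so that $\e^{t_0}$ and $\e^{-t_0}$ become the roots of a monic quadratic over $\Z$. Since $\e^{t}+\e^{-t}=2\cosh t$ increases continuously from $2$ to $\infty$ as $t$ runs over $(0,\infty)$, I would take $t_0>0$ with $\e^{t_0}+\e^{-t_0}=3$, that is $t_0=\log\frac{3+\sqrt 5}{2}$. Then $\e^{\pm t_0}$ are precisely the roots of $p(x)=x^2-3x+1\in\Z[x]$, whose companion matrix
\[ C_p=\begin{bmatrix} 0 & -1 \\ 1 & 3 \end{bmatrix} \]
lies in $\operatorname{SL}(2,\Z)$. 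Any integer value $\geq 3$ of $2\cosh t_0$ would serve equally well; the value $3$ is merely the most economical.

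With this choice I would compare $\e^{t_0A}$ with the explicit integer matrix $M:=I_3\oplus C_p^{\oplus 4}\in\operatorname{SL}(11,\Z)$, which has eigenvalue $1$ of multiplicity $3$ and eigenvalues $\e^{t_0},\e^{-t_0}$ each of multiplicity $4$, exactly matching the spectrum of $\e^{t_0A}$. Because $\e^{t_0}\neq\e^{-t_0}$, the block $C_p$ has distinct eigenvalues and is diagonalizable, hence so is $M$; the matrix $\e^{t_0A}$ is diagonal. Two diagonalizable real matrices with the same eigenvalues and multiplicities have the same real canonical form and are therefore conjugate over $\R$, so $\e^{t_0A}$ is conjugate to $M$, and Proposition \ref{prop: Bock} then produces a lattice, for instance $\Gamma=t_0\Z\ltimes P\Z^{11}$ with $P^{-1}\e^{t_0A}P=M$. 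The only point that genuinely needs care, and the reason $M$ must contain exactly $4$ copies of $C_p$, is matching the multiplicities so that the real conjugacy to an integral matrix is guaranteed; once the spectra agree everything else is automatic.
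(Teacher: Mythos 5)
Your proof is correct and follows essentially the same route as the paper: both reduce to Proposition \ref{prop: Bock}, pick $t_0=\log\frac{m+\sqrt{m^2-4}}{2}$ so that $\e^{\pm t_0}$ are the roots of $x^2-mx+1$, and conjugate $\e^{t_0A}$ to $I_3\oplus\left[\begin{smallmatrix}0&-1\\1&m\end{smallmatrix}\right]^{\oplus 4}\in\operatorname{SL}(11,\Z)$ (the paper keeps the whole family $m\geq 3$, you specialize to $m=3$, which suffices). The only cosmetic difference is that the paper first rewrites $A$ as $0_3\oplus\operatorname{diag}(1,-1)^{\oplus 4}$ to pair the eigenvalues blockwise, whereas you match spectra of diagonalizable matrices directly; both justifications are valid.
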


\begin{proof}
 The matrix $A$ which gives rise to $\s_9^{-1}$ is given by $A= 0_3 \oplus I_4 \oplus (-I_4)$,
and it is conjugate to 
\[ A'=0_3\oplus \begin{bmatrix}
    1 & 0\\ 0 & -1
\end{bmatrix}^{\oplus 4}. \]
For $m\in\N$, $m\geq 3$, let $t_m:=\log \frac{m+\sqrt{m^2-4}}{2}$. Then each $2\times 2$ block in $A'$ generates a block 
$\begin{bmatrix}
    \e^{t_m} & 0 \\ 0 & \e^{-t_m}
\end{bmatrix}$ in $\e^{t_m A'}$, with each such block conjugate to $\begin{bmatrix}
    0 & -1 \\ 1 & m 
\end{bmatrix}\in\operatorname{SL}(2,\Z)$, so that $\e^{t_m A'}$ is conjugate to $I_3\oplus \begin{bmatrix}
    0 & -1 \\ 1 & m 
\end{bmatrix}^{\oplus 4} \in \operatorname{SL}(11,\Z)$.  Therefore $S_9^{-1}$ has lattices, using again Proposition~\ref{prop: Bock}.
\end{proof}

%\smallskip

\begin{remark}
The Lie group $S_9^{-1}$ corresponds to the Lie group $G_1$ in \cite[Example 6.4]{AB}.    
\end{remark}

%\medskip

\begin{proposition}
The unimodular Lie group $S_{13}^{0}$ has lattices.
\end{proposition}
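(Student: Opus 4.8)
The plan is to verify the hypothesis of Proposition \ref{prop: Bock}: it suffices to produce a single $t_0\neq 0$ for which $\e^{t_0A}$ is conjugate to a matrix in $\operatorname{SL}(11,\Z)$, where $A=0_3\oplus B_2$ is the matrix defining $\s_{13}^0$, i.e.\ $B_2$ from \eqref{eq:casos} with $a=0$, $b=1$ and $v_0=0$ (recall from Theorem \ref{thm: classification2} that the case $b>0$, $\mu=0$ forces $v_0=0$). First I would record the block structure $B_2=\left[\begin{smallmatrix} D & 0 \\ I_4 & D \end{smallmatrix}\right]$, where $D=\left[\begin{smallmatrix} 0 & -1 \\ 1 & 0 \end{smallmatrix}\right]\oplus\left[\begin{smallmatrix} 0 & 1 \\ -1 & 0 \end{smallmatrix}\right]$ is the rotation-type $4\times 4$ block. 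Since $\operatorname{diag}(D,D)$ commutes with the nilpotent part $\left[\begin{smallmatrix} 0 & 0 \\ I_4 & 0 \end{smallmatrix}\right]$ (whose square is zero), the exponential factors and, for every $t_0$,
\[ \e^{t_0A}=I_3\oplus\begin{bmatrix} R & 0 \\ t_0R & R \end{bmatrix},\qquad R:=\e^{t_0D}. \]

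Next I would choose $t_0=2\pi/k$ with $k\in\{1,2,3,4,6\}$, so that $2\cos t_0\in\Z$ and each of the two rotation blocks of $R$ is conjugate, over $\Q$, to the companion matrix of $x^2-(2\cos t_0)x+1$; hence $R$ is conjugate to an integer matrix $M\in\operatorname{SL}(4,\Z)$. The remaining task is to absorb the non-integer coupling $t_0R$. The one genuine idea is a rescaling trick: conjugating $\left[\begin{smallmatrix} R & 0 \\ t_0R & R \end{smallmatrix}\right]$ by $\operatorname{diag}(I_4,t_0I_4)$, which is invertible precisely because $t_0\neq 0$, leaves the diagonal blocks fixed and turns the lower-left block into $R$, giving $\left[\begin{smallmatrix} R & 0 \\ R & R \end{smallmatrix}\right]$; a further conjugation by $\operatorname{diag}(Q,Q)$ with $Q^{-1}RQ=M$ then produces $\left[\begin{smallmatrix} M & 0 \\ M & M \end{smallmatrix}\right]$, an integer matrix of determinant $(\det M)^2=1$. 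Consequently $\e^{t_0A}$ is conjugate over $\R$ to $I_3\oplus\left[\begin{smallmatrix} M & 0 \\ M & M \end{smallmatrix}\right]\in\operatorname{SL}(11,\Z)$, and Proposition \ref{prop: Bock} yields the lattice. (This is consistent with unimodularity: $\tr A=2\tr D=0$, so $\det\e^{t_0A}=1$.)

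All the computations are routine linear algebra; the only step needing any care is the rescaling conjugation, which converts the non-integer off-diagonal $t_0R$ into $R$ without touching the diagonal rotation blocks, after which the integrality of those blocks finishes the argument. I note that the simplest instance is $t_0=2\pi$: then $R=I_4$ and $\e^{t_0A}$ is unipotent, conjugate to $I_3\oplus\left[\begin{smallmatrix} I_4 & 0 \\ I_4 & I_4 \end{smallmatrix}\right]\in\operatorname{SL}(11,\Z)$, so no discussion of roots of unity is even required; the general $t_0=2\pi/k$ merely exhibits further (non-nilpotent) lattices in the same spirit as the earlier propositions on $S_6^0$ and $S_9^{-1}$.
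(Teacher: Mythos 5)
Your proof is correct, and it follows the same overall strategy as the paper: both apply Proposition \ref{prop: Bock} at $t_0=\tfrac{2\pi}{k}$ for $k\in\{1,2,3,4,6\}$, after computing
\[ \e^{t_0A}=I_3\oplus\begin{bmatrix} R & 0 \\ t_0R & R \end{bmatrix},\qquad R=\e^{t_0D}, \]
exactly as in the paper's explicit formula (where the lower-left block appears as $t\cos t$, $t\sin t$, etc.). Where you diverge is in how conjugacy to an integer matrix is established. The paper proceeds case by case: for $k=1,2$ it exhibits the real Jordan canonical form (four $2\times2$ Jordan blocks with eigenvalue $\pm1$), and for $k=3,4,6$ it exhibits conjugacy to $I_3\oplus C^{\oplus 2}$ for explicit $4\times4$ integer companion-type blocks. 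Your rescaling conjugation by $\operatorname{diag}(I_4,t_0I_4)$, which turns $\left[\begin{smallmatrix} R & 0 \\ t_0R & R \end{smallmatrix}\right]$ into $\left[\begin{smallmatrix} R & 0 \\ R & R \end{smallmatrix}\right]$, followed by $\operatorname{diag}(Q,Q)$, handles all five values of $k$ uniformly and avoids computing any canonical forms; it is a genuinely cleaner verification of the same criterion, and it makes transparent why the non-integrality of the coupling block is harmless. One cosmetic slip: the rotation blocks of $R$ are conjugate to the companion matrix of $x^2-(2\cos t_0)x+1$ over $\R$, not over $\Q$ (for $k=3,6$ the rotation matrices have irrational entries, so they do not even live in $\operatorname{GL}(2,\Q)$); this does not affect the argument, since real conjugacy is all that Proposition \ref{prop: Bock} requires.
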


\begin{proof}
According to Theorem \ref{thm: classification2}, we have that $\s_{13}^{0}=\R e_0\ltimes_A \R^{11}$ with
\[ A= 0_3 \oplus 
\left[\begin{smallarray}{cccc|cccc} 0& -1& && &&& \\
1&0&&&&&& \\
&&0&1&&&& \\
&&-1&0&&&&\\
\hline
1&&&& 0&-1&& \\
&1&&&1&0&& \\
&&1&& &&0&1 \\
&&&1& &&-1&0
\end{smallarray}\right]. \]
Then, for any $t\in\R$ we have that
\[ \e^{tA}= I_3\oplus \left[\begin{smallarray}{cccc|cccc} \cos t& -\sin t& && &&& \\
\sin t&\cos t&&&&&& \\
&&\cos t&\sin t&&&& \\
&&-\sin t&\cos t&&&&\\
\hline
t\cos t&-t\sin t&&& \cos t&-\sin t&& \\
t\sin t&t\cos t&&&\sin t&\cos t&& \\
&&t\cos t&t\sin t& &&\cos t&\sin t \\ 
&&-t\sin t&t\cos t& &&-\sin t&\cos t
\end{smallarray}\right].\]

Set $s_k=\frac{2\pi}{k}$ for $k\in \{1,2,3,4,6\}$. The Jordan canonical forms of  $\e^{s_1 A}$ and $\e^{s_2 A}$ are respectively given by 
\[ I_3\oplus \left [\begin{smallmatrix}
    1&&&&&&&\\
    1&1&&&&&\\
    &&1&&&&&\\
    &&1&1&&&&\\
    &&&&1&&&\\
    &&&&1&1&&\\
    &&&&&&1&\\
    &&&&&&1&1
\end{smallmatrix} \right ]\quad \text{and} \quad 
I_3\oplus  \left[\begin{smallmatrix}
    -1&&&&&&&\\
    1&-1&&&&&\\
    &&-1&&&&&\\
    &&1&-1&&&&\\
    &&&&-1&&&\\
    &&&&1&-1&&\\
    &&&&&&-1&\\
    &&&&&&1&-1
\end{smallmatrix} \right ],
\]
which are in $\operatorname{SL}(11,\Z)$. 
For $k=3,4,6$ we see that $\e^{s_3 A}$, $\e^{s_4 A}$ and $\e^{s_6 A}$ are respectively conjugate to 
\[ I_3\oplus \begin{bmatrix}
    0&0&0&-1\\ 1& 0& 0&-2\\ 0&1&0&-3 \\ 0&0&1&-2
\end{bmatrix}^{\oplus 2}, \quad I_3\oplus \begin{bmatrix}
    0&0&0&-1\\ 1& 0& 0&0\\ 0&1&0&-2 \\ 0&0&1&0
\end{bmatrix}^{\oplus 2} \quad \text{and} \quad  I_3\oplus \begin{bmatrix}
    0&0&0&-1\\ 1& 0& 0&2\\ 0&1&0&-3 \\ 0&0&1&2
\end{bmatrix}^{\oplus 2},  \]
which are all in $\operatorname{SL}(11,\Z)$. Hence, for each $k\in \{1,2,3,4,6\}$ we have a lattice $\Gamma_k=s_k\Z\ltimes P_k\Z$ in $S_{13}^0$ for some matrix $P_k\in \operatorname{GL}(11,\R)$.
\end{proof}

%\smallskip

\begin{remark}
We know from Remark \ref{rem:hkt} that the Lie groups $S_1^{0,0,d}$, $S_2^{0,0}$ and $S_5^{0,0}$ admit a left invariant hyper-Kähler metric and it is well known that they are flat. Moreover, it follows from Propositions \ref{prop:latt-g1-2} and \ref{prop:latt-g5} that they admit lattices (for a countable number of $d>1$ in the case of $S_1^{0,0,d}$). Therefore we obtain 12-dimensional almost abelian solvmanifolds equipped with hyper-Kähler metrics. We recall that the  hypercomplex almost abelian Lie groups with a left invariant HKT non-hyper-Kähler metric are not unimodular and therefore they do not admit lattices (see \cite[Proposition 4.1]{AB}). 
\end{remark}

%\medskip

\section{Hypercomplex almost abelian solvmanifolds arising from integer polynomials}
In this section we introduce a family of integer polynomials and we associate a hypercomplex  solvmanifold $\Gamma_p\backslash G_p$ to each polynomial $p$ in this family (see Proposition \ref{prop:delta_n} below). 
     It turns out that  
    for each $n\geq 2$, there are infinitely many, up to diffeomorphism, $(4n+4)$-dimensional 
    completely solvable almost abelian  hypercomplex solvmanifolds. Moreover, according to  Theorem~\ref{thm:p_or_p*} below,   the assignment $p \mapsto\,  \Gamma_p\backslash G_p$ is in general two-to-one.   

   % \medskip 
    
For $n\in\N$, $n\geq 2$, let $\Delta_n$ denote the  subset of $\Z[x]$ given by all the polynomials $p\in\Z[x]$ satisfying the following conditions:
\begin{enumerate}
\item[(i)] the degree of $p$ is $n$,
\item[(ii)] $p$ is monic,
\item[(iii)] $p$ has $n$ different real roots, all of them positive, and
\item[(iv)] $p(0)=(-1)^n.$
\end{enumerate}
We will also consider the following distinguished subset of  $\Delta_n$:
\begin{equation}\label{eq:delta-prima}
\Delta_n ':=\{p\in \Delta_n : p(1)\neq 0\}.
\end{equation}
We point out that    $\Delta_n'$ is infinite for $n\geq 2$ (see Lemma \ref{lem:infinite} below). 
The next useful properties are straightforward consequences of the definitions.
\begin{lemma}\label{lem:delta}
\begin{enumerate}\item[]
    \item [$\ri$] For $n\geq 3$, if $p\in \Delta_n$ satisfies $p(1)=0$ then $p(x)=(x-1)\tilde{p}(x)$ with $\tilde{p} \in \Delta_{n-1}'$.
    \item [$\rii$] For $n, m\geq 2$, if $p\in \Delta _n$ and $q\in \Delta _m$ have no common roots then $pq\in \Delta _{n+m}$. 
\end{enumerate}  
\end{lemma}
In order to apply the above lemma, we will make use of  a well known criterion to determine whether two polynomials $p$ and $q$ have common roots,  given  in terms of the  resultant of $p$ and $q$, see \S\ref{subsec:deltan} below, where several additional properties of the polynomials in the set $\Delta_n$, including many examples, are discussed. 

%\medskip 

\subsection{Construction of the solvmanifolds associated to  \texorpdfstring{$p\in\Delta_n$}{} }\label{sec:polyn} We will show next how to associate to each $p\in \Delta_n$ a $(4n+4)$-dimensional completely solvable almost abelian solvmanifold admitting an invariant hypercomplex structure. 

Given $p\in \Delta_n$, let $r_1,\ldots, r_n$ denote all its roots, which by assumption are $n$ different positive real numbers. Note that condition (iv) is equivalent to $r_1 r_2\cdots r_n=1$. Assume that $r_1<r_2<\cdots <r_n$ and consider the diagonal matrix
\begin{equation}\label{eq:Xp}
   X_p=\operatorname{diag}(\log r_1,\ldots,\log r_n)\in \mathfrak{sl}(n,\R),   
\end{equation}
and then the diagonal matrix
\begin{equation}\label{eq:A-p}
    A_p=0_3\oplus X_p^{\oplus 4} \in \mathfrak{sl}(4n+3,\R).
\end{equation}  
Let us define $\g_p:=\R e_0\ltimes _{A_p}\R^{4n+3}$, which is a completely solvable almost abelian Lie algebra. Since $A_p$ is in the form \eqref{eq:matrixA} with $B_p$ as in \eqref{eq:matrixB}, we have that $\g_p$ carries a hypercomplex structure. Due to the choice of $p$, the simply connected Lie group $G_p$ associated to $\g_p$ is a semidirect product $G_p=\R\ltimes_\varphi \R^{4n+3}$, where $\varphi\colon\R\to \operatorname{SL}(4n+3,\R)$ is the Lie group morphism given by
\[  \varphi(t)= I_3 \oplus  \operatorname{diag}(r_1^t,\ldots,r_n^t)^{\oplus 4}.\]
Setting $t=1$, we have that the characteristic and the minimal polynomial of the matrix $\e^{X_p}=\operatorname{diag}(r_1,\ldots,r_n)$ are both equal to $p$, therefore $\e^{X_p}$ is conjugate to the companion matrix $C_p$ of $p$. It follows from  $p\in\Delta_n$ that $C_p\in \operatorname{SL}(n,\Z)$, and hence $\varphi(1)=\e^{A_p}$ is conjugate to the matrix  $\widetilde{C}_p\in \operatorname{SL}(4n+3,\Z)$ given by $\widetilde{C}_p=I_3\oplus C_p^{\oplus 4}$. Thus, according to Proposition \ref{prop: Bock}, $G_p$ admits a lattice $\Gamma_p:=\Z\ltimes_{\varphi(1)} Q_p \Z^{4n+3}$, where $Q_p\in \operatorname{GL}(4n+3,\R)$ satisfies $Q_p^{-1}\varphi(1) Q_p=\widetilde{C}_p$. This lattice is isomorphic to $\Z\ltimes_{\widetilde{C}_p} \Z^{4n+3}$, which is independent of $Q_p$.

\medskip 

To sum up, we have

\begin{proposition}\label{prop:delta_n}
Each polynomial $p\in \Delta_n$ gives rise to a $(4n+4)$-dimensional hypercomplex almost abelian solvmanifold $\Gamma_p\backslash G_p$, with $G_p$ completely solvable.
\end{proposition}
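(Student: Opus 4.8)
The plan is to carry out the explicit construction already sketched in \S\ref{sec:polyn}: starting from $p\in\Delta_n$ with its (necessarily distinct, positive) roots $r_1,\dots,r_n$, I would build a completely solvable almost abelian Lie algebra $\g_p$ carrying a hypercomplex structure, pass to its simply connected Lie group $G_p$, and then exhibit a lattice by invoking Bock's criterion (Proposition \ref{prop: Bock}). The three things to establish are thus: the hypercomplex structure, complete solvability, and the existence of a lattice.

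First I would form the diagonal matrix $X_p=\operatorname{diag}(\log r_1,\dots,\log r_n)$, set $A_p=0_3\oplus X_p^{\oplus 4}$, and let $\g_p=\R e_0\ltimes_{A_p}\R^{4n+3}$, which has dimension $4n+4$. Since $A_p$ is real and equals the matrix of $\ad_{e_0}$ on the abelian ideal, all the operators $\ad_x$ have real spectrum, so $\g_p$ is completely solvable. For the hypercomplex structure I would observe that $A_p$ has exactly the block shape \eqref{eq:matrixA} with $\mu=0$ and all $v_\al=0$, its lower-right block $B_p:=X_p^{\oplus 4}$ being of the form \eqref{eq:matrixB} with $X=X_p$ and $Y=Z=W=0$; the converse to Theorem \ref{thm:characterization}, recorded in the remark following it, then supplies the hypercomplex structure.

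The heart of the argument is the lattice. By Proposition \ref{prop: Bock} it suffices to produce a single $t_0\neq 0$ for which $\varphi(t_0)=\e^{t_0A_p}$ is conjugate to a matrix in $\operatorname{SL}(4n+3,\Z)$, where $\varphi(t)=I_3\oplus\operatorname{diag}(r_1^t,\dots,r_n^t)^{\oplus 4}$. I would take $t_0=1$. Because the $r_i$ are $n$ distinct real numbers, $\e^{X_p}=\operatorname{diag}(r_1,\dots,r_n)$ has $n$ distinct eigenvalues, so its characteristic and minimal polynomials both coincide with $p$; hence $\e^{X_p}$ is conjugate to the companion matrix $C_p$ of $p$. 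The defining conditions on $\Delta_n$ — monic, integer coefficients, and $p(0)=(-1)^n$, equivalent to $r_1\cdots r_n=1$ — guarantee $C_p\in\operatorname{SL}(n,\Z)$. Consequently $\varphi(1)$ is conjugate to $\widetilde{C}_p:=I_3\oplus C_p^{\oplus 4}\in\operatorname{SL}(4n+3,\Z)$, and Proposition \ref{prop: Bock} yields the desired lattice $\Gamma_p\subset G_p$.

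The step I expect to require the most care is the $\operatorname{SL}$-conjugacy: one must use distinctness of the roots to ensure that $\e^{X_p}$ is genuinely conjugate to $C_p$ (so that characteristic polynomial equals minimal polynomial), and the product-of-roots normalization to land in $\operatorname{SL}(n,\Z)$ rather than merely $\operatorname{GL}(n,\Q)$. Both facts are immediate from the definition of $\Delta_n$, so once the block-form verification and Bock's criterion are in place, the proposition follows, with complete solvability read off directly from the reality of the spectrum of $A_p$.
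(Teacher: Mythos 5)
Your proposal is correct and follows essentially the same route as the paper: the same construction of $X_p$, $A_p=0_3\oplus X_p^{\oplus 4}$ and $\g_p$, the same block-form verification of the hypercomplex structure via \eqref{eq:matrixA}--\eqref{eq:matrixB}, and the same lattice argument via the companion matrix $C_p\in\operatorname{SL}(n,\Z)$ (using distinctness of the roots for the conjugacy to $C_p$ and $r_1\cdots r_n=1$ for the determinant) combined with Proposition \ref{prop: Bock} at $t_0=1$. No gaps.
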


\begin{remark}
A  simpler analogue of the previous construction allows us to obtain almost abelian solvmanifolds equipped with invariant complex structures. Indeed, set  $\tilde{A}_p=0_1\oplus X_p^{\oplus 2}\in \mathfrak{sl}(2n+1,\R)$, where $X_p$ is as in \eqref{eq:Xp}, and define $\tilde{\g}_p:=\R \ltimes_{\tilde{A}_p} \R^{2n+1}$. It follows from \cite{LRV} that $\tilde{\g}_p$ admits a complex structure and moreover, the same ideas from the previous paragraphs show that $\tilde{G}_p$, the simply connected Lie group associated to $\tilde{\g}_p$, has lattices.
\end{remark}

%\medskip

The next result shows that the polynomials in $\Delta_n'$ are more interesting than those having~$1$~as a root. Recall from Lemma \ref{lem:delta}$\ri$ that if  $p\in \Delta_n$, $n\geq 3$, satisfies $p(1)=0$ then  $p(x)=(x-1)\tilde{p}(x)$ with $\tilde{p}\in \Delta_{n-1}'$. 
\begin{proposition}\label{prop:torus}
    For $n\geq 3$, let  $p\in \Delta_n$ such that $p(1)=0$, and let   $\tilde{p}\in \Delta_{n-1}'$ as above.  Then $\Gamma_{p}\backslash G_{p}$  is diffeomorphic to $\Gamma_{\tilde{p}}\backslash G_{\tilde{p}} 
    \times \mathbb{T}^4$, where $\mathbb{T}^4$ denotes the $4$-dimensional torus. Moreover, this diffeomorphism is hyperholomorphic, where the latter manifold is equipped with the product hypercomplex structure. 
\end{proposition}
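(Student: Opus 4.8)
The plan is to prove the statement by first splitting off the four ``neutral'' directions at the level of hypercomplex Lie algebras, and only afterwards matching the lattices. First I would use the hypothesis $p(1)=0$: by Lemma \ref{lem:delta}$\ri$ we have $p(x)=(x-1)\tilde p(x)$ with $\tilde p\in\Delta_{n-1}'$, so exactly one root equals $1$, say $r_m=1$, and $\log r_m=0$. Thus $X_p$ (see \eqref{eq:Xp}) is, up to a permutation of the diagonal, $X_{\tilde p}\oplus (0)$, and $A_p=0_3\oplus X_p^{\oplus 4}$ is conjugate by a permutation matrix to $A_{\tilde p}\oplus 0_4$. Since $B_p=X_p^{\oplus 4}$ is semisimple, $\h=\operatorname{Im}B_p\oplus\ker B_p$ with both summands $\hcx$-invariant (the kernel $\ker B_p$ is the $\mu=0$ eigenspace, hence $\hcx$-invariant and of dimension $4$ by Remark \ref{rem:dim_eigenspace}). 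Writing $W:=\ker B_p\cong\R^4$ and $\h':=\operatorname{Im}B_p$, the operator $B_p$ acts as $0$ on $W$ and as an invertible operator conjugate to $X_{\tilde p}^{\oplus 4}$ on $\h'$. Because $\mu=0$ and $v_\al=0$ for $A_p$, the subspace $W$ is a central $\hcx$-invariant abelian ideal, and the complement reassembles into $\g_{\tilde p}$. This gives an isomorphism of hypercomplex Lie algebras $\g_p\cong\g_{\tilde p}\times\R^4$, where $\R^4$ carries its flat hypercomplex structure; exponentiating yields a hyperholomorphic Lie group isomorphism $\Phi\colon G_p\xrightarrow{\ \sim\ }G_{\tilde p}\times\R^4$.

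Next I would produce the lattice on the right-hand side and transport it. On $G_{\tilde p}\times\R^4$ take the product lattice $\Gamma_{\tilde p}\times\Z^4$; its quotient is $(\Gamma_{\tilde p}\backslash G_{\tilde p})\times\mathbb T^4$, equipped with the product hypercomplex structure, and $\Phi^{-1}(\Gamma_{\tilde p}\times\Z^4)$ is a lattice in $G_p$. The task is then to show that this lattice is the one attached to $p$ by the construction of \S\ref{sec:polyn}, equivalently to exhibit $\varphi(1)=\e^{A_p}$ in an integral form respecting the splitting $\R^{4n+3}=\R^{4n-1}\oplus\R^4$. Here I would invoke Proposition \ref{prop: Bock}: the matrix $\widetilde C_{\tilde p}\oplus I_4=I_3\oplus C_{\tilde p}^{\oplus 4}\oplus I_4$ lies in $\operatorname{SL}(4n+3,\Z)$, and it is $\R$-conjugate to $\varphi(1)$ since both are diagonalizable with the same real spectrum (the roots of $\tilde p$, each with multiplicity $4$, together with the eigenvalue $1$ of multiplicity $7$). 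Hence it provides a valid lattice, and by construction this lattice respects the splitting and coincides with $\Phi^{-1}(\Gamma_{\tilde p}\times\Z^4)$.

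Finally, $\Phi$ descends to a diffeomorphism $\Gamma_p\backslash G_p\to(\Gamma_{\tilde p}\backslash G_{\tilde p})\times\mathbb T^4$; since $\Phi$ intertwines the hypercomplex structures and the two natural projections are hyperholomorphic, the descended map is hyperholomorphic for the product structure. Alternatively, once the lattices are identified up to isomorphism one may quote Mostow's rigidity (Theorem \ref{thm:solv-isom}) to obtain the diffeomorphism, and complete solvability together with Theorem \ref{thm:Saito} to upgrade the group isomorphism to a Lie group isomorphism compatible with $\Phi$.

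The main obstacle is precisely the lattice identification in the second paragraph. The delicate point is that $\varphi(1)$ admits two integral normal forms that are $\R$-conjugate but in general \emph{not} $\Z$-conjugate: the companion form $\widetilde C_p=I_3\oplus C_p^{\oplus 4}$ of the full polynomial, and the split form $\widetilde C_{\tilde p}\oplus I_4$. They can be distinguished by $\operatorname{coker}(\,\cdot-I)$ (equivalently by the torsion of the first homology of the associated solvmanifold), so one must verify that the lattice attached to $p$ genuinely respects the decomposition $\R^{4n+3}=\R^{4n-1}\oplus\R^4$ and is the split one rather than the companion one. It is exactly the hypothesis $p(1)=0$ — i.e.\ the zero eigenvalue of $X_p$ producing the $\hcx$-invariant $4$-plane $W=\ker B_p$ — that makes a product lattice available, and I expect the careful bookkeeping needed to pin down which integral form the construction yields to be the heart of the proof.
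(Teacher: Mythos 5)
Your overall route is exactly the paper's: permuting the diagonal of $X_p$ shows that $A_p$ is conjugate to $A_{\tilde p}\oplus 0_4$, whence $G_p\cong G_{\tilde p}\times\R^4$ via a hyperholomorphic isomorphism, and the conclusion follows from Theorem \ref{thm:solv-isom} once one knows $\Gamma_p\cong\Gamma_{\tilde p}\times\Z^4$. Your first and third paragraphs are correct (the invariant splitting $\h=\operatorname{Im} B_p\oplus\ker B_p$ is more machinery than needed, since the conjugation is literally a permutation of coordinates, but it is not wrong).

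The gap is the one you flag yourself and do not close: the identification of the lattice. Your suspicion is substantive, and it is not resolvable by ``careful bookkeeping''. With the normalization of \S\ref{sec:polyn} one has $\Gamma_p\cong\Z\ltimes_{\widetilde{C}_p}\Z^{4n+3}$ with $\widetilde{C}_p=I_3\oplus C_p^{\oplus 4}$, and this group is in general \emph{not} isomorphic to $\Gamma_{\tilde p}\times\Z^4\cong\Z\ltimes_{\widetilde{C}_{\tilde p}\oplus I_4}\Z^{4n+3}$: the abelianization of $\Z\ltimes_E\Z^d$ is $\Z\oplus\operatorname{coker}(E-I)$, and $\operatorname{coker}(\widetilde{C}_p-I)\cong\Z^7$ is torsion free (a companion matrix minus the identity always has a unimodular maximal minor), whereas $\operatorname{coker}\bigl((\widetilde{C}_{\tilde p}\oplus I_4)-I\bigr)\cong\Z^7\oplus\bigl(\Z^{n-1}/(C_{\tilde p}-I)\Z^{n-1}\bigr)^4$ carries torsion of order $|\tilde p(1)|^4$. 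For $p(x)=(x-1)(x^2-4x+1)$ one has $|\tilde p(1)|=2$, so the two integral forms of $\varphi(1)$ are genuinely inequivalent over $\Z$ and the companion-form lattice cannot ``respect the splitting''. The paper does not attempt to reconcile them: its proof simply takes $Q_p=Q_{\tilde p}\oplus I_4$, i.e.\ it exercises the freedom in Proposition \ref{prop: Bock} and adopts the split integral form $\widetilde{C}_{\tilde p}\oplus I_4$ as the definition of $\Gamma_p$ in this degenerate case, after which $\Gamma_p\cong\Z\ltimes(Q_{\tilde p}\Z^{4n-1}\times\Z^4)\cong\Gamma_{\tilde p}\times\Z^4$ is immediate and Theorem \ref{thm:solv-isom} finishes. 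To complete your argument you must make that same choice explicit (the statement is then about that lattice); as you correctly sensed, with the companion-form lattice the two solvmanifolds would in general have different first homology and could not be diffeomorphic.
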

\begin{proof} It follows from \eqref{eq:Xp} and \eqref{eq:A-p} that $A_p\in \g\l(4n+3,\R)$ is conjugate to $A_{\tilde p}\oplus 0_4$, so that $G_{p}$ is isomorphic to $G_{\tilde p}\times \R^4$ via a hyperholomorphic map. Furthermore, the corresponding lattice $\Gamma_{ p}$ in $G_{ p}$ is given by $\Gamma_{ p}=\Z\ltimes Q_{ p}\Z^{4n+3}$, where $Q_{ p}$ can be chosen as $Q_{\tilde p}\oplus I_4$. Hence, $\Gamma_{ p}=\Z\ltimes (Q_{\tilde p} \Z^{4n-1}\times \Z^4)\cong \Gamma_{\tilde p}\times \Z^4$, and the proposition follows from Theorem \ref{thm:solv-isom}. 
    \end{proof}

%\medskip

For polynomials $p$ and $q$ as in Lemma \ref{lem:delta}$\rii$, we observe that the solvmanifold associated to $pq$ can be hyperholomorphically embedded in the product $\Gamma_{p}\backslash G_{p}\times \Gamma_{q}\backslash G_{q}$. 
\begin{proposition}\label{prop:hcx_sub}
For $n,m\geq 2$, if $p\in \Delta_n$ and $q\in \Delta_m$ have no common roots then $\Gamma_{pq}\backslash G_{pq}$ is a codimension $4$ hypercomplex submanifold of $\Gamma_{p}\backslash G_{p}\times \Gamma_{q}\backslash G_{q}$, where the latter is equipped with the product hypercomplex structure.
\end{proposition}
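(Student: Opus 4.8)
The plan is to realize $\Gamma_{pq}\backslash G_{pq}$ concretely inside the product by exhibiting a $\hcx$-invariant subalgebra of $\g_p\times\g_q$ isomorphic to $\g_{pq}$ and then descending to the quotient. First I would record the dimension count: by Lemma \ref{lem:delta}$\rii$ we have $pq\in\Delta_{n+m}$, so $\dim(\Gamma_{pq}\backslash G_{pq})=4(n+m)+4$, whereas $\dim(\Gamma_p\backslash G_p\times\Gamma_q\backslash G_q)=4(n+m)+8$; this already yields the codimension $4$. The geometric content then reduces to producing a closed hyperholomorphic copy of $G_{pq}$ inside $G_p\times G_q$ that carries a compatible lattice.

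The algebraic core uses that $p$ and $q$ have no common roots, so the roots of $pq$ are the disjoint union of those of $p$ and of $q$; hence $X_{pq}$ is conjugate by a permutation to $X_p\oplus X_q$ and $A_{pq}$ to $0_3\oplus X_p^{\oplus 4}\oplus X_q^{\oplus 4}$. Writing $e_0^p,e_\al^p$ and $e_0^q,e_\al^q$ for the distinguished vectors of $\g_p,\g_q$ (with $e_\al^\bullet=J_\al e_0^\bullet$), I would set $\tilde e_0=e_0^p+e_0^q$, $\tilde e_\al=e_\al^p+e_\al^q$, and define $\tilde\g=\operatorname{span}\{\tilde e_0,\tilde e_1,\tilde e_2,\tilde e_3\}\oplus\h_p\oplus\h_q$. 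Since the product structures satisfy $J_\al=J_\al^p\oplus J_\al^q$, one has $J_\al\tilde e_0=\tilde e_\al$, so $\tilde\g$ is $\hcx$-invariant; moreover $[\tilde e_0,\tilde e_\al]=0$ (the two factors commute and $\mu=v_0=0$ in both), while $\ad_{\tilde e_0}$ acts as $B_p=X_p^{\oplus 4}$ on $\h_p$ and as $B_q=X_q^{\oplus 4}$ on $\h_q$. Regrouping the four $J_\al$-copies of the spatial basis—a permutation commuting with every $J_\al$—identifies $\tilde\g$, as a hypercomplex almost abelian Lie algebra, with $\g_{pq}$. The connected subgroup $\tilde G\le G_p\times G_q$ with Lie algebra $\tilde\g$ is then closed, hyperholomorphically embedded, and isomorphic to $G_{pq}$.

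Next I would match lattices. Choose $\Gamma_p,\Gamma_q$ with the explicit spatial lattices $L_\bullet$ coming from the companion matrices, and take $\Gamma_{pq}$ with the equally valid integer representative $C_p\oplus C_q$ in place of $C_{pq}$ (both lie in $\operatorname{SL}(n+m,\Z)$ and are $\R$-conjugate to $\e^{X_{pq}}$, so Proposition \ref{prop: Bock} applies to either). With these choices I would compute $\tilde\Gamma:=\tilde G\cap(\Gamma_p\times\Gamma_q)$: the diagonal time direction $\tilde e_0$ forces equal integer time steps in both factors, and the spatial membership forces integer coefficients on the $\tilde e_\al$ together with $\h_p$- and $\h_q$-parts lying in the respective companion lattices. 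The one computation to run is that $\e^{\tilde A}=I_3\oplus D_p^{\oplus 4}\oplus D_q^{\oplus 4}$ preserves this lattice, which follows from $D_\bullet V_\bullet=V_\bullet C_\bullet$ with $C_\bullet\in\operatorname{SL}(\Z)$. Hence $\tilde\Gamma$ is a lattice in $\tilde G$ isomorphic to $\Gamma_{pq}$, the inclusion descends to $\tilde\Gamma\backslash\tilde G\to\Gamma_p\backslash G_p\times\Gamma_q\backslash G_q$, and this map is injective precisely because $\tilde\Gamma=\tilde G\cap(\Gamma_p\times\Gamma_q)$. An injective immersion with compact domain is a closed embedding, and since $T\tilde G$ is $\hcx$-invariant the product hypercomplex structure restricts to the invariant structure of $\Gamma_{pq}\backslash G_{pq}$.

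The main obstacle is exactly this lattice identification. The clean embedding naturally produces the lattice attached to the block matrix $C_p\oplus C_q$, while Proposition \ref{prop:delta_n} uses the companion matrix $C_{pq}$; these are conjugate over $\R$ (indeed over $\Q$, since $\gcd(p,q)=1$) but \emph{not} in general over $\Z$, the discrepancy being measured by $\operatorname{Res}(p,q)$. I would resolve this by using the freedom in Bock's criterion exactly as in the proof of Proposition \ref{prop:torus}, where a convenient $Q_p$ is selected, so that $\Gamma_{pq}$ may be taken to be $\tilde\Gamma$ from the start; the resulting solvmanifold is the one the construction attaches to $pq$. Checking that this choice of representative is consistent with the intended meaning of $p\mapsto\Gamma_p\backslash G_p$ is the delicate point and the place where genuine care is required.
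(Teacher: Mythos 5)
Your argument is essentially the paper's: the authors form the same $\hcx$-invariant subalgebra (in their notation $\a=\operatorname{span}\{e_1+f_1,e_2+f_2,e_3+f_3\}$ and $\g=\R(e_0+f_0)\oplus\a\oplus(\h_p\oplus\h_q)$), identify it with $\g_{pq}$ by noting that $\operatorname{ad}_{e_0+f_0}|_{\u}=0_3\oplus X_p^{\oplus 4}\oplus X_q^{\oplus 4}$ is conjugate to $A_{pq}$ from \eqref{eq:A-p}, and use exactly your embedding $(t,x,v,w)\mapsto((t,x,v),(t,x,w))$ of $G_{pq}$ into $G_p\times G_q$; the dimension count giving codimension $4$ is the same. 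The one point where you genuinely add something is the lattice identification. The paper's proof simply asserts that ``under this map, the lattice $\Gamma_{pq}$ can be considered as a subgroup of $\Gamma_p\times\Gamma_q$,'' whereas you correctly observe that the embedding naturally produces the spatial lattice on which $\e^{A_{pq}}$ acts by $I_3\oplus(C_p\oplus C_q)^{\oplus 4}$ rather than by $I_3\oplus C_{pq}^{\oplus 4}$, and that these two integer matrices are conjugate over $\Q$ (since $\gcd(p,q)=1$) but not automatically over $\Z$, the obstruction being measured by $\operatorname{Res}(p,q)$. Your fix --- exploit the freedom in Proposition \ref{prop: Bock} to declare $\Gamma_{pq}:=\tilde G\cap(\Gamma_p\times\Gamma_q)$, i.e.\ to build the lattice from the block-diagonal integer representative --- makes the proposition cleanly true and is evidently what the authors intend; but, as you yourself flag, if $C_p\oplus C_q$ and $C_{pq}$ are not $\Z$-conjugate the two candidate lattices need not be isomorphic, so this normalization is not interchangeable with the companion-matrix convention fixed just before Proposition \ref{prop:delta_n} without further argument. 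This caveat applies equally to the paper's own (terser) proof; your write-up has the merit of making it explicit rather than introducing a new error.
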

\begin{proof}
 Let us denote $\g_p=\R e_0\ltimes_{A_p} \R^{4n+3}$, $\g_q=\R f_0\ltimes_{A_q} \R^{4m+3}$, and $e_\alpha= J^p_\alpha e_0$, $f_\alpha= J^q_\alpha f_0$, $\alpha=1,2,3$, where $\{J_\al^p\}$ and $\{J_\al^q\}$ are the hypercomplex structures on $\g_p$ and $\g_q$, respectively. Moreover, let us denote $\h_p\cong \R^{4n}$ and $\h_q\cong \R^{4m}$ the corresponding subspaces  given by Theorem \ref{thm:characterization}. Set $\a:= \text{span}\{e_1+f_1,e_2+f_2,e_3+f_3\}$ and let $\g$ denote the following subspace of $\g_p\times  \g_q$:
\[ \g=\R(e_0+f_0)\oplus \a \oplus (\h_p\oplus \h_q).\]
We point out that $\g$ is invariant by the  hypercomplex structure $\{J_\al^p\oplus J_\al^q\}_{\al=1,2,3}$ on $\g_p\times  \g_q$. 
Note that $\R(e_0+f_0)\oplus \a$ is an abelian subalgebra of $\g_p\oplus \g_q$, and the action of $e_0+f_0$ on $\h_p\oplus \h_q$ is given by
\[ [e_0+f_0,x+y]=X_p^{\oplus 4} x+X_q^{\oplus 4} y, \quad x\in \h_p, \, y\in \h_q. \] 
Since $[e_\al+f_\al,\h_p\oplus \h_q]=0$ for $\al=1,2,3$, we have that $\g$ is again almost abelian, with codimension 1 abelian ideal $\u:=\a \oplus (\h_p\oplus \h_q)$. Moreover, the action of $e_0+f_0$ on $\u$ is given by 
\[ \ad_{(e_0+f_0)}|_\u= 0_3\oplus X_p^{\oplus 4}\oplus X_q^{\oplus 4}.\]
Since this matrix is conjugate to the matrix $A_{pq}$ as in \eqref{eq:A-p} with corresponding  polynomial $pq$, we have that $\g\cong \g_{pq}$, and this isomorphism preserves the corresponding hypercomplex structures. 

The Lie groups $G_p$ and $G_q$ can be decomposed as 
\[ G_p= \R \ltimes _{\varphi_p} (\R^3\times \R^{4n}),\qquad  
G_q= \R \ltimes _{\varphi_q} (\R^3\times \R^{4m}),\]
with
\[  \varphi_p(t)= I_3 \oplus  \operatorname{diag}(r_1^t,\ldots,r_n^t)^{\oplus 4}, \qquad 
  \varphi_q(t)= I_3 \oplus  \operatorname{diag}(s_1^t,\ldots,s_m^t)^{\oplus 4},\]
where $r_1, \dots ,r_n$ are the roots of $p$ and $s_1,\dots , s_m $ are the roots of $q$. Analogously, $G_{pq}$ decomposes~as 
\[
G_{pq}= \R \ltimes _{\varphi_{pq} }(\R^3\times \R^{4n}\times \R^{4m}), \qquad 
\varphi_{pq}(t)= I_3 \oplus  \operatorname{diag}(r_1^t,\ldots,r_n^t)^{\oplus 4} \oplus  \operatorname{diag}(s_1^t,\ldots,s_m^t)^{\oplus 4}.
\]
We may consider $G_{pq}$ as a Lie subgroup of $G_p\times G_q$ via the following map $G_{pq} \to G_p\times G_q$:
\[  (t, x,v,w)\mapsto ((t,x,v),(t,x, w)), \quad t\in \R,\, x\in \R^3, \, v\in\R^{4n} ,\, w\in \R^{4m}.
\]Under this map, the lattice $\Gamma_{pq}$ can be considered as a subgroup of $\Gamma_p\times \Gamma_q$. 
%\[ \Gamma_{pq}=\{ ((k,Q_p v),(k,Q_q w))\, :\, k\in\Z,\, v\in \Z^{4n+3}, \, w\in \Z^{4m+3}\}\subset \Gamma_p\times \Gamma_q.\]
Therefore, $\Gamma_{pq}\backslash G_{pq}$ is a submanifold of $\Gamma_{p}\backslash G_{p}\times \Gamma_{q}\backslash G_{q}$, which is clearly of codimension 4. Moreover, the inclusion is a hypercomplex map.
\end{proof}

%\medskip

According to Proposition \ref{prop:delta_n}, there is a natural map  which associates to each $p\in \Delta_n$ a hypercomplex solvmanifold $\Gamma_p\backslash G_p$ of dimension $4n+4$. This map is not one-to-one, moreover, it is in general two-to-one, as Theorem \ref{thm:p_or_p*} below  shows (compare with \cite[Theorem 2.5]{Hu}).

In order to state the theorem, we associate to each polynomial $p$ a polynomial $p^*$  defined by
\[ p^*(x)=(-1)^nx^np(x^{-1}).  \]
Up to sign, $p^*$ is the reciprocal polynomial of $p$ and it is monic precisely when  $p(0)= (-1)^n.$  It follows that $(p^*)^*=p$, and 
$p\in \Z[x]$ if and only if $p^*\in\Z[x]$. We note that:
\begin{enumerate}
    %\item if $p(x)=x^n+\sum_{j=1}^{n-1}(-1)^j m_{n-j}x^{n-j} + (-1)^n$ then 
    %\[ p^*(x)= x^n+\sum_{j=1}^{n-1}(-1)^j m_{j}x^{n-j} + %(-1)^n. \]
    %Therefore, if $p\in \Z[x]$ then $p^*\in\Z[x]$.
    \item if $p\in \Delta_n$ then $p^*\in\Delta_n$. Indeed, if $r_1,\ldots, r_n$ are the distinct positive roots of $p$, with $r_1 \cdots r_n =1$, then $r_1^{-1},\ldots, r_n^{-1}$ are distinct positive roots of $p^*$ and $r_1^{-1}\cdots r_n^{-1}=1$,  therefore, $p^*\in \Delta_n$; 
    %, therefore,  $p^*(x)=(x-r_1^{-1})\cdots (x-r_n^{-1})$, and it is clear that
    \item if $p\in\Delta_n$ is the characteristic polynomial of a matrix $Q\in\operatorname{SL}(n,\Z)$ then $p^*$ is the characteristic polynomial of $Q^{-1}$. 
\end{enumerate}

%\smallskip

\begin{theorem}\label{thm:p_or_p*}
 Let $p,q\in \Delta_n$, with associated solvmanifolds $\Gamma_p\backslash G_p$ and $\Gamma_q\backslash G_q$. Then $\Gamma_p\backslash G_p$ and $\Gamma_q\backslash G_q$ are diffeomorphic if and only if $q=p$ or $q=p^*$.    
\end{theorem}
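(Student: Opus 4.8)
The plan is to reduce the diffeomorphism problem to an isomorphism problem for the lattices, and then to analyze that isomorphism at the level of Lie groups. A diffeomorphism $\Gamma_p\backslash G_p\cong\Gamma_q\backslash G_q$ induces an isomorphism of fundamental groups $\Gamma_p\cong\Gamma_q$, and conversely Theorem \ref{thm:solv-isom} turns any such isomorphism into a diffeomorphism. Hence the statement is equivalent to: $\Gamma_p\cong\Gamma_q$ as abstract groups if and only if $q=p$ or $q=p^*$. Throughout I use that $\Gamma_p\cong\Z\ltimes_{\widetilde C_p}\Z^{4n+3}$ with $\widetilde C_p=I_3\oplus C_p^{\oplus4}$, and that $A_p=0_3\oplus X_p^{\oplus4}$ is diagonalizable with eigenvalue $0$ of multiplicity $3$ (plus $4$ whenever $1$ is a root of $p$) and eigenvalues $\log r_i$ of multiplicity $4$.

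For the forward implication I would argue as follows. Both $G_p$ and $G_q$ are completely solvable, so by Saito's Theorem \ref{thm:Saito} any isomorphism $\Gamma_p\cong\Gamma_q$ extends to a Lie group isomorphism $F\colon G_p\to G_q$ carrying $\Gamma_p$ onto $\Gamma_q$. Since $A_p$ is not nilpotent, the abelian ideal $\u_p=\R^{4n+3}$ is exactly the nilradical of $\g_p$, hence characteristic; therefore the induced Lie algebra isomorphism $f\colon\g_p\to\g_q$ satisfies $f(\u_p)=\u_q$ and descends to a scaling $t\mapsto\beta t$, $\beta\neq0$, on the quotient $\g_p/\u_p\cong\R\cong\g_q/\u_q$. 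Writing $f(e_0)=\beta e_0'+w$ with $w\in\u_q$ and using that $\u_q$ is abelian, one gets $f|_{\u_p}\circ A_p=\beta\,A_q\circ f|_{\u_p}$, so $A_q$ is conjugate to $\beta^{-1}A_p$ (compare Lemma \ref{lem:ad-conjugated}). The integrality now enters: the time projection $P_q\colon G_q\to\R$ is a homomorphism with $P_q(\Gamma_q)=\Z$, and $F$ intertwines it with the analogous projection $P_p$ on $G_p$ (which sends $\Gamma_p$ onto $\Z$) through the scaling by $\beta$; since $F(\Gamma_p)=\Gamma_q$, this forces $\beta\Z=\Z$, whence $\beta=\pm1$. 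Consequently $A_q$ is conjugate either to $A_p$ or to $-A_p$. Comparing the eigenvalue multisets (matching the triple zero eigenvalue and the multiplicity-four blocks) yields $\{r_i\}=\{s_j\}$ in the first case and $\{r_i\}=\{s_j^{-1}\}$ in the second, that is, $q=p$ or $q=p^*$.

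For the converse, the case $q=p$ is trivial, so assume $q=p^*$; I would produce an explicit isomorphism $\Gamma_p\cong\Gamma_{p^*}$ and invoke Theorem \ref{thm:solv-isom}. First, reversing the base direction gives an isomorphism $\Z\ltimes_E\Z^d\cong\Z\ltimes_{E^{-1}}\Z^d$, $(m,u)\mapsto(-m,u)$, valid for any $E\in\operatorname{GL}(d,\Z)$; applied to $E=\widetilde C_p$ this produces $\Gamma_p\cong\Z\ltimes_{\widetilde C_p^{-1}}\Z^{4n+3}$. Second, because $p(0)=\pm1$ the class of $x$ is a unit in $\Z[x]/(p)$, and multiplication by $x^{-1}$ realizes $C_p^{-1}$; the ring isomorphism $\Z[y]/(p^*)\to\Z[x]/(p)$, $y\mapsto x^{-1}$, identifies multiplication by $y$ with multiplication by $x^{-1}$, showing that $C_p^{-1}$ is $\operatorname{GL}(n,\Z)$-conjugate to the companion matrix $C_{p^*}$. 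Hence $\widetilde C_p^{-1}=I_3\oplus(C_p^{-1})^{\oplus4}$ is $\operatorname{GL}(4n+3,\Z)$-conjugate to $\widetilde C_{p^*}$, giving $\Z\ltimes_{\widetilde C_p^{-1}}\Z^{4n+3}\cong\Z\ltimes_{\widetilde C_{p^*}}\Z^{4n+3}\cong\Gamma_{p^*}$. Chaining these isomorphisms yields $\Gamma_p\cong\Gamma_{p^*}$, and Theorem \ref{thm:solv-isom} finishes the proof.

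The main obstacle is the forward implication: everything hinges on upgrading the real conjugacy $A_q\sim cA_p$ supplied by Lemma \ref{lem:ad-conjugated} to the rigid value $c=\pm1$, which is exactly where the lattice (and not merely the Lie group) must be exploited, via the nilradical and time-projection argument above. A secondary delicate point is that the converse requires conjugacy of $C_p^{-1}$ and $C_{p^*}$ over $\Z$, not merely over $\Q$; this is guaranteed here by the unit condition $p(0)=\pm1$, which makes $x$ invertible modulo $p$ and furnishes the required integral module isomorphism.
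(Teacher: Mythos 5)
Your proof is correct and follows essentially the same route as the paper's: Mostow's Theorem \ref{thm:solv-isom} and Saito's Theorem \ref{thm:Saito} to pass between diffeomorphisms, lattice isomorphisms and a Lie group isomorphism $F\colon G_p\to G_q$; the integrality of the time projection $F(\Gamma_p)=\Gamma_q$ to force the scaling factor to be $\pm1$ and hence $A_q\sim\pm A_p$, from which $q=p$ or $q=p^*$ by comparing roots; and, for the converse, the chain $\Gamma_{p^*}\cong\Z\ltimes_{\widetilde{C}_{p^*}}\Z^{4n+3}\cong\Z\ltimes_{\widetilde{C}_p^{-1}}\Z^{4n+3}\cong\Gamma_p$. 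The only divergences are in how two auxiliary facts are justified: you obtain the form of $F$ (and the scaling on $\g_p/\u_p$) from the nilradical $\u_p$ being characteristic, where the paper invokes its Appendix B result (Theorem \ref{thm:Lie-iso}), and you prove the $\operatorname{GL}(n,\Z)$-conjugacy of $C_p^{-1}$ and $C_{p^*}$ directly via the cyclic module $\Z[x]/(p)$ and the unit $x$, where the paper cites Kaplansky; both substitutes are valid (the latter is arguably more self-contained, since it avoids any appeal to a general rational-implies-integral conjugacy statement).
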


\begin{proof}
We already know that $p^*\in \Delta_n$. 
Let us show next that $\Gamma_{p^*}\backslash G_{p^*}$ is diffeomorphic to $\Gamma_{p}\backslash G_{p}$. Recall that $\Gamma_{p^*}\cong \Z \ltimes_{\widetilde{C}_{p^*}} \Z^{4n+3}$, where $\widetilde{C}_{p^*}=I_3\oplus C_{p^*}^{\oplus 4} $ and $C_{p^*}$ is the companion matrix of $p^*$. Since $p^*$ is the characteristic polynomial of both $(C_p)^{-1}$ and $C_{p^*}$ and coincides also with the minimal polynomial of these matrices, we have that$(C_p)^{-1}$  is conjugate   to $C_{p^*}$ over $\Q$. But, if two integer matrices are conjugate  over $\Q$, they are conjugate over $\Z$, according to \cite[page 75]{Ka}. Hence, there exists $P\in\operatorname{GL}(n,\Z)$ such that $(C_p)^{-1}=PC_{p^*}P^{-1}$. This implies that $\Z\ltimes_{C_{p^*}} \Z^n\cong \Z\ltimes_{(C_p)^{-1}} \Z^n$, via the isomorphism $(k,v)\mapsto (k,Pv)$. Therefore
\[ \Gamma_{p^*}\cong \Z \ltimes_{\widetilde{C}_{p^*}} \Z^{4n+3}\cong \Z \ltimes_{(\widetilde{C}_p)^{-1}} \Z^{4n+3}.\]
However, it is well known that the semidirect products $\Z\ltimes_Q \Z^m$ and $\Z\ltimes_{Q^{-1}} \Z^m$ are isomorphic for any $Q\in \operatorname{SL}(m,\Z)$, via the isomorphism $(k,v)\mapsto (-k,v)$. As a consequence, we have that
\[ \Gamma_{p^*} \cong \Z \ltimes_{(\widetilde{C}_p)^{-1}} \Z^{4n+3} \cong \Z \ltimes_{\widetilde{C}_p} \Z^{4n+3}\cong \Gamma_p.\]
This implies that $\Gamma_{p^*}\backslash G_{p^*}$ and $\Gamma_{p}\backslash G_{p}$ are diffeomorphic, according to Theorem \ref{thm:solv-isom}.

Conversely, assume now that $\Gamma_p\backslash G_p$ and $\Gamma_q\backslash G_q$ are diffeomorphic, where $G_p=\R\ltimes_{\varphi}\R^{4n+3}$ and $G_q=\R\ltimes_{\psi}\R^{4n+3}$, with $\varphi(t)=\e^{tA_p}$, $\psi(t)=\e^{tA_q}$, $t\in \R$. Then $\Gamma_p$ and $\Gamma_q$ are isomorphic and since both $G_p$ and $G_q$ are completely solvable, Theorem \ref{thm:Saito} ensures that the isomorphism between the lattices extends to a Lie group isomorphism between $G_p$ and $G_q$. That is, there exists an isomorphism $F:G_p\to G_q$ such that $F(\Gamma_p)=\Gamma_q$ and $f:=F|_{\Gamma_p}:\Gamma_p \to \Gamma_q$ is the given isomorphism. Since we have that the Lie algebras $\g_p$ and $\g_q$ are isomorphic, it follows from Lemma \ref{lem:ad-conjugated} that there exists a non-zero $c\in \R$  such that $A_p$ and $cA_q$ are conjugated in $\operatorname{GL}(4n+3,\R)$. According to  Theorem \ref{thm:Lie-iso}$\rii$, the isomorphism $F$ is given by $F(t,v)=(\mu  t, g(t,v) )$ for $\mu=\pm c$ and some $C^\infty$ function $g:G_p\to \R^{4n+3}$. %for an appropriate $L\in \operatorname{GL}(4n+3,\R)$, where $(dF)_{(0,0)} (1,0)=(\mu, v_0)$. 

Recall the explicit descriptions of the lattices $\Gamma_p$ and $\Gamma_q$: 
$\Gamma_p=\Z\ltimes_{\varphi(1)} Q_p \Z^{4n+3}\subset G_p$  and $\Gamma_q=\Z\ltimes_{\psi(1)} Q_q \Z^{4n+3}\subset G_q$, where $Q_p, \, Q_q\in \operatorname{GL}(4n+3,\R)$ satisfy $Q_p^{-1}\varphi(1) Q_p=\widetilde{C}_p$ and $Q_q^{-1}\psi(1) Q_q=\widetilde{C}_q$. We have that $F(\Gamma _p)= \mu \Z\times \Lambda_p\subset \Gamma_q$, where $\Lambda _p=g(\Gamma _p)$, therefore, $\mu \Z =\Z$ due to $F(\Gamma_p)=\Gamma_q$, which implies that $\mu =\pm 1$, hence $c=\pm 1$.

If $c=1$ then $\varphi(1)=\e^{A_p}$ and $\psi(1)=e^{A_q}$ are conjugate, so that they have the same characteristic polynomial, that is, $(x-1)^3\,q(x)^4=(x-1)^3\,p(x)^4$, which implies $p=q$ since $p,\, q\in \Delta_n$.  

On the other hand, if $c=-1$ then $A_q$ is conjugate to $-A_p$, so  that $\e^{A_q}$ is conjugate to $(\e^{A_p})^{-1}$. Therefore, the  characteristic polyomials of $\e^{A_q}$ and  $(\e^{A_p})^{-1}$ coincide, which implies that  $q=p^*$. This concludes the proof. 
\end{proof}

%\smallskip

\begin{example}
There is a way to produce an infinite number of polynomials in $\Delta_n$, beginning with a given $p\in\Delta_n$.  Indeed, if $p$ has roots $r_1,\ldots, r_n$ consider, for 
$k\in \Z, \, k\neq 0$, the polynomial $p_k$ given by
\[ p_k(x)=(x-r_1^k)\cdots (x-r_n^k).\] 
Using Newton's identities it can be seen that $p_k\in \Z[x]$ for $k\geq 2$ and, since $p_{-1}=p^*$, this also holds for $k<0$. Moreover, $p_k$ has  positive distinct  roots  $r_1^k, \ldots, r_n^k$, therefore,  $p_k\in \Delta_n$. 

We observe that the corresponding matrix $A_{p_k}$, as in \eqref{eq:A-p}, is given by
$A_{p_k}=k A_p$. 
It follows from Lemma \ref{lem:ad-conjugated} that the associated almost abelian Lie groups $G_p$ and $G_{p_k}$ are isomorphic for any $k\neq 0$. Moreover, according to Theorem \ref{thm:p_or_p*} we have that for $j,k\neq 0$, the corresponding solvmanifolds $\Gamma_{p_j}\backslash G_{p_j}$ and $\Gamma_{p_k}\backslash G_{p_k}$ are diffeomorphic if and only if $k=\pm j$.
\end{example}

%\smallskip

\subsection{{Properties of the polynomials in  \texorpdfstring{$\Delta_n$}{} }} \label{subsec:deltan} In order to obtain explicit examples of the construction above we will prove several properties of the polynomials in the set $\Delta_n$ and we will  exhibit infinite subsets of $\Delta_n$ for each $n\geq 2$.

\begin{lemma}\label{lem:combinatorio}
Let $p\in \Delta_n$ be given by
\[ p(x)=x^n+\sum_{j=1}^{n-1}(-1)^j m_{n-j}x^{n-j} + (-1)^n.\]
Then $m_j>\binom{n}{j}$ for all $j=1,\ldots,n-1$. 
\end{lemma}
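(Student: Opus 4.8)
The plan is to translate the inequality on the coefficients $m_j$ into an inequality for the elementary symmetric functions of the roots, and then to invoke a symmetric-function version of the AM--GM inequality, using the normalization $r_1\cdots r_n=1$ coming from condition (iv).

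First I would record the relation between the coefficients $m_j$ and the roots. Since $p$ is monic with roots $r_1<\cdots<r_n$, expanding
\[ p(x)=\prod_{i=1}^n (x-r_i)=\sum_{k=0}^n (-1)^{\,n-k}\, e_{n-k}(r)\, x^{k}, \]
where $e_j$ denotes the $j$-th elementary symmetric polynomial in $r_1,\ldots,r_n$, and comparing the coefficient of $x^{n-j}$ with the given expression $(-1)^j m_{n-j}$ yields $m_{n-j}=e_j(r)$ for $1\le j\le n-1$, that is, $m_i=e_{n-i}(r)$. Condition (iv), $p(0)=(-1)^n$, is exactly $e_n(r)=r_1\cdots r_n=1$. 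Since $\binom{n}{i}=\binom{n}{n-i}$, writing $k=n-i$ the assertion $m_i>\binom{n}{i}$ becomes equivalent to $e_k(r)>\binom{n}{k}$ for every $k=1,\ldots,n-1$.

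The key step is the inequality $e_k(r)\ge \binom{n}{k}$. I would obtain it directly from AM--GM: the value $e_k(r)$ is a sum of the $\binom{n}{k}$ products $\prod_{i\in S}r_i$ over the $k$-element subsets $S\subseteq\{1,\ldots,n\}$, hence
\[ \frac{e_k(r)}{\binom{n}{k}}\ \ge\ \Bigl(\ \prod_{|S|=k}\ \prod_{i\in S} r_i\ \Bigr)^{1/\binom{n}{k}}. \]
Each index $i$ lies in exactly $\binom{n-1}{k-1}$ of these subsets, so the product on the right equals $(r_1\cdots r_n)^{\binom{n-1}{k-1}}=1$ by $e_n(r)=1$, giving $e_k(r)\ge \binom{n}{k}$. (Alternatively one may simply quote Maclaurin's inequalities, noting $p_n:=e_n/\binom{n}{n}=1$.)

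Finally I would rule out equality. Since the roots are distinct, for $1\le k\le n-1$ the summands $\prod_{i\in S} r_i$ are not all equal: swapping a single index between two overlapping $k$-subsets shows that equality of all these products would force $r_a=r_b$ for some $a\neq b$, contradicting distinctness. Therefore AM--GM is strict and $e_k(r)>\binom{n}{k}$, which is the desired conclusion. I expect no serious obstacle here; the only points requiring care are the bookkeeping of indices in the coefficient comparison and the verification that each $r_i$ occurs with multiplicity $\binom{n-1}{k-1}$, while the equality analysis is routine.
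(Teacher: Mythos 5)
Your proposal is correct and follows essentially the same route as the paper: identify the coefficients with elementary symmetric functions of the roots, apply AM--GM to the $\binom{n}{j}$ products of $j$-element subsets (whose total product is $1$ by the normalization $r_1\cdots r_n=1$), and rule out equality using the distinctness of the roots. The only cosmetic difference is that your equality analysis via swapping an index between overlapping subsets is spelled out slightly more explicitly than in the paper.
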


\begin{proof}
Let $r_1,\ldots,r_n$ be the $n$ different real roots of $p$, where $r_j>0$ for all $j$. It follows that 
\[ p(x)=(x-r_1)\cdots (x-r_n).\]
Note that $r_1\cdots r_n=1$. Then, expanding this product, we obtain that the coefficient of the monomial $x^{n-j}$ is
\[ (-1)^j m_{n-j} = (-1)^{j} \sum_{k_1<\cdots < k_{j}} r_{k_1}\cdots r_{k_{j}},\]
so that,  for all $j$,
\begin{equation}\label{eq:mj}
m_{n-j}=\sum_{k_1<\cdots < k_{j}} r_{k_1}\cdots r_{k_{j}} >0.
\end{equation} 

Next, for $1\leq j\leq n-1$  we use the inequality of arithmetic and geometric means to the $\binom{n}{j}$ positive real numbers $r_{k_1}\cdots r_{k_{j}}$, $1\leq k_1<\cdots < k_{j}\leq n$. Using \eqref{eq:mj} we obtain that the arithmetic mean is $\binom{n}{j}^{-1}m_{n-j}$, whereas using $r_1\cdots r_n=1$ we get that the geometric mean is~$1$. Thus $m_{n-j}\geq \binom{n}{j}$. If we had $m_{n-j}=\binom{n}{j}$ for some $j$ then all the numbers $r_{k_1}\cdots r_{k_{j}}$ would be equal to $1$, and this would imply that $r_1=\cdots=r_n$, which is a contradiction. Therefore, for all $j$ we have
\[ m_{n-j} > \binom{n}{j} =\binom{n}{n-j} ,\]
 and the proof is complete.
\end{proof}

%\smallskip

\begin{example}\label{ex:delta2}
 For each $m\in \N, \, m\geq 3$, let $t_m=\frac{m+\sqrt{m^2-4}}{2}>0$, then $t_m^{-1}=\frac{m-\sqrt{m^2-4}}{2} \neq t_m$. We define   the following  quadratic  polynomial $h_m$: \[ h_m(x)=\left(x-t_m^{-1}\right)(x-t_m)=x^2-mx+1.\] It is easy to show that 
\[
\Delta_2=\{h_m : m\in \N, \, m\geq 3\}. 
\]
Note that  $h_m(1) \neq 0$ for all $m\geq 3$, therefore, $\Delta_2'=\Delta_2 $. Moreover, $h_m$ and $h_n$ have no common roots for $m\neq n$. 
\end{example}

%\medskip 

\begin{example}\label{ex:cubic}
Let us consider a cubic monic integer polynomial given by 
\[ f_{m,n}(x)=x^3-mx^2+nx-1, \quad m,n\in \N.\]
Note that any real root of $f_{m,n}$ is positive, since $f_{m,n}(x)\leq -1 $  for $x\leq 0$. Moreover, all the roots of $f_{m,n}$ are distinct  real numbers if and only if its discriminant $D(f_{m,n})$ is positive, where $D(f_{m,n})=m^2n^2-4m^3-4n^3+18mn-27$. Therefore, $f_{m,n}\in\Delta_3$ if and only if $D(f_{m,n})>0$. In particular,  we obtain from Lemma \ref{lem:combinatorio} that $m>3$ and $n>3$. Therefore, the set $\Delta_3$ is in one-to-one correspondence with the integer points in the interior of the region in Figure \ref{fig:region}. Note that $f_{m,n}\in \Delta_3'$ if and only if $m\neq n$. 

\smallskip

\begin{figure}[ht]
\includegraphics[scale=0.5]{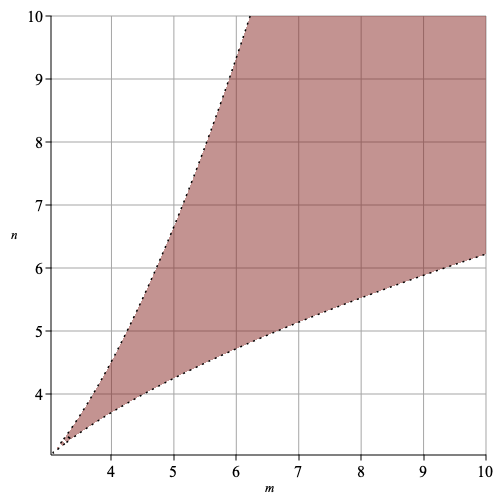}
\caption{Discriminant of $f_{m,n}>0$}
\label{fig:region} 
\end{figure} 
\end{example}

%\medskip

\begin{proposition}\label{prop:positive}
Let $p\in\Z[x]$ be a monic polynomial of degree $n\geq 2$ of the form
\[ p(x)=x^n+\sum_{j=1}^{n-1}(-1)^j m_{n-j}x^{n-j} + (-1)^n, \]
with $m_j>0$ for all $j=1,\ldots,n-1$, and set $m_0=m_n=1$. If 
\begin{equation}\label{eq:desigualdad}
m_j^2-4m_{j-1}m_{j+1} >0, \quad j=1,\ldots,n-1,
\end{equation} 
then $p\in \Delta_n$. %In particular, $m_j>\binom{n}{j}$ for $j=1,\ldots, n-1$.
\end{proposition}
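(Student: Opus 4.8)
The plan is to separate the cheap requirements from the real content and then to recognize \eqref{eq:desigualdad} as the hypothesis of a classical real-rootedness criterion. From the displayed form, $p$ is monic of degree $n$ and its constant term is $(-1)^n m_0 = (-1)^n$, so conditions (i), (ii) and (iv) defining $\Delta_n$ hold at once. Thus everything reduces to proving condition (iii): that $p$ has $n$ distinct real roots, all positive.

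The key preliminary move is a sign change of the variable. Incorporating $m_0 = m_n = 1$, one may write $p(x) = \sum_{k=0}^n (-1)^{n-k} m_k x^k$, and therefore $p(-x) = (-1)^n q(x)$, where $q(x) = \sum_{k=0}^n m_k x^k$ has \emph{strictly positive} coefficients. The roots of $p$ are exactly the negatives of the roots of $q$, and under this substitution \eqref{eq:desigualdad} becomes $m_k^2 > 4 m_{k-1} m_{k+1}$ for $1 \le k \le n-1$. Hence it suffices to prove that $q$ has $n$ distinct negative real roots; by the bijection between roots this gives $p$ exactly $n$ distinct positive real roots, and $p \in \Delta_n$ follows.

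This last assertion is precisely the content of the criterion of Kurtz (a sharpening of an older theorem of Hutchinson): a real polynomial $\sum a_k x^k$ with $a_k > 0$ and $a_k^2 > 4 a_{k-1} a_{k+1}$ at every interior index has only real, simple and negative zeros. I would either quote this criterion directly or, for a self-contained treatment, reproduce its sign-change proof. For the latter, set $u_k := m_{k-1}/m_k > 0$; then \eqref{eq:desigualdad} is equivalent to $u_{k+1}/u_k = m_k^2/(m_{k-1} m_{k+1}) > 4$, so that $0 < u_1 < u_2 < \cdots < u_n$ grow faster than a geometric progression of ratio $4$. Writing $q(-s) = \sum_k (-1)^k m_k s^k$ for $s > 0$, the $k$-th term dominates its neighbours on the interval $(u_k, u_{k+1})$, and the aim is to show that $q(-s)$ takes the sign $(-1)^k$ at a suitable interior point of each interval $(u_k, u_{k+1})$, $0 \le k \le n$ (with $u_0 = 0$, $u_{n+1} = +\infty$). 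These $n+1$ sign evaluations alternate, so the intermediate value theorem produces $n$ distinct negative roots of $q$, which are all of them by degree count.

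I expect the delicate step --- and the exact place where the constant $4$ is indispensable --- to be the sign estimate on each interval. The clean mechanism is that grouping three consecutive terms of $q(-s)$ gives $(-1)^k s^{k-1}\bigl(m_k s - m_{k-1} - m_{k+1} s^2\bigr)$, and the quadratic $m_{k+1} s^2 - m_k s + m_{k-1}$ has discriminant $m_k^2 - 4 m_{k-1} m_{k+1}$, which is positive precisely by \eqref{eq:desigualdad}; it therefore has two distinct positive roots $s_- < s_+$, one checks that $(s_-,s_+) \subset (u_k, u_{k+1})$, and on this subinterval the three-term part carries the predicted sign $(-1)^k$. The remaining work, which is where I anticipate the main technical effort, is to choose the evaluation point inside $(s_-,s_+)$ so that this three-term contribution dominates the sum of all the non-adjacent terms; here the super-geometric decay of the $u_k$ forced by the factor $4$ is exactly what makes the tail negligible. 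Carrying out this estimate completes the proof that $q$, and hence $p$, has the required roots, so that $p \in \Delta_n$.
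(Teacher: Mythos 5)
Your proof is correct and follows essentially the same route as the paper: the sign change $p(-x)=(-1)^n q(x)$ reduces everything to a positive-coefficient polynomial $q$, the inequality \eqref{eq:desigualdad} is recognized as the hypothesis of Kurtz's criterion guaranteeing $n$ distinct real roots, and positivity of the coefficients of $q$ forces those roots to be negative, hence the roots of $p$ positive. The paper simply cites Kurtz's Theorem 1 at this point rather than reproving it, so your optional sketch of the interlacing argument is extra work the paper does not undertake.
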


\begin{proof}
We observe that $p(-x)=(-1)^n q(x)$, where $q(x)$ is given by \[ q(x)= x^n+\sum_{j=1}^{n-1} m_{n-j}x^{n-j} +1 .\] Since \eqref{eq:desigualdad} is satisfied, it follows from \cite[Theorem 1]{Ku} that $q$, and hence $p$, has $n$ distinct real roots. The roots of $q$ are negative, since clearly $q(x)> 0$ for $x\geq 0$, which means that  all roots of $p$ are positive, that is, $p\in \Delta_n$. 
%It follows from \cite[Theorem 1]{Ku} that condition \eqref{eq:desigualdad} implies that all roots of $p$ are real and distinct\footnote{In fact, Theorem 1 in \cite{Ku} holds for integer polynomials with positive coefficients, but it can be seen that it also holds when the coefficients alternate in sign.}. Thus, we only need to verify that all these roots are positive. We will do this considering two cases, according to the parity of $n$. Note that the derivative of $p$ is given by
%\[ p'(x)=nx^{n-1}+\sum_{j=1}^{n-1} (-1)^j(n-j)m_{n-j}x^{n-j-1}.   \]
%
%If $n$ is even, it follows from  $m_j>0$ for all $j$ that $p'(x)<0$ for $x<0$. Since $p(0)=1>0$ we have that all the roots of $p$ must be in $(0,\infty)$.
%
%If $n$ is odd, it follows from $m_j>0$ that $p'(x)>0$ for $x<0$. Since $p(0)=-1<0$ we have that all the roots of $p$ must be in $(0,\infty)$.
%
%In any case, we arrive at $p\in\Delta_n$.
\end{proof}

%\smallskip

\begin{example}
For $m,n, r\in\N$, consider the integer polynomial given by 
\[ p(x)=x^4-mx^3+nx^2-rx+1.\]
It is easily verified that the conditions in Proposition \ref{prop:positive} are fulfilled if and only if 
\[ 2\sqrt{mr}<n<\frac 14 \min\{ m^2, r^2\}.\]
Therefore, for each choice of  $m>8$ and $r>8$, we find a finite number of values of $n$ such that $p\in \Delta_4$.  Each $p$ gives rise to a 20-dimensional hypercomplex almost abelian solvmanifold.

In a similar fashion, the integer polynomial 
\[ q(x)=x^5-mx^4+nx^3-rx^2+sx-1\]
satisfies the conditions in Proposition \ref{prop:positive} if and only if
\[ n<\frac{m^2}{4}, \quad r<\frac{s^2}{4}, \quad 4mr<n^2, \quad 4sn<r^2.\]
Thus, for each choice of $m> 16$ and $s>16$ we find a finite number of values of $n$ and $r$ such that $q\in \Delta_5$. Each $q$ gives rise to a 24-dimensional hypercomplex almost abelian solvmanifold. For instance, if $m=s=17$ then we can choose $n,r\in\{69,70,71,72\}$; moreover, if $n\neq r$ then $q\in\Delta_5'$. 
\end{example}

%\medskip

In order to prove that the set $\Delta_n'$ defined in \eqref{eq:delta-prima} is infinite for $n\geq 2$, 
we will apply  Lemma \ref{lem:delta}$\rii$ and Examples \ref{ex:delta2} and \ref{ex:cubic}.  
 We recall next the following useful criterion to determine whether two polynomials have common roots.
If $p, \, q\in \R[x]$ have only real roots, then they have no common roots  if and only if $\operatorname{Res}(p,q)\neq 0$, where  $\operatorname{Res}(p,q)$ is the resultant of $p$ and $q$, defined by $\operatorname{Res}(p,q)=\det (\mathrm{Syl}(p,q))$. Here $\mathrm{Syl}(p,q)$ denotes the Sylvester matrix associated to $p$ and $q$, defined in the following way: if $p$ has degree $m$ and $q$ has degree $n$, with $p(x)= \sum _{i=0}^m a_i x^i , \, q(x)= \sum _{j=0}^n b_j x^j$, then $\mathrm{Syl}(p,q)$ is the $(m+n)\times(m+n)$ matrix given by:
\[
	\mathrm{Syl}(p,q)=\left[\phantom{\begin{matrix}a_0\\ \ddots\\a_0\\b_0\\ \ddots\\ \ddots \\b_0 \end{matrix}}
	\right.\hspace{-1.5em}
	\begin{matrix}
		a_m & a_{m-1} &\cdots& \cdots & a_0 &  & \\
		&  	\ddots& \ddots& & & \ddots & \\
		& & a_m & a_{m-1} & \cdots & \cdots &a_0 \\
		b_n & b_{n-1} & \cdots & b_0 & & & \\
		&\ddots &\ddots & & \ddots &  &\\ 
  & &\ddots & \ddots & &\ddots  &\\
&	& &	  b_n & b_{n-1}& \cdots & b_0
	\end{matrix}
	\hspace{-1.5em}
	\left.\phantom{\begin{matrix}a_0\\ \ddots\\a_0\\b_0\\ \ddots\\ \ddots \\ b_0 \end{matrix}}\right]\hspace{-1em}
	\begin{tabular}{l}
		$\left.\lefteqn{\phantom{\begin{matrix} a_0\\ \ddots\\ a_0\ \end{matrix}}}\right\}n$\\
		$\left.\lefteqn{\phantom{\begin{matrix} b_0\\ \ddots  \\\ddots    \\ b_0\ \end{matrix}}} \right\}m$
	\end{tabular}
	\]
 where the $(i,j)$ coefficient is  zero in the following cases: 
\[ 1<i,\, j<n, \qquad i<n, \, j> m+1,\qquad i> n+1, \, j<m, \qquad   i,\,  j> n+1 .\]

%\smallskip

\begin{lemma}\label{lem:infinite} 
The set $\Delta_n '$ is infinite for $n\geq 2$.
\end{lemma}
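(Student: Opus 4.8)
The plan is to prove that $\Delta_n'$ is infinite by induction on $n$, building higher-degree polynomials from lower-degree ones via the product construction in Lemma \ref{lem:delta}$\rii$. The base cases $n=2$ and $n=3$ are already handled by Examples \ref{ex:delta2} and \ref{ex:cubic}: Example \ref{ex:delta2} exhibits the infinite family $\{h_m : m\geq 3\}=\Delta_2=\Delta_2'$ with pairwise disjoint root sets, and Example \ref{ex:cubic} produces infinitely many $f_{m,n}\in \Delta_3'$ (taking, say, $m\neq n$ along a suitable sequence with positive discriminant). The inductive step is where the product lemma does the work: given a polynomial $q\in \Delta_{n-2}'$ and one of the quadratics $h_m\in \Delta_2'$ with no root in common with $q$, Lemma \ref{lem:delta}$\rii$ yields $h_m q\in \Delta_n$, and since $(h_m q)(1)=h_m(1)\,q(1)\neq 0$ (both factors are nonzero at $1$), in fact $h_m q\in \Delta_n'$.

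The main obstacle is ensuring two things simultaneously at each step: that the factors share no common root (so that Lemma \ref{lem:delta}$\rii$ applies and the product has $n$ \emph{distinct} roots), and that the resulting infinite collection in $\Delta_n'$ really is infinite rather than collapsing to finitely many polynomials. First I would address disjointness of roots. Fixing a single $q\in \Delta_{n-2}'$, its root set is a finite set of $n-2$ positive reals; the roots of $h_m$ are $t_m^{\pm 1}$ with $t_m=\tfrac{m+\sqrt{m^2-4}}2\to\infty$ as $m\to\infty$, so $t_m$ exceeds every root of $q$ for all large $m$, and $t_m^{-1}$ is below every positive root of $q$ once $t_m$ is large; hence for all but finitely many $m$ the polynomials $h_m$ and $q$ have no common root. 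By the resultant criterion recalled just before this lemma, $\operatorname{Res}(h_m,q)\neq 0$ for these $m$, confirming the hypothesis of Lemma \ref{lem:delta}$\rii$.

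To guarantee infinitude, I would observe that distinct values of $m$ give distinct products $h_m q$: since the multiset of roots of $h_m q$ is $\{t_m, t_m^{-1}\}\cup\{\text{roots of }q\}$ and the value $t_m$ is strictly increasing in $m$ (and eventually larger than all roots of $q$), the largest root of $h_m q$ is $t_m$ for large $m$, which distinguishes the polynomials. Thus $\{h_m q : m\text{ large}\}$ is an infinite subset of $\Delta_n'$, completing the induction for $n\geq 4$.

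In summary, the proof is a short induction: the two base cases $n=2,3$ come from the worked examples, and for $n\geq 4$ one multiplies a fixed element of $\Delta_{n-2}'$ by infinitely many of the quadratics $h_m$, using the growth of $t_m$ both to secure disjointness of roots (via the resultant criterion and Lemma \ref{lem:delta}$\rii$) and to certify that infinitely many distinct polynomials in $\Delta_n'$ are produced. The only point requiring care is the ``eventually disjoint roots'' argument, which is elementary given the explicit unboundedness of $t_m$.
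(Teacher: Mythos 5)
Your proof is correct. It rests on the same two pillars as the paper's argument --- the product construction of Lemma \ref{lem:delta}\rii{} and the infinite family of quadratics $h_m\in\Delta_2'$ from Example \ref{ex:delta2} --- but it is organized differently. The paper gives a direct, non-inductive construction split by parity: for even $n$ it takes products of $k$ quadratics $h_{m_j}$ with distinct indices, and for odd $n\geq 5$ it multiplies such a product by the fixed cubic $f_{6,7}\in\Delta_3'$, verifying the no-common-roots hypothesis by an explicit computation of $\operatorname{Res}(h_m,f_{6,7})=-m^3+13m^2-52m+61<0$ for $m\geq 4$. You instead induct in steps of two, multiplying a single fixed $q\in\Delta_{n-2}'$ by the quadratics $h_m$ for all large $m$, and you replace the resultant computation by the elementary observation that $t_m\to\infty$ forces the roots $t_m^{\pm 1}$ to avoid the finite root set of $q$ eventually; distinctness of the resulting polynomials follows from the monotonicity of $t_m$. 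Your route avoids any explicit resultant calculation and avoids the parity case split (the cubic case $n=3$ enters only as a base case, exactly as in the paper), at the mild cost of producing elements of $\Delta_n'$ only for sufficiently large $m$ rather than an explicitly described family; both arguments are complete and of comparable length.
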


\begin{proof}
 Assume first that $n$ is even, $n=2k$, $k\geq 1$, and let $m_1, \dots , m_k\geq 3$ be integers, which are chosen to be distinct if $k>1$. Then it follows from Lemma \ref{lem:delta}$\rii$ that $\Pi _{j}h_{m_j} \in \Delta_n'$, where $h_{m_j}$ are the quadratic polynomials from Example \ref{ex:delta2}. 

%On the other hand, if $n=2k+1$ let $m_1, \dots , m_k$   as above. Then  $\Pi _{j}h_{m_j} \in \Delta_{2k}$ (according to Lemma \ref{lem:delta} $\rii$) and since $1$ is not a root of $\Pi _{j}h_{m_j}$, Lemma \ref{lem:delta} $\ri$ implies that $(x-1) \Pi _{j}h_{m_j}\in \Delta_n$. 

The assertion in the statement  is true for   $n=3$ (see Example \ref{ex:cubic}).  Consider next the case when $n$ is odd, $n\geq 5$, that is, $n=2k+3$ with $k\geq 1$. Let $m_1, \dots , m_k\geq 4$ be integers, which are chosen to be distinct if $k>1$. Then it follows from Lemma \ref{lem:delta}$\rii$ that $h:=\Pi _{j}h_{m_j} \in \Delta_{2k}$, where $h_{m_j}$ are the quadratic polynomials from Example \ref{ex:delta2} (note that  $h\in \Delta_{2k}'$). Let $f_{6,7}\in \Delta_3$ from Example \ref{ex:cubic}, then  $f_{6,7}\in \Delta_3'$. We will show next that $f_{6,7}$ and $h$  have no common roots, therefore, it will follow from Lemma \ref{lem:delta}$\rii$ that $q:=f_{6,7}\, h\in \Delta_n'$. To prove that $f_{6,7}$ and $h$ have no common roots it suffices to show that $f_{6,7}$ and $h_{m_j}$ have no common roots for all $j$. Indeed, for each $m\geq 4$,  
 we compute the resultant $\operatorname{Res}( h_{m}, f_{6,7})$:
 \[\operatorname{Res}( h_{m}, f_{6,7})= \det \begin{bmatrix}
     1&-m&1&0&0 \\
     0&1&-m&1&0 \\
     0&0&1&-m&1 \\
     1&-6&7&-1&0\\
     0&1&-6&7&-1
 \end{bmatrix}= -m^3+13m^2-52m+61. \] 
 It can be easily seen that $\operatorname{Res}( h_{m}, f_{6,7})<0$ for $m\geq 4$, therefore, $f_{6,7}$ and $h_{m}$ have no common roots, and the lemma follows.
\end{proof}

%\medskip

\section{Appendix A}\label{app:complex-nilp}
In this section we state, without proof, the analogue of Theorem \ref{thm:classif_nilp} for nilpotent  almost abelian Lie algebras admitting a complex structure, which follows  by applying the usual Jordan normal form. 

Given a complex structure $J$ on $\R^{2q}$ we denote by 
\[ \g\l (q,\C )=\{T\in \g\l (2q,\R ) : TJ=JT\}.
\]

We recall from \cite[Lemma 3.1]{AO} (see also \cite[Lemma 6.1]{LRV}) the characterization of almost abelian Lie algebras with a Hermitian structure: 
\begin{lemma}\label{lem:hermitian}
Let $\g$ be an almost abelian Lie algebra with codimension one abelian ideal $\u$, admitting  a Hermitian structure  $(J, \pint)$. Then $\a:=\u \cap J\u $ is a $J$-invariant abelian ideal  of codimension $2$. Moreover, there exist an orthonormal basis $\{f_1,f_2=Jf_1\}$ of ${\a}^{\perp}$, $v_0\in\a$ and $\mu\in\R$ such that $f_2 \in \u$, $[f_1,f_2]=\mu f_2 + v_0$ and $\ad_{f_1}|_{\a}$ commutes with $J|_{\a}$.
\end{lemma}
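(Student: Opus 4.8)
The plan is to follow the blueprint of Theorem~\ref{thm:characterization}, but in the simpler complex setting, where integrability is encoded by a single Nijenhuis identity. First I would do the dimension bookkeeping. Since $J$ exists, $\dim\g$ is even, so $\dim\u$ is odd; hence $\u$ cannot be $J$-invariant and $\u\neq J\u$. As $\u$ and $J\u$ both have codimension one, their sum is all of $\g$, so $\a=\u\cap J\u$ has codimension $2$. It is $J$-invariant because $J\a=J\u\cap\u=\a$ (using $J^2=-I$), and abelian because $\a\subset\u$; the ideal property I would defer to the integrability step below. Using that $\pint$ is $J$-compatible, the $2$-dimensional complement $\a^\perp$ is $J$-invariant as well. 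Because $\a\subset\u$ and $\dim\u=\dim\a+1$, the intersection $\u\cap\a^\perp$ is a line: I pick a unit vector $f_2$ spanning it and set $f_1=-Jf_2$, so that $f_2=Jf_1$ and, by a one-line computation with $J$-compatibility, $\{f_1,f_2\}$ is an orthonormal basis of $\a^\perp$ with $f_2\in\u$ and $f_1\notin\u$. Thus $\g=\R f_1\ltimes\u$, and the whole bracket is carried by $A:=\ad_{f_1}|_\u$, which maps $\u$ to $\u$ since $\u$ is an ideal.

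Second, I would exploit integrability. For $x\in\a$ both $Jf_1=f_2$ and $Jx$ lie in $\u$, so every bracket internal to the abelian ideal $\u$ vanishes, and the Nijenhuis tensor collapses to $0=N_J(f_1,x)=Ax+JA(Jx)$. Replacing $x$ by $Jx$ and using $J^2=-I$ yields $A(Jx)=JAx$, that is, $\ad_{f_1}|_\a$ commutes with $J|_\a$. The same relation gives $JAx=A(Jx)\in\u$, whence $Ax\in\u\cap J\u=\a$; combined with $[\u,\a]=0$ this establishes that $\a$ is an ideal.

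Finally, the normal form follows by decomposition: since $f_2\in\u$ we have $[f_1,f_2]=Af_2\in\u=\a\oplus\R f_2$, so writing $Af_2=\mu f_2+v_0$ with $\mu\in\R$ and $v_0\in\a$ completes the statement. I expect the only genuine obstacle to be verifying that $\a$ is an ideal, as this is the sole place where integrability of $J$ is actually needed; every other claim is elementary $\R$-linear algebra built on the dimension count and the $J$-compatibility of the metric.
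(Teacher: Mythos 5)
Your argument is correct and complete: the dimension count for $\a=\u\cap J\u$, the construction of the orthonormal $J$-adapted basis of $\a^\perp$, the use of $N_J(f_1,x)=0$ to obtain $A(Jx)=JAx$ on $\a$, and the resulting inclusion $A\a\subset\u\cap J\u=\a$ that makes $\a$ an ideal are all sound, as is the final decomposition $Af_2=\mu f_2+v_0$. The paper itself states this lemma without proof, citing \cite[Lemma 3.1]{AO}, and your proof is precisely the standard argument one finds there, so there is no divergence to report.
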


It follows from Lemma  \ref{lem:hermitian}  that  $\g$ can be written as $\g=\R e_0\ltimes_\cA \R^{2n-1}$, where the matrix $\cA\in\mathfrak{gl}(2n-1,\R)$ defined by the adjoint action of $e_0$ on $\R^{2n-1}$ has the following expression in a basis $\{e_1\} \cup \cC$ of $\u$,  where $\cC$ is a basis of $\a$:
\begin{equation}\label{eq:matrixA-complex} 
\cA=\left[
	\begin{array}{c|ccc}      
		\mu  & 0 &\cdots  & 0\\
		\hline
		 |& & & \\
		  v & & \cB &\\
		 | & & &
	\end{array}
	\right], \qquad \mu \in \R, \; v\in \R^{2n-2}, \; \cB\in\mathfrak{gl}(n-1,\C)\subset \mathfrak{gl}(2n-2,\R).
\end{equation} 
If the matrix of $J$ in the basis $\mathcal C$ is given by $\begin{bmatrix}0&-I \\
    I&0
\end{bmatrix}$, then the corresponding matrices in $\mathfrak{gl}(n-1,\C )$ take  the form $\begin{bmatrix}X&-Y \\
    Y&X
\end{bmatrix}$ with $X,Y\in M_{n-1}(\R )$. 
We can identify $\mathfrak{gl}(n-1,\C)\subset \mathfrak{gl}(2n-2,\R)$ with $M_{n-1}(\C)$ via the $\R$-algebra isomorphism:  \[  \varphi\colon \mathfrak{gl}(n-1,\mathbb{C}) \to M_{n-1}(\C), \qquad   
    \begin{bmatrix} X& -Y\\
Y&X
\end{bmatrix}\mapsto X+iY .\]
In order to carry out the study of  nilpotent  almost abelian  Lie algebras admitting a complex structure, let $\g=\R \ltimes  _\cA \R ^{2n-1}$ where $\cA$ is  as in \eqref{eq:matrixA-complex} with $\mu=0$ and $\cB\in \g\l (n-1, \C)$, $\cB$ nilpotent. 
If  $\cB\neq 0$,  
the Jordan normal form of $\varphi (\cB)$ is given by: 
\begin{equation}   \label{eq:Jordan_nilp-complex} 
  (j_{m_1})^{\oplus p_1}\oplus \cdots \oplus  (j_{m_r})^{\oplus p_r}\oplus { 0}_s, \quad m_1 > \dots >m_r\geq 2, \;s\geq 0, \; p_k>0,
\end{equation}
for all $k$, where $0_s$ is the zero $s\times s$ matrix and $j_{m_k}$ is an elementary Jordan block as in \eqref{eq:matrixjm}. 
We will encode all this data associated to $\cB$ using the following notation:
\[ {\Sigma}'(\cB):=(r,m_1,\ldots, m_r, p_1,\ldots, p_r, s),  \quad \;\; m_1 > \dots >m_r\geq 2, \;s\geq 0, \; p_k>0 .  \]
We point out that \begin{equation}\label{eq:dimn-complex}
    n-1=\sum_{i=1}^r m_i p_i +s .
\end{equation}
If $\cB=0$ we have $r=0, \; s=n-1$. 

In order to state Theorem \ref{thm:classif_nilp-complex} below, we introduce first some notation. Consider the following nilpotent matrices:
\begin{equation}\label{eq:matrixN-complex}
      \cN_\ell= \left[ \begin{array}{c|ccc}
    0& & &  \\
    \hline 
    \begin{array}{c}
    1  \\ 
    0\\
    \vdots \\
    0 
    \end{array}& &\mathcal \cJ_{m_\ell} &
\end{array}\right] \oplus (\cJ _{m_\ell})^{\oplus (p_\ell-1)}, \; 1\leq \ell\leq r, \quad 
\cN = \left[ \begin{array}{c|ccc}
    0&  & & \\
    \hline 
    \begin{array}{c}
    1  \\ 
    0\\
    \vdots \\
    0 
    \end{array}&  & 0_{2s}&   
\end{array}\right], \; s>0,
\end{equation}
where  $\cJ_{m_\ell}$ is  the following  matrix: 
\[\cJ_{m_\ell}= \left[
\begin{array}{ccccc}
   0_2  &  0_2 & \cdots & \cdots & 0_2 \\
   I_2 & 0_2 & 0_2 & \cdots & 0_2 \\
   0_2 & I_2 & 0_2 & \cdots & 0_2 \\
   \vdots & \vdots & \ddots & \ddots & 0_2 \\
   0_2 & 0_2 & 0_2 & I_2 & 0_2
\end{array}\right] \in M_{2m_\ell}(\R).
\]
Here, $0_2$ and $I_2$ are the $2\times 2$ zero and identity matrices, respectively. Note that $\cJ_{m_\ell}$ is conjugate to $\left(j_{m_\ell}\right)^{\oplus 2}$. 
Let
\begin{equation}
     \label{eq:Al-complex} 
     \begin{split} 
     \cA_0&=0_1 \oplus ({ \cJ _{m_1}})^{\oplus p_1}\oplus \cdots \oplus  ( \cJ _{m_r})^{\oplus p_r} 
\oplus 0_{2s}, \\
     \cA_\ell&= \bigoplus_{i=1}^{\ell-1}({ \cJ_{m_i}})^{\oplus p_i}\oplus   \; \cN_\ell \; \oplus\bigoplus_{i=\ell+1}^{r} ( \cJ _{m_{i}})^{\oplus p_{i}}\oplus  { 0}_{2s}, \quad 1\leq \ell\leq r,\\
  \cA_{r+1}&= ({ \cJ _{m_1}})^{\oplus p_1}\oplus \cdots \oplus  ( \cJ _{m_r})^{\oplus p_r}\oplus \cN .
\end{split} \end{equation}
We point out that $\cA _{r+1}$ is only defined when $s>0$.

%\medskip

The proof of the next result is analogous to that  of Theorem \ref{thm:classif_nilp}.

\begin{theorem}\label{thm:classif_nilp-complex} Let 
$\cB\in  \g\l(n-1, \C)\subset \g\l(2n-2,\R)$ be a nilpotent matrix and consider $\cA$ as in \eqref{eq:matrixA-complex}, for some $v\in\R^{2n-2}$, with $\mu=0$, where $\cB$ is  the given matrix. Consider the  almost abelian Lie algebra $\g_\cA=\R e_0 \ltimes_\cA \R^{2n-1}$, which admits a complex structure.
\begin{enumerate}
    \item[$\ri$] If $\cB=0$ then $\g_\cA$ is $2$-step nilpotent and isomorphic to  $\g _\cN$, where $\cN$ is as in \eqref{eq:matrixN-complex} with $s=n-1$.
\item[$\rii$] If $\cB\neq 0$ with
\[ {\Sigma}'(\cB)=(r,m_1,\ldots, m_r, p_1,\ldots, p_r, s), \qquad r\geq 1, \;\; m_1 > \dots >m_r\geq 2, \;s\geq 0, \; p_k>0 ,\] 
then there exists a unique integer $\ell$ with  $0\leq \ell \leq r+1-\delta_{s,0}$  such that $\g _\cA$ is isomorphic to $\g _{\cA_\ell}$, where $\cA_0$ and $\cA_\ell$, $1\leq \ell\leq r+1$, are defined in  \eqref{eq:Al-complex}. The Lie algebra  $\g_{\cA_1}$ is $(m_1+1)$-step nilpotent and $\g_{\cA_\ell}$ is $m_1$-step nilpotent for $\ell\neq 1 $.
 \end{enumerate}
 Moreover, the Lie algebras $\g_\cN, \g_{\cA_0}, \dots , \g_{\cA_{r+1}}$ are pairwise non-isomorphic. 
\end{theorem}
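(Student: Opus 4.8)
The plan is to mirror the proof of Theorem \ref{thm:classif_nilp} essentially verbatim, replacing the quaternionic data by its complex counterpart. Concretely, each complex Jordan block $\cJ_m$ is conjugate to $j_m^{\oplus 2}$ rather than to $j_m^{\oplus 4}$, so every multiplicity factor $4$ in the kernel counts becomes a $2$; and since Lemma \ref{lem:hermitian} produces a single vector $v_0=v$ rather than the triple $v_1,v_2,v_3=J_\al v_0$ of the quaternionic setting, the contribution ``$+3$'' in the quaternionic formulas becomes ``$+1$''. Throughout, the role played by congruences modulo $4$ is taken over by ordinary parity (congruences modulo $2$). The first preparatory step is to record the complex analogue of Remark \ref{rem:dim_eigenspace}: since $\ker(\cB-\mu I)^j$ is $J$-invariant it is even-dimensional, so if $v=0$ then $\dim\ker(\cA-\mu I)^j=\dim\ker(\cB-\mu I)^j+1$ is odd for every $j$, while if $v\neq 0$ the single vector $(\cB-\mu I)^jv$, when nonzero, accounts for the discrepancy, producing the two-case formula analogous to \eqref{eq:A'}. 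The complex versions of Proposition \ref{prop:v=0} and Lemma \ref{lem:vnot0} hold with identical proofs, and Lemma \ref{lem:no-iso} carries over under the substitutions $4\mapsto 2$, $3\mapsto 1$: the $\mu$-eigenspace is the unique one of odd multiplicity, which forces $\mu=c\mu'$ and the equivalence $v=0\iff v'=0$ under any isomorphism.

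For part $\ri$, when $\cB=0$ and $\mu=0$ the matrix $\cA$ in \eqref{eq:matrixA-complex} has a single nonzero column, so it has rank $1$ and satisfies $\cA^2=0$. Hence $\dim\ker\cA=2s=\dim\ker\cN$, where $\cN$ (with $s=n-1$) also satisfies $\cN^2=0$; this forces $\cA$ to be conjugate to $\cN$, so $\g_\cA\cong\g_\cN$, a $2$-step nilpotent Lie algebra.

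For part $\rii$, when $\cB\neq 0$ I would first invoke the complex analogue of Lemma \ref{lem:vnot0} to assume $v\in W$, where $W=\widetilde W_1\oplus\cdots\oplus\widetilde W_{r+1}$ is the $J$-invariant complement of $\operatorname{Im}\cB$ built from the blocks of the complex Jordan form \eqref{eq:Jordan_nilp-complex}. Writing $v=v^1+\cdots+v^{r+1}$ with $v^l\in\widetilde W_l$ and setting $l_0:=\min\{l:v^l\neq 0\}$, I would show that $\cA$ is conjugate to $\cA_{l_0}$ from \eqref{eq:Al-complex} by computing $\dim\ker(\cA^j)$ in the three cases $l_0=1$, $2\le l_0\le r$, and $l_0=r+1$ (the last forcing $s>0$) and matching these against $\dim\ker(\cA_{l_0}^j)$. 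These are exactly the formulas \eqref{eq:caso(1)}, \eqref{eq:caso(2)}, \eqref{eq:caso(3)} after the substitutions $4\mapsto 2$ and $3\mapsto 1$. The only structural feature to track is that for $l_0=1$ the longest Jordan chain is extended by the $v$-coupling, so the minimal polynomial becomes $x^{m_1+1}$ and $\g_{\cA_1}$ is $(m_1+1)$-step nilpotent, while in the remaining cases the minimal polynomial stays $x^{m_1}$ and the algebra is $m_1$-step nilpotent. Uniqueness of the integer $\ell$ and pairwise non-isomorphism of $\g_{\cA_0},\dots,\g_{\cA_{r+1}}$ then follow because the sequences $\{\dim\ker(\cA_\ell^j)\}_j$ are pairwise distinct, and non-isomorphism with $\g_\cN$ follows from the complex version of Lemma \ref{lem:no-iso}, since $\g_\cN$ corresponds to $\cB=0$ whereas each $\g_{\cA_\ell}$ corresponds to the fixed $\cB\neq 0$.

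The main obstacle I anticipate is purely combinatorial bookkeeping: checking that the parity arguments (mod $2$ in place of mod $4$) still isolate the $\mu$-eigenspace and still separate the $r+2-\delta_{s,0}$ classes in the kernel-dimension count. Because every eigenspace of $\cB$ is even-dimensional while the extra vector $v$ contributes exactly one odd dimension at the level dictated by $l_0$, the parity obstruction is in fact cleaner here than in the quaternionic case, so no genuinely new difficulty is expected beyond transcribing the three kernel-dimension tables with the indicated substitutions.
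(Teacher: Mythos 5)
Your proposal is correct and is exactly the argument the paper intends: the paper states that the proof is analogous to that of Theorem \ref{thm:classif_nilp}, and your systematic substitutions ($j_m^{\oplus 4}\mapsto j_m^{\oplus 2}$, the triple $v_1,v_2,v_3$ replaced by the single vector $v$, hence $+3\mapsto +1$ and congruences mod $4$ replaced by parity) together with the three kernel-dimension computations indexed by $l_0$ reproduce that proof faithfully, including the rank-one argument for part $\ri$ and the use of the complex analogue of Lemma \ref{lem:no-iso} to separate $\g_\cN$ from the $\g_{\cA_\ell}$.
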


\begin{remark} 
We observe that $\g_\mathcal{N}$ is isomorphic to $\h_3\times \R^{2n-3}$, where $\h_3$ is the 3-dimensional Heisenberg Lie algebra.  
\end{remark}

For each $n\geq 2$, let ${\Sigma}' _{n-1}$ denote the set of all possible tuples
\[ (r,m_1,\ldots, m_r, p_1,\ldots, p_r, s), \quad \;\; r>0,\, m_1 > \dots >m_r\geq 2, \;s\geq 0, \; p_k>0 , \] satisfying \eqref{eq:dimn-complex}. We point out that ${\Sigma}' _{n-1}$ parametrizes the conjugacy classes in $\g\l(2n-2,\R)$ of non-zero nilpotent matrices in $\g\l(n-1,\C)\subset \g\l(2n-2,\R)$. According to Theorem \ref{thm:classif_nilp-complex}$\rii$,  each tuple as above gives rise to $r+2-\delta_{s,0}$  matrices in $\g\l(2n-1 , \R) $ of the form \eqref{eq:matrixA-complex}, which correspond  to  different isomorphism classes of $2n$-dimensional nilpotent  almost abelian Lie algebras admitting a complex structure. We denote by $\hat{\Sigma}_{n-1}\subset  \g\l(2n-1 , \R) $ the set of   matrices arising from all possible tuples in $\Sigma'_{n-1}$. 
 It follows  that Lie algebras corresponding to  matrices arising from  different tuples are not isomorphic.  Note that if $\g_\cA$ is a $2n$-dimensional nilpotent almost abelian Lie algebra admitting a complex structure and satisfying   
  $\dim (\ker \cA)=2(n-1)$, then the corresponding matrix $\cB$ is equal to~$0$  and, according to Theorem \ref{thm:classif_nilp-complex}$\ri$,  $\g_\cA$ is isomorphic to $\g_\cN$ with $\cN$ as in \eqref{eq:matrixN-complex} for $s=n-1$. 
  In other words, there is a unique, up to isomorphism, $2n$-dimensional  nilpotent almost abelian  Lie algebra $\g_\cA$ admitting a complex structure and satisfying $\dim (\ker \cA)=2(n-1)$.
   These observations are summarized in the next corollary. 

\begin{corollary}\label{cor:param-nilp-complex}
The isomorphism classes of $2n$-dimensional nilpotent  almost abelian Lie algebras $\g_\cA=\R e_0\ltimes _\cA \R^{2n-1}$  admitting a complex structure and satisfying $\dim (\ker \cA) <2(n-1)$  are parametrized by $\hat{\Sigma}_{n-1}$. If $\dim (\ker \cA) = 2(n-1)$ then $\g_\cA$ is isomorphic to  $\g_\cN$ with $\cN$ as in \eqref{eq:matrixN-complex} for $s=n-1$. 
\end{corollary}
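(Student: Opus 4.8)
The plan is to read off the corollary from Theorem \ref{thm:classif_nilp-complex} and the observations recorded just before it, the one point requiring an argument being the translation of the hypothesis on $\dim(\ker\cA)$ into a condition on the block $\cB$.

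First I would establish the dichotomy $\dim(\ker\cA)=2(n-1)\iff\cB=0$. Writing $\cA$ as in \eqref{eq:matrixA-complex} with $\mu=0$, the matrix $\cA$ is $(2n-1)\times(2n-1)$, so $\dim(\ker\cA)=2(n-1)$ is equivalent to $\operatorname{rank}\cA=1$. The columns of $\cA$ coming from the block $\cB$ span a subspace of dimension $\operatorname{rank}\cB$, whence $\operatorname{rank}\cA\geq\operatorname{rank}\cB$. The crucial observation is that $\cB\in\g\l(n-1,\C)$ commutes with $J$, so under $\varphi$ it corresponds to $\varphi(\cB)\in M_{n-1}(\C)$ and its real rank equals $2\operatorname{rank}_{\C}\varphi(\cB)$; in particular $\operatorname{rank}\cB$ is even. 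Hence $\cB\neq 0$ forces $\operatorname{rank}\cA\geq 2$ and thus $\dim(\ker\cA)\leq 2n-3<2(n-1)$, while if $\cB=0$ then $\cA$ has rank $1$ (the vector $v$ being nonzero, since $\g_\cA$ is non-abelian) and $\dim(\ker\cA)=2(n-1)$. This is the step I expect to be the main, and essentially the only nontrivial, obstacle; everything that follows is bookkeeping.

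With the dichotomy in hand I would split into the two regimes. If $\dim(\ker\cA)=2(n-1)$, equivalently $\cB=0$, then Theorem \ref{thm:classif_nilp-complex}\ri{} yields at once that $\g_\cA\cong\g_\cN$ with $\cN$ as in \eqref{eq:matrixN-complex} and $s=n-1$, so there is a single isomorphism class. If instead $\dim(\ker\cA)<2(n-1)$, equivalently $\cB\neq 0$, I would encode the real conjugacy class of $\cB$ by a tuple $\Sigma'(\cB)\in\Sigma'_{n-1}$; this is the complex analogue of Corollary \ref{cor:tuples}, which follows from the usual real Jordan normal form together with the fact that $\cJ_m$ is conjugate to $j_m^{\oplus 2}$. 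For each fixed tuple, Theorem \ref{thm:classif_nilp-complex}\rii{} produces exactly $r+2-\delta_{s,0}$ pairwise non-isomorphic Lie algebras $\g_{\cA_\ell}$, realized by the matrices $\cA_\ell$ of \eqref{eq:Al-complex}, and the complex analogue of Lemma \ref{lem:no-iso} guarantees that algebras attached to non-conjugate $\cB$'s, hence to distinct tuples, are non-isomorphic. Collecting the matrices $\cA_\ell$ over all tuples is exactly the set $\hat{\Sigma}_{n-1}$, and the two non-isomorphism inputs (within a single tuple and across tuples) show that distinct elements of $\hat{\Sigma}_{n-1}$ give non-isomorphic algebras, while surjectivity onto the relevant isomorphism classes is Theorem \ref{thm:classif_nilp-complex} itself. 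This proves that $\hat{\Sigma}_{n-1}$ parametrizes the isomorphism classes with $\dim(\ker\cA)<2(n-1)$.

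Finally I would remark that the complex counterparts of Corollary \ref{cor:tuples} and Lemma \ref{lem:no-iso} invoked above are obtained verbatim from their quaternionic originals by replacing $\H$ with $\C$, the factor $4$ with $2$, and the triple $\{v_1,v_2,v_3\}$ by the single vector $v$, so no genuinely new argument is needed for them.
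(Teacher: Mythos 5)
Your proposal is correct and follows essentially the same route as the paper, which presents this corollary as a summary of the observations in the preceding paragraph: the dichotomy $\dim(\ker\cA)=2(n-1)\iff\cB=0$ (which you justify carefully via the evenness of $\operatorname{rank}\cB$ for $\cB\in\g\l(n-1,\C)$, a detail the paper leaves implicit), followed by Theorem \ref{thm:classif_nilp-complex} together with the complex analogues of Corollary \ref{cor:tuples} and Lemma \ref{lem:no-iso}. No discrepancies to report.
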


\section{Appendix B}\label{app:isos}

 We include in this section the computation of the Lie group isomorphisms between simply connected almost abelian Lie groups (compare with \cite{AAetal}). Theorem \ref{thm:Lie-iso} below is crucial for the proof of Theorem \ref{thm:p_or_p*}.

Let $G$ be a simply connected almost abelian Lie group with Lie algebra $\g=\R\ltimes_A \R^d$ where $A\in \g\l(d,\R)$. We will show in Proposition \ref{prop:exp} below  that, with few exceptions,  the  exponential map $\exp: \g \to G$ is a diffeomorphism.   The explicit expression of $\exp$ will be given in terms of the  real analytic function $\Phi :\R \to \R$ defined by:
\begin{equation}\label{eq:Phi}
\Phi(x)= \sum_{n=0}^\infty \frac{x^n}{(n+1)!}=  \begin{cases}
\dfrac{e^x-1}{x},&  x\neq 0,\\ \; 1, &  x= 0.
\end{cases}
\end{equation}
We observe that  $\Phi(x)\neq 0$ for all $x\in \R$ and if we extend $\Phi$ to the complex plane, then $\Phi (z)=0$ if and only if $z\in \{ 2k\pi i : k\in \Z, \, k\neq 0\}$. 

Consider on  $\g\l(d,\R)$ the operator norm induced by the euclidean norm on $\R^d$, that is, if  $S\in  \g\l(d,\R)$ then  
$\|S\|=\sup\{ \| S x\| \, :\, x\in \R^d ,\ \| x \| =1\}$. Given a   power series 
$h(x)=\sum_{n=0}^\infty c_n x^n, \, x\in \R$,  such that $\sum_{n=0}^\infty |c_n|\,  r^n$ is convergent for some $r>0$, then for any 
$S\in \g\l(d,\R)$ such that  $\| S \|<r$,  $h(S)=\sum_{n=0}^\infty c_n S^n \in \g\l(d,\R)$ is a well defined operator (see, for instance, \cite[Lemma 3.1.5]{HN}). 

\begin{remark}\label{rem:nu-commuting}
Let $h(x)=\sum_{n=0}^\infty c_n x^n, \, x\in \R$, be a power series   such that $\sum_{n=0}^\infty |c_n|\,  r^n$ is convergent for some $r>0$. Let $S\in \g\l(d,\R)$ and $\nu\neq 0$ such that $\|  S\| <r$ and $\| \nu S\| <r$ and assume that $T\in \g\l(d,\R)$ satisfies $\nu ST= TS$. Then $h\left(\nu S   \right) T = T h(S).$
    
\end{remark}
\begin{lemma}\label{lem:Psi}
    Let $S\in  \g\l(d,\R)$ and $\Phi$ as in \eqref{eq:Phi}. Then $\Phi(S)$ is invertible if and only if $S$ has no eigenvalue in $ \{ 2k\pi i : k\in \Z, \, k\neq 0\}$. 
\end{lemma}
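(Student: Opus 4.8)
The plan is to reduce the invertibility of $\Phi(S)$ to a statement about the eigenvalues of $S$ via a spectral mapping argument, and then to invoke the description of the zero set of $\Phi$ already recorded in the text. First I would observe that the power series $\sum_{n=0}^\infty \frac{x^n}{(n+1)!}$ has infinite radius of convergence, so $\sum_{n=0}^\infty |c_n|\, r^n$ converges for every $r>0$; hence $\Phi(S)=\sum_{n=0}^\infty \frac{S^n}{(n+1)!}$ is a well-defined operator for every $S\in\g\l(d,\R)$, and likewise $\Phi$ extends to an entire function on $\C$ whose zero set is exactly $\{2k\pi i : k\in\Z,\ k\neq 0\}$, as noted after \eqref{eq:Phi}.

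Next, the key step is the identity $\det\Phi(S)=\prod_{i}\Phi(\lambda_i)$, where $\lambda_1,\dots,\lambda_d$ are the eigenvalues of $S$ listed with algebraic multiplicity. To prove it I would view $S$ as a complex matrix and write $S=P^{-1}JP$ with $J$ its Jordan normal form over $\C$; since $(P^{-1}SP)^n=P^{-1}S^nP$ and the defining series converges, we get $\Phi(S)=P^{-1}\Phi(J)P$, so $\det\Phi(S)=\det\Phi(J)$ (the determinant of the real matrix $\Phi(S)$ is unchanged when computed over $\C$). For a single Jordan block $J=\lambda I+N$ with $N$ nilpotent, the commuting Taylor expansion $\Phi(\lambda I+N)=\sum_{j\geq 0}\frac{\Phi^{(j)}(\lambda)}{j!}N^j$ shows that $\Phi(J)$ is triangular with constant diagonal entry $\Phi(\lambda)$; multiplying over all blocks yields the claimed formula for $\det\Phi(S)$.

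Finally I would conclude: $\Phi(S)$ is invertible if and only if $\det\Phi(S)\neq 0$, that is, if and only if $\Phi(\lambda_i)\neq 0$ for every eigenvalue $\lambda_i$ of $S$. By the description of the zero set of $\Phi$, this happens precisely when no eigenvalue of $S$ lies in $\{2k\pi i : k\in\Z,\ k\neq 0\}$, which is the assertion.

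The only point requiring care is the interchange of summations in the Jordan-block computation and the identification $\Phi(P^{-1}SP)=P^{-1}\Phi(S)P$; both follow from the absolute convergence of the series on all of $\g\l(d,\R)$, so that partial sums may be manipulated freely and limits commute with the continuous operations of conjugation and multiplication. I expect this bookkeeping, rather than any conceptual difficulty, to be the main (and quite minor) obstacle.
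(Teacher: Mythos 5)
Your proposal is correct and follows essentially the same route as the paper: both arguments reduce invertibility of $\Phi(S)$ to the non-vanishing of $\Phi$ on the spectrum of $S$ and then invoke the description of the zero set of $\Phi$ recorded after \eqref{eq:Phi}. The only difference is that the paper simply asserts the spectral mapping fact that the eigenvalues of $\Phi(S)$ are $\Phi(\lambda)$, whereas you supply its standard proof via the Jordan form; this is sound bookkeeping but not a different method.
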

\begin{proof} First we observe that since $\Phi$ is absolutely convergent for all $x\in \R$, $\Phi(S)$ is well defined for any $S\in  \g\l(d,\R)$. The eigenvalues of  $\Phi(S)$ are precisely $\Phi(\lambda)$ with $\lambda$ eigenvalue of $S$. Therefore, $\Phi(S)$ is invertible if and only if $\Phi(\lambda)\neq 0$ for all eigenvalues $\lambda $ of $S$, and since the set of zeros of $\Phi$ is $\{ 2k\pi i : k\in \Z, \, k\neq 0\}$, the lemma follows. 
\end{proof}

%\smallskip

\begin{corollary} \label{cor:Psi} Let $S\in  \g\l(d,\R)$ and $\Phi$ as in \eqref{eq:Phi}. Then $\Phi(tS)$ is invertible for all $t\in\R $ if and only if $\lambda \notin\{ a  i : a\in \R, \, a\neq 0\}$ for any eigenvalue $\lambda$  of $S$. 
\end{corollary}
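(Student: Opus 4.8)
The plan is to deduce this directly from Lemma \ref{lem:Psi} applied to the matrix $tS$ in place of $S$. First I would observe that, for fixed $t\in\R$, the eigenvalues of $tS$ are precisely the numbers $t\lambda$, where $\lambda$ ranges over the eigenvalues of $S$. By Lemma \ref{lem:Psi}, $\Phi(tS)$ is invertible if and only if $tS$ has no eigenvalue in the set $\{2k\pi i : k\in\Z,\, k\neq 0\}$, that is, if and only if $t\lambda \notin \{2k\pi i : k\in\Z,\, k\neq 0\}$ for every eigenvalue $\lambda$ of $S$. Consequently, $\Phi(tS)$ is invertible for all $t\in\R$ precisely when there is no triple $(t,\lambda,k)$, with $t\in\R$, $\lambda$ an eigenvalue of $S$, and $k\in\Z\setminus\{0\}$, satisfying $t\lambda = 2k\pi i$.

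Next I would analyze this equation according to the nature of $\lambda$. The value $t=0$ never produces a solution, since $0\neq 2k\pi i$ for $k\neq 0$, and in any case $\Phi(0_d)=I_d$ is invertible. For $t\neq 0$, if $\lambda$ is purely imaginary and nonzero, say $\lambda = ai$ with $a\in\R\setminus\{0\}$, then taking $t = 2\pi/a$ gives $t\lambda = 2\pi i$, so $\Phi(tS)$ fails to be invertible for this value of $t$; this establishes one implication in contrapositive form. Conversely, if $t\lambda = 2k\pi i$ for some $t\neq 0$ and $k\neq 0$, then $\lambda = 2k\pi i/t$ is a nonzero purely imaginary number, because $2k\pi/t$ is a nonzero real scalar. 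Hence the existence of any such solution forces $S$ to possess a nonzero purely imaginary eigenvalue.

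Combining these observations yields the stated equivalence: $\Phi(tS)$ is invertible for all $t\in\R$ if and only if $S$ has no eigenvalue of the form $ai$ with $a\in\R$, $a\neq 0$. I do not expect any genuine obstacle here, as the result is a routine specialization of Lemma \ref{lem:Psi}; the only point demanding a moment's care is the bookkeeping of which eigenvalues $\lambda$ can satisfy $t\lambda\in\{2k\pi i : k\in\Z,\, k\neq 0\}$ for some real $t$, namely exactly the nonzero purely imaginary ones, since a real or genuinely complex $\lambda$ keeps $t\lambda$ off the imaginary axis for every $t\neq 0$.
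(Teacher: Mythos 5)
Your argument is correct and is exactly the route the paper intends: the corollary is stated without proof as an immediate consequence of Lemma \ref{lem:Psi} applied to $tS$, and your bookkeeping of when $t\lambda$ can land in $\{2k\pi i : k\in\Z,\ k\neq 0\}$ fills in precisely the implicit step. Nothing is missing.
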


%\medskip

We are now in condition to compute the exponential map on almost abelian Lie groups.

\begin{proposition}\label{prop:exp} 
    Let $G$ be a simply connected almost abelian Lie group with Lie algebra $\g=\R\ltimes_A \R^d$ where $A\in \g\l(d,\R)$. Then $\exp : \g \to G$ is given by: \begin{equation}\label{eq:exp}
        \exp (t,v)=(t, \Phi(tA)v), \qquad (t,v) \in \g, \quad \Phi \text{  as in   (\ref{eq:Phi})}.
    \end{equation}  Moreover,  $\exp$ is a diffeomorphism if and only if   $\lambda \notin\{ a  i : a\in \R, \, a\neq 0\}$ for any eigenvalue $\lambda$  of $A$,  and in this case $\exp ^{-1}$ is given by: 
    \[  \exp ^{-1}(t,w)=(t, \Psi(tA) w),  \quad \text{ where } \Psi (tA)=\Phi(tA)^{-1}.\]  
\end{proposition}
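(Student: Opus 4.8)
The plan is to first derive the explicit formula \eqref{eq:exp} by a direct matrix computation, and then to deduce the diffeomorphism criterion from the triangular shape of that formula together with Corollary \ref{cor:Psi}.

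To compute $\exp$, I would realize $G$ faithfully as a group of $(d+1)\times(d+1)$ real matrices via $(t,v)\mapsto \left(\begin{smallmatrix} \e^{tA} & v \\ 0 & 1\end{smallmatrix}\right)$. A direct check shows this map is an injective Lie group homomorphism reproducing the multiplication of $G=\R\ltimes_\varphi\R^d$ with $\varphi(t)=\e^{tA}$, and its differential at the identity sends $(t,v)\in\g$ to $\left(\begin{smallmatrix} tA & v \\ 0 & 0\end{smallmatrix}\right)$. Since a Lie group homomorphism intertwines the exponential maps, it suffices to exponentiate this matrix. For $n\geq 1$ one has $\left(\begin{smallmatrix} tA & v \\ 0 & 0\end{smallmatrix}\right)^n=\left(\begin{smallmatrix} (tA)^n & (tA)^{n-1}v \\ 0 & 0\end{smallmatrix}\right)$, so summing $\tfrac{1}{n!}$ times these powers produces $\e^{tA}$ in the top-left block and $\sum_{m\geq 0}\tfrac{(tA)^m}{(m+1)!}\,v=\Phi(tA)v$ in the top-right block, which is exactly \eqref{eq:exp}. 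Equivalently, one may solve the one-parameter subgroup equation $b'(s)=\e^{stA}v$, $b(0)=0$, and integrate the power series for $\e^{stA}$ term by term from $0$ to $1$.

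For the second assertion, I would exploit that by \eqref{eq:exp} the map $\exp$ fixes the first coordinate and acts on the fiber over each $t$ by the linear map $\Phi(tA)$. Consequently $\exp$ is a bijection if and only if $\Phi(tA)$ is invertible for every $t\in\R$: an invertible $\Phi(tA)$ allows one to solve uniquely for the preimage of any $(t,w)$, while a singular $\Phi(t_0A)$ has a nontrivial kernel and forces $\exp(t_0,v_1)=\exp(t_0,v_2)$ for distinct $v_1,v_2$. By Corollary \ref{cor:Psi}, invertibility of $\Phi(tA)$ for all $t$ is equivalent to $A$ having no eigenvalue in $\{a i : a\in\R,\, a\neq 0\}$, which gives the stated criterion.

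Finally, when this condition holds I would set $\Psi(tA):=\Phi(tA)^{-1}$ and verify that $(t,w)\mapsto(t,\Psi(tA)w)$ is a genuine two-sided inverse of \eqref{eq:exp}; its smoothness follows since $t\mapsto\Phi(tA)$ is real-analytic with values in $\operatorname{GL}(d,\R)$ and matrix inversion is smooth on $\operatorname{GL}(d,\R)$. The converse direction is immediate from the failure of injectivity noted above. I expect the only delicate points to be confirming that the faithful matrix model indeed computes the exponential of the abstractly defined $G$, and ensuring that $\Phi(tA)$ remains invertible for \emph{all} $t$ (not merely for $t\neq 0$) so that $\Psi(tA)$ is globally smooth --- precisely the content supplied by Corollary \ref{cor:Psi}.
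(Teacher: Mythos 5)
Your proposal is correct and follows essentially the same route as the paper: both realize $G$ via the faithful affine representation $(t,v)\mapsto\left(\begin{smallmatrix} \e^{tA} & v \\ 0 & 1\end{smallmatrix}\right)$, compute the matrix exponential of $\left(\begin{smallmatrix} tA & v \\ 0 & 0\end{smallmatrix}\right)$ to obtain $\Phi(tA)v$ in the translation block, and then invoke Corollary \ref{cor:Psi} for the invertibility criterion. Your additional remarks on the explicit power computation and the smoothness of $(t,w)\mapsto(t,\Phi(tA)^{-1}w)$ are fine but only make explicit what the paper leaves implicit.
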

\begin{proof}
    In order to compute $\exp :\g \to G$, we consider the following faithful representations of   $\g$ and $G$ on $\R^{d+1}$:
    \[
\rho (t, v) =  
\left[ \begin{array}{c|c} t A & v \\
\hline
& 0    
\end{array} \right], \quad (t,v)\in \g, \qquad\quad \tau(s,w) =  
\left[ \begin{array}{c|c} e^{s A} & w \\
\hline
& 1    
\end{array} \right], \quad (s,w)\in G. 
    \]
We compute the matrix exponential of $\rho(t,v)$: 
    \[ e^{\rho(t,v)}=\left[ \begin{array}{c|c} e^{t A} & \Phi (tA)v \\
\hline
& 1    
\end{array} \right] =\tau (t, \Phi (tA)v), 
\]
and \eqref{eq:exp} follows since $(d\tau)_{(0,0)}=\rho$. 
The last assertion in the statement is a consequence of Corollary \ref{cor:Psi}.
\end{proof}
\begin{remark} Let $\g$ and $G$ be  as in Proposition \ref{prop:exp}. We point out that when $a  i$ is  an eigenvalue of $A$ for some $a \in \R , \, a\neq 0$, then $\exp:\g \to G$  is neither injective nor surjective.   The non-injectivity is clear.  On the other hand, in order to show that $\exp$ is not surjective, let $r_k=\frac{2\pi k}{a}$ for $k\in \Z, \, k\neq 0$. Then $2\pi k i$ is an eigenvalue of $r_kA$, hence $\Phi (r_kA)$ is not invertible (Lemma \ref{lem:Psi}) and there exists  $w_k \notin \operatorname{Im}\, \Phi (r_kA)$. It follows that $(r_k, w_k)\notin \operatorname{Im}\, \exp$.
\end{remark}

The next corollary will be needed for the proof of Theorem \ref{thm:Lie-iso} below. It is equivalent to the fact that, for each $(t_0,v_0)\in \g$, 
 the map $\gamma :\R \to G$ is a monoparametric subgroup of $G$,  where  $\gamma(t)=\exp \left( t (t_0,v_0) \right)$. 
 \begin{corollary}\label{cor:monop} Let $G$ be a simply connected almost abelian Lie group with Lie algebra $\g=\R\ltimes_A \R^d$ where $A\in \g\l(d,\R)$. Then, for each  $(t_0, v_0)\in \g$, the following equation is satisfied:
 \begin{equation}\label{eq:monop}
     (t+s)\, \Phi\left((t+s)t_0 A\right)v_0=
     t\, \Phi\left(t t_0A\right)v_0+s e^{t\, t_0A}\Phi\left(st_0A\right)v_0, \quad \text{ for all } t, s\in \R,
     \end{equation} 
 where   $\Phi$ is as in   \eqref{eq:Phi}.
 \end{corollary}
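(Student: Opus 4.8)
The plan is to recognize Corollary \ref{cor:monop} as the infinitesimal/integrated statement that $\gamma(t)=\exp\left(t(t_0,v_0)\right)$ is a one-parameter subgroup of $G$, and then simply translate the identity $\gamma(t+s)=\gamma(t)\gamma(s)$ into the explicit coordinates furnished by Proposition \ref{prop:exp}. Since $G$ is a Lie group and $(t_0,v_0)\in\g$, the curve $t\mapsto \exp\left(t(t_0,v_0)\right)$ is by general theory a homomorphism from $(\R,+)$ into $G$, so the multiplicativity is automatic; the content of the corollary is entirely in unwinding what this says about the second coordinate.

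First I would compute $\gamma(t)$ explicitly. By \eqref{eq:exp} applied to the vector $t(t_0,v_0)=(tt_0,tv_0)\in\g$, we get
\[ \gamma(t)=\exp\left(tt_0,tv_0\right)=\left(tt_0,\,\Phi(tt_0A)\,(tv_0)\right)=\left(tt_0,\,t\,\Phi(tt_0A)v_0\right). \]
Next I would recall the group multiplication in $G=\R\ltimes_\varphi\R^d$ with $\varphi(s)=e^{sA}$, namely
\[ (a,u)\cdot(b,w)=\left(a+b,\,u+e^{aA}w\right). \]
Then I would form the product $\gamma(t)\cdot\gamma(s)$ and compare its second coordinate with that of $\gamma(t+s)$.

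Carrying this out, the first coordinate gives $tt_0+st_0=(t+s)t_0$, which matches $\gamma(t+s)$ trivially. For the second coordinate,
\[ \gamma(t)\cdot\gamma(s)=\left((t+s)t_0,\ t\,\Phi(tt_0A)v_0+e^{tt_0A}\bigl(s\,\Phi(st_0A)v_0\bigr)\right), \]
while $\gamma(t+s)$ has second coordinate $(t+s)\,\Phi\left((t+s)t_0A\right)v_0$. Equating these two expressions for the second coordinate yields exactly \eqref{eq:monop}. Thus the corollary follows immediately from the homomorphism property of $\gamma$ together with the formula \eqref{eq:exp} for $\exp$ and the semidirect-product multiplication law.

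There is essentially no obstacle here, since the result is a direct consequence of Proposition \ref{prop:exp} and the definition of the group law; the only point requiring a word of care is the justification that $\gamma$ is a one-parameter subgroup. This is standard: for any $v\in\g$, $t\mapsto\exp(tv)$ is a homomorphism $\R\to G$ because $\exp$ intertwines the additive structure of $\g$ along the line $\R v$ with multiplication in $G$, a fact that holds for every Lie group. Alternatively, one can verify the homomorphism property directly using the representations $\rho,\tau$ from the proof of Proposition \ref{prop:exp}: since $e^{\rho((t+s)v)}=e^{\rho(tv)}e^{\rho(sv)}$ by the additivity of the matrix exponential on the commuting matrices $\rho(tv)$ and $\rho(sv)$, applying $\tau^{-1}$ transports this to $\gamma(t+s)=\gamma(t)\gamma(s)$, whence \eqref{eq:monop}.
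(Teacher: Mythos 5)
Your proof is correct and follows essentially the same route as the paper: the paper also defines $\gamma(t)=\exp\left(t(t_0,v_0)\right)$, invokes the standard fact that this is a one-parameter subgroup, and compares the second coordinates of $\gamma(t+s)$ and $\gamma(t)\gamma(s)$ using the formula \eqref{eq:exp} and the semidirect-product multiplication. Your extra remark justifying the homomorphism property of $\gamma$ is a harmless elaboration of a step the paper takes as standard.
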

 \begin{proof}
   Let  $\gamma(t)=\exp \left( t (t_0,v_0),  \right),\, t\in \R$, then $\gamma$ is a monoparametric subgroup of $G$, that is, $\gamma(t+s)=\gamma(t)\gamma(s)$ for all $t,s\in \R$. We compute each side of this equation, starting with $\gamma(t+s)$:
   \begin{eqnarray*}
     \gamma(t+s)&=& \exp\left( (t+s)t_0, (t+s)v_0   \right)  \\  
     &=& \left( (t+s)t_0, (t+s)\, \Phi\left((t+s)t_0 A\right)v_0   \right).
   \end{eqnarray*}
   On the other hand,
   \begin{eqnarray*}
   \gamma(t)\gamma(s)&=& \exp (tt_0, tv_0) \exp(st_0, sv_0)  \\
   &=&  \left( tt_0, t\, \Phi\left(t t_0A\right)v_0\right) \left(st_0, s\,  \Phi\left(s t_0A\right)v_0\right)\\ 
    &=& \left( (t+s)t_0 , t\, \Phi\left(t t_0A\right)v_0 + s e^{t\, t_0 A}\Phi\left(s t_0A\right)v_0 \right),
   \end{eqnarray*}
   and the corollary follows since $\gamma(t+s)=\gamma(t)\gamma(s)$ for all $t,s\in \R$. 
 \end{proof}

%\

We apply the results above to obtain the Lie group isomorphisms between two simply connected almost abelian Lie groups. We start with the following special case. 

Let $\h_3$ denote the 3-dimensional Heisenberg Lie algebra and $H_3$ the corresponding simply connected Lie group.  If $\g_1$ and $\g_2$ are isomorphic to $\h_3\times \R^{k}$, we describe next the Lie algebra isomorphisms between  $\g_1$ and $\g_2$ and the corresponding Lie group isomorphisms between $G_1$ and $G_2$, the simply connected Lie groups with Lie algebra $\g_1$ and $\g_2$, respectively. In order to state Proposition \ref{prop:h3} below, we fix some notation. Let $\z_1$ be the center of $\g_1$, $e_1,\, \, e_2,\, e_3 \in \g_1$ and fix a subspace $V_1\subset \z_1$ such that
\[
[e_1,e_2]=e_3, \qquad \z_1=\R e_3\oplus V_1 .
\]
 Similarly, let $u_1,\, \, u_2,\, u_3 \in \g_2$ such that $[u_1,u_2]=u_3$ and fix a subspace $V_2\subset \z_2$ such that $\z_2=\R u_3\oplus V_2$, where $\z_2$ is the center of $\g_2$. 

 We point out that $\g_1$ and $\g_2$ are almost abelian Lie algebras, where we fix the following  decompositions: $\g_1 =\R e_1\ltimes (\R e_2 \oplus \z_1)$ and  $\g_2=\R u_1\ltimes (\R u_2 \oplus \z_2)$. The corresponding simply connected Lie groups $G_1$ and $G_2$ decompose both as $\R\ltimes \R^{k+2}$.
\begin{proposition}\label{prop:h3} Let $\g_1$ and $\g_2$ be Lie algebras isomorphic to $\h_3\times \R^{k}$ and let $f:\g_1 \to \g_2$ be a Lie algebra isomorphism. Then, there exist $a_i, b_i, c_i \in \R,\, i=1,2,\; v_1, v_2\in V_2, \, \mu\in V_1^*$ and a linear isomorphism $L: V_1 \to V_2$ such that $\Delta := a_1b_2-a_2b_1\neq 0$ and $f$ is given by:
\begin{eqnarray*}
f(e_i)&=&a_i u_1+b_iu_2+c_iu_3+v_i, \; i=1,2,\\
f(e_3)&= &\Delta u_3, \qquad f(x)=\mu (x)u_3+Lx, \quad x\in V_1.    
\end{eqnarray*}
  The corresponding Lie group isomorphism $F:G_1\to G_2$ such that $(dF)_{(0,0)}=f$ has the following form:
  \[
  F(t,w)=(a_1t+a_2w_2, (a_2t+b_2w_2)u_2+\nu(t,w) u_3+tv_1+w_2v_2+L\widetilde{w}),
  \]
  where $w=w_2e_2+w_3e_3+\widetilde{w}, \, w_i\in \R, \, \widetilde{w}\in V_1$ and $\nu(t,w)\in \R$ is given by:
  \[ 
  \nu(t,w)= \frac 12(a_1t+a_2w_2)(a_2t+b_2w_2)+c_1 t+c_2w_2+\left(w_3-\frac t2 w_2\right)\Delta +\mu(\widetilde{w}).
  \] 
\end{proposition}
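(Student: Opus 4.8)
The plan is to determine $f$ from the rigidity of a $2$-step nilpotent Lie algebra and then to integrate it explicitly via the exponential map of Proposition \ref{prop:exp}, exploiting that both relevant adjoint operators square to zero.

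First I would use that $\g_1\cong\h_3\times\R^k$ is $2$-step nilpotent with $[\g_1,\g_1]=\R e_3\subseteq\z_1$ (and similarly for $\g_2$), so that any isomorphism $f$ must satisfy $f(\z_1)=\z_2$ and $f(\R e_3)=\R u_3$. Expanding $f(e_1),f(e_2)$ in $\g_2=\R u_1\oplus\R u_2\oplus\R u_3\oplus V_2$ yields the scalars $a_i,b_i,c_i$ and the vectors $v_i\in V_2$, while expanding $f(x)$ for $x\in V_1$ in $\z_2=\R u_3\oplus V_2$ yields $\mu\in V_1^{*}$ and $L\colon V_1\to V_2$. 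The only relation to impose is bracket-compatibility: since $u_3$ and $V_2$ are central, $[f(e_1),f(e_2)]=(a_1b_2-a_2b_1)[u_1,u_2]=\Delta\,u_3$, and comparison with $f([e_1,e_2])=f(e_3)$ forces $f(e_3)=\Delta\,u_3$. Injectivity of $f$ then gives $\Delta\neq0$, and since $f|_{\z_1}$ has a block upper-triangular matrix with diagonal blocks $\Delta$ and $L$ in the bases $(e_3,V_1)$ and $(u_3,V_2)$, it forces $L$ to be an isomorphism as well.

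Next I would make the two almost abelian structures explicit: writing $\g_1=\R e_1\ltimes_{A_1}(\R e_2\oplus\z_1)$, the operator $A_1=\ad_{e_1}$ sends $e_2\mapsto e_3$ and annihilates $\z_1$, so $A_1^2=0$, and symmetrically $A_2=\ad_{u_1}$ satisfies $A_2u_2=u_3$, $A_2^2=0$. Being nilpotent, $A_1$ and $A_2$ have no nonzero purely imaginary eigenvalue, so Proposition \ref{prop:exp} guarantees that both exponentials are global diffeomorphisms and that $\Phi(tA)=I+\tfrac{t}{2}A$; in particular $\exp_{\g_1}^{-1}(t,w)=\bigl(t,(I-\tfrac{t}{2}A_1)w\bigr)$. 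Since $F$ is the unique Lie group isomorphism integrating $f$, it is characterized by $F\circ\exp_{\g_1}=\exp_{\g_2}\circ f$, so I would simply evaluate $F=\exp_{\g_2}\circ f\circ\exp_{\g_1}^{-1}$. For $w=w_2e_2+w_3e_3+\widetilde{w}$, the map $\exp_{\g_1}^{-1}$ replaces the $e_3$-coordinate $w_3$ by $w_3-\tfrac{t}{2}w_2$; then $f$ sends the result to an element of $\g_2$ whose $u_1$-component is $s:=a_1t+a_2w_2$ (the first coordinate of $F$) and whose abelian part $\eta$ is read off componentwise from $a_i,b_i,c_i,\mu,L,v_i$; finally $\exp_{\g_2}$ multiplies $\eta$ by $\Phi(sA_2)=I+\tfrac{s}{2}A_2$, which fixes every component except that it adds $\tfrac{s}{2}$ times the $u_2$-component of $\eta$ to the $u_3$-component. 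Collecting these contributions gives the stated $u_2$-coefficient, the $V_2$-part $tv_1+w_2v_2+L\widetilde{w}$, and the scalar $\nu(t,w)$.

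The hard part will be the bookkeeping of the two quadratic-in-$(t,w_2)$ effects entering $\nu$: the shift $w_3\mapsto w_3-\tfrac{t}{2}w_2$ produced by $\exp_{\g_1}^{-1}$ and the correction $\tfrac{s}{2}A_2\eta$ produced by $\exp_{\g_2}$, which together generate both the cross-term $\tfrac12(a_1t+a_2w_2)(\cdots)$ and the summand $\bigl(w_3-\tfrac{t}{2}w_2\bigr)\Delta$. Rather than trust the raw substitution, I would confirm the final formula by checking that the candidate $F$ is a genuine homomorphism, that is, $F\bigl((t,w)(t',w')\bigr)=F(t,w)\,F(t',w')$; on the quadratic terms this reduces to the monoparametric identity of Corollary \ref{cor:monop}, providing an independent consistency check for $\nu$.
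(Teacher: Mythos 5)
Your argument is correct and follows essentially the same route as the paper: the Lie-algebra part is derived exactly as in the paper from preservation of the center and the commutator ideal together with bracket compatibility on $[e_1,e_2]=e_3$, and your group-level computation is precisely what the paper compresses into the remark that $F$ ``can be obtained from $f$ since $\exp_1$ is a diffeomorphism'', i.e. $F=\exp_2\circ f\circ\exp_1^{-1}$ with $\Phi(tA)=I+\tfrac{t}{2}A$ because $A_1^2=A_2^2=0$. One caveat: carrying out your bookkeeping honestly produces the $u_2$-coefficient $b_1t+b_2w_2$ (and the corresponding factor inside $\nu$), not $a_2t+b_2w_2$ as printed --- this is the value forced by $(dF)_{(0,0)}=f$, so the printed formula appears to contain a typo ($a_2$ should read $b_1$) and you should not simply assert that your computation ``gives the stated coefficient'' without flagging the discrepancy.
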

\begin{proof}
    Let $f:\g_1 \to \g_2 $ be a Lie algebra isomorphism, then $f$ is an isomorphism between  the centers and the commutator ideals of $\g_1$
    and $\g_2$, respectively. Therefore, $f:\R e_3\oplus V_1 \to \R u_3\oplus V_2$ and $f(e_3)=du_3, $  for some non-zero $d\in \R$. 
    Let  $a_i, b_i, c_i \in \R,\, i=1,2,\; v_1, v_2\in V_2$ such that
    \[
    f(e_i)=a_i u_1+b_iu_2+c_iu_3+v_i, \; i=1,2. 
    \]
    We have that  $du_3=f(e_3)=f([e_1,e_2])=[f(e_1),f(e_2)]=\Delta u_3$ since  $f$ is a Lie algebra isomorphism. Therefore, $d=\Delta$, so in particular, $\Delta\neq 0$. 

    Since $f:\R e_3\oplus V_1 \to \R u_3\oplus V_2$ is a linear isomorphism such that $f(e_3)=\Delta u_3$, there exist  $\mu\in V_1^*$ and a linear isomorphism $L:V_1\to V_2$ such that $f(x)=\mu(x)u_3+Lx$ for all $x\in V_1$. 

    The expression of the corresponding Lie group isomorphism $F$ can be obtained from $f$ since it is well known that $\exp_1 :\g_1\to G_1$ is a diffeomorphism.
\end{proof}

%\ 

We consider next the general case. Given two  almost abelian  Lie algebras  $\g_1=\R \ltimes _{A_1} \R^d$ and $\g_2=\R \ltimes _{A_2} \R^d$, recall from Lemma \ref{lem:ad-conjugated} that they are isomorphic if there exists $c\neq 0$ and $P\in \operatorname{GL}(d, \R)$ such that $A_1=cPA_2P^{-1}$. %We may assume that  $c$ satisfies $|c|\geq 1$. 
Let $G_1$ and $G_2$ be the simply connected Lie groups with Lie algebras $\g_1$ and $\g_2$, respectively. The next theorem gives a characterization of the Lie algebra isomorphisms between $\g_1$ and $\g_2$. Moreover, the expression of the corresponding Lie group isomorphisms between $G_1$ and $G_2$ is obtained.
\begin{theorem}\label{thm:Lie-iso}
Let $\g_1=\R \ltimes _{A_1} \R^d$ and $\g_2=\R \ltimes _{A_2} \R^d$ be two isomorphic almost abelian Lie algebras which are not isomorphic to $\h_3 \times \R^{d-2} $ and let $c\in \R, \, c\neq 0 $, $P\in \operatorname{GL}(d, \R)$ such that $A_1=cPA_2P^{-1}$, with $c=1$ when $\g_1$ is nilpotent.  
    \begin{enumerate}
        \item[$\ri$]     
    If $f : \g_1\to \g_2$ is a Lie algebra  isomorphism, then there exist $\mu \in \R, \,\mu\neq 0,$ $v_0\in \R^d$ and 
    $L\in \operatorname{GL}(d, \R)$ such that 
    \begin{equation} \label{eq:Lie-alg-iso}        
     f(t,v)=(\mu t ,Lv+t v_0), \quad  (t,v)\in \g_1 , \quad \text{ with } LPA_2=\frac{\mu}c A_2 LP. 
     \end{equation} 
    Moreover, 
    if $\g_1$ is  not  nilpotent, then $\mu=\pm c$. 
    \item[$\rii$] If $G_1$ and $G_2$ are the simply connected Lie groups with Lie algebras $\g_1$ and $\g_2$, respectively, and $F: G_1\to G_2$ is  the Lie group isomorphism such that $(dF)_{(0,0)}=f$, with $f$ as in \eqref{eq:Lie-alg-iso}, then $F$ has the following form:
\begin{equation} \label{eq:Lie-group-iso} 
F(t,v)=  \left(\mu t, Lv+t \Phi(\mu tA_2) v_0\right), \qquad (t,v)\in G_1 .\end{equation}
\end{enumerate}
\end{theorem}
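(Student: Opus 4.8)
The plan is to prove (i) first, by showing that any Lie algebra isomorphism $f\colon\g_1\to\g_2$ must carry the abelian ideal $\u_1=\R^d\subset\g_1$ into $\u_2=\R^d\subset\g_2$, and then to read off both the normal form and the intertwining relation. Writing elements of $\g_i$ as pairs $(t,v)$, with $t\in\R$ the $e_0$-coefficient and $v\in\R^d$, the bracket is $[(t_1,v_1),(t_2,v_2)]=(0,t_1A_iv_2-t_2A_iv_1)$. I would set $f(e_0)=(\mu,v_0)$ and, for $v\in\u_1$, write $f(0,v)=(\phi(v),Lv)$ with $\phi\colon\R^d\to\R$ and $L\colon\R^d\to\R^d$ linear. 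Applying $f$ to $[(0,v),(0,w)]=0$ and to $[e_0,(0,v)]=(0,A_1v)$ and expanding with the bracket formula yields
\begin{equation}\label{eq:plan-cond}
\phi(v)\,A_2Lw=\phi(w)\,A_2Lv\quad(v,w\in\R^d),\qquad \phi\circ A_1=0,\qquad LA_1v=\mu A_2Lv-\phi(v)\,A_2v_0.
\end{equation}

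The heart of the argument, and the step I expect to be the main obstacle, is to deduce $\phi=0$ from \eqref{eq:plan-cond}, since this is exactly where the hypothesis $\g_i\not\cong\h_3\times\R^{d-2}$ (together with non-abelianness) enters. From the first identity, if $\phi\neq0$ one may pick $v_\ast$ with $\phi(v_\ast)=1$ and conclude $A_2Lw=\phi(w)A_2Lv_\ast$ for all $w$, so $\operatorname{Im}(A_2L)=\operatorname{Im}(A_2)$ is at most one-dimensional; hence $\operatorname{rank}A_2\le1$. If $\operatorname{rank}A_2=0$ then $\g_2$ is abelian, which is excluded; if $\operatorname{rank}A_2=1$ and $A_2$ is nilpotent then $\g_2\cong\h_3\times\R^{d-2}$, again excluded. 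In the remaining case $\operatorname{rank}A_2=1$ with a nonzero eigenvalue, write $A_2v=\ell(v)z$ with $\ell(z)\neq0$; the first identity forces $\phi$ to be proportional to $\ell\circ L$, while the third shows $\operatorname{Im}A_1=\R\,L^{-1}z$, and then $\phi\circ A_1=0$ evaluated on $L^{-1}z$ makes the proportionality constant vanish. Thus $\phi\equiv0$ in all admissible cases, so $f(\u_1)\subseteq\u_2$; comparing dimensions gives $f(\u_1)=\u_2$, whence $L\in\operatorname{GL}(d,\R)$, $\mu\neq0$, and $f(t,v)=(\mu t,Lv+tv_0)$.

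With $\phi=0$ the third identity in \eqref{eq:plan-cond} reads $LA_1=\mu A_2L$. Substituting $A_1=cPA_2P^{-1}$ and multiplying on the right by $P$ gives $cLPA_2=\mu A_2LP$, that is $LPA_2=\tfrac{\mu}{c}A_2LP$, as claimed. Moreover $LA_1=\mu A_2L$ says that $A_1$ is conjugate to $\mu A_2$, while by hypothesis (Lemma \ref{lem:ad-conjugated}) $A_1$ is conjugate to $cA_2$; hence $\mu A_2$ and $cA_2$ are conjugate, so multiplication by $c/\mu$ permutes the multiset of eigenvalues of $A_2$. When $\g_1$ is not nilpotent, $A_2$ has a nonzero eigenvalue, and comparing eigenvalues of maximal modulus forces $|c/\mu|=1$, i.e.\ $\mu=\pm c$; this finishes (i).

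For (ii) I would not invert $\exp_1$ (which need not be a diffeomorphism), but instead verify directly that the candidate $\tilde F(t,v)=(\mu t,Lv+t\,\Phi(\mu tA_2)v_0)$ is a homomorphism with $(d\tilde F)_{(0,0)}=f$, and then invoke uniqueness of the homomorphism integrating $f$ (here $G_1$ is simply connected). Using the group law $(t,v)(s,w)=(t+s,v+e^{tA_i}w)$, the identity $\tilde F((t,v)(s,w))=\tilde F(t,v)\,\tilde F(s,w)$ splits into a $w$-part, which holds because $LA_1=\mu A_2L$ implies $Le^{tA_1}=e^{\mu tA_2}L$, and a $v_0$-part, which is precisely the monoparametric identity \eqref{eq:monop} of Corollary \ref{cor:monop} applied in $G_2$ with $t_0=\mu$. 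Differentiating $\tilde F(\epsilon t,\epsilon v)$ at $\epsilon=0$ and using $\Phi(0)=I$ gives $(\mu t,Lv+tv_0)=f(t,v)$, so $(d\tilde F)_{(0,0)}=f$; by uniqueness $F=\tilde F$, which is \eqref{eq:Lie-group-iso}.
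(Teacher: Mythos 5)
Your proof is correct in its main lines and, for most of the theorem, follows the same strategy as the paper: the relation $LPA_2=\frac{\mu}{c}A_2LP$, the eigenvalue-multiset argument for $\mu=\pm c$, and the verification of part \rii\ via the cocycle identity \eqref{eq:monop} of Corollary \ref{cor:monop} are essentially identical (the paper also checks the homomorphism property directly rather than inverting $\exp_1$, and then verifies $F\circ\exp_1=\exp_2\circ f$ instead of differentiating $\tilde F(\epsilon t,\epsilon v)$; both are equivalent). The genuine difference is at the crucial first step: the paper simply quotes \cite[Proposition 1]{Fr} to assert that $\g_1$ and $\g_2$ have a unique codimension-one (abelian) ideal, hence $f(0\times\R^d)=0\times\R^d$, whereas you re-derive this from the structure equations \eqref{eq:plan-cond}, which makes the role of the hypothesis $\g_i\not\cong\h_3\times\R^{d-2}$ completely explicit and keeps the appendix self-contained. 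That is a worthwhile trade: you pay with a case analysis, the paper pays with an external citation.

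Two loose ends in your case analysis should be tightened. First, the assertion $\operatorname{Im}(A_2L)=\operatorname{Im}(A_2)$ presupposes that $L$ is surjective, which you have not yet established; what you do know (from injectivity of $f$ on $\u_1$ and $\dim\ker\phi=d-1$ when $\phi\neq0$) is only $\operatorname{rank}L\geq d-1$. This still yields $\operatorname{rank}A_2\leq1$: if $L$ is invertible your argument applies verbatim, and if $\ker L=\R u$ with $\phi(u)=1$ then taking $v_\ast=u$ in the first identity gives $A_2L=0$, so $\operatorname{Im}L\subseteq\ker A_2$ and again $\operatorname{rank}A_2\leq1$. Second, in the remaining case $\operatorname{rank}A_2=1$ with nonzero eigenvalue, your deduction that $\phi$ is proportional to $\ell\circ L$ is vacuous when $\ell\circ L=0$, and the step $\operatorname{Im}A_1=\R L^{-1}z$ again assumes $L$ invertible; the sub-case $\ell\circ L=0$ must be excluded separately (e.g.\ $A_2L=0$ forces $LA_1=0$ via the third identity, hence $\operatorname{Im}A_1=\ker L=\R u$, and then $\phi\circ A_1=0$ contradicts $\phi(u)=1$), or, more cleanly, one can observe that $f(\u_1)$ is automatically a codimension-one abelian ideal of $\g_2$ and that $\aff(\R)\times\R^{d-1}$ has only one such ideal. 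Finally, note that both your argument and the paper's implicitly assume $\g_1$ is non-abelian (the rank-zero case); this is not literally excluded by the stated hypotheses, but the theorem is false for abelian $\g_i$, so the assumption is intended.
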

\begin{proof}
Recall from Remark \ref{rem:nilp-sim} that  if $\g_1$ is nilpotent, then we can take $c=1$. 

Since $\g_1$, hence $\g_2$,   is not isomorphic to $\h^3\times \R ^{d-2}$, then $\g_1$ and $\g_2$ have a unique codimension one ideal (see \cite[Proposition 1]{Fr}). Therefore, if $f: \g_1 \to \g_2$ is a Lie algebra isomorphism, then $f(0\times \R^d)=0\times \R^d$, so  there exist $L\in \operatorname{GL}(d, \R)$, $v_0\in \R^d$  and $\mu\in \R, \, \mu\neq 0,$ such that $f(0,v)=(0,Lv), \, v\in \R^d, $ and $ f(1,0)=(\mu,v_0)$. Since $f$ is a Lie algebra isomorphism we must have $f\left([(1,0),(0,v)]_1\right)=[f(1,0), f(0,v)]_2$, which is equivalent to  
$  LA_1v=\mu A_2Lv$, for all $v\in\R^d$. Since $A_1=cPA_2P^{-1}$, we obtain that 
\begin{equation} \label{eq:A2-conj}
 A_2=\dfrac{c}{\mu} (LP)A_2(LP)^{-1}.\end{equation}

Assume  that $\g_1$ is not nilpotent.   We prove next that $\mu=\pm c$, or equivalently, $\nu=\pm 1$, where $\nu=\dfrac{c}{\mu}$. Let $\{\lambda_1, \dots , \lambda_d\}$, with $|\lambda_1|\leq \cdots \leq |\lambda_d|$, be the (possibly repeated) eigenvalues of $A_2$.  It follows from \eqref{eq:A2-conj} that 
\[ \{\lambda_1, \dots , \lambda_d\}= \{\ \nu\, \lambda_1, \dots , \nu\,\lambda_d\},\]
with $|\nu\, \lambda_1|\leq \cdots \leq |\nu\, \lambda_d|$, therefore, $|\lambda_j|= |\nu\, \lambda_j|$ for all $j$. 
Since $\g_1$ is not nilpotent, $A_1$ and $A_2$ are not nilpotent, in particular, $\lambda_d\neq 0$, which implies that  $|\nu|=1$, that is, $\mu=\pm c$. 
This concludes the proof of $\ri$. 

The proof of $\rii$ consists in showing that $F:G_1\to G_2$ defined as in \eqref{eq:Lie-group-iso} is a Lie group isomorphism satisfying  $(dF)_{(0,0)}=f$. We start by proving that $F$ is a homomorphism. For $(t,v), \, (s,w)\in G_1$ we compute
\begin{eqnarray*}   
F\left((t,v) (s,w)\right)&=& F\left( t+s, v+e^{tA_1} w \right)\\
&=& \left( \mu (t+s), L\left(v+e^{tA_1} w\right) +(t+s) \,\Phi(\mu (t+s) A_2) v_0    \right).
\end{eqnarray*}
On the other hand,
\begin{eqnarray*}   
F(t,v) F(s,w)&=& \left(\mu t, Lv+t \Phi(\mu tA_2) v_0\right) \left(\mu s, Lw+s \Phi(\mu sA_2) v_0\right) \\
&=& \left(\mu(t+s), Lv+t \Phi(\mu tA_2) v_0+ e^{\mu tA_2}\left( Lw+s \Phi(\mu sA_2) v_0 \right)  \right). 
\end{eqnarray*}
Since \eqref{eq:monop} holds for $A=A_2$ and $(t_0, v_0)=(\mu, v_0)$, it follows that 
 $F$ is a group homomorphism if and only if 
\begin{equation}\label{eq:exp-LA1}
Le^{tA_1}w= e^{\mu t A_2}Lw, \text{ for all } w\in \R^d, \quad \text{ that is, }\; Le^{tA_1}= e^{\mu t A_2}L.
\end{equation}
This follows from $A_1=cPA_2P^{-1}$, since 
\[
Le^{tA_1}=LP e^{ctA_2}P^{-1}=e^{\mu tA_2}L,
\]
where the last equality holds by recalling from  \eqref{eq:A2-conj} that $LPA_2= \dfrac {\mu}c A_2LP$ and applying Remark \ref{rem:nu-commuting} with $h(x)=e^x, \, T=LP, \, S= c tA_2, \, \nu= \dfrac{\mu}c $. Therefore, $F$ is a Lie group homomorphism. We show next that $(dF)_{(0,0)}=f$, that is, $F\circ \exp_1=\exp_2 \circ f$, where $\exp_j:\g_j\to G_j$ denotes the exponential map, $j=1,2$.
%the following diagram is commutative:
%\begin{center}
%\begin{tikzpicture}[every node/.style={midway}]
%  \matrix[column sep={4em,between origins}, row sep={2em}] at (0,0) {
%    \node(g1) {$\g_1$}  ; & \node(g2) {$\g_2$}; \\
%    \node(G1) {$G_1$}; & \node (G2) {$G_2$};\\
%  };
%  \draw[<-] (G1) -- (g1) node[anchor=east]  {$\exp_1$};
  %\draw[->] (R/I) -- (S) node[anchor=north]  {$\psi$};
%  \draw[->] (g1) -- (g2) node[anchor=south] {$f$};
%  \draw[->] (g2) -- (G2) node[anchor=west] {$\exp_2$};
%  \draw[->] (G1) -- (G2) node[anchor=north] {$F$};
%\end{tikzpicture}
%\end{center}

First we observe that the same argument used to obtain \eqref{eq:exp-LA1}, but with $h(x)=\Phi(x)$, gives:
\begin{equation} \label{eq:LA1} L \Phi(tA_1)=\Phi\left(\mu t A_2\right) L. 
\end{equation}
Now, for $(t,v)\in \g_1$ we compute: 
\begin{eqnarray*}
F( \exp_1(t,v))&=& F(t, \Phi\left(tA_1)v\right)= \left( \mu t, L\Phi(tA_1)v +t \Phi(\mu tA_2) v_0\right)\\ 
&=& \left( \mu t, \Phi\left(\mu t A_2\right) Lv +t \Phi(\mu tA_2) v_0\right)\\ 
&=& \left( \mu t, \Phi\left(\mu t A_2\right)\left( Lv +tv_0\right)\right)=\exp_2 (\mu t, Lv +tv_0)\\
&=& \exp_2 (f(t,v)),
\end{eqnarray*}
where the third equality follows from \eqref{eq:LA1}. 
Therefore, $F\circ \exp_1=\exp_2 \circ f$, 
 as asserted. 
In particular, $(dF)_{(0,0)}$ is an isomorphism, which implies that $(dF)_{(s,w)}$ is an isomorphism for all $(s,w)\in G_1$, since $F$ is a homomorphism. It is clear that $F$ is bijective, hence, $F$ is a Lie group isomorphism. 
\end{proof}

%\medskip

%\noindent \textbf{Acknowledgments.} 
%The authors are grateful to the referee for his/her thorough revision of the manuscript and useful suggestions. This work was partially supported by CONICET, SECyT-UNC and FONCyT (Argentina).

\medskip

\end{document}